\title{Near-critical and finite-size scaling for \\
high-dimensional lattice trees and animals}
\author{
Yucheng Liu\,\orcidlink{0000-0002-1917-8330}\thanks{Department of Mathematics,
	University of British Columbia,
	Vancouver, BC, Canada V6T 1Z2.
	Liu: \href{mailto:yliu135@math.ubc.ca}{yliu135@math.ubc.ca}.
	Slade: \href{mailto:slade@math.ubc.ca}{slade@math.ubc.ca}.
	}
\and
Gordon Slade\,\orcidlink{0000-0001-9389-9497}$^*$
}
\date{\vspace{-5ex}} 
\tikzset{every picture/.style={line width=0.75pt}}
\theoremstyle{plain}
\newtheorem{theorem}{Theorem}[section]
\newtheorem{lemma}[theorem]{Lemma}
\newtheorem{conjecture}[theorem]{Conjecture}
\newtheorem{proposition}[theorem]{Proposition}
\newtheorem{corollary}[theorem]{Corollary}
\newtheorem{definition}[theorem]{Definition}
\numberwithin{equation}{section}
\newcommand{\ie}{i.e.}
\newcommand{\eps}{\varepsilon}
\newcommand{\Z}{\mathbb{Z}}
\newcommand{\R}{\mathbb{R}}
\newcommand{\C}{\mathbb{C}}
\newcommand{\T}{\mathbb{T}}
\newcommand{\del}{\partial}
\newcommand{\grad}{\nabla}
\newcommand{\inv}{^{-1}}
\renewcommand{\(}{\left(}
\renewcommand{\)}{\right)}
\newcommand{\half}{\frac{1}{2}}
\newcommand{\1}{\mathds{1}}
\newcommand{\nl}{\nonumber \\}
\renewcommand{\Re}{\mathrm{Re}\,}
\providecommand{\abs}[1]{\lvert#1\rvert}
\providecommand{\bigabs}[1]{\big\lvert#1\big\rvert}
\providecommand{\Bigabs}[1]{\Big\lvert#1\Big\rvert}
\providecommand{\biggabs}[1]{\bigg\lvert#1\bigg\rvert}
\providecommand{\norm}[1]{\lVert#1\rVert}
\providecommand{\bignorm}[1]{\big\lVert#1\big\rVert}
\providecommand{\Bignorm}[1]{\Big\lVert#1\Big\rVert}
\providecommand{\biggnorm}[1]{\bigg\lVert#1\bigg\rVert}
\newcommand{\Ann}{B}
\renewcommand{\mp}{m(p)}
\newcommand{\btil}{\tilde{b}}
\newcommand{\chip}{{\chi(p)}}
\newcommand{\chim}{\chi \supm}
\newcommand{\muz}{{\mu_z}}
\newcommand{\mup}{{\mu_p}}
\newcommand{\supm}{^{(m)}}
\newcommand{\supmp}{^{(\mp)}}
\newcommand{\supmS}{^{(m_S(z))}}
\newcommand{\supM}{^{(M)}}
\newcommand{\supN}{^{(N)}}
\newcommand{\supNm}{^{(N,m)}}
\newcommand{\supT}{^{ \mathbb{T} }}
\newcommand{\supk}[1]{^{(#1)}}
\newcommand{\supzerom}{^{(0,m)}}
\newcommand{\nnb}{\nonumber\\}
\newcommand{\veee}[1]{|\!|\!|#1|\!|\!|}
\newcommand{\xvee}{\veee{x}}
\providecommand{\nnnorm}[1]{\veee #1}
\newcommand{\const}{\mathrm{const}}
\newcommand{\crit}{_{p_c}}
\newcommand{\gp}{ g_p }
\newcommand{\gpc}{ g\crit }
\newcommand{\BL}{{B_L}}
\newcommand{\KIR}{K_{\rm IR}}
\newcommand{\Rd}{{\mathbb R^d}}
\newcommand{\Zd}{\mathbb Z^d}
\newcommand{\Td}{\mathbb T^d}
\newcommand{\dc}{d_{\mathrm{c}}}
\newcommand{\TA}{\mathcal{C}}
\newcommand{\LTPiFour}{
\begin{tikzpicture}[x=0.75pt,y=0.75pt,yscale=-1,xscale=1]

\draw    (195,100) -- (230,100) ;
\draw [shift={(195,100)}, rotate = -15] [color={rgb, 255:red, 0; green, 0; blue, 0 }  ][line width=0.75]    (0,5.59) -- (0,-5.59)(-5.03,5.59) -- (-5.03,-5.59)   ;
\draw    (190,140) -- (230,140) ;
\draw    (270,140) -- (310,140) ;
\draw [shift={(290,140)}, rotate = 0] [color={rgb, 255:red, 0; green, 0; blue, 0 }  ][fill={rgb, 255:red, 0; green, 0; blue, 0 }  ][line width=0.75]      (0, 0) circle [x radius= 3.35, y radius= 3.35]   ;
\draw    (235,140) -- (270,140) ;
\draw [shift={(235,140)}, rotate = 15] [color={rgb, 255:red, 0; green, 0; blue, 0 }  ][line width=0.75]    (0,5.59) -- (0,-5.59)(-5.03,5.59) -- (-5.03,-5.59)   ;
\draw    (275,100) -- (310,100) ;
\draw [shift={(275,100)}, rotate = -15] [color={rgb, 255:red, 0; green, 0; blue, 0 }  ][line width=0.75]    (0,5.59) -- (0,-5.59)(-5.03,5.59) -- (-5.03,-5.59)   ;
\draw    (315,140) -- (350,140) ;
\draw [shift={(315,140)}, rotate = 15] [color={rgb, 255:red, 0; green, 0; blue, 0 }  ][line width=0.75]    (0,5.59) -- (0,-5.59)(-5.03,5.59) -- (-5.03,-5.59)   ;
\draw    (230,100) -- (270,100) ;
\draw [shift={(250,100)}, rotate = 0] [color={rgb, 255:red, 0; green, 0; blue, 0 }  ][fill={rgb, 255:red, 0; green, 0; blue, 0 }  ][line width=0.75]      (0, 0) circle [x radius= 3.35, y radius= 3.35]   ;
\draw    (310,100) -- (350,100) ;
\draw    (230,100) -- (230,140) ;
\draw    (190,100) -- (190,140) ;
\filldraw[fill=white] (190,100) circle (0pt) node[left]{$0$};
\draw    (270,100) -- (270,140) ;
\draw    (310,100) -- (310,140) ;
\draw    (350,100) -- (350,140) ;
\filldraw[fill=white] (350,140) circle (0pt) node[right]{$x$};
\end{tikzpicture}
}
\newcommand{\LAPiFour}{
\begin{tikzpicture}[x=0.75pt,y=0.75pt,yscale=-1,xscale=1]

\draw    (195,100) -- (230,100) ;
\draw [shift={(195,100)}, rotate = -15] [color={rgb, 255:red, 0; green, 0; blue, 0 }  ][line width=0.75]    (0,5.59) -- (0,-5.59)(-5.03,5.59) -- (-5.03,-5.59)   ;
\draw    (190,140) -- (230,140) ;
\draw [shift={(210,140)}, rotate = 0] [color={rgb, 255:red, 0; green, 0; blue, 0 }  ][fill={rgb, 255:red, 0; green, 0; blue, 0 }  ][line width=0.75]      (0, 0) circle [x radius= 3.35, y radius= 3.35]   ;
\draw    (270,140) -- (310,140) ;
\draw [shift={(290,140)}, rotate = 0] [color={rgb, 255:red, 0; green, 0; blue, 0 }  ][fill={rgb, 255:red, 0; green, 0; blue, 0 }  ][line width=0.75]      (0, 0) circle [x radius= 3.35, y radius= 3.35]   ;
\draw    (235,140) -- (270,140) ;
\draw [shift={(235,140)}, rotate = 15] [color={rgb, 255:red, 0; green, 0; blue, 0 }  ][line width=0.75]    (0,5.59) -- (0,-5.59)(-5.03,5.59) -- (-5.03,-5.59)   ;
\draw    (275,100) -- (310,100) ;
\draw [shift={(275,100)}, rotate = -15] [color={rgb, 255:red, 0; green, 0; blue, 0 }  ][line width=0.75]    (0,5.59) -- (0,-5.59)(-5.03,5.59) -- (-5.03,-5.59)   ;
\draw    (315,140) -- (350,140) ;
\draw [shift={(315,140)}, rotate = 15] [color={rgb, 255:red, 0; green, 0; blue, 0 }  ][line width=0.75]    (0,5.59) -- (0,-5.59)(-5.03,5.59) -- (-5.03,-5.59)   ;
\draw    (230,100) -- (270,100) ;
\draw [shift={(250,100)}, rotate = 0] [color={rgb, 255:red, 0; green, 0; blue, 0 }  ][fill={rgb, 255:red, 0; green, 0; blue, 0 }  ][line width=0.75]      (0, 0) circle [x radius= 3.35, y radius= 3.35]   ;
\draw    (310,100) -- (350,100) ;
\draw    (230,100) -- (230,140) ;
\draw    (190,100) -- (190,140) -- (170,120) -- cycle ;
\filldraw[fill=white] (170,120) circle (0pt) node[left]{$0$};
\draw    (270,100) -- (270,140) ;
\draw    (310,100) -- (310,140) ;
\draw    (350,100) -- (350,140) ;
\filldraw[fill=white] (350,140) circle (0pt) node[right]{$x$};
\end{tikzpicture}
}
\begin{document}
\maketitle

\begin{abstract}
We consider spread-out models of lattice trees and lattice animals
on $\mathbb Z^d$, for $d$ above the upper critical dimension $d_{\mathrm c}=8$.
We define a correlation length and prove that it diverges
as $(p_c-p)^{-1/4}$ at the critical point $p_c$.
Using this, we prove that
the near-critical two-point function
is bounded above by $C|x|^{-(d-2)}\exp[-c(p_c-p)^{1/4}|x|]$.
We apply the near-critical bound to study lattice trees and lattice
animals on a discrete $d$-dimensional torus (with $d > d_{\mathrm c}$) of volume $V$.
For $p_c-p$ of order $V^{-1/2}$, we prove that the torus susceptibility is
of order $V^{1/4}$, and that the torus two-point function behaves as
$|x|^{-(d-2)} + V^{-3/4}$ and thus has a plateau of size $V^{-3/4}$.
The proofs require significant extensions of previous results obtained using
the lace expansion.
\end{abstract}




\section{Introduction and results}

\subsection{Introduction}

Lattice trees and lattice animals are combinatorial  models for the
 critical behaviour of branched polymers \cite{Jans15}, and
 are both believed to belong to the same universality class.
 From a mathematical point of view, the
analysis of their scaling limits and critical exponents in low dimensions
is at least as difficult
as it is for the notoriously intractable self-avoiding walk, and the theory
remains to be discovered.  This is despite the fact that an alternate model
of branched polymers does have a remarkable exact solution
in dimensions two and three \cite{BI03a,PS81}.

In dimensions larger than the upper critical dimension $\dc=8$
(first predicted in \cite{LI79}),
there has been significant progress.  This includes asymptotics for the
critical point \cite{BBR10,GP23,KS24,MS13}, proof of existence of mean-field
critical exponents
$\gamma = \frac 12$, $\nu_2 = \frac 14$, $\eta=0$ \cite{HS90b,HS92c,HHS03},
and proof of
super-Brownian scaling limits \cite{DS98,Holm08,Holm16,HP20,CFHP23}.
The lace expansion developed in \cite{HS90b} has been the primary tool.

Our purpose here is to extend the high-dimensional results in three ways,
for spread-out lattice trees and lattice animals in dimensions $d >\dc =8$,
at and near the critical point $p_c$:
\begin{itemize}
\item
We obtain an asymptotic formula $\xi(p) \asymp
(p_c-p)^{-1/4}$ for a correlation length that is believed to coincide with the exponential rate of decay of the two-point
function.
\item
We obtain a uniform upper bound
$C|x|^{-(d-2)}\exp[-c(p_c-p)^{1/4}|x|]$
for the near-critical two-point function.
\item
We apply the above results to study lattice trees and lattice animals on
the $d$-dimensional discrete torus.
On the torus of volume $V$, we prove that for $p$ at a distance
$V^{-1/2}$ below $p_c$, the torus susceptibility has size $V^{1/4}$ and the
two-point function behaves as $|x|^{-(d-2)}+V^{-3/4}$.  The constant term $V^{-3/4}$
is the torus ``plateau.''
The torus results form part of the general theory of high-dimensional finite-size
scaling described in \cite{LPS25-universal}.
\end{itemize}
The proofs apply and extend results for lattice trees and lattice animals
obtained using
the lace expansion in \cite{HS90b,HHS03}, and adapt methods recently applied to self-avoiding
walk \cite{Slad23_wsaw,Liu24}, percolation \cite{HMS23}, and the Ising model \cite{LPS25-Ising}
above their upper critical dimensions.

\smallskip \noindent \textbf{Notation.}
We write $a \vee b = \max \{ a , b \}$ and $a \wedge b = \min \{ a , b \}$.
We write $f \lesssim g$ to mean that there is a $C>0$ such that $f \le Cg$,
write $f \asymp g$ to mean that $f \lesssim g$ and $g \lesssim f$,
and write $f\sim g$ to mean that $\lim f/g=1$.
To avoid dividing by zero, with $|x|$ the Euclidean norm of $x\in \R^d$, we define
\begin{equation}
\label{eq:xvee1}
	\xvee = \max\{|x| , 1\}.
\end{equation}
Note that \eqref{eq:xvee1} does not define a norm on $\R^d$.

\subsection{Lattice trees and lattice animals}

Let $\mathbb{G}=(\mathbb{V},\mathbb{E})$ be a graph with vertex set
$\mathbb{V}$ and edge set $\mathbb{E}$.
A lattice animal is a finite connected subgraph of $\mathbb{G}$,
and a lattice tree is an acyclic lattice animal.
We restrict attention to $\mathbb{G}$ with vertex set $\mathbb{V}=\Z^d$ and edge
set $\mathbb{E}=\{\{x,y\}: 0< \|y-x\|_\infty \le L\}$;
each vertex therefore has degree $\Omega = (2L+1)^d - 1$.
The parameter $L$ will be taken to be large.
This defines the \emph{spread-out} models of lattice trees and lattice animals,
which are conjectured to be in the same universality class as each other and
as the nearest-neighbour
models (for which $\mathbb{E}=\{\{x,y\}: \|y-x\|_1 =1\}$).

We consider \emph{bond} trees and animals, as opposed to \emph{site}.
The number of bonds (a.k.a.\ edges) in a tree $T$ or an animal $A$ is denoted $\abs T, \abs A$.
The \emph{two-point functions} for lattice trees and lattice animals
are defined, for $p\ge 0$ and $x \in \Z^d$, as the generating functions
\begin{equation}
\label{eq:2ptfcn}
	G_p^t(x) = \sum_{T \ni 0,x} \Bigl(\frac{p}{\Omega} \Big)^{|T|} , \qquad
	G_p^a(x) = \sum_{A \ni 0,x} \Bigl(\frac{p}{\Omega} \Big)^{|A|} ,
\end{equation}
where the sums are over all trees or animals containing $0$ and $x$.
The \emph{one-point functions} are defined by
\begin{equation} \label{eq:gpdef}
    g_p^t = G_p^t(0)
    = \sum_{T \ni 0} \Bigl(\frac{p}{\Omega} \Big)^{|T|} ,
    \qquad g_p^a = G_p^a(0)
    = \sum_{A \ni 0} \Bigl(\frac{p}{\Omega} \Big)^{|A|} .
\end{equation}
To avoid repeating separate formulas for lattice trees and lattice animals,
we often omit the superscripts $t,a$
when a formula applies to both models.
With this convention,
the \emph{susceptibility} and the \emph{correlation length of order $2$}
are defined as
\begin{equation} \label{eq:chi}
	\chi (p) = \sum_{x \in \Z^d} G_p (x) ,  \qquad	
	\xi_2 (p) = \biggl(\frac{1}{\chi  (p)}\sum_{x \in \Z^d}|x|^2 G_p (x)\bigg)^{1/2}.
\end{equation}

Let $t_n$ denote the number of $n$-bond lattice trees or $n$-bond lattice animals \emph{modulo translation}.
A simple concatenation argument shows that
$t_nt_m \le t_{n+m+1}$, from which it
follows that
$t_n^{1/n}$ approaches a positive limit as $n \to\infty$.  We define
the \emph{critical points} $p_c^t,p_c^a$  (depending on $d,L$) by
\begin{equation} \label{eq:tn_subadd}
    \lim_{n\to\infty} t_n^{1/n} =\frac{\Omega}{p_c}.
\end{equation}
Also, $t_{n} \le (\Omega/p_c)^{n}$ for all $n$ \cite[Section~2.1]{Jans15}.
The series
in \eqref{eq:2ptfcn}--\eqref{eq:chi} all
converge for positive $p$ less than $p_c^t,p_c^a$.
Both critical points
converge to $e\inv$
as $L\to \infty$ \cite{Penr94,KS24}.
A standard differential inequality \cite{BFG86,TH87} implies that, in all dimensions,
\begin{equation}
\label{eq:chilb-general-d}
    \chi(p) \gtrsim \frac{1}{(1-p/p_c)^{1/2}}.
\end{equation}

In dimensions $d > 8$ and when $L$ is sufficiently large,
much more has been proved using the lace expansion.
It is proved in
\cite{HS90b,HHS03} that
\begin{align}
\label{eq:gamma_half}
 \chi (p) &\asymp \frac{1}{(1-p/p_c)^{1/2}},
	\\
\label{eq:nu2_quarter}
\xi_2 (p) &\asymp \frac{1}{(1-p/p_c)^{1/4}}     ,
	\\
\label{eq:eta_zero}
 G\crit (x) & \sim \frac{B}{|x|^{d-2}},
\end{align}
with $B>0$.
In terms of the critical exponents $\gamma$ for the susceptibility, $\nu_2$
for the correlation length of order $2$, and $\eta$ for the critical two-point
function,
\eqref{eq:gamma_half}--\eqref{eq:eta_zero} are statements that
\begin{equation}
    \gamma = \frac 12, \qquad \nu_2 = \frac 14, \qquad \eta =0.
\end{equation}
In addition, it is proved in \cite{HS90b} that the one-point functions obey
$    1 \le g_{p_c}  \le 4 $.
For lattice trees,
the upper and lower bounds in \eqref{eq:gamma_half}--\eqref{eq:nu2_quarter}
are improved in \cite{HS92c}
to asymptotic formulas, which are used to prove that
$t_n^t \sim \const (\Omega/p_c)^{n}n^{-5/2}$.  An asymptotic formula for
the mean radius of gyration of $n$-bond lattice trees is also proved in \cite{HS92c}.

\subsection{The correlation length}

For $m\ge 0$,
we define the \emph{tilted susceptibility} by
\begin{equation}
\label{eq:chimdef}
    \chim(p) = \sum_{x\in \Z^d}G_p(x)e^{mx_1},
\end{equation}
and we define the \emph{correlation length} $\xi(p)$ and \emph{mass} $m(p)$ by
\begin{align}
\label{eq:mtilt}
    \xi(p)^{-1} = \mp = \sup \Bigl \{ m \ge 0 :
	 \chim(p) < \infty 	\Bigr \} .
\end{align}
We believe, but do not prove, that
the definition \eqref{eq:mtilt} agrees with the more common definition
\begin{equation}
\label{eq:mdef}
    \tilde\xi(p)^{-1} = \tilde m(p) =   \liminf_{n \to \infty}\frac{-\log G_p(ne_1)}{n},
\end{equation}
where $e_1 = (1, 0, \dots, 0)\in \Zd$.
It follows from \eqref{eq:mtilt} and $\Zd$-symmetry that
\begin{align} \label{eq:G_exp_decay}
G_p(x) \le \chim(p) e^{-m \norm x_\infty}
	\qquad (x\in \Zd)
\end{align}
whenever $0 \le m < \mp$, and therefore $\mp \le \tilde m(p)$.

For models that satisfy FKG or GKS type inequalities,
e.g., for percolation and the Ising model,
$\tilde m(p)$ provides an off-axis exponential bound on $G_p(x)$,
from which $\mp = \tilde m(p)$ can be deduced.
Also, the fact that $\tilde m(p) \to 0$ as $p \to p_c$ can sometimes
be deduced from the off-axis bound (e.g., \cite[Corollary~A.4(b)]{Hara90} for percolation).
However,
we do not know a proof of any of these facts for lattice trees and animals,
and we work with $\mp$ instead.

To see that $\mp > 0$ when $p < p_c$,
we recall that $t_n$ denotes
the number of $n$-bond lattice trees/animals modulo translation.
Since an $n$-bond tree contains exactly $n+1$ vertices
and an $n$-bond animal contains at most $n+1$ vertices,
with $\TA$ denoting either tree or animal,
the susceptibilities obey
\begin{equation}
\label{eq:chi-tn}
    \chi(p)
    =
    \sum_{x\in\Z^d} \sum_{\TA \ni 0,x} \Big(\frac{p}{\Omega} \Big)^{|\TA|}
    \le
    \sum_{n=0}^\infty t_n(n+1)^2 \Big(\frac{p}{\Omega} \Big)^{n}.
\end{equation}
Similarly, since
$e^{mx_1} \le e^{mnL}$ if $x$ is in an $n$-bond tree or animal,
and since $t_n \le (\Omega/p_c)^{n}$, we have
\begin{equation}
\label{eq:chi-tn-2}
    \chim(p)
    \le
    \sum_{n=0}^\infty t_n(n+1)^2 \Big(\frac{p e^{mL}}{\Omega} \Big)^{n}
    \le
    \sum_{n=0}^\infty (n+1)^2 \Big(\frac{p e^{mL}}{p_c} \Big)^{n} ,
\end{equation}
which is finite if $p e^{mL} < p_c$.  It follows that
$m(p) \ge L^{-1} \log(p_c / p) > 0$ if $p<p_c$.

\subsection{Results}

\subsubsection{Results for the infinite lattice}

Our first result is the following theorem which shows that,
above the upper critical dimension,
the mass $m(p)$ of \eqref{eq:mtilt}
does go to zero at the critical point, and it does so
in a precise way.
Theorem~\ref{thm:mass} is a statement that the critical exponent $\nu$
is given by $\nu =\frac 14$.

\begin{theorem}
\label{thm:mass}
Let $d>8$
and $L \ge L_0$ with $L_0$ sufficiently large.
For lattice trees, there is a constant $a_L>0$ such that
$m(p) \sim a_L (p_c-p)^{1/4}$.
For lattice animals, there are $L$-dependent constants such that
$m(p)\asymp (p_c-p)^{1/4}$.
\end{theorem}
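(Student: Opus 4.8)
The plan is to read $m(p)$ off the singularity, in the variable $m$, of the tilted susceptibility $\chim(p)$ of \eqref{eq:chimdef}, using the lace expansion of \cite{HS90b,HHS03} continued to imaginary momenta. By $\Zd$-symmetry, $\chim(p) = \sum_x G_p(x)\cosh(mx_1)$, so $m \mapsto \chim(p)$ is non-decreasing on $[0,\infty)$ and stays infinite once it is infinite; hence by \eqref{eq:mtilt}, $m(p)$ is exactly the right endpoint of the interval on which $\chim(p) < \infty$. The lace expansion supplies a coefficient $\Pi_p \colon \Zd \to \R$ and, for real $k$ and $p < p_c$, an identity of the form $\hat G_p(k) = \hat\Phi_p(k)/[1 - \hat\Psi_p(k)]$, where $\hat\Phi_p(k)$ and $\hat\Psi_p(k)$ are built from $g_p$, $\hat\Pi_p(k)$ and $\hat D(k) = \sum_y D(y)e^{ik\cdot y}$ (with $D$ the uniform probability distribution on the steps $\{y : 0 < \|y\|_\infty \le L\}$), with $\hat\Psi_p(k)$ carrying the factor $\hat D(k)$. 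Since $\hat G_p(0) = \chi(p)$ and $\hat D(0) = 1$, this yields the robust relation $1 - \hat\Psi_p(0) = \hat\Phi_p(0)/\chi(p)$.

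The principal step is to extend the convergence of the diagrammatic series defining $\Pi_p$, and the validity of this identity, from real $k$ to imaginary momenta $k = ime_1$ with $m \ge 0$ small, uniformly for $p \le p_c$. Because the lace-expansion diagrams are assembled from two-point functions, inserting the weight $e^{mx_1}$ forces one to control the tilted two-point function $x \mapsto G_p(x)e^{mx_1}$ and the corresponding tilted diagrams; this is where the real work lies. I would carry it out by a bootstrap argument modelled on \cite{Slad23_wsaw,Liu24,HMS23,LPS25-Ising}: introduce suitable weighted norms of the tilted two-point function, seed the bootstrap at $m = 0$ using the estimates of \cite{HS90b,HHS03} at and below $p_c$ (which also provide the a priori control of $G_p$ through \eqref{eq:eta_zero}, and of $\sum_x |x|^2 |\Pi_{p_c}(x)|$ through the analysis underlying \eqref{eq:nu2_quarter}), and show that the set of admissible $m$ is relatively open and closed, hence an interval $[0, \bar m(p))$ on which $\chim(p) = \hat\Phi_p(ime_1)/[1 - \hat\Psi_p(ime_1)]$ holds with a convergent, positive denominator. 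Since $\hat\Phi_p(ime_1)$ stays bounded away from $0$ for $m$ small (being $O(m^2)$-close to $\hat\Phi_p(0) > 0$), the denominator must vanish as $m \uparrow \bar m(p)$, so $\chim(p) \to \infty$; combined with the monotonicity above, this gives $\bar m(p) = m(p)$. Thus $m(p)$ is the first zero in $m$ of $m \mapsto 1 - \hat\Psi_p(ime_1)$.

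It remains to expand the zero condition $1 - \hat\Psi_p(im(p)e_1) = 0$ near $m = 0$, uniformly as $p \uparrow p_c$. For the constant term, $1 - \hat\Psi_p(0) = \hat\Phi_p(0)/\chi(p)$ and \eqref{eq:gamma_half} give $1 - \hat\Psi_p(0) \asymp (p_c - p)^{1/2}$; for lattice trees the sharper $\chi(p) \sim A(p_c-p)^{-1/2}$ of \cite{HS92c} together with $\hat\Phi_p(0) \to \hat\Phi_{p_c}(0) > 0$ (lace-expansion bounds and continuity in $p$) give $1 - \hat\Psi_p(0) \sim c_1 (p_c - p)^{1/2}$ with $c_1 = \hat\Phi_{p_c}(0)/A > 0$. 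For the $m$-dependence, $\hat D(ime_1) = \sum_y D(y)\cosh(my_1) = 1 + \tfrac12 \sigma_1^2 m^2 + O(L^4 m^4)$ with $\sigma_1^2 = \sum_y y_1^2 D(y) \asymp L^2$, while the remaining $m$-dependence of $\hat\Psi_p(ime_1)$ is $O(m^2)$ with a constant governed by $\sum_x x_1^2 \Pi_p(x)$; since for $L$ large the $\hat D$ contribution dominates, one obtains, as $(p,m) \to (p_c, 0)$,
\[ \hat\Psi_p(ime_1) - \hat\Psi_p(0) = c_2(p)\, m^2 + o(m^2), \qquad c_2(p) \to c_2 := \tfrac12\, \partial_m^2 \hat\Psi_{p_c}(ime_1)\big|_{m=0} > 0, \qquad c_2 \asymp L^2. \]
Substituting both expansions into the zero condition yields $c_1 (p_c-p)^{1/2}(1 + o(1)) = c_2\, m(p)^2(1 + o(1))$, hence $m(p) \sim a_L (p_c-p)^{1/4}$ with $a_L := (c_1/c_2)^{1/2}$, proving the theorem for lattice trees. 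For lattice animals, only $\chi(p) \asymp (p_c-p)^{-1/2}$ is available from \cite{HHS03}, giving $1 - \hat\Psi_p(0) \asymp (p_c-p)^{1/2}$ and hence $m(p) \asymp (p_c-p)^{1/4}$.

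The main obstacle is the complex-momentum extension of the lace expansion in the second step: one must re-derive the Hara--Slade diagrammatic estimates with the tilt $e^{mx_1}$ inserted, control the tilted two-point function via a bootstrap, and --- crucially --- show that the interval of validity in $m$ reaches up to the first zero of $1 - \hat\Psi_p(ime_1)$ rather than terminating earlier. That this zero is of order $L^{-1}(p_c-p)^{1/4}$, hence small, is precisely what makes the argument self-consistent.
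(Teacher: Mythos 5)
Your proposal follows essentially the same route as the paper: the lace expansion identity $\hat G_p = \hat h_p/\hat F_p$, evaluation at imaginary momentum (equivalently the exponential tilt), identification of $m(p)$ with the zero in $m$ of $\hat F_p^{(m)}(0)$, a bootstrap to control the tilted diagrams, and a Taylor expansion of $\hat F_p(0)-\hat F_p^{(m(p))}(0)$ in powers of $m(p)$ combined with $\chi(p)\asymp(p_c-p)^{-1/2}$. The computational skeleton and the final expansion are correct. However, the two steps you assert rather than prove are exactly the ones requiring separate, non-trivial arguments, and as written they are genuine gaps.

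First, you claim that ``the denominator must vanish as $m\uparrow\bar m(p)$,'' i.e.\ that $\hat F_p^{(m(p))}(0)=0$. This does not follow from openness/closedness of the bootstrap interval: the supremum in \eqref{eq:mtilt} could a priori be attained, with $\chi^{(m(p))}(p)<\infty$ and $\hat F_p^{(m(p))}(0)>0$, and one cannot extend the identity past $m(p)$ to derive a contradiction because the Fourier identity itself presupposes $\chi^{(m)}(p)<\infty$. The paper breaks this circle with an Aizenman--Simon type inequality for trees and animals (Lemma~\ref{lem:AS}, via the BK-type inequality of \cite{HS90b}): finiteness of $\chi^{(m(p))}(p)$ would propagate to $\chi^{(m(p)+\eps)}(p)<\infty$ for some $\eps>0$, contradicting the definition of $m(p)$ (Corollary~\ref{cor:mtilt}). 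Second, your expansion of the zero condition near $m=0$ is usable only if one knows a priori that $m(p)\to 0$ as $p\to p_c^-$; your appeal to ``self-consistency'' is circular, since the bootstrap only controls tilts with $mL$ small, and if $m(p)$ stayed bounded away from $0$ the controlled interval would terminate before reaching the zero of the denominator, which would then never be detected. The paper devotes Section~\ref{sec:chim} and Proposition~\ref{prop:mp} to this point: assuming $m(p)\searrow m_0>0$, the bootstrap gives a small tilted square diagram, which feeds a tilted differential inequality (Lemma~\ref{lem:chim_diff}) yielding $\chi^{(\mu)}(p)\lesssim(p_c-p)^{-1/2}$ for a fixed small $\mu>0$; this contradicts the super-polynomial lower bound $\chi^{(\mu)}(p)\gtrsim\chi(p)^s$ valid for every $s>1$ (Lemma~\ref{lem:chim}). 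Both ingredients are essential and absent from your outline.
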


The proof of Theorem~\ref{thm:mass} employs a technique that was introduced
in the context of percolation \cite{Hara90} and then later adapted
to self-avoiding walk \cite{HS92a}.  An exponential tilt of the two-point function
is used to prove that $\chi(p)$ is asymptotically equivalent to $\mp^{-2}$.
Given this asymptotic relation, Theorem~\ref{thm:mass} follows from \eqref{eq:gamma_half} and its improvement to an asymptotic relation for lattice trees in \cite{HS92c}.
If \eqref{eq:gamma_half} is also improved to an asymptotic relation for lattice animals, our proof will yield an asymptotic relation also.

The next theorem extends
the critical behaviour of $G\crit$ in
\eqref{eq:eta_zero} to a near-critical upper
bound for the two-point function.

\begin{theorem}
\label{thm:near_critical}
Let $d>8$
and $L \ge L_0$ with $L_0$ sufficiently large.
For both lattice trees and lattice animals,
there are constants $c \in (0,1)$ (independent of $L$)
and $C_L, \delta_L > 0$ such that
\begin{equation} \label{eq:near_critical}
    G_p(x) \le \frac{C_L}{|x|^{d-2}} e^{-cm(p)|x|}
\end{equation}
for all $p \in [p_c - \delta_L, p_c)$ and for all $x \in \Z^d$.
\end{theorem}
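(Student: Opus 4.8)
The plan is to combine the critical $x$-space asymptotics \eqref{eq:eta_zero} with the exponential decay built into the mass, interpolating between the two regimes according to whether $|x|$ is small or large compared to $m(p)^{-1}$. The natural mechanism is a lace-expansion analysis of the \emph{tilted} two-point function. Let me write $G_p^{(m)}(x) = G_p(x) e^{m x_1}$ for the one-sided tilt, or more symmetrically work with $G_p(x) e^{m \norm{x}_\infty}$ via \eqref{eq:G_exp_decay}; the key analytic object is the tilted susceptibility $\chi^{(m)}(p)$ of \eqref{eq:chimdef}, which is finite precisely for $m < m(p)$. The lace expansion for $G_p$ produces a convolution equation $G_p = g_p \delta + (\text{something}) * D * G_p + \Pi_p * (\ldots)$ with an expansion coefficient $\Pi_p$; tilting by $e^{mx_1}$ multiplies each step and yields an analogous equation for $G_p^{(m)}$ with tilted coefficients $\Pi_p^{(m)}$, $D^{(m)}$, etc. Since the diagrammatic estimates on $\Pi_p$ from \cite{HS90b,HHS03} control sums weighted by growing factors, for $m$ a small multiple of $m(p)$ — say $m = c\, m(p)$ with $c<1$ fixed — these tilted diagrams remain finite and small (uniformly in $L$ large), because the extra exponential cost at each vertex is absorbed by the super-polynomial decay of the building blocks once $d>8$. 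This is exactly the ``significant extension of previous results'' flagged in the abstract.

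The concrete steps I would carry out are as follows. First, establish a \emph{tilted infrared bound}: show that the Fourier transform of the tilted two-point function satisfies $\hat G_p^{(m)}(k) \asymp (\hat\nabla_m(k) + \text{const}\cdot(1 - p/p_c))^{-1}$-type bounds, or more simply that $\chi^{(m)}(p) \asymp m(p)^{-2}$ when $m = c\,m(p)$ — the latter is essentially the content used in the proof of Theorem~\ref{thm:mass} and follows from the same tilt argument. Second, run the lace expansion in $x$-space for $G_p^{(m)}$ at $m = c\,m(p)$, controlling $\Pi_p^{(m)}$ and deducing, via the now-standard deconvolution / Fourier argument that gave \eqref{eq:eta_zero} (the method of \cite{Hara90} for percolation as used in \cite{HHS03}), that
\begin{equation}
    G_p^{(m)}(x) \le \frac{C_L}{\xvee^{d-2}} \chi^{(m)}(p)^{?}
\end{equation}
— more precisely, that $G_p^{(m)}(x) \lesssim \xvee^{-(d-2)}$ uniformly for $p \in [p_c - \delta_L, p_c)$, where the uniformity comes from the fact that the tilted diagrams are bounded by their critical ($p = p_c$, $m=0$) counterparts up to a constant depending only on the fixed ratio $c$. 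Third, undo the tilt: since $G_p(x) e^{c\,m(p) \norm{x}_\infty} \le G_p^{(m)}(x) \cdot (\text{bounded factor})$ — here one uses that along any coordinate direction the one-sided tilt dominates, combined with $\Zd$-symmetry exactly as in the derivation of \eqref{eq:G_exp_decay} — we obtain $G_p(x) \lesssim \xvee^{-(d-2)} e^{-c\,m(p)\norm{x}_\infty}$, and then $\norm{x}_\infty \asymp |x|$ converts this to the stated bound with a possibly smaller constant $c$.

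The main obstacle I expect is the second step: establishing that the lace-expansion machinery of \cite{HS90b,HHS03}, which was set up at the critical point and for the untilted model, survives the exponential tilt with constants uniform over $p \in [p_c-\delta_L, p_c)$ and uniform in $L$. Two points need care. (i) One must verify that the diagrammatic bounds on $\Pi_p^{(m)}$ — sums of products of tilted two-point functions over lattice-tree/animal skeleton diagrams — converge and are small when $m = c\,m(p)$; the danger is that a diagram with $k$ internal lines picks up a factor like $e^{m \cdot (\text{total displacement})}$ that could blow up, but total displacement in a diagram is controlled by the sum of the individual line displacements, and each tilted line $G_p(y)e^{c\,m(p)|y|}$ is still summable with a bound uniform in $p$ near $p_c$ provided we have already proven the tilted infrared bound in step one — so there is a mild bootstrap between steps one and two that must be organized carefully, presumably by first proving everything for $m$ in a range $[0, c\,m(p)]$ via a continuity/sprinkling argument in $m$. (ii) The deconvolution argument extracting $|x|^{-(d-2)}$ decay requires the tilted ``free propagator'' $1/(1-\hat D^{(m)})$-analogue to retain the same small-$k$ singularity structure as in the untilted case; since tilting by a subcritical amount $m < m(p)$ only shifts the relevant singularity, this should go through, but the bookkeeping of how the $|x|^{-(d-2)}$ constant $C_L$ depends on $m(p)$ (and the verification that it stays bounded as $p \to p_c$, i.e.\ as $m(p) \to 0$) is the delicate quantitative heart of the argument, and is presumably where the restriction to $p \ge p_c - \delta_L$ enters.
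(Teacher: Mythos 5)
Your outline follows essentially the same route as the paper: an $x$-space bootstrap in the tilt parameter $m$ up to a fixed fraction of $m(p)$, tilted diagrammatic estimates for $\Pi_p^{(m)}$ under the bootstrap hypothesis, and a deconvolution that isolates a tilted random-walk propagator (with the comparison parameters chosen so that the error term has vanishing zeroth and second moments) before undoing the tilt by $\Zd$-symmetry. The obstacles you flag---organizing the bootstrap in $m$, and verifying that the tilted free propagator retains the $|x|^{-(d-2)}$ behaviour uniformly as $m(p)\to 0$---are precisely the ones the paper resolves via Propositions~\ref{prop:boot} and~\ref{prop:so-nn} and the massive infrared bound of Lemma~\ref{lem:massive_infrared}.
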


For $p<p_c$, the two-point function is expected to exhibit Ornstein--Zernike
decay of the form $m(p)^{(d-3)/2} |x|_p^{-(d-1)/2} e^{-m(p)|x|_p}$, with
a $p$-dependent norm (\emph{not} the $\ell_p$ norm) that approaches
the $\ell_2$ norm as $p \to p_c$ \cite{MS22}.
This would imply that \eqref{eq:near_critical} cannot remain true
 for all $x$, with fixed $p$, unless $c<1$.

Analogues of
Theorem~\ref{thm:near_critical}
have been proved for self-avoiding walk for $d>4$
\cite{Slad23_wsaw,Liu24,DP25a}, for the Ising model for $d>4$ \cite{DP24},
and for percolation for $d>6$ \cite{HMS23,DP25b,Pani24_thesis}.
The proof of Theorem~\ref{thm:near_critical}
follows the method of \cite{Slad23_wsaw} and uses
an analogous theorem for the two-point function of the spread-out random walk,
which we prove using the deconvolution strategy of \cite{LS24b}.

\subsubsection{Results for the torus}
\label{sec:torus-results}

Fix an $L \ge L_0$.
For an integer $r \ge 2L+1$, we define $\Lambda_r$ to be the discrete
box $[ - \frac r 2, \frac r 2 )^d \cap \Z^d$ of volume $V = r^d$.
We write $\T_r^d = (\Z/r\Z)^d$ for the $d$-dimensional discrete torus of period $r$,
with the spread-out edge set inherited from projecting the $\Zd$ edge set
$\mathbb{E}=\{\{x,y\}: 0< \|y-x\|_\infty \le L\}$ to the torus.
We identify a point $x\in\T_r^d$ with its representative in
$\Lambda_r$.
We are interested in estimates that are uniform in large $r$.
Constants throughout Section~\ref{sec:torus-results} may depend on $L$.

The theory of high-dimensional finite-size scaling has received considerable
attention in the physics literature, including the important role of boundary conditions.
A small sample of relevant papers is \cite{BKW12,WY14,GEZGD17,BEHK22,DGGZ22,FMPPS23,LPS25-universal}.
Using Theorems~\ref{thm:mass}--\ref{thm:near_critical}, the following theorem
and its corollary
establish the relevant scaling for (spread-out) lattice trees and lattice animals
in dimensions $d>\dc=8$.
We write $G_p^\T(x)$ for the two-point function
for lattice trees or lattice animals on the torus $\T_r^d$,
defined analogously to \eqref{eq:2ptfcn}.
Let $\delta_L$ be the constant of Theorem~\ref{thm:near_critical}.

\begin{theorem}
\label{thm:torus}
Let $d>8$ and $L \ge L_0$.
For both lattice trees and lattice animals on $\T_r^d$,
there are constants $c,C>0$ such that
for all $p \in [p_c - \delta_L, p_c)$ and all $x\in \T_r^d$,
\begin{align}
\label{eq:GTub}
G_p \supT (x)
	&\le G_p(x) + \frac{C}{m(p)^2 r^d},
	\\
\label{eq:GTlb}
G_p \supT(x)
	&\ge \bigg(1- \frac{C}{m(p)^8r^d} \biggr)
	\biggl( G_p(x) + \frac {c} {m(p)^2 r^d} \bigg)
\qquad (p_c - p \le c r^{-4}).
\end{align}
\end{theorem}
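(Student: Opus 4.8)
The plan is to express the torus two-point function in terms of a random walk on the torus via the lace expansion, and then to transfer the near-critical $\Z^d$ bound of Theorem~\ref{thm:near_critical} to the torus. More precisely, I would first recall (from \cite{HS90b,HHS03} and the torus adaptations in \cite{HMS23,LPS25-universal}) that both $G_p$ on $\Z^d$ and $G_p^\T$ on $\T_r^d$ admit lace expansions of the form $G = \delta + (\Pi + \tfrac{p}{\Omega}D)*G$ with the same diagrammatic structure, where the torus expansion coefficient $\Pi^\T$ differs from the $\Z^d$ coefficient $\Pi$ only through wrap-around corrections that are exponentially small in $r$. One then solves the torus recursion in Fourier space on $\T_r^d$: writing $\hat G_p^\T(k) = \hat g_p^\T(k)/(1 - \hat F_p^\T(k))$ with $\hat F_p^\T$ the full step-distribution kernel, and inverting, $G_p^\T(x) = \frac 1V \sum_{k} e^{ik\cdot x} \hat G_p^\T(k)$. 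The $k=0$ term contributes the plateau $\chi_p^\T/V \asymp (m(p)^2 V)^{-1}$, while the $k\neq 0$ sum should reproduce $G_p(x)$ up to corrections.

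The key technical step is the comparison between the $k\neq 0$ part of the torus sum and the $\Z^d$ two-point function. I would use the identity $G_p^\T(x) - \frac{\chi_p^\T}{V} = \frac 1V\sum_{k\neq 0} e^{ik\cdot x}\hat G_p^\T(k)$ and compare term by term with the Riemann-sum approximation of $G_p(x) = \int_{[-\pi,\pi]^d}\frac{d^dk}{(2\pi)^d} e^{ik\cdot x}\hat G_p(k)$. The near-critical bound $G_p(x)\le C_L|x|^{-(d-2)}e^{-cm(p)|x|}$ from Theorem~\ref{thm:near_critical}, together with the infrared bound $\hat G_p(k)\asymp (|k|^2 + m(p)^2)^{-1}$ and its $\Z^d$-analogue for the discrete gradient, controls both the difference between the Riemann sum and the integral and the difference between $\hat G_p^\T$ and $\hat G_p$ on nonzero modes (the latter governed by the exponentially small discrepancy between $\Pi^\T$ and $\Pi$). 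This yields the upper bound \eqref{eq:GTub} directly. For the lower bound \eqref{eq:GTlb}, one needs the two-sided infrared estimate and positivity of $\hat g_p^\T(k)$; the restriction $p_c - p \le cr^{-4}$ ensures $m(p)^4 \lesssim r^{-4}$ so that the mass is small relative to the inverse torus diameter, placing the correlation length above the torus scale, and the factor $1 - C/(m(p)^8 r^d)$ absorbs the accumulated error terms (note $m(p)^8 r^d \asymp (p_c-p)^2 V$ is large in this regime).

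The main obstacle I anticipate is establishing the torus lace expansion with quantitative control: one must prove that the torus expansion converges for $p$ in the near-critical window (not just deep in the subcritical regime), that the torus coefficient $\Pi_p^\T$ satisfies bounds matching those of $\Pi_p$ on $\Z^d$ uniformly in $r$, and that $\Pi_p^\T - \Pi_p$ (suitably interpreted after identifying torus points with box representatives) is exponentially small in $r$. This is exactly where Theorems~\ref{thm:mass} and~\ref{thm:near_critical} enter as inputs: the mass lower bound $m(p)\asymp (p_c-p)^{1/4}$ controls how close to $p_c$ we may go while keeping diagrams summable, and the exponential decay in Theorem~\ref{thm:near_critical} is what makes the wrap-around corrections of order $e^{-cm(p)r}$, which in the regime $p_c - p\le cr^{-4}$ is of order $e^{-c'r^{3/4}}$ and hence negligible against the polynomial error terms. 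The remaining work — bounding the Riemann-sum error, handling the discrete versus continuous Fourier inversion, and collecting the error terms into the stated form — is technical but follows the template of \cite{HMS23,LPS25-universal}.
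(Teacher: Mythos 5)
Your proposal takes a genuinely different route from the paper, which never sets up a lace expansion on the torus. The paper compares the torus model to the $\Z^d$ model directly in $x$-space: each torus polymer containing $0$ is injectively \emph{lifted} (unwrapped) to a $\Z^d$ polymer, giving $G_p^{\mathbb T}(x)\le \sum_{x'\equiv x}G_p(x')=G_p(x)+\psi_p(x)$, and an inclusion--exclusion step (with the BK-type inequality of \cite{HS90b}) gives the matching lower bound $G_p^{\mathbb T}(x)\ge G_p(x)+\psi_p(x)-E_p(x)$, where $E_p$ counts $\Z^d$ polymers containing a pair of distinct torus-equivalent points. Theorem~\ref{thm:near_critical} is used only to bound sums such as $\sum_{u\ne 0}G_p^{*k}(x+ru)$ from above, and a differential-inequality lower bound on $G_p(x)$ yields $\psi_p(x)\gtrsim m(p)^{-2}r^{-d}$, which is the plateau. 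This avoids both the torus lace expansion and any Fourier Riemann-sum comparison.

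As written, your argument has a genuine gap at its central mechanism. You claim the wrap-around corrections are of order $e^{-cm(p)r}$ and that this equals $e^{-c'r^{3/4}}$ when $p_c-p\le cr^{-4}$. But by Theorem~\ref{thm:mass}, $m(p)\asymp(p_c-p)^{1/4}\lesssim r^{-1}$ in that regime, so $m(p)r\lesssim 1$ and $e^{-cm(p)r}$ is bounded below by a positive constant; in the window $p_c-p\asymp V^{-1/2}$ of Corollary~\ref{cor:plateau} one even has $m(p)r\asymp r^{1-d/8}\to 0$. The correlation length is at least the torus diameter throughout the relevant regime --- that is exactly why a plateau appears --- so exponential decay provides no smallness at scale $r$, and all finite-volume effects are only polynomially small and must be tracked to the precision $m(p)^{-2}V^{-1}$ of the plateau itself. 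Beyond this, the torus lace expansion for lattice trees and animals, with convergence and bounds on $\Pi_p^{\mathbb T}$ uniform in $r$ up to the critical window, and the identification of $\Pi_p^{\mathbb T}-\Pi_p$ as a controlled wrap-around term, are assumed rather than established (and the relation is not simply a torus sum of the $\Z^d$ coefficient, since torus animals can carry non-contractible cycles); and the comparison of $\frac 1V\sum_{k\ne 0}e^{ik\cdot x}\hat G_p^{\mathbb T}(k)$ with $G_p(x)$ near the infrared singularity is itself a plateau-sized quantity, so the ``Riemann-sum error'' you defer is not a technicality but the heart of the matter.
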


We will show in Lemma~\ref{lem:Glb}, as a straightforward consequence of \eqref{eq:eta_zero}
and a standard differential inequality, that there is a $c_0>0$ such that
\begin{equation}
\label{eq:Glb-intro}
    G_p(x) \ge \frac{c_0}{ \nnnorm x^{d-2}}
    \qquad
    (|x|\le c_0(p_c-p)^{-1/4} ) .
\end{equation}
Using this, we have the following corollary of Theorem~\ref{thm:torus},
expressed in terms of the volume $V=r^d$.

\begin{corollary}
\label{cor:plateau}
Let $d>8$ and $L \ge L_0$.
There exists $C_1 > 0$ such that for $r$ sufficiently large
and $p_* = p_c - C_1 V^{-1/2}$,
\begin{equation} \label{eq:Gplateau}
G_{p_*} \supT(x) \asymp \frac{1}{\nnnorm x^{d-2}} + \frac{1}{V^{3/4}}
	\qquad (x\in \T_r^d)
\end{equation}
and
\begin{equation}
    \chi^{\T}(p_*) \asymp V^{1/4}.
\end{equation}
\end{corollary}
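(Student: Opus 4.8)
The plan is to apply Theorem~\ref{thm:torus} with a judicious choice of $p_*$ and then optimize the error terms. The key observation is that the torus bounds \eqref{eq:GTub}--\eqref{eq:GTlb} are governed by the single scale $m(p)$, and by Theorem~\ref{thm:mass} we have $m(p) \asymp (p_c-p)^{1/4}$; so choosing $p_c - p_*$ of order $V^{-1/2} = r^{-d/2}$ does \emph{not} directly produce a clean scale --- we actually want $p_c-p_*$ of order $r^{-4}$ so that $m(p_*) \asymp r^{-1}$, giving $m(p_*)^{-2} r^{-d} \asymp r^{-(d-2)} \asymp r^{2-d}$, matching the plateau term $V^{-3/4}$ precisely when $d$ is such that $r^{2-d}$... wait, let me recompute: we want the constant term $m(p)^{-2} r^{-d}$ in \eqref{eq:GTub} to be of order $V^{-3/4} = r^{-3d/4}$, i.e. $m(p)^{-2} \asymp r^{d/4}$, i.e. $m(p) \asymp r^{-d/8}$, i.e. $(p_c-p)^{1/4} \asymp r^{-d/8}$, i.e. $p_c - p \asymp r^{-d/2} = V^{-1/2}$. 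Good --- so the choice $p_* = p_c - C_1 V^{-1/2}$ in the statement is exactly the one that balances things, and with this choice $m(p_*) \asymp r^{-d/8}$ so that $m(p_*)^{-2} r^{-d} \asymp r^{-3d/4} = V^{-3/4}$. First I would fix $C_1$ large enough (depending only on $d,L$) that the side condition $p_c - p_* \le c r^{-4}$ in \eqref{eq:GTlb} holds: since $p_c - p_* = C_1 r^{-d/2}$ and $d > 8$, we have $r^{-d/2} = o(r^{-4})$, so this condition is satisfied for all large $r$ regardless of $C_1$, and moreover $p_* \in [p_c - \delta_L, p_c)$ for large $r$.

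Second, I would plug $p = p_*$ into Theorem~\ref{thm:torus}. For the upper bound, \eqref{eq:GTub} combined with the critical two-point function bound $G_{p_*}(x) \le G\crit(x) \lesssim \nnnorm{x}^{-(d-2)}$ from \eqref{eq:eta_zero} (monotonicity in $p$ gives $G_{p_*} \le G\crit$) yields
\begin{equation*}
    G_{p_*}^\T(x) \lesssim \frac{1}{\nnnorm{x}^{d-2}} + \frac{1}{m(p_*)^2 r^d} \asymp \frac{1}{\nnnorm{x}^{d-2}} + \frac{1}{V^{3/4}}.
\end{equation*}
For the lower bound, \eqref{eq:GTlb} requires two things: that the prefactor $1 - C m(p_*)^{-8} r^{-d}$ is bounded below by a positive constant, and a lower bound on $G_{p_*}(x)$. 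For the prefactor, $m(p_*)^{-8} r^{-d} \asymp r^{d} \cdot r^{-d} = 1$ --- hmm, that is order $1$, not small. So I need $C_1$ large: $m(p_*)^8 \asymp (p_c-p_*)^2 = C_1^2 r^{-d}$, hence $m(p_*)^{-8} r^{-d} \asymp C_1^{-2}$, which is $\le \frac12$ once $C_1$ is chosen large enough. This fixes the choice of $C_1$. Then the prefactor is $\ge \frac12$, and \eqref{eq:GTlb} gives $G_{p_*}^\T(x) \gtrsim G_{p_*}(x) + m(p_*)^{-2} r^{-d} \gtrsim G_{p_*}(x) + V^{-3/4}$.

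Third, to finish the lower bound I need $G_{p_*}(x) \gtrsim \nnnorm{x}^{-(d-2)}$ on the whole torus, i.e. for all $x \in \Lambda_r$. This is where \eqref{eq:Glb-intro} enters: it gives $G_p(x) \ge c_0 \nnnorm{x}^{-(d-2)}$ for $|x| \le c_0 (p_c-p)^{-1/4}$. With $p = p_*$ we have $(p_c - p_*)^{-1/4} = (C_1 r^{-d/2})^{-1/4} \asymp r^{d/8}$, so the bound is valid for $|x| \lesssim r^{d/8}$. But the torus has diameter of order $r$, and since $d > 8$ we have $r^{d/8} > r$ for large $r$, so in fact \eqref{eq:Glb-intro} covers \emph{every} $x \in \Lambda_r$ (whose representative has $|x| \le \sqrt d\, r/2$). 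Hence $G_{p_*}(x) \gtrsim \nnnorm{x}^{-(d-2)}$ throughout the torus, and combining with the previous step gives the matching lower bound, completing \eqref{eq:Gplateau}.

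Finally, for the susceptibility, I would sum \eqref{eq:Gplateau} over $x \in \T_r^d$. The term $\nnnorm{x}^{-(d-2)}$ summed over $|x| \le r$ contributes $\sum_{k=1}^{r} k^{d-1} k^{-(d-2)} \asymp \sum_k k \asymp r^2$ (plus an $O(1)$ contribution from $x=0$), which since $d>8$ is much smaller than the contribution $V \cdot V^{-3/4} = V^{1/4} = r^{d/4}$ from the constant term; indeed $r^2 = o(r^{d/4})$. Hence $\chi^\T(p_*) = \sum_x G_{p_*}^\T(x) \asymp V^{1/4}$. The main obstacle in this argument is really just the bookkeeping of the exponents to confirm that the single scale $p_c - p_* \asymp V^{-1/2}$ simultaneously (i) balances the plateau term against $V^{-3/4}$, (ii) with $C_1$ large makes the lower-bound prefactor in \eqref{eq:GTlb} bounded away from $0$, and (iii) places the entire torus inside the window $|x| \lesssim (p_c-p_*)^{-1/4}$ of \eqref{eq:Glb-intro} --- all of which hinge on $d > 8$; no genuinely new estimate beyond Theorems~\ref{thm:mass}, \ref{thm:torus} and \eqref{eq:eta_zero}, \eqref{eq:Glb-intro} is needed.
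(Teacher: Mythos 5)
Your proposal is correct and follows essentially the same route as the paper's proof: apply Theorem~\ref{thm:torus} at $p_*=p_c-C_1V^{-1/2}$, use Theorem~\ref{thm:mass} to identify the plateau scale $m(p_*)^{-2}r^{-d}\asymp V^{-3/4}$, choose $C_1$ large to make the prefactor in \eqref{eq:GTlb} at least $\tfrac12$, invoke \eqref{eq:Glb-intro} (valid on the whole torus since $r^{d/8}\gg r$ for $d>8$), and sum to get $\chi^\T(p_*)\asymp r^2+V^{1/4}\asymp V^{1/4}$. The only wrinkle is the arithmetic slip about $m(p_*)^{-8}r^{-d}$, which you correct within the proposal itself.
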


\begin{proof}
Let $p_* = p_c - C_1 V^{-1/2}$, with the constant $C_1$ to be chosen later.
We restrict to $r$ sufficiently large so that $p_* \ge p_c - \delta_L$ and Theorem~\ref{thm:torus} applies at $p_*$.
We note that $p_c - p_* = C_1 V^{-1/2} = C_1 r^{-d/2}$ is $o(r^{-4})$ since $d > 8$, so the lower bound \eqref{eq:GTlb} applies when $r$ is large.

By Theorem~\ref{thm:mass}, there is an $a>0$ such that
$m(p_*)\ge a (p_c-p_*)^{1/4} = aC_1^{1/4}V^{-1/8}$,
with a matching upper bound.
The plateau term $m(p_*)^{-2} r^{-d}$ in \eqref{eq:GTub}--\eqref{eq:GTlb} is therefore bounded above and below by multiples of $C_1^{-1/2} V^{-3/4}$.
Combining \eqref{eq:GTub} with
$G_{p_*}(x) \lesssim \nnnorm x^{-(d-2)}$ from \eqref{eq:eta_zero},
we get the upper bound in \eqref{eq:Gplateau}.

For the lower bound,
if we take $C_1 \ge a^{-4}\sqrt{2C}$, then the coefficient in
\eqref{eq:GTlb} obeys
\begin{equation} \label{eq:plateau_pf1}
    1- \frac{C}{m(p_*)^8 r^d} \ge 1 - \frac{C}{a^8C_1^2} \ge \frac 12.
\end{equation}
Also,
since $|x|\le r \sqrt d$ for all $x\in \T_r^d$, the restriction on $x$
in \eqref{eq:Glb-intro} is satisfied throughout $\T_r^d$ if
$r \sqrt d \le c_0 C_1^{-1/4}V^{1/8} = c_0 C_1^{-1/4}r^{d/8}$,
which holds for all large $r$ as $d > 8$.
The desired lower bound then follows by inserting \eqref{eq:plateau_pf1}
and \eqref{eq:Glb-intro} into \eqref{eq:GTlb}.

Finally, summation over $x$ in the torus gives
\begin{equation}
    \chi^{\T}(p_*)
    \asymp
    \sum_{x\in\Lambda_r}\Big(\frac{1}{\nnnorm x^{d-2}} + \frac{1}{V^{3/4}} \Big)
    \asymp r^2 + V^{1/4},
\end{equation}
and the term $V^{1/4} = r^{d/4}$ dominates since $d>8$.
This concludes the proof.
\end{proof}

We believe that Corollary~\ref{cor:plateau} remains valid as long as $|p_c-p|$ is
of order $V^{-1/2}$, and we refer to $V^{-1/2}$ as the size of the \emph{scaling window}.
In Corollary~\ref{cor:plateau}, the size of the window, the plateau, and the susceptibility
are respectively instances of the more general formulas
$V^{-\frac{2}{\gamma \dc}}$, $V^{\frac{2}{\dc}-1}$, and $V^{\frac{2}{\dc}}$
\cite{LPS25-universal}, where $\dc$ is the upper critical dimension.
These formulas have been proven to hold for self-avoiding walk with $\dc=4$
\cite{Slad23_wsaw,Liu24}, the Ising model with $\dc=4$ \cite{LPS25-Ising},
the hierarchical $|\varphi|^4$ model with $\dc=4$ \cite{MPS23,PS25},
and percolation with $\dc=6$ \cite{HMS23}.
Our results show that lattice trees and lattice animals also fit into this general theory, which is presented in \cite{LPS25-universal} (see also \cite{BEHK22} for the physics
perspective).
We do not expect Theorem~\ref{thm:torus} to hold when the periodic boundary
condition is replaced by free boundary conditions.  Instead, with free boundary
conditions we have conjectured in \cite{LPS25-universal} that the asymptotic relations of
Corollary~\ref{cor:plateau} hold for values of $p$ in a window of the same width $V^{-1/2}$
but centred at a \emph{pseudocritical point} $p_{r,c}$ that is shifted
larger than $p_c$ by an amount of order $r^{-1/\nu}=r^{-4}$.  Note that this
shift is larger than $V^{-1/2} = r^{-d/2}$ in dimensions $d > \dc = 8$,
so these windows for free and periodic boundary
conditions do not overlap.

Inspired by the results of \cite{MPS23,PS25} for the hierarchical $|\varphi|^4$
model in dimensions $d \ge 4$, we have the following conjecture for the amplitude
for the two-point function plateau and the susceptibility throughout the critical
window $p=p_c+sV^{-1/2}$, $s \in \R$.
Let
\begin{equation}
    I_0(s) = \int_0^\infty e^{-\frac 14 t^4 + \frac 12 s t^2} dt
    \qquad
    (s \in \R).
\end{equation}
Via the change of variables $t = x^4/4$, $I_0 (s)$ can be rewritten in terms of
the \emph{Fax\'en integral}
\begin{equation}
    \mathrm{Fi} (\alpha, \beta ; y) = \int_0^{\infty} e^{-t + y t^{\alpha}} t^{\beta - 1} dt
    \qquad
    (0 \le \alpha <1 ,  \;\;\beta>0)
\end{equation}
as
$I_0 (s) = 2^{-\frac{3}{2}} \mathrm{Fi} ({\textstyle \frac{1}{2}, \frac{1}{4} ; s })$.
Its asymptotic behaviour can then be read from
\cite[Ex.~7.3, p.~84]{Olve97}:
\begin{align}
    I_0 (s)
    \sim
    \begin{cases}
    \sqrt{\frac{\pi}{2}} |s|^{-1/2} & (s \to -\infty)
    \\
    \sqrt{\pi} s^{-1/2}e^{s^2/4} & (s \to +\infty).
    \end{cases}
\end{align}

\begin{conjecture}
\label{conj:profile}
Let $d>8$.
For lattice trees or lattice
animals on $\T_r^d$, there are positive constants $\lambda_1,\lambda_2$ such that,
for all $x\in \T_r^d$ and all $s \in \R$,
\begin{equation}
    G_{p_c +sV^{-1/2}}^\T(x) - G_{p_c}(x)
    \sim
    \lambda_1 I_0(\lambda_2 s) \frac{1}{V^{3/4}}
\end{equation}
as $V \to \infty$.
By summation over all $x \in \T_r^d$,
\begin{equation}
\label{eq:chi-conj}
    \chi^\T(p_c+sV^{-1/2}) \sim  \lambda_1 I_0(\lambda_2 s) V^{1/4}.
\end{equation}
\end{conjecture}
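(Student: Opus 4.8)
The plan is to carry out the lattice-tree/animal analogue of the hierarchical $|\varphi|^4$ analysis of \cite{MPS23,PS25}, with the lace expansion playing the role of the renormalisation group. One starts from a torus lace expansion for $G_p^\T$, obtained by adapting the expansion of \cite{HS90b} to $\T_r^d$ as was done for percolation in \cite{HMS23} and self-avoiding walk in \cite{Slad23_wsaw,Liu24}. Together with the infinite-volume expansion and the near-critical bound of Theorem~\ref{thm:near_critical}, this should produce, uniformly in large $r$, a representation of the schematic form
\begin{equation}
    \hat G_p^\T(k) = \frac{\hat g_p^\T + \hat \Pi_p^\T(k)}{1 - \hat F_p^\T(k)} ,
\end{equation}
where $\hat\Pi_p^\T$ is the torus lace-expansion coefficient and $\hat F_p^\T$ is assembled from the one-step distribution together with $\hat g_p^\T$ and $\hat\Pi_p^\T$, along with estimates showing that for $k \ne 0$ these coefficients agree with their $\Z^d$ counterparts up to errors summable against the factor $e^{-cm(p)|x|}$ of \eqref{eq:near_critical} and hence negligible throughout the window. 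Consequently the sum over $k \ne 0$ reconstructs $G_p(x)$ up to a small error, and the entire finite-size effect is carried by the $k = 0$ mode.

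\textbf{Zero-mode reduction.} The heart of the matter is a closed equation for the $k=0$ mode. For lattice trees one has the exact identity $\chi^\T(p) = g_p^\T + p\,\tfrac{d}{dp} g_p^\T$, so $\chi^\T(p) \asymp \tfrac{d}{dp} g_p^\T$ near $p_c$ (the same holds in infinite volume, and explains why $\gamma = \tfrac12$: the one-point function has a square-root singularity $g_{p_c} - g_p \sim b_L (p_c - p)^{1/2}$ by \cite{HS92c}). The problem thus reduces to the behaviour of the torus one-point function $g_p^\T$ in the window, where the torus rounds off the square-root singularity on the scale fixed by the spectral gap $\asymp r^{-2}$ of $-\Delta$ on $\T_r^d$. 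Expanding $\hat g_p^\T$, $\hat\Pi_p^\T(0)$ and $\hat F_p^\T(0)$ to the order at which the self-interaction of the zero mode first contributes should yield a self-consistent equation for $g_p^\T$ (equivalently for $\chi^\T$) whose solution, written in the window $p = p_c + sV^{-1/2}$, is governed by a one-dimensional integral. Matching that integral to the Fax\'en integral $I_0$ — rather than to one of its lookalikes, such as a ratio of the type $I_0'/I_0$ that arises for $|\varphi|^4$, or an Airy- or Pearcey-type integral associated with a fold or a cusp — is the delicate structural step: it requires pinning down the exact coefficient of the leading nonlinear term and proving that all higher corrections vanish in the limit. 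The two-sided bounds of Theorem~\ref{thm:torus} and Corollary~\ref{cor:plateau} already supply the order $V^{1/4}$ for $\chi^\T$ and the plateau of size $V^{-3/4}$; the conjecture asks to upgrade $\asymp$ to $\sim$ with the explicit amplitude $\lambda_1 I_0(\lambda_2 s)$.

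\textbf{From $\chi^\T$ to the profile, and the regime $s > 0$.} Given control of $\chi^\T$, the plateau statement follows because the $k=0$ mode contributes the constant $V^{-1}\chi^\T(p)(1+o(1))$ to $G_p^\T(x)$ uniformly in $x \in \T_r^d$, while Theorem~\ref{thm:near_critical} controls the remainder by $C|x|^{-(d-2)}e^{-cm(p)|x|}$; summation over $x$ then returns \eqref{eq:chi-conj}. Two consistency checks fix the constants: as $s \to -\infty$, $I_0(s) \sim \sqrt{\pi/2}\,|s|^{-1/2}$ must reproduce the infinite-volume asymptotics $\chi(p) \sim c(p_c - p)^{-1/2}$ of \eqref{eq:gamma_half} (in its asymptotic form from \cite{HS92c} for trees), while as $s \to +\infty$ the factor $e^{s^2/4}$ reflects the approach of $p = p_c + sV^{-1/2}$ towards the torus radius of convergence $p_c^\T > p_c$. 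This second regime is where substantial new input is needed: Theorems~\ref{thm:mass}--\ref{thm:near_critical} are proved only for $p < p_c$, so one must first establish that the torus lace expansion converges, with adequate quantitative error bounds, for $p$ in a full window straddling $p_c$ and extending up towards $p_c^\T$.

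\textbf{Main obstacle.} The principal difficulty is quantitative: Corollary~\ref{cor:plateau} needs only order-of-magnitude estimates, whereas the conjecture demands the exact crossover function, hence control of the lace-expansion remainders to one further order of precision than in \cite{HS90b,HHS03} — the analogue, in the lace-expansion framework, of the fine error tracking that the renormalisation group supplies for hierarchical $|\varphi|^4$ in \cite{MPS23,PS25}. In particular one must prove that in the window the effective zero-mode action is quartic with all corrections genuinely lower order, and for lattice animals this requires first upgrading \eqref{eq:gamma_half} to an asymptotic relation, exactly as noted after Theorem~\ref{thm:mass}. Since even for self-avoiding walk the precise $I_0$ profile has not yet been established \cite{Slad23_wsaw,Liu24}, a proof of Conjecture~\ref{conj:profile} would most likely accompany a resolution of that case.
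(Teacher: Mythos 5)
This statement is a \emph{conjecture} in the paper: no proof is given there, only motivation via the complete-graph results of \cite{LS25_profile} and the universality principle of \cite{MPS23,PS25}, so there is no argument of the paper against which to compare yours. What you have written is a research programme rather than a proof, and the decisive steps are exactly the ones you leave open: (i) a torus lace expansion with quantitative control \emph{uniform in the full window} $p=p_c+sV^{-1/2}$, $s\in\R$ --- all the quantitative input available in the paper (Theorems~\ref{thm:mass}--\ref{thm:near_critical}, hence also Theorem~\ref{thm:torus} and Corollary~\ref{cor:plateau}) is restricted to $p<p_c$; (ii) the derivation of a closed equation for the zero mode and the identification of its solution with the Fax\'en profile $I_0$ with an exact amplitude, which is precisely the content of the conjecture and is not supplied by ``expanding to the order at which the self-interaction of the zero mode first contributes''; and (iii) for lattice animals, upgrading \eqref{eq:gamma_half} to an asymptotic relation. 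Since you acknowledge each of these gaps yourself, the proposal cannot be assessed as a correct proof; it is a plausible but unproven strategy, consistent in spirit with how \cite{HMS23,Slad23_wsaw,Liu24} treat the torus but going one order of precision beyond anything currently known even for self-avoiding walk.

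Two side claims also need correction. First, on a finite torus there are only finitely many lattice trees or animals, so $G_p^\T(x)$ and $\chi^\T(p)$ are polynomials in $p$; there is no finite ``torus radius of convergence $p_c^\T>p_c$'', and the growth $e^{s^2/4}$ for $s\to+\infty$ must instead be read off from the crossover profile itself (as on the complete graph in \cite{LS25_profile}), not from an approach to a singularity. Second, the rounding of the square-root singularity of $g_p$ (equivalently of $\chi$) in the critical window happens at the scale of the window width $V^{-1/2}=r^{-d/2}$, which is governed by the zero mode; the spectral gap $\asymp r^{-2}$ is much larger when $d>8$ and controls only the non-zero modes, which remain effectively critical and infinite-volume-like throughout the window. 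Your exact identity $\chi^\T(p)=g_p^\T+p\,\frac{d}{dp}g_p^\T$ for lattice trees is correct (an $n$-bond tree has $n+1$ vertices) and is a reasonable starting point, but by itself it only transfers the problem to the one-point function without resolving the crossover.
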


In \cite{LS25_profile}, we show that \eqref{eq:chi-conj} holds
for  trees and connected subgraphs on the complete graph on $V$ vertices.
The universal behaviour of the two-point function on the complete graph
is also established in \cite{LS25_profile}, including a proof that the
profile $I_0$ and the power $V^{-3/4}$
also occur in the scaling of the two-point
function at a pair of distinct vertices.
As motivated in \cite{MPS23,PS25},
Conjecture~\ref{conj:profile} is founded on the (generally unproven)
principle that the complete graph profile applies universally at and above
the upper critical dimension.

\subsection{Organisation}

We prove
Theorem~\ref{thm:torus} assuming Theorems~\ref{thm:mass}--\ref{thm:near_critical}
in Section~\ref{sec:torus}.
General facts needed for our proof that $m(p) \to 0$ as $p\to p_c$ are presented in Section~\ref{sec:chim}.
We prove Theorem~\ref{thm:mass} in Section~\ref{sec:mass} and
Theorem~\ref{thm:near_critical} in Section~\ref{sec:near_critical}.
Some proofs are deferred to two appendices.
Appendix~\ref{app:diagram} is concerned with diagrammatic estimates
for the lace expansion under
an exponential tilt.
Appendix~\ref{app:S} proves the analogue of Theorems~\ref{thm:mass}--\ref{thm:near_critical}
for the spread-out random walk.

We need to take the spread-out parameter $L$ large on several occasions.
The constant $L_0$ appearing in the main results is taken to be the maximum of
the constants $L_1,L_2,L_3$ that occur below.

\section{Finite-size scaling: proof of Theorem~\ref{thm:torus}}
\label{sec:torus}

In this section, we prove
Theorem~\ref{thm:torus} assuming Theorems~\ref{thm:mass}--\ref{thm:near_critical}.
In Section~\ref{sec:lift}, we formalise the notion of \emph{lift} that we use to
compare the torus models with their $\Z^d$ counterparts.
Lower bounds needed for the proof of Theorem~\ref{thm:torus}
are presented in Section~\ref{sec:Glb}, and upper bounds are presented in Section~\ref{sec:Gammaub}.
In particular, in Section~\ref{sec:Glb} we show that the asymptotic formula
\eqref{eq:eta_zero} for $G_{p_c}(x)$ implies an lower bound on $G_p(x)$.
Finally, in Section~\ref{sec:pf-torus} we prove Theorem~\ref{thm:torus}.

We assume throughout this section that $d >8$ and $L \ge L_0$ are fixed,
where $L_0$ is from Theorems~\ref{thm:mass}--\ref{thm:near_critical}.
All constants in this section are permitted to depend on $d,L$.

We write $\pi:\Z^d \to \Lambda_r$ for the torus projection, i.e.,
if $x \in \Lambda_r$ and $u \in \Z^d$
then $\pi(x+ru)=x$.
If $\pi(x)=\pi(x')$ then we say $x'$ and $x$ are \emph{(torus) equivalent}
and we write $x' \equiv x$; this occurs if and only if
$x'=x+ru$ for some $u \in \Z^d$.

\subsection{Lift from the torus to the infinite lattice}
\label{sec:lift}

We compare a lattice tree or animal on the torus with its \emph{lift}
(unwrapping) to $\Z^d$, defined as follows.

\begin{definition} \label{def:lift}
Let $\T_r^d = (\Z/r\Z)^d$ be the $d$-dimensional discrete torus of period $r$,
with the spread-out edge set inherited from projecting the $\Zd$ edge set
$\mathbb{E}=\{\{x,y\}: 0< \|y-x\|_\infty \le L\}$ to the torus.
Assume that $r \ge 2L+1$.
\begin{enumerate}
\item \emph{(Lift of a walk.)}
Let $\omega=(\omega_0,\omega_1,\dots,\omega_n)$ be a torus walk
starting from $\omega_0 = 0$ and taking steps from $\mathbb{E}$.
The \emph{lift} of $\omega$ is
the $\Z^d$ walk $\bar \omega$ defined by $\bar\omega(0)=0$ and
\begin{equation}
\label{e:liftdef}
\bar \omega(k) - \bar \omega(k-1) = \omega(k) -  \omega(k-1)
	\qquad (1\leq k \leq n),
\end{equation}
using the identification of $\T^d_r$ with $\Lambda_r = [ - \frac r 2, \frac r 2 )^d \cap \Z^d$.
The restriction $r \ge 2L+1$ ensures that the lift is a bijection
from torus walks starting from the origin
to $\Zd$ walks starting from the origin.

\item \emph{(Lift of a lattice tree.)}
Let $T$ be a torus lattice tree containing $0$,
so that for any $x\in T$,
there is a unique self-avoiding walk in the tree from
$0$ to $x$.  The \emph{lift} of $T$ is defined to be the union over $x \in T$
of the lifts of those torus walks to $\Zd$.
This defines an injection from
torus lattice trees containing the origin
to $\Zd$ lattice trees containing the origin.
It is not a surjection because lifted trees cannot contain
any pair of $\Zd$ vertices that project to the same torus vertex.

\item \emph{(Lift of a lattice animal.)}
We use the lexicographic order on $[-L,L]^d \cap \Zd \setminus \{ 0 \}$
and on $\Lambda_r$, and we use the order on $\Lambda_r$ to induce an order on
the torus vertices.
Let $A$ be a torus animal containing $0$.
We first grow a spanning tree in $A$ recursively as follows,
starting from $\{0\}$.
Given the current tree,
we choose its first
vertex which is incident to an edge in $A$ that would not create a cycle when added to the tree,
and we add the first such edge to the tree.
The process terminates
in a spanning tree $T$ for $A$ when no further edges can be added.
We lift $T$ to a $\Zd$ tree $\hat T$.
For the remaining excess edges in $A \setminus T$,
we write them in the form $\{u,v\}$
with $u$ less than $v$ in the torus vertex order,
and we lift the edge
 to $\{ \hat u, \hat u + (v-u) \}$
where $\hat u\in \hat T$ projects to $u$.
The lifting is injective because we can recover the torus animal by projection.
See Figure~\ref{figure:LTlift} for an example.

\end{enumerate}
\end{definition}

\begin{figure}[h]
\center{
\parbox{0.2\linewidth}{\includegraphics[scale = 0.8]{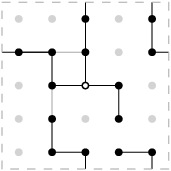}}
\qquad
\parbox{0.2\linewidth}{\includegraphics[scale = 0.8]{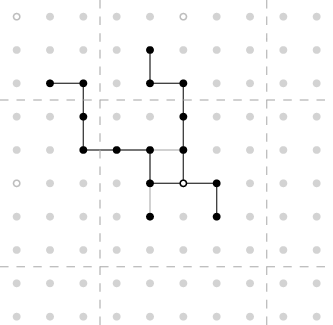}}
\caption{The lift of a lattice animal from $\T_5^2$ to $\Z^2$.
A spanning tree and its lift are shown in black.
The two excess edges are shown in grey.
Note that the lift of an animal can break torus cycles.
}
\label{figure:LTlift}
}
\end{figure}

\begin{lemma} \label{lem:LA_subtree}
Let $A$ be a $\Zd$ animal containing $0$, such that
$A$ does not contain any pair of torus equivalent points.
Let $\pi(A)$ be the projection of $A$ onto the torus.
Then $A$ is the lift of $\pi(A)$.
\end{lemma}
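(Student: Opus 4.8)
The plan is to show that the lift of $\pi(A)$, as constructed in Definition~\ref{def:lift}(3), reproduces $A$ exactly. The key observation is that when $A$ contains no pair of torus equivalent points, the projection $\pi$ restricted to the vertex set of $A$ is a bijection onto the vertex set of $\pi(A)$, and likewise it is a bijection on edges (since an edge $\{x,y\}$ of $A$ has $0 < \|y-x\|_\infty \le L < r/2$, so $\pi$ does not identify its endpoints nor collapse it, and distinct edges of $A$ cannot project to the same torus edge without forcing a pair of equivalent vertices). Consequently $\pi: A \to \pi(A)$ is a graph isomorphism, and in particular the unique self-avoiding path in $\pi(A)$ between two torus vertices is the $\pi$-image of the corresponding path in $A$.

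First I would fix the deterministic ingredients: the vertex order on $\Lambda_r$ and on $[-L,L]^d\cap\Zd\setminus\{0\}$ used in the construction. Because $\pi|_A$ is an order-preserving bijection onto $\pi(A)$ (the torus vertex order is \emph{defined} by pulling back the $\Lambda_r$ order, and $\pi$ maps $A$ injectively into $\Lambda_r$), the greedy spanning-tree algorithm run on $\pi(A)$ makes exactly the $\pi$-image choices of the same algorithm run on $A$ with the inherited order: at each stage the ``first vertex incident to a non-cycle-creating edge'' and the ``first such edge'' correspond under $\pi$. Hence the spanning tree $T$ grown inside $\pi(A)$ is $\pi(T_A)$ for the spanning tree $T_A$ of $A$ produced by the same rule.

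Next I would verify that lifting $T = \pi(T_A)$ back to $\Zd$ returns $T_A$. This is the tree case: the lift of a torus tree containing $0$ is built by lifting, for each vertex, the unique path from $0$; since these paths are the $\pi$-images of the paths in $T_A$ and the lift of a torus walk reconstructs the original $\Zd$ walk whenever that $\Zd$ walk already lies over the torus and $r\ge 2L+1$, we recover exactly $T_A$, hence $\hat T = T_A$. Then I would handle the excess edges: each edge $e\in \pi(A)\setminus T$ is $\pi(f)$ for a unique $f = \{u',v'\}\in A\setminus T_A$; writing $e=\{u,v\}$ with $u<v$ in the torus order, order-preservation of $\pi|_A$ gives $u=\pi(u')$, $v=\pi(v')$ with $u'<v'$ in the induced $\Zd$ order on $A$, and $v-u$ (computed in $\Lambda_r$) equals $v'-u'$ in $\Zd$ because $\|v'-u'\|_\infty\le L<r/2$. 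The construction then attaches the lifted edge as $\{\hat u,\hat u+(v-u)\}$ where $\hat u\in\hat T=T_A$ projects to $u$; but $u'\in T_A$ projects to $u$ and $\hat T=T_A$ contains no equivalent pair, so $\hat u=u'$ and the lifted edge is $\{u',u'+(v'-u')\}=f$. Thus every edge of $A$ is recovered and no extra edge is added, so the lift of $\pi(A)$ equals $A$.

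The main obstacle — really the only nontrivial point — is the bookkeeping that the greedy spanning-tree procedure commutes with $\pi$, i.e.\ that the choices made on the torus are the $\pi$-images of the choices made on $\Zd$. This rests entirely on $\pi|_A$ being an order-preserving graph isomorphism onto $\pi(A)$, which in turn uses the hypothesis that $A$ has no torus-equivalent pair together with $r\ge 2L+1$; I would state this isomorphism as the first lemma-internal claim and prove it carefully, after which everything else is a routine induction on the steps of the algorithm. One should also note that the cycle-creation test is preserved under the isomorphism, so ``would not create a cycle'' transfers between the two runs.
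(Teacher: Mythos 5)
Your proposal is correct and follows essentially the same route as the paper: both arguments couple the greedy spanning-tree construction on $\pi(A)$ with the one on $A$ (using that the absence of torus-equivalent pairs makes $\pi|_A$ a bijection on vertices and edges, so the algorithms make corresponding choices step by step), conclude that the lifted spanning tree is the spanning tree of $A$, and then check that each excess edge lifts back to the unique edge of $A$ above it. The only cosmetic differences are that you phrase the coupling via an order-preserving graph isomorphism while the paper uses a lexicographic order on $\Lambda_r\times\Z^d$, and the paper dispatches the excess edges by noting that $\norm{u-v}_\infty\le L$ makes the lift independent of which endpoint is taken as the base.
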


\begin{proof}
By Definition~\ref{def:lift},
the lift of $\pi(A)$ is defined by first lifting a spaning tree $T$ of $\pi(A)$.
We first show that $T$ lifts to a subtree of $A$,
by coupling the construction of $T$ in $\pi(A)$
with the construction of a subtree $\tilde T$ of $A$ that will coincide with $\hat T$.
To construct $\tilde T$,
we use the same ordering of $[-L,L]^d \cap \Zd \setminus \{ 0 \}$
and of $\Lambda_r$ as in Definition~\ref{def:lift},
and we order vertices of $\Zd = \Lambda_r + r\Z^d$ via the lexicographical ordering on the product space $\Lambda_r \times \Zd$.
We then run the same spanning tree algorithm starting from $\{0\}$ in $A$, and we stop when the construction of $T$ in $\pi(A)$ terminates.
By the chosen ordering of $\Zd$ vertices,
and using that $A$ does not contain any pair of torus equivalent points,
at each time when a torus edge is added to $T$,
some $\Zd$ edge in $A$ that projects to that torus edge will be added to $\tilde T$.
This ensures that the lift $\hat T$ of $T$ satisfies $\hat T  = \tilde T$.
In particular, $\hat T$ is a subtree of $A$, as claimed.

We now consider the lifting of edges in $\pi(A) \setminus T$.
Let $\norm A$ denote the number of vertices in $A$.
Since
$A$ does not contain any pair of torus equivalent points,
we have $\norm{ \pi(A) } = \norm A$, so the lift $\hat T = \tilde T$ of $T$
satisfies
\begin{equation}
\norm{ \hat T } = \norm T = \norm { \pi(A) } = \norm A ,
\end{equation}
and $\hat T$ must be a \emph{spanning} tree of $A$.
When $\{\check u ,\check v \} \in \pi(A) \setminus T$ is lifted,
we know there is a unique edge $\{u,v\} \in A$ that projects to $\{\check u ,\check v\}$,
and we know that $u,v  \in \hat T$ because $\hat T$ is spanning.
Since $\norm{u-v}_\infty \le L$,
no matter which of the vertices $\check u, \check v$ is smaller,
the edge is always lifted to $\{u,v\} \in A$.
This concludes the proof that $\pi(A)$ lifts to $A$.
\end{proof}

In discussions that apply equally to lattice trees and lattice animals,
we refer to them as \emph{polymers}
and write $\TA$ for a generic polymer.
We define the functions $\psi_p$ and $E_p$ on $\T_r^d$ by
\begin{align}
    \psi_p(x) =
    \sum_{ \substack{ x' \equiv x \\ x' \ne x } } G_p(x') ,
\qquad
\label{eq:Epdef}
    E_p(x) = \sum_{x' \equiv x} \sum_{ y\in\Z^d}
    \sum_{ \substack{ y' \equiv y \\  y'\neq y} }
    \sum_{\TA \ni 0,x',y,y'} \Big(\frac{p}{\Omega} \Big)^{|\TA|} .
\end{align}

\begin{proposition} \label{prop:GGam}
For all $d \ge 1$, $p \ge 0$, and $x \in \T_r^d$,
\begin{align} \label{eq:Gamma-G}
\psi_p(x) - E_p(x)
\le G_p \supT (x) - G_p(x)
\le \psi_p(x) .
\end{align}
\end{proposition}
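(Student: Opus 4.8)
The plan is to establish both inequalities by partitioning the set of torus polymers according to the lift construction of Definition~\ref{def:lift}. The basic dichotomy is this: a torus polymer $\TA^\T$ containing $0$ and $x$ either does or does not contain a pair of torus-equivalent vertices. If it does not, then by Lemma~\ref{lem:LA_subtree} (and its evident tree analogue) it is the projection of its own lift, and that lift is a $\Z^d$ polymer containing $0$ together with \emph{some} vertex $x' \equiv x$; conversely, every $\Z^d$ polymer through $0$ and some $x'\equiv x$ having no torus-equivalent pair projects back to such a torus polymer, and the weight $(p/\Omega)^{|\TA|}$ is preserved because lifting and projecting do not change the edge count. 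Summing over $x'\equiv x$ then accounts, with equality, for this ``no repeated equivalent pair'' class, and this sum is precisely $G_p(x) + \psi_p(x)$ minus the contribution of those $\Z^d$ polymers through $0,x'$ that \emph{do} contain a repeated equivalent pair.

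\textbf{Upper bound.} For $G_p^\T(x) - G_p(x) \le \psi_p(x)$, I would drop the ``no repeated pair'' restriction entirely: every torus polymer through $0,x$ lifts (via the spanning-tree lift of Definition~\ref{def:lift}) to \emph{some} $\Z^d$ polymer, and projecting it back recovers the torus polymer, so the map torus-polymer $\mapsto$ ($\Z^d$-lift) is injective and weight-preserving. The $\Z^d$-lift contains $0$ and contains some representative $x'$ of $x$ (either $x$ itself or another $x'\equiv x$, depending on which vertex of the torus polymer we track). Hence
\begin{equation}
    G_p^\T(x) \le \sum_{x' \equiv x} G_p(x') = G_p(x) + \psi_p(x),
\end{equation}
which is the claimed upper bound. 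The only care needed here is that a single torus polymer containing $0$ and $x$ might, after lifting, contain several vertices equivalent to $x$; but injectivity of the lift is all we need, and each lifted polymer is counted on the right-hand side at least once (for whichever $x'$ it contains that is equivalent to $x$), so the inequality goes the right way.

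\textbf{Lower bound.} For the harder direction $G_p^\T(x) - G_p(x) \ge \psi_p(x) - E_p(x)$, I would go the other way: start from $G_p(x') $ for a fixed $x' \equiv x$, $x'\ne x$, and note that any $\Z^d$ polymer $\TA$ containing $0$ and $x'$ which contains \emph{no} pair of torus-equivalent vertices projects, by Lemma~\ref{lem:LA_subtree}, to a \emph{genuine} torus polymer through $0$ and $x$ (distinct torus vertices, since $x'\ne x$ and $\TA$ has no equivalent pair); moreover such a torus polymer is not counted in $G_p(x)$ because its lift is not $\TA$ unless $\TA$ also had a vertex equal to $x$ on the nose — but it does, it contains both $0$-side and $x'$, hmm. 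The clean way is: among torus polymers through $0,x$, those \emph{not} already accounted for by $G_p(x)$ (i.e.\ whose spanning-tree-lift does not pass through the literal vertex $x$ via the $0$-to-$x$ path, or more robustly: pair up using a fixed $x'$) are bounded below by $\sum_{x'\equiv x,\, x'\ne x} G_p(x')$ minus a correction counting $\Z^d$ polymers through $0$ and $x'$ that \emph{do} contain some torus-equivalent pair $\{y,y'\}$. That correction is exactly $E_p(x)$: a $\Z^d$ polymer through $0, x'$ containing torus-equivalent $y \ne y'$ contributes to the quadruple sum $\sum_{x'\equiv x}\sum_{y}\sum_{y'\equiv y, y'\ne y}\sum_{\TA\ni 0,x',y,y'}(p/\Omega)^{|\TA|}$, and conversely every term of that sum over-counts at least the polymers we must discard. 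Subtracting and using $G_p^\T(x) \ge G_p(x) + \big(\sum_{x'\equiv x,\,x'\ne x} G_p(x') - E_p(x)\big) = G_p(x) + \psi_p(x) - E_p(x)$ gives the result.

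\textbf{Main obstacle.} The delicate point is bookkeeping in the lower bound: ensuring the map from ``$\Z^d$ polymers through $0,x'$ with no torus-equivalent pair'' to ``torus polymers through $0,x$ not counted by $G_p(x)$'' is injective across the different choices of $x' \equiv x$, so that no torus polymer is double-counted when we sum over $x'$, and simultaneously that $E_p(x)$ genuinely dominates the set of excluded $\Z^d$ configurations (those with a repeated equivalent pair) summed over all $x'$. I expect this to be handled by fixing, for each relevant torus polymer, a canonical representative vertex $x'$ (e.g.\ the one reached by the lift of the spanning-tree geodesic from $0$ to $x$) so that the correspondence is one-to-one, and by noting that the quadruple sum defining $E_p$ is manifestly an over-count of the discarded configurations. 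The tree case is strictly easier since the spanning tree is the polymer itself and the $0$-to-$x$ path is unique, so there I would do the argument first and then indicate the animal modifications via Lemma~\ref{lem:LA_subtree}.
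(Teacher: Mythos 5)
Your proposal is correct and follows essentially the same route as the paper: the upper bound from injectivity of the lift, and the lower bound by inclusion--exclusion, using Lemma~\ref{lem:LA_subtree} (and its evident tree analogue) to identify the $\Z^d$ polymers that are not lifts as exactly those containing a pair of distinct torus-equivalent vertices, whose total contribution over all $x'\equiv x$ (including $x'=x$) is dominated by $E_p(x)$. The bookkeeping you flag as the main obstacle resolves itself without any canonical choice of representative: a $\Z^d$ polymer with no torus-equivalent pair contains exactly one representative of $x$, so the families indexed by $x'\equiv x$ are automatically disjoint and each projects injectively to a torus polymer containing $0$ and $x$.
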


\begin{proof}
The upper bound is due to injectivity of lifting.
Since the lift of a torus polymer containing $x\in \T_r^d$ must contain a $\Zd$ point $x' \equiv x$,
we get the desired upper bound
\begin{align}
G_p\supT(x) \le G_p(x) + \psi_p(x) .
\end{align}

For the lower bound,
we use inclusion--exclusion and exclude from $G_p(x) + \psi_p(x)$
the contribution due to $\Zd$ polymers that are not the lift of a torus polymer.
For lattice trees,
the exclusion term consists exactly of trees that contain $0,x'$ for some $x'
\equiv x$ as well as a pair of distinct equivalent points $y,y'$,
and this gives rise to the subtracted term $E_p(x)$ in \eqref{eq:Gamma-G}.
For lattice animals,
Lemma~\ref{lem:LA_subtree} implies that
any $\Zd$ animal that does not contain any pair of torus equivalent points
is the lift of some torus animal (namely, its torus projection),
so the exclusion term is again bounded by $E_p(x)$. 
This gives the lower bound of \eqref{eq:Gamma-G} and concludes the proof.
\end{proof}

\subsection{Lower bounds on $G_p$ and $\psi_p$}
\label{sec:Glb}

The $\Zd$ convolution of absolutely summable functions $f,g:\Z^d\to \C$
is defined by
\begin{equation}
\label{eq:Zdconv}
    (f*g)(x)=\sum_{y\in \Z^d} f(y)g(x-y)
     \qquad (x \in \Z^d).
\end{equation}
We will use the fact (see \cite[Proposition~1.7]{HHS03}) that if
$|f_i(x)| \lesssim \nnnorm{x}^{-(d-2)}$ for $i=1,\ldots, k$, and if $d>2k$, then
\begin{equation}
\label{eq:prop1.7}
    |(f_1* \cdots * f_k)(x)| \lesssim \frac{1}{\nnnorm{x}^{d-2k}}.
\end{equation}

\begin{lemma} \label{lem:Glb}
Let $d > 8$ and $L \ge L_0$.
Then there is a constant $c_0 > 0$ such that
\begin{equation} \label{eq:Glb}
G_p(x)  \ge \frac{c_0}{ \xvee^{d-2}}
	 \qquad (\norm x_\infty \le c_0 (p_c - p)^{-1/4}).
\end{equation}
\end{lemma}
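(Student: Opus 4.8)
The plan is to deduce the lower bound on $G_p(x)$ from the critical asymptotics \eqref{eq:eta_zero} together with a differential inequality that controls how fast $G_p(x)$ can decrease as $p$ moves away from $p_c$. First, from \eqref{eq:eta_zero} we know $G_{p_c}(x) \sim B|x|^{-(d-2)}$ with $B > 0$, so there is a constant $c_1 > 0$ and a radius $R_0$ such that $G_{p_c}(x) \ge c_1 \xvee^{-(d-2)}$ for all $x$ with $\norm{x}_\infty \ge R_0$; for the finitely many $x$ with $\norm{x}_\infty < R_0$ one has $G_{p_c}(x) \ge G_{p_c}(0) \cdot (\text{something}) > 0$ by a direct combinatorial argument (a single polymer through $0$ and $x$ exists), so after shrinking $c_1$ we get $G_{p_c}(x) \ge c_1 \xvee^{-(d-2)}$ for \emph{all} $x \in \Z^d$.

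Next I would quantify the $p$-dependence. The key point is a bound of the form
\begin{equation}
\label{eq:Glb-pf-diff}
    0 \le G_{p_c}(x) - G_p(x) \lesssim (p_c - p)\, \bigl(G_p * G_p * G_p * \cdots\bigr)(x)
\end{equation}
or, more simply, $G_{p_c}(x) - G_p(x) \le (p_c - p)\frac{d}{dp}G_p(x)\big|$ type control, using that $\frac{\partial}{\partial p} G_p(x)$ can be expressed (by differentiating the generating function \eqref{eq:2ptfcn}, which introduces a factor $|\TA|/p$) in terms of a quantity dominated by a convolution of two-point functions — concretely $\frac{\partial}{\partial p}G_p(x) \le \frac{1}{p}\sum_y G_p(y) G_p(x-y)\cdot(\text{const})$ via the tree-graph / BK-type inequality available for lattice trees and animals. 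Since for $p \le p_c$ we have $G_p(y) \le G_{p_c}(y) \lesssim \xvee^{-(d-2)}$ by \eqref{eq:eta_zero}, the estimate \eqref{eq:prop1.7} with $k=2$ (valid since $d > 4$) gives $(G_p * G_p)(x) \lesssim \xvee^{-(d-4)}$. Integrating from $p$ to $p_c$ then yields
\begin{equation}
\label{eq:Glb-pf-gap}
    G_{p_c}(x) - G_p(x) \lesssim (p_c - p)\, \frac{1}{\xvee^{d-4}}.
\end{equation}
Comparing \eqref{eq:Glb-pf-gap} with the lower bound $G_{p_c}(x) \ge c_1 \xvee^{-(d-2)}$: whenever $(p_c - p)\,\xvee^{-(d-4)} \le \frac{c_1}{2}\,\xvee^{-(d-2)}$, i.e. whenever $(p_c-p)\xvee^2 \le \frac{c_1}{2C}$, equivalently $\norm{x}_\infty \le \xvee \le c_0 (p_c-p)^{-1/2}$ for a suitable $c_0$, we obtain $G_p(x) \ge \frac{c_1}{2}\xvee^{-(d-2)}$. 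This is in fact stronger than the claimed range $\norm{x}_\infty \le c_0(p_c-p)^{-1/4}$, so after relabelling the constant the lemma follows a fortiori. (One should double check whether the paper wants exactly the $(p_c-p)^{-1/4}$ scale because that matches $\xi_2$; in any case the range $(p_c-p)^{-1/4}$ is contained in $(p_c-p)^{-1/2}$ for $p$ near $p_c$, so there is no issue.)

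The main obstacle is establishing the differential-inequality input \eqref{eq:Glb-pf-diff}: one needs a clean bound on $\frac{\partial}{\partial p}G_p(x)$, or equivalently on $\sum_{\TA \ni 0,x} |\TA| (p/\Omega)^{|\TA|}$, by a convolution of two-point functions. For lattice trees this is the standard observation that marking an extra edge/vertex and splitting the tree produces two sub-polymers sharing a vertex, giving $\le \text{const}\cdot (G_p * G_p)(x)$ after summing over the split point; for lattice animals the analogous tree-graph inequality (as used in \cite{BFG86,TH87,HS90b}) does the job, with possibly a higher-order convolution $G_p^{*k}$, which is still fine since $d > 2k$ for the relevant small $k$ when $d > 8$. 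An alternative, avoiding differentiation entirely, is to note monotonicity and interpolate: since each polymer contributing to $G_p(x)$ has $|\TA| \ge \norm{x}_\infty / L$, one gets $G_p(x) \ge (p/p_c)^{?}G_{p_c}(x)$-type bounds directly, but the convolution route is cleaner and is the one consistent with the rest of the paper's machinery. I would present the convolution/differential-inequality argument as the main line, cite the relevant differential inequality from \cite{HS90b} or \cite{HHS03}, and close with the elementary comparison above.
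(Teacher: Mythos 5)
Your overall strategy is the same as the paper's (integrate a differential inequality for $\frac{d}{dp}G_p(x)$ from $p$ to $p_c$ and compare with the critical asymptotics \eqref{eq:eta_zero} via the convolution bound \eqref{eq:prop1.7}), but your key intermediate inequality is wrong, and the error is exactly at the point that determines the exponent $1/4$. You propose $\frac{\partial}{\partial p}G_p(x)\lesssim \frac1p(G_p*G_p)(x)$. Differentiating \eqref{eq:2ptfcn} produces a factor $|\TA|/p$, i.e.\ a sum over a \emph{marked edge} of the polymer. Decomposing the polymer at the branch point $w$ on the backbone from $0$ to $x$ from which the marked edge hangs, the skeleton/BK inequality gives three two-point functions meeting at $w$, one of which is summed over the free endpoint of the branch carrying the marked edge; that free sum produces a factor $\chi(p)$. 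The correct inequality (the upper skeleton inequality of \cite{HS90b}, which is what the paper uses) is
\begin{equation}
p\frac{\mathrm d}{\mathrm dp}G_p(x)\le \frac{\Omega}{2}\,\chi(p)\,(G_p*G_p)(x)
\lesssim (p_c-p)^{-1/2}(G_{p_c}*G_{p_c})(x),
\end{equation}
using \eqref{eq:gamma_half}. The factor $\chi(p)\asymp(p_c-p)^{-1/2}$ cannot be avoided: it reflects the fact that a typical polymer joining $0$ and $x$ has of order $\chi$ mass hanging off its backbone, so marking an edge costs a susceptibility.

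Consequently, integrating from $p$ to $p_c$ gives $G_{p_c}(x)-G_p(x)\lesssim (p_c-p)^{1/2}\nnnorm{x}^{-(d-4)}$, not $(p_c-p)\nnnorm{x}^{-(d-4)}$ as you claim. Comparing with $G_{p_c}(x)\gtrsim \nnnorm{x}^{-(d-2)}$ then requires $(p_c-p)^{1/2}|x|^2$ to be small, i.e.\ $|x|\lesssim (p_c-p)^{-1/4}$ — which is precisely the range in the statement, and is \emph{not} an artefact of the paper being conservative. Your remark that you obtain the lemma "a fortiori" on the larger range $|x|\le c_0(p_c-p)^{-1/2}$ should have been a warning sign: that range would be inconsistent with the correlation length scale $\xi\asymp(p_c-p)^{-1/4}$ established in Theorem~\ref{thm:mass}, beyond which $G_p$ decays exponentially and no such power-law lower bound can hold. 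With the $\chi(p)$ factor restored, the rest of your argument (including the handling of small $|x|$ via $G_p(0)\ge 1$) goes through and matches the paper's proof.
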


\begin{proof}
Let $\rho = p_c-p$.
A standard differential inequality (the upper skeleton inequality of \cite{HS90b}),
together with the upper bound on $\chip$ in \eqref{eq:gamma_half}, implies that
\begin{equation}
p \frac{\mathrm d}{\mathrm d p}[G_p(x)]
\le \frac \Omega 2 \chip (G_p*G_p)(x)
\lesssim  \rho^{-1/2} (G_{p_c}*G_{p_c})(x).
\end{equation}
By integrating the differential inequality, and by
using the critical behaviour of $G\crit$ in \eqref{eq:eta_zero}
and the convolution estimate \eqref{eq:prop1.7} with $k=2$,
we find that
\begin{align}
G_p(x) \ge G\crit(x) - (G\crit(x) - G_p(x))
\ge \frac {C_1} { \nnnorm x ^{d-2} } -  \rho^{1/2} \frac {C_2}{ \nnnorm x ^{d-4} }
\end{align}
for some $C_1, C_2 > 0$.
For $0 < \abs x \le \eps (p_c - p)^{-1/4} = \eps \rho^{-1/4}$,
this gives $G_p(x) \ge (C_1 - \eps^2 C_2) \nnnorm x^{-(d-2)}$.
Since $G_p(0) \ge 1$ by definition, the claim follows
with $c_0=\eps$
by choosing $\eps$ sufficiently small and using the equivalence of norms on $\Rd$.
\end{proof}

\begin{lemma} \label{lem:psi_lb}
Let $d >8$, $L \ge L_0$,
and $c_3=\frac {16}{81} c_0^4$,
where $c_0$ is the constant of Lemma~\ref{lem:Glb}.
Then
\begin{equation}
\psi_p(x) \gtrsim   \frac{1}{\mp^2r^d}
\end{equation}
for all $r\ge 2L+1$, all $p \in [p_c - c_3 r^{-4}, p_c)$, and all $x\in \T_r^d$.
\end{lemma}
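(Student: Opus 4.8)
The plan is to throw away all but finitely many terms of the sum defining $\psi_p$ and estimate those survivors with the pointwise lower bound of Lemma~\ref{lem:Glb}. Identifying $x\in\T_r^d$ with its representative in $\Lambda_r$, we have $\psi_p(x)=\sum_{u\in\Z^d\setminus\{0\}}G_p(x+ru)$, a sum of nonnegative terms, so for any finite set $U$ of nonzero $u$ we get $\psi_p(x)\ge\sum_{u\in U}G_p(x+ru)$. I would take $U=\{u\in\Z^d:0<\norm u_\infty\le N\}$ with
\begin{equation*}
    N=\Bigl\lfloor \tfrac{c_0}{r}(p_c-p)^{-1/4}-\tfrac12\Bigr\rfloor .
\end{equation*}
The point of the precise constant $c_3=\tfrac{16}{81}c_0^4$ in the hypothesis is that it forces $\tfrac{c_0}{r}(p_c-p)^{-1/4}\ge c_0c_3^{-1/4}=\tfrac32$ throughout $p\in[p_c-c_3r^{-4},p_c)$, so that $N\ge 1$, and moreover an elementary case split gives $\tfrac{c_0}{3r}(p_c-p)^{-1/4}\le N\le \tfrac{c_0}{r}(p_c-p)^{-1/4}$.

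Next I would check that every $u\in U$ is in range for Lemma~\ref{lem:Glb}. Since $\norm x_\infty\le r/2$ on $\Lambda_r$, any $u$ with $\norm u_\infty\le N$ satisfies $\norm{x+ru}_\infty\le \tfrac r2+rN\le c_0(p_c-p)^{-1/4}$ by the choice of $N$, so \eqref{eq:Glb} applies. At the same time, for $u\ne 0$ there is a coordinate $i$ with $\abs{x_i+ru_i}\ge r-\tfrac r2=\tfrac r2$, hence $\abs{x+ru}\ge r/2\ge 1$ (as $r\ge 2L+1$), so $\xvee=\abs{x+ru}$; and $\abs{x+ru}\le\sqrt d\,(\tfrac r2+rN)\le 2\sqrt d\,rN$ because $N\ge1$. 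Therefore $G_p(x+ru)\ge c_0\bigl(2\sqrt d\,rN\bigr)^{-(d-2)}$ for each $u\in U$. Since $\#U=(2N+1)^d-1\ge N^d$ (using $N\ge1$), summing yields
\begin{equation*}
    \psi_p(x)\ \gtrsim\ \frac{N^d}{(rN)^{d-2}}\ =\ \frac{N^2}{r^{d-2}}\ \gtrsim\ \frac{(p_c-p)^{-1/2}}{r^{d}},
\end{equation*}
where the last step inserts $N\gtrsim(p_c-p)^{-1/4}/r$ and the implied constants depend only on $d$ and $c_0$, hence on $d,L$.

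To convert this into the stated bound, I would invoke Theorem~\ref{thm:mass}: for both lattice trees and lattice animals $\mp\lesssim(p_c-p)^{1/4}$, so $\mp^2\lesssim(p_c-p)^{1/2}$ and hence $(p_c-p)^{-1/2}\gtrsim\mp^{-2}$, giving $\psi_p(x)\gtrsim \mp^{-2}r^{-d}$ as required. I do not expect any genuine obstacle here: the only nontrivial inputs are the near-critical lower bound Lemma~\ref{lem:Glb} and the correlation-length asymptotics of Theorem~\ref{thm:mass}, both already available in this section. The one thing that needs care is the bookkeeping around $N$: choosing the constant $c_3$ so that $N\ge1$ uniformly in the allowed range of $p$, confirming that all of the $\sim N^d$ shifts $x+ru$ land in the region where the pointwise lower bound holds, and checking that the count $N^d$ of these shifts, each contributing $\sim (rN)^{-(d-2)}$, recombines to $N^2r^{-(d-2)}\asymp (p_c-p)^{-1/2}r^{-d}$.
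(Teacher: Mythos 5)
Your proposal is correct and follows essentially the same route as the paper: lower-bound $\psi_p(x)$ by the terms $G_p(x+ru)$ with $u$ in a box of radius $N\asymp c_0(p_c-p)^{-1/4}/r$, chosen (via the constant $c_3=\tfrac{16}{81}c_0^4$) so that every shift lands in the region where Lemma~\ref{lem:Glb} applies, count roughly $N^d$ terms each of size $\gtrsim (rN)^{-(d-2)}$ to get $N^2 r^{-(d-2)}\gtrsim (p_c-p)^{-1/2}r^{-d}$, and convert via $\mp^2\asymp(p_c-p)^{1/2}$ from Theorem~\ref{thm:mass}. The only cosmetic difference is your floor-function bookkeeping for $N$ versus the paper's choice $M=\tfrac23 c_0(p_c-p)^{-1/4}r^{-1}$, and your crude count $\#U\ge N^d$ times the worst-case term versus the paper's shell-by-shell summation; both yield the same bound.
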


\begin{proof}
We apply Lemma~\ref{lem:Glb} to $x +ru$ with
$x\in \Lambda_r$ and $\norm u_\infty \le M$, with a well-chosen $M \ge 1$.
Since \eqref{eq:Glb} holds when $\|x+ru\|_\infty \le c_0 (p_c - p)^{-1/4}$,
and since $\norm x_\infty \le \frac r 2$,
it suffices to have
\begin{equation}\label{eq:proof1.11}
    M \ge  1, \qquad \frac r2 + rM \le c_0 (p_c - p)^{-1/4}.
\end{equation}
We try $M=M(p,r) = \mu c_0 (p_c - p)^{-1/4} r\inv$.
The two inequalities of \eqref{eq:proof1.11} then become
\begin{equation}
    r^4 (p_c - p) \le   \mu^4 c_0^4,
    \qquad r^4 (p_c - p) \le 2^4 (1-\mu)^4 c_0^4 .
\end{equation}
The choice $\mu=2/3$ makes the two right-hand sides equal, and
in that case both inequalities assert
that $r^4 (p_c - p) \le \frac {16}{81} c_0^4$.
Thus, if we choose $c_3=\frac {16}{81} c_0^4$ and require $p \in [p_c - c_3 r^{-4}, p_c )$, then
by using \eqref{eq:Glb} in the definition of $\psi_p(x)$
we obtain
\begin{align}
\psi_p(x)
\gtrsim \sum_{1 \le \norm u_\infty \le M} \frac 1 {\norm {x+ru}_\infty^{d-2}}
\ge \sum_{1 \le \norm u_\infty \le M} \frac 1 { ( \frac 3 2 r \norm {u}_\infty )^{d-2}}
\asymp \frac {M^2} {r^{d-2}}.
\end{align}
By the definition of $M$ and the fact that
$\mp^2 \asymp (p_c- p)^{1/2}$ (by Theorem~\ref{thm:mass}), the right-hand
side is bounded from below by a multiple of $\mp^{-2} r^{-d} $.  This completes the proof.
\end{proof}

\subsection{Upper bound on $E_p$}
\label{sec:Gammaub}

We estimate $E_p$ using the torus $\star$ convolution, defined
for functions $f,g : \T_r^d \to \C$ (periodic functions on $\Z^d$) by
\begin{equation}
\label{eq:starconv}
    (f\star g)(x) = \sum_{y\in \T_r^d} f(y)g(x-y)
    \qquad (x \in \T_r^d).
\end{equation}
We also use the $\Zd$ convolution $f*g$ from \eqref{eq:Zdconv},
so it is important to mark the difference between $\star$ and $*$.
Let
\begin{equation}
    \Gamma_p(x) = G_p(x) + \psi_p(x) = \sum_{u\in \Z^d} G_p(x+ru).
\end{equation}
We write $\Gamma_p^{\star k}$ for the $k$-fold star convolution of $\Gamma_p$ with itself, and similarly for $G_p^{*k}$.
The following lemma provides a bound on $\Gamma_p^{\star k}$.  In its statement
$\delta_L$ is the constant of Theorem~\ref{thm:near_critical}.

\begin{lemma} \label{lem:integrals}
Let $d>8$ and $L \ge L_0$.
There is a constant $C>0$ such that for all $r \ge 2L+1$, $p \in [p_c - \delta_L, p_c)$, and $x\in \T_r^d$,
\begin{equation}
\Gamma_p^{\star k}(x) \le G_p^{*k}(x) + \frac C{ \mp^{2k} r^d }
	\qquad (k=1,2,3,4).
\end{equation}
\end{lemma}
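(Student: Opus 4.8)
The plan is to turn the torus star-convolution into a $\Z^d$ convolution by periodisation, control the $\Z^d$ convolution pointwise using the near-critical bound of Theorem~\ref{thm:near_critical}, and then sum over the nonzero periods. First I would record the identity
\[
    \Gamma_p^{\star k}(x) = \sum_{u\in\Z^d} G_p^{*k}(x+ru)
    \qquad (x\in\T_r^d,\ k\ge 1).
\]
Since $p<p_c$ we have $\chi(p)<\infty$, so $G_p\in\ell^1(\Z^d)$ and $\Gamma_p$ is precisely its periodisation; periodisation commutes with convolution, which for $k=1$ is seen by the change of variables $(y,u)\mapsto z=y+ru$ (a bijection $\Lambda_r\times\Z^d\to\Z^d$) in the double sum defining $(\Gamma_p\star\Gamma_p)(x)$, and the general case follows by induction on $k$, the functions $G_p^{*k}$ remaining in $\ell^1$. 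Isolating the $u=0$ term, the claim reduces to showing $\sum_{u\ne 0}G_p^{*k}(x+ru)\le C\mp^{-2k}r^{-d}$.

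Next I would prove the $\Z^d$ bound
\[
    G_p^{*k}(y)\le \frac{C}{\veee{y}^{\,d-2k}}\,e^{-\frac c2\mp|y|}
    \qquad (y\in\Z^d,\ k=1,\dots,4),
\]
for $p\in[p_c-\delta_L,p_c)$, where $c$ is the constant of Theorem~\ref{thm:near_critical}. From that theorem, $0\le G_p(z)\le C_L\veee{z}^{-(d-2)}e^{-c\mp|z|}$, so $G_p^{*k}(y)=\sum_{z_1+\cdots+z_k=y}\prod_i G_p(z_i)$ is bounded by $C_L^k\sum_{z_1+\cdots+z_k=y}\prod_i\veee{z_i}^{-(d-2)}e^{-c\mp|z_i|}$. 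Writing $e^{-c\mp|z_i|}=e^{-\frac c2\mp|z_i|}e^{-\frac c2\mp|z_i|}$ and using $\sum_i|z_i|\ge|y|$ on one copy, we extract the factor $e^{-\frac c2\mp|y|}$ and are left with the convolution of the summable functions $z\mapsto C_L\veee{z}^{-(d-2)}e^{-\frac c2\mp|z|}\lesssim\veee{z}^{-(d-2)}$; since $d>8\ge 2k$, the convolution estimate \eqref{eq:prop1.7} bounds this convolution by a multiple of $\veee{y}^{-(d-2k)}$, giving the displayed bound. (The precise exponential rate is immaterial; any positive rate suffices below.)

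Finally, for $x\in\Lambda_r$ and $u\ne 0$ one has $\|x+ru\|_\infty\ge r\bigl(\|u\|_\infty-\tfrac12\bigr)\ge\tfrac12 r\|u\|_\infty$, so $|x+ru|\ge\tfrac12 r\|u\|_\infty\ge 1$ and $\veee{x+ru}\ge\tfrac12 r\|u\|_\infty$. Summing over the shells $\{\|u\|_\infty=n\}$, which have $\asymp n^{d-1}$ points, and inserting the bound from the previous step,
\[
    \sum_{u\ne 0}G_p^{*k}(x+ru)
    \lesssim \sum_{n\ge 1} n^{d-1}\,\frac{e^{-\frac c4\mp r n}}{(\tfrac12 rn)^{\,d-2k}}
    \asymp \frac{1}{r^{\,d-2k}}\sum_{n\ge 1}n^{2k-1}e^{-\frac c4\mp r n}.
\]
With $a=\tfrac c4\mp r>0$, the elementary estimate $\sum_{n\ge 1}n^{2k-1}e^{-an}\lesssim_k a^{-2k}$, uniform in $a>0$ (compare with $\int_0^\infty t^{2k-1}e^{-at}\,dt$ for $a\le 1$, and use $e^{-a/2}\lesssim_k a^{-2k}$ for $a\ge 1$), yields $\sum_{u\ne 0}G_p^{*k}(x+ru)\lesssim r^{-(d-2k)}(\mp r)^{-2k}=\mp^{-2k}r^{-d}$, as required.

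I expect the only step needing genuine care to be the second one — propagating the exponential decay of $G_p$ through the $k$-fold convolution while retaining a clean polynomial prefactor — and even there the triangle inequality $\sum_i|z_i|\ge|y|$ reduces it immediately to the known polynomial convolution estimate \eqref{eq:prop1.7}. The rest is bookkeeping, with the one subtlety that the geometric-sum bound must be uniform as $\mp r$ ranges over both small values (the regime of interest, since $\mp\to 0$ as $p\to p_c$) and large values.
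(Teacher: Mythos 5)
Your proof is correct and follows essentially the same route as the paper, which establishes the identity $\Gamma_p^{\star k}(x)=\sum_{u\in\Z^d}G_p^{*k}(x+ru)$ and then uses Theorem~\ref{thm:near_critical} to control the $u\neq 0$ terms, deferring the details to \cite[Lemma~3.6]{MS23}. Your write-up supplies exactly those details (splitting the exponential across the $k$-fold convolution, invoking \eqref{eq:prop1.7}, and summing over shells with the uniform-in-$a$ geometric estimate), so there is nothing to correct.
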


\begin{proof}
The proof is based on the observation that
\begin{align}
\label{eq:Gammastarbd}
\Gamma_p^{\star k}(x) = \sum_{u\in \Zd} G_p^{*k}(x+ru) ,
\end{align}
and Theorem~\ref{thm:near_critical} is used to bound the terms with nonzero $u$.
The details for $k=1,2,3$ (both for the identity \eqref{eq:Gammastarbd}
and its upper bound) are given in \cite[Lemma~3.6]{MS23}.
The same method applies for $k=4$.
\end{proof}

\begin{figure}
\begin{center}
	\includegraphics[scale = 0.7]{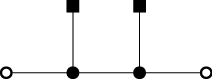}
	\qquad\qquad
	\includegraphics[scale = 0.7]{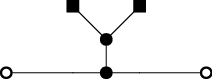}
\caption{$\Z^d$ configurations contributing to $E_{p}(x)$.
Lines represent $G_p$, hollow vertices are $0,x'$,
box vertices are $y,y'$,
and filled vertices are summed over $\mathbb Z^d$.
The vertices $y,y'$ are distinct and torus equivalent.
}
\label{figure:Zd_diagrams}
\end{center}
\end{figure}

\begin{figure}
\begin{center}
	\includegraphics[scale = 0.7]{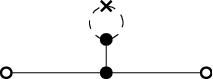}
	\qquad
	\includegraphics[scale = 0.7]{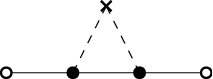}
	\qquad
	\includegraphics[scale = 0.7]{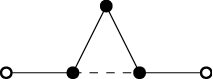}
\caption{Torus configurations contributing to $E_p(x)$.
Lines represent $\Gamma_p$, hollow vertices are $0,x$,
and filled vertices are summed over the torus.
The dashed lines with $\times$ represent
$\Gamma_p^{\star 2}-G_p^{*2}\lesssim \mp^{-4}r^{-d}$,
and the last dashed line represents
$\Gamma_p - G_{p} \lesssim \mp^{-2}r^{-d}$.
}
\label{figure:torus_diagrams}
\end{center}
\end{figure}

The next lemma bounds $E_p$ in terms of $\Gamma_p$.

\begin{lemma} \label{lem:lower}
Let $d>8$ and $L \ge L_0$.
There is a constant such that for all $r \ge 2L+1$, $p \in [p_c - \delta_L, p_c)$, and $x\in \T_r^d$,
\begin{equation}
E_p(x) \lesssim \frac 1 {\mp^6 r^d} \Gamma^{\star 2}( x )
	+ \frac 1 {\mp^4 r^d} \Gamma^{\star 3}( x )
	+ \frac 1 {\mp^2 r^d} \Gamma^{\star 4}( x)  .
\end{equation}
\end{lemma}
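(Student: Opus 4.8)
The plan is to bound the four-point generating function $\sum_{\TA \ni 0,x',y,y'}(p/\Omega)^{|\TA|}$ by $\Zd$ tree diagrams and then to carry out the constrained sums over $x'\equiv x$, over $y\in\Zd$, and over $y'\equiv y$ with $y'\ne y$ directly. First I would invoke the tree-graph inequality (the skeleton inequality for lattice trees, and its analogue for lattice animals, as in \cite{HS90b,HHS03}): it shows that $\sum_{\TA \ni 0,x',y,y'}(p/\Omega)^{|\TA|}$ is at most the sum, over the skeleton shapes shown in Figure~\ref{figure:Zd_diagrams}, of the associated diagram, namely a product of factors $G_p$ over the edges of the shape with the branch (internal) vertices summed freely over $\Zd$. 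For four external vertices the only shapes are the $H$-shape, with two degree-$3$ branch vertices, for each of the three ways of pairing the external vertices into two pairs, and the star, with one degree-$4$ branch vertex; degenerate shapes in which an external vertex is itself a branch vertex need no separate treatment, since the branch vertices are already summed over all of $\Zd$. Substituting this into \eqref{eq:Epdef} reduces matters to estimating each of these diagrams.

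Two elementary identities drive the rest. Summing a line over $x'\equiv x$ wraps it onto the torus: $\sum_{x'\equiv x}G_p(x'-a)=\Gamma_p(\pi(x-a))$, using $\Gamma_p=\sum_{u}G_p(\,\cdot\,+ru)$. Summing the pair $(y,y')$ against two lines $G_p(y-s_1)$ and $G_p(y'-s_2)$ gives, after grouping $y$ into torus classes and subtracting the diagonal $y'=y$, the ``pinch'' identity $\sum_{y}\sum_{y'\equiv y,\,y'\ne y}G_p(y-s_1)G_p(y'-s_2)=\Gamma_p^{\star 2}(\overline{s_1-s_2})-G_p^{*2}(s_1-s_2)$; here it is crucial that $E_p$ carries the constraint $y'\ne y$. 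When the two lines meet at a common branch vertex ($s_1=s_2$) this equals $\Gamma_p^{\star 2}(0)-G_p^{*2}(0)\le C\mp^{-4}r^{-d}$ by Lemma~\ref{lem:integrals}. This covers the star and the $H$-shape with pairing $\{0,x'\}\,|\,\{y,y'\}$: in the latter, after the pinch the branch vertex carrying $y,y'$ is joined only to the other branch vertex, producing a free two-point sum $\chi(p)\lesssim\mp^{-2}$ (by \eqref{eq:gamma_half} and Theorem~\ref{thm:mass}), while the remaining part becomes $\Gamma_p^{\star 2}(x)$; together this is $\lesssim\mp^{-6}r^{-d}\Gamma_p^{\star 2}(x)$, the first term of the lemma, and the star contributes $\lesssim\mp^{-4}r^{-d}\Gamma_p^{\star 2}(x)$, which the first term absorbs.

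The main case is the $H$-shape with pairing $\{0,y\}\,|\,\{x',y'\}$ (the mirror pairing $\{0,y'\}\,|\,\{x',y\}$ is equal by the $y\leftrightarrow y'$ symmetry of the sum). Now $y$ and $y'$ hang off the two different branch vertices, so the pinch decorates the internal edge, and after the remaining sums one is left with $\sum_{w\in\Zd}G_p(w)\,[\,\Gamma_p^{\star 2}(\bar w)-G_p^{*2}(w)\,]\,\Gamma_p^{\star 2}(\pi(x-w))$, where $w$ is the internal-edge vector. The obstacle here is that $\Gamma_p^{\star 2}(\bar w)-G_p^{*2}(w)$ is \emph{not} uniformly of order $\mp^{-4}r^{-d}$, because $G_p^{*2}(w)$ is evaluated outside the fundamental domain. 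I would split $\Gamma_p^{\star 2}(\bar w)-G_p^{*2}(w)=[\Gamma_p^{\star 2}(\bar w)-G_p^{*2}(\bar w)]+[G_p^{*2}(\bar w)-G_p^{*2}(w)]$: the first bracket is $\le C\mp^{-4}r^{-d}$ uniformly by Lemma~\ref{lem:integrals}, and the leftover satisfies $\sum_w G_p(w)\Gamma_p^{\star 2}(\pi(x-w))=\Gamma_p^{\star 3}(x)$, giving the second term $\mp^{-4}r^{-d}\Gamma_p^{\star 3}(x)$; for the second bracket, $G_p^{*2}(\bar w)-G_p^{*2}(w)\le G_p^{*2}(\bar w)$, which vanishes unless $w\ne\bar w$, so grouping by torus class produces the factor $\sum_{w\equiv\bar w,\,w\ne\bar w}G_p(w)=\psi_p(\bar w)\le C\mp^{-2}r^{-d}$ (Lemma~\ref{lem:integrals}), leaving the torus sum $\sum_{\bar w}G_p^{*2}(\bar w)\Gamma_p^{\star 2}(\bar x-\bar w)$, which after $G_p^{*2}(\bar w)\le\Gamma_p^{\star 2}(\bar w)$ is at most $\Gamma_p^{\star 4}(x)$, giving the third term $\mp^{-2}r^{-d}\Gamma_p^{\star 4}(x)$. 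Summing the contributions of all shapes then yields the stated bound.

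I expect this last case to be the main obstacle: one must recognise that when the equivalent pair straddles an internal edge the pinch does not by itself supply the $r^{-d}$ factor, and that the ``fundamental-domain discrepancy'' $G_p^{*2}(\bar w)-G_p^{*2}(w)$ must be re-summed as a $\psi_p$-type wrapped sum to recover the $\mp^{-2}r^{-d}$ gain, at the cost of an extra $\Gamma_p^{\star}$ factor. The remaining ingredients---the tree-graph reduction, the wrapping identity, and tracking which branch vertex becomes free---are routine given Lemma~\ref{lem:integrals} and the bound $\chi(p)\lesssim\mp^{-2}$.
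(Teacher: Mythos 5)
Your proposal is correct and follows essentially the same route as the paper: a BK/skeleton reduction to the two $H$-topologies (with the star and the mirror pairing absorbed), the wrapping and pinch identities, Lemma~\ref{lem:integrals} for the pinched pair, and, in the straddling case, extraction of a $\psi_p$-type factor $\lesssim \mp^{-2}r^{-d}$ at the cost of an extra $\Gamma^{\star}$ to produce the $\Gamma^{\star 4}$ term. Your split of $\Gamma_p^{\star 2}(\bar w)-G_p^{*2}(w)$ into a fundamental-domain part and a discrepancy part is just a repackaging of the paper's case distinction $t=\check t$ versus $t\ne\check t$, and correctly identifies the one genuinely delicate point.
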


\begin{proof}
The function $E_p(x)$ is defined in \eqref{eq:Epdef}
to be the sum over polymers that contain $0$, a point $x' \equiv x$, and a pair of distinct equivalent points $y,y'$.
This can happen with two topologies, as depicted in Figure~\ref{figure:Zd_diagrams}.

For an upper bound, we ignore the interaction between different lines in the diagrams,
by using the lattice animal version of the BK inequality \cite[Lemma~2.1]{HS90b}.
This produces $E_p(x) \le E_p \supk 1(x) + E_p \supk 2(x)$, where
\begin{align}
E_p \supk 1(x)
&= \sum_{x' \equiv x} \sum_{s,t\in\Z^d}
	G_p(s) G_p(t) G_p(x'-t - s)
	\sum_{y\in\Z^d} \sum_{u \neq 0} G_p(y - s) G_p(y - ru - t - s) , \\
E_p \supk 2(x)
&= \sum_{x' \equiv x} \sum_{s\in\Z^d} G_p(s) G_p(x'-s)
	\sum_{t\in\Z^d} G_p(t-s)
	\sum_{y\in\Z^d} \sum_{u \neq 0} G_p(y-t) G_p(y-ru-t).
\end{align}
We estimate each term.
For simplicity, we omit the subscript $p$ and write $m = \mp$ below.
We also write $\check t = \pi(t)$, $\check s = \pi(s)$, and $\check x = x$ for the projections onto the torus.

For $E \supk 1$, we first  rewrite the sums over $y,u$ as
\begin{align}
\sum_{y\in\Z^d} \sum_{u \neq 0} G(y - s) G(y - ru - s - t)
= \sum_{u\ne 0} (G*G) (t +ru) ,
\end{align}
and observe that,
by considering whether or not $t = \check t$
and using Lemma~\ref{lem:integrals} (and \eqref{eq:Gammastarbd}),
we have
\begin{align}
G(t) \sum_{u\ne 0} (G*G)(t +ru)
\lesssim \1_{ t = \check t } G( t ) \frac 1 { m^4 r^d }
	+ \1_{ t \ne \check t } G(t) (\Gamma \star \Gamma)(t) .
\end{align}
The sum over $x'$ gives a factor $\Gamma(\check x - \check t - \check s)$.
Altogether, we obtain the upper bound
\begin{align}
E \supk 1(x)
&\lesssim \sum_{s,t\in\Z^d}
	G(s) \Gamma(\check x - \check t - \check s)
	\Big( G( t ) \frac 1 { m^4 r^d }
	+ \1_{ t \ne \check t } G(t) (\Gamma \star \Gamma)(t)  \Big) \nl
&= \frac 1 {m^4 r^d} \Gamma^{\star 3}( \check x )
	+ \sum_{s\in \Zd} \sum_{\check t \in \T_r^d} G(s)
	\Gamma(\check x - \check t - \check s) (\Gamma \star \Gamma)(\check t)
	\sum_{w\ne 0} G(\check t + rw) 	\nl
&\lesssim \frac 1 {m^4 r^d} \Gamma^{\star 3}( \check x )
	+ \frac 1 {m^2 r^d} \Gamma^{\star 4}(\check x)  ,
\end{align}
where we have again used Lemma~\ref{lem:integrals} to bound the sum over $w \ne 0$ in the last inequality.

For $E \supk 2$, we use the same strategy.
The sums over $y,u$ equal
$\sum_{u\ne 0} (G*G)(ru) \lesssim m^{-4}r^{-d}$
by Lemma~\ref{lem:integrals},
and the sum over $x'$ gives $\Gamma( \check x - \check s)$.
We obtain
\begin{align}
E_p \supk 2(x)
&\lesssim \frac 1 {m^4 r^d} \sum_{s\in\Z^d} G(s) \Gamma(\check x-\check s)
	\sum_{t\in\Z^d} G(t-s)
= \frac \chi {m^4 r^d} ( \Gamma \star \Gamma )(\check x) .
\end{align}
This completes the proof since
$\chi \asymp m^{-2}$ by \eqref{eq:gamma_half} and Theorem~\ref{thm:mass}.

The three contributions to the upper bound on $E_p(x)$ are depicted in
Figure~\ref{figure:torus_diagrams}.
\end{proof}

\subsection{Proof of Theorem~\ref{thm:torus}}
\label{sec:pf-torus}

We first prove the plateau upper bound, and then turn to the more substantial lower bound.

\begin{proof}[Proof of \eqref{eq:GTub}]
This follows by combining the upper bound $G_p^{\T}(x)\le G_p(x)+\psi_p(x)$ from
Proposition~\ref{prop:GGam} with the upper bound $\psi_p(x) \lesssim   \mp^{-2} r^{-d}$ from the $k=1$ case of Lemma~\ref{lem:integrals}.
\end{proof}

\begin{proof}[Proof of \eqref{eq:GTlb}]
Let $p \in [p_c - \delta_L, p_c)$ with $p \ge p_c - c_3 r^{-4}$,
where $c_3$ is the constant of Lemma~\ref{lem:psi_lb}.
By the lower bounds of Proposition~\ref{prop:GGam} and Lemma~\ref{lem:psi_lb},
there is a $c_\psi>0$ such that
\begin{equation} \label{eq:lb_pf}
G_p \supT (x) \ge G_p(x) + \frac {c_\psi} {\mp^2 r^d} - E_p(x) ,
\end{equation}
so it suffices to prove that
\begin{equation} \label{eq:lb_claim}
E_p(x) \lesssim \frac { 1 } { \mp^{8} r^d }
	\bigg( G_p(x) + \frac 1 { \mp^{2} r^d } \bigg) .
\end{equation}

Since $\norm{ x }_\infty \le \frac r 2$ for all $x\in \T_r^d$,
by choosing a smaller constant $c_3$, we can assume
the lower bound of Lemma~\ref{lem:Glb} holds throughout $\T_r^d$.
Using Theorem~\ref{thm:mass}, we can also assume $\mp \le ( r \sqrt d)\inv$.
Then, by Lemma~\ref{lem:lower}, Lemma~\ref{lem:integrals},
and by the convolution estimate \eqref{eq:prop1.7},
we have
\begin{align}
E_p(x) \lesssim \sum_{k=2}^4 \frac 1 { \mp^{2(5-k)}r^d } \Gamma^{\star k} (x)
\lesssim \sum_{k=2}^4 \frac 1 { \mp^{2(5-k)}r^d }
	\bigg( \frac 1 {\nnnorm x^{d-2k} } + \frac 1 { \mp^{2k} r^d } 	\bigg) .
\end{align}
Now, since $\abs x \le r \sqrt d \le \mp\inv$, we obtain
\begin{align}
E_p(x) \lesssim \sum_{k=2}^4 \frac { 1 } { \mp^{2(5-k)} r^d }
	\bigg( \frac {\mp^{-2(k-1)}} {\nnnorm x^{d-2} }+\frac1{ \mp^{2k} r^d }\bigg)
=  \frac { 3 } { \mp^{8} r^d }
	\bigg( \frac 1 {\nnnorm x^{d-2} } + \frac 1 { \mp^{2} r^d } \bigg) ,
\end{align}
and the desired \eqref{eq:lb_claim}
follows from Lemma~\ref{lem:Glb}.
\end{proof}

\section{The tilted susceptibility}
\label{sec:chim}

In this section, we present useful preliminaries concerning the
tilted susceptibility $\chim$ defined in \eqref{eq:chimdef}.
These results follow from geometric considerations and apply in all dimensions.
They will be used in our proof that $\lim_{p\to p_c^-}m(p)=0$, which is
implicit in the statement of Theorem~\ref{thm:mass}.

\subsection{Divergence of the tilted susceptibility}

Recall that $\Omega = (2L+1)^d - 1$. Let
\begin{equation} \label{eq:def_D}
D(x) = \frac 1 \Omega \1\{ 0 < \norm x_\infty \le L \}.
\end{equation}
The following is an inequality in the spirit of the Aizenman--Simon inequality \cite{AS80}.

\begin{lemma} \label{lem:AS}
Let $d\ge1$.
Let $\Lambda \subset \Zd$ be such that $0 \in \Lambda$ and $x \not \in \Lambda$. Then
\begin{align}
\label{eq:AS}
G_p(x) \le \sum_{\substack{y\in \Lambda \\ z \not \in \Lambda}} G_p(y) pD(z-y) G_p(x-z) .
\end{align}
\end{lemma}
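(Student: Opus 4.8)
The plan is to decompose each polymer according to where it first leaves the region $\Lambda$. Fix a polymer $\TA$ containing $0$ and $x$. Since $\TA$ is connected it contains a self-avoiding walk from $0$ to $x$, and since $0\in\Lambda$ but $x\notin\Lambda$, any such walk has a last vertex $y$ lying in $\Lambda$, immediately followed by a vertex $z\notin\Lambda$; the bond $\{y,z\}$ lies in $\TA$ and carries weight $p/\Omega=pD(z-y)$. The idea is to cut $\TA$ at this bond into a part containing $\{0,y\}$ and a part containing $\{z,x\}$, so that summing the weights of all polymers reproduces the right-hand side of \eqref{eq:AS}.

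For lattice trees this is straightforward. The walk in question is the unique path in $T$ from $0$ to $x$, so $y$, $z$ and $e=\{y,z\}$ are canonically determined. Deleting $e$ from $T$ disconnects it into two subtrees $T_1\ni 0,y$ and $T_2\ni z,x$ with $\abs T=\abs{T_1}+\abs{T_2}+1$, and the resulting map $T\mapsto(y,z,T_1,T_2)$ is injective since $T=T_1\cup\{e\}\cup T_2$. Writing $\sum_{T_2\ni z,x}(p/\Omega)^{\abs{T_2}}=G_p^t(x-z)$ by translation invariance, and bounding the (weight-multiplicative) sum over trees by the sum over all tuples $(y,z,T_1,T_2)$ with $y\in\Lambda$, $z\notin\Lambda$, $\{y,z\}$ a bond of the graph, $T_1\ni 0,y$, $T_2\ni z,x$ — a strictly larger index set, since it discards the constraints that $T_1,T_2$ be vertex-disjoint and that the $z$-to-$x$ path in $T_2$ not re-enter $\Lambda$ — yields \eqref{eq:AS} for lattice trees.

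For lattice animals the same cut works, but there is no unique path, so I would route the argument through the van den Berg--Kesten inequality for lattice animals, \cite[Lemma~2.1]{HS90b} (the tool already used in the proof of Lemma~\ref{lem:lower}). Fixing a canonical self-avoiding walk in $A$ from $0$ to $x$ and its first-exit bond $e=\{y,z\}$ (the last walk-vertex in $\Lambda$ and the next one, as above), one sees that $A$ contains, on pairwise edge-disjoint sets of bonds, a sub-animal joining $0$ to $y$, the single bond $\{y,z\}$, and a sub-animal joining $z$ to $x$. The BK inequality then bounds the total weight of animals admitting such a decomposition, for fixed $y,z$, by $G_p^a(y)\cdot(p/\Omega)\cdot G_p^a(x-z)$, and summing over $y\in\Lambda$, $z\notin\Lambda$ gives \eqref{eq:AS} for animals.

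The main obstacle is this animal step: one must ensure that the middle factor is exactly the bond weight $p/\Omega=pD(z-y)$, not the much larger generating function for all animals through $\{y,z\}$. This forces the BK decomposition to be an honest partition $E(A)=E(A_1)\sqcup\{e\}\sqcup E(A_2)$ of all bonds of $A$, with $A_1\ni 0,y$ and $A_2\ni z,x$ connected but permitted to share vertices — precisely the bookkeeping that the lattice-animal BK machinery of \cite{HS90b} supplies, and which is automatic for trees. A minor point is that the presence of the factor $D(z-y)$ correctly annihilates the (vanishing) terms in which $\{y,z\}$ is not a bond of the graph, so that one may freely sum over all $y\in\Lambda$, $z\notin\Lambda$ in both cases.
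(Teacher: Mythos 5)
Your proof is correct and follows essentially the same route as the paper's: cut the polymer at a canonically chosen exit bond $\{y,z\}$ of a self-avoiding path from $0$ to $x$, use uniqueness of that path for trees (ignoring the mutual avoidance of the two subtrees), and invoke the lattice-animal BK inequality of \cite[Lemma~2.1]{HS90b} for animals. The only cosmetic difference is that you select the last exit bond rather than the first, which is immaterial.
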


\begin{proof}
In a lattice tree or lattice animal containing $0$ and $x$,
fix a self-avoiding path connecting $0$ and $x$.
Since $0\in \Lambda$ and $x \not \in \Lambda$, there is a first bond $\{ y, z \}$ in this path such that $y \in \Lambda$ and $z \not \in \Lambda$.
For a lattice tree,
there is only one such path, and $y,z$ divide the tree into three components:
a subtree containing $0,y$, the bond $\{ y,z \}$, and a subtree containing $z,x$.
Ignoring the mutual avoidance between the three components then produces the desired inequality.
For a lattice animal,
neither the path nor the division is unique in general,
but the lattice animal version of the BK
inequality \cite[Lemma~2.1]{HS90b} still yields the desired inequality.
\end{proof}

\begin{corollary}
\label{cor:mtilt}
For $p \in (0,p_c)$, $\chi \supmp(p) = \infty$.
\end{corollary}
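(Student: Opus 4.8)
The plan is to show that the tilted susceptibility $\chi^{(m(p))}(p)$ diverges at the boundary value $m = m(p)$, using the geometric inequality of Lemma~\ref{lem:AS} to convert the a priori finiteness of $\chi^{(m)}(p)$ for $m < m(p)$ into a renewal-type upper bound on $G_p(ne_1)$, and then deriving a contradiction with the definition \eqref{eq:mtilt} of $m(p)$ as the supremum of $m$ with $\chi^{(m)}(p) < \infty$. The key point is that the definition \eqref{eq:mtilt} gives finiteness strictly below $m(p)$ but says nothing at $m(p)$ itself; I want to rule out the borderline possibility that $\chi^{(m(p))}(p) < \infty$.

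First I would suppose for contradiction that $\chi^{(m(p))}(p) < \infty$ for some fixed $p \in (0, p_c)$, abbreviating $m = m(p)$. Applying Lemma~\ref{lem:AS} with $\Lambda$ a large box (say $\Lambda = \Lambda_N$ centred at the origin with $x$ outside it), and multiplying through by $e^{m x_1}$, I get a convolution-type inequality for the tilted two-point function $\tilde G(x) := G_p(x) e^{m x_1}$ of the form $\tilde G(x) \le (\tilde G \star p\tilde D \star \tilde G)(x)$, where the tilted weights $\tilde D(z) = D(z) e^{m z_1}$ are bounded since $D$ has bounded support. Summing over all $x$ outside $\Lambda$ (and estimating the boundary contributions), this produces a relation roughly of the shape $\chi^{(m)}(p) \le \text{(boundary terms)} + (\text{small}) \cdot \chi^{(m)}(p)^2$, or more precisely, after iterating the Aizenman--Simon-type bound, a bound showing that $\tilde G$ decays \emph{strictly faster} than the critical tilt rate would suggest — i.e., one can extract from $\chi^{(m)}(p) < \infty$ a genuine exponential gain. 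The mechanism: the inequality shows that renewing through a bond of $\tilde D$ strictly contracts, so the decay exponent of $G_p$ along the axis is strictly larger than $m$, contradicting $\tilde m(p) \ge m(p) = m$ unless — and here is the subtlety — the renewal structure is trivial.

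The cleanest route, which I expect the paper to take, is as follows. Fix $p < p_c$ and let $m = m(p) > 0$ (positivity was established in the excerpt via \eqref{eq:chi-tn-2}). For $0 \le \mu < m$ we know $\chi^{(\mu)}(p) < \infty$. Write $f(\mu) = \chi^{(\mu)}(p)$; this is finite, nondecreasing, and convex in $\mu$ on $[0, m)$ by \eqref{eq:chimdef} (it is a sum of exponentials in $\mu$ with nonnegative coefficients). If $f(m) = \lim_{\mu \uparrow m} f(\mu) < \infty$ as well, then Lemma~\ref{lem:AS} applied with the tilt $e^{m x_1}$ and $\Lambda = \{0\}$ gives $\chi^{(m)}(p) \le p\, (\sum_z D(z) e^{m z_1}) \cdot \chi^{(m)}(p) \cdot [\text{one-point-type factor}]$; more carefully, summing \eqref{eq:AS} over $x \notin \Lambda$ with $\Lambda$ a box and letting the box grow shows $\chi^{(m)}(p)$ controls its own tail in a way that forces a strict exponential improvement — concretely, that $G_p(x) e^{m'x_1}$ is still summable for some $m' > m$ (by choosing the tilt in the renewal bond slightly larger, using that $\tilde D$ has bounded support so $\sum_z D(z) e^{m' z_1} < \infty$ for all $m'$, while the gain from summability of $\chi^{(m)}$ leaves room to spare). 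This contradicts the definition of $m(p)$ as the supremum in \eqref{eq:mtilt}. Hence $\chi^{(m(p))}(p) = \infty$.

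The main obstacle I anticipate is the bookkeeping in passing from the single-step Aizenman--Simon inequality of Lemma~\ref{lem:AS} to a clean self-improving estimate for the \emph{tilted} susceptibility: one must choose the intermediate set $\Lambda$ and the split of the tilt $e^{m x_1} = e^{m y_1} e^{m(z_1 - y_1)} e^{m(x_1 - z_1)}$ so that each factor is absorbed into a tilted two-point function or a bounded tilted step weight, and then argue that finiteness at the critical tilt $m$ propagates to summability at a slightly larger tilt. This is precisely the kind of argument that works for percolation and self-avoiding walk (cf.\ the references \cite{Hara90,HS92a} cited around Theorem~\ref{thm:mass}), and the lattice-tree/animal version goes through because Lemma~\ref{lem:AS} is available in exactly the needed form; the only care needed is that for animals one uses the BK inequality, which Lemma~\ref{lem:AS} already incorporates. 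Everything else — convexity of $f$, boundedness of the tilted step distribution, positivity of $m(p)$ — is either elementary or already recorded in the excerpt.
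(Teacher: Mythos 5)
Your proposal follows essentially the same route as the paper: the paper's proof is exactly the contradiction argument you describe, citing \cite[Corollary~A.4(c)]{Hara90} — assume $\chi^{(m(p))}(p)<\infty$, use Lemma~\ref{lem:AS} with a large box (whose near-boundary tilted boundary layer is small precisely because the tilted susceptibility converges) to extract genuine exponential decay of $G_p(x)e^{m(p)x_1}$, and conclude $\chi^{(m(p)+\eps)}(p)<\infty$, contradicting the supremum in \eqref{eq:mtilt}. Your intermediate detours (the quadratic self-bound, the $\Lambda=\{0\}$ attempt) are not the operative mechanism, but your "more careful" formulation is the correct one and is what the paper does.
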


\begin{proof}
This is proved in \cite[Corollary~A.4(c)]{Hara90}
to follow from
Lemma~\ref{lem:AS}.
The essential idea is that if $\chi \supmp (p) < \infty$,
then Lemma~\ref{lem:AS} implies that $G_p(x) e^{\mp x_1}$ still has exponential decay,
which in turn implies that $\chi\supk{\mp + \eps}(p) < \infty$ for some small $\eps > 0$.  This contradicts the definition of $\mp$.
\end{proof}

\subsection{Lower bound on the tilted susceptibility}

The next lemma is a lower bound on $\chi \supm$,
inspired by the method of \cite{TH87} for the correlation length of order $2$.
It expresses the fact that, due to the exponential weight,
the tilted susceptibility
diverges much faster than the untilted susceptibility.

\begin{lemma} \label{lem:chim}
Let $d\ge 1$, $m>0$, and $s > 1$.
Then there is a constant $c_{m,s} > 0$ for which
\begin{align}
\chi \supm (p) \ge c_{m,s} \chip ^ s
	\qquad ( p < p_c).
\end{align}
\end{lemma}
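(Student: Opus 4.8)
The plan is to exploit the exponential weight $e^{mx_1}$ in the tilted susceptibility by showing that a polymer reaching far out along the $e_1$-axis contributes an exponentially large amount to $\chi\supm(p)$, and then to produce such far-reaching polymers by concatenation. First I would fix $m>0$ and $s>1$. The key geometric input is a concatenation-type lower bound: if $\TA$ is a polymer containing $0$ and $y$, then translating a copy of $\TA$ by $y$ and joining the two copies (possibly through one extra bond, as in the subadditivity argument behind \eqref{eq:tn_subadd}) produces a polymer containing $0$ and $2y$ whose weight is at least a fixed constant multiple of the product of the two weights. Iterating, for any fixed unit-type displacement one obtains, for each $N$, a lower bound of the form $G_p(Ny_0) \gtrsim c^N G_p(y_0)^N$ for a suitable $y_0$ with $(y_0)_1 > 0$; more usefully, summing over the intermediate endpoints gives a bound on partial sums of $G_p$ restricted to a half-space or to a cone around $e_1$ in terms of powers of such restricted sums.

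The cleaner route, which I expect to follow, is this. Let $\chi^+(p) = \sum_{x: x_1 \ge 0} G_p(x)$; by $\Zd$-symmetry $\chi^+(p) \asymp \chi(p)$, and in particular $\chi^+(p) \to \infty$ as $p \to p_c^-$. By concatenation, $\sum_{x_1 \ge 0} G_p(x)$ restricted further to $x$ lying ``downstream'' admits a supermultiplicativity: there is $c>0$ such that the sum of $G_p$ over polymers from $0$ to points with $x_1 \ge n$ dominates $c^{n}$ times an appropriate power, so that for each integer $k\ge 1$ one gets a chain of $k$ polymers whose endpoints have first coordinates $t_1 \le t_1+t_2 \le \cdots$, each step of size $\ge 1$, with total weight $\gtrsim c^{k}\,(\text{something})$. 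Pushing the endpoint out to first-coordinate $\asymp k$ while keeping the summed weight $\gtrsim \eta^{k}\chi^+(p)$ for a fixed $\eta \in (0,1)$ (losing one factor per concatenation but gaining a full copy of $\chi^+$ each time is too optimistic; instead one keeps one factor of $\chi^+$ and pays $\eta^k$), and then inserting the weight $e^{mx_1} \ge e^{mk}$, yields
\begin{equation}
    \chi\supm(p) \;\gtrsim\; e^{mk}\,\eta^{k}\,\chi^+(p)
    \qquad\text{for every } k \ge 1.
\end{equation}
Here $\eta$ depends only on the lattice (via $L,d$), not on $p$.

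Now choose $k=k(p)$ growing slowly with $\chi(p)$: set $k = \lceil \beta \log \chi^+(p)\rceil$ for a constant $\beta>0$ to be optimised. Then $e^{mk}\eta^{k} = e^{k(m-|\log\eta|)}$, and while we do not know the sign of $m-|\log\eta|$ in general, we are free to first split a long polymer into many short pieces so that each concatenation step moves the endpoint by only a bounded amount $\ell_0$ along $e_1$ while the multiplicative loss per step is still a fixed $\eta<1$; taking $\ell_0$ large enough (depending on $m$ and $\eta$) makes $e^{m\ell_0}\eta > 1$, i.e. the per-step factor $e^{m\ell_0}\eta =: \theta > 1$. Then $\chi\supm(p) \gtrsim \theta^{k}\chi^+(p)$ with $\theta>1$, and with $k \asymp \beta\log\chi^+(p)$ this gives $\chi\supm(p) \gtrsim \chi^+(p)^{1+\beta\log\theta} \gtrsim \chi(p)^{s}$ once $\beta$ is chosen with $\beta\log\theta \ge s-1$. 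This proves the claim with $c_{m,s}$ absorbing the finitely many constants (the concatenation constant, the implicit constant in $\chi^+\asymp\chi$, and the $o(k)$ corrections from $p<p_c$ ensuring each short piece has weight bounded below).

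The main obstacle is making the concatenation lower bound genuinely uniform in $p<p_c$ and honest about which intermediate sum survives: a naive concatenation bound like $G_p(x+y)\ge c\,G_p(x)G_p(y)$ is fine for lattice trees (join two trees by one bond, then use the tree BK/factorisation as in \cite{HS90b}), but for lattice animals one must be careful that gluing two animals does not create forbidden coincidences and that the BK inequality is applied in the correct direction — here it is the supermultiplicative (lower) direction, so one instead uses a direct injection: the disjoint union of a copy of $A_1$ and a translate of $A_2$, plus a connecting bond, is a valid animal, and distinct pairs $(A_1,A_2)$ with a marked endpoint give distinct animals. Getting the bookkeeping right so that summing over the $k-1$ intermediate endpoints produces exactly one surviving factor of $\chi^+(p)$ (rather than $\chi^+(p)^k$, which would be false, or $1$, which would be too weak) is the delicate point; the resolution is to sum over intermediate endpoints on only one side and keep the innermost polymer's endpoint free, which is precisely the structure that yields $\theta^k\,\chi^+(p)$.
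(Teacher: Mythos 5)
Your approach has a genuine gap, and in fact the chain of inequalities you propose cannot hold in the regime where the lemma has content. The first problem is the concatenation lower bound itself. To glue $\TA_1\ni 0,x$ to a translate of $\TA_2\ni 0,y$ through a connecting bond and obtain a single lattice tree or animal carrying the weight $(p/\Omega)^{|\TA_1|+|\TA_2|+1}$, the two pieces must be vertex-disjoint: otherwise the union is not a tree, and for animals the bond count (hence the weight) changes and injectivity is lost. Your ``direct injection'' therefore only produces the sum over \emph{disjoint} pairs, and the step you yourself flag as delicate --- lower-bounding that restricted sum by a constant times $G_p(x)G_p(y)$, uniformly in $p<p_c$ --- is precisely what is unavailable: lattice trees and animals satisfy no FKG or GKS type inequality (the paper stresses this when explaining why even $\mp=\tilde m(p)$ is unknown), and a reverse-BK / non-intersection bound for two near-critical polymers is exactly the kind of statement that requires the full lace-expansion analysis. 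The lemma, however, is stated and used for all $d\ge 1$, where two large polymers genuinely do intersect.

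The second problem is quantitative and decisive. You claim $\chim(p)\gtrsim\theta^{k}\chi^{+}(p)$ for \emph{every} $k\ge 1$ with $\theta=e^{m\ell_0}\eta>1$ and constants uniform in $k$; letting $k\to\infty$ would force $\chim(p)=\infty$. But $\chim(p)<\infty$ whenever $m<L^{-1}\log(p_c/p)\le\mp$ (see \eqref{eq:chi-tn-2}), and it is precisely in the regime $m<\mp$ that the lemma is invoked (in the proof of Proposition~\ref{prop:mp}). Indeed, for $m<m'<\mp$ the total weight of pieces whose endpoint advances by $\ell_0$ in the $e_1$ direction satisfies $\sum_{y:\,y_1\ge\ell_0}G_p(y)\le e^{-m'\ell_0}\chi^{(m')}(p)$, so your per-step factor $e^{m\ell_0}\eta$ tends to $0$, not to something larger than $1$, as $\ell_0$ grows; enlarging $\ell_0$ cannot rescue the argument, and the optimisation over $k$ collapses.

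The paper's proof avoids concatenation entirely. It decomposes $\chim(p)$ by the number of bonds $n$, uses the deterministic fact that an $n$-bond polymer must contain a vertex with $\norm{x}_\infty\gtrsim n^{1/d}$ --- so the tilt contributes a factor of at least $\cosh(cmn^{1/d})$ per polymer --- and then applies H\"older's inequality with conjugate exponents $(s,r)$ to $\chip\le\sum_n(p/\Omega)^n(n+1)^2t_n$, using the subadditivity bound $t_n\le(\Omega/p_c)^n$ to make the conjugate factor summable. This gives $\chip\lesssim\chim(p)^{1/s}$ directly, in all dimensions and uniformly in $p<p_c$, with no need to control intersections of polymers.
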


\begin{proof}
Let $\TA$ denote either tree or animal.
We begin by rewriting $\chi \supm$
using $\Zd$-symmetry and the definition of $G_p$.
We also decompose by the number of edges in $\TA$, to get
\begin{align}
\chi \supm (p)
&= \frac 1 d \sum_{x\in \Zd} G_p(x) \sum_{j=1}^d \cosh mx_j
\nl &
= \frac 1 d \sum_{x\in \Zd}    \sum_{\TA \ni 0,x} \Big(\frac p \Omega \Big)^{\abs \TA}
	\sum_{j=1}^d \cosh mx_j
= \frac 1 d \sum_{n=0}^\infty \Big(\frac p \Omega \Big)^n    \sum_{ \substack{ \abs \TA = n \\ \TA \ni 0 } }
	\bigg( \sum_{x \in \TA} \sum_{j=1}^d \cosh mx_j  \bigg).
\end{align}
An $n$-bond lattice tree contains $n+1$ vertices, and an $n$-bond lattice animal
contains at least order  $n+1$
vertices.
In either case, there must be a vertex
$x\in \TA$ with $\norm x_\infty \gtrsim n^{1/d}$.
Therefore, for some $c>0$ the double sum in parentheses obeys
\begin{align}
\sum_{x \in \TA} \sum_{j=1}^d \cosh mx_j
\ge \cosh ( c m n^{1/d} ) .
\end{align}
With $t_n$ the number of $n$-bond lattice trees or animals modulo translation,
this implies that
\begin{align} \label{eq:chim_pf}
\chi \supm (p)
\ge \frac 1 d \sum_{n=0}^\infty \Big(\frac p \Omega \Big)^n  \cosh ( c m n^{1/d} )
	\sum_{ \substack{ \abs \TA = n \\ \TA \ni 0 } } 1
\gtrsim   \sum_{n=0}^\infty \Big(\frac p \Omega \Big)^n \cosh ( c m n^{1/d} ) (n+1) t_n
.
\end{align}

Given $s > 1$, there is an $r \in (1,\infty)$ such that $r \inv + s \inv = 1$.
We recall \eqref{eq:chi-tn} and apply H\"older's inequality with
\begin{equation}
\begin{aligned}
f(n) &= \Big[ \Big(\frac p \Omega \Big)^n \cosh(cmn^{1/d}) (n+1) t_n \Big]^{1/s},
\\
g(n) &= \Big[ \Big(\frac p \Omega \Big)^n [\cosh(cmn^{1/d})]^{1-r} (n+1)^{1+r} t_n \Big]^{1/r},
\end{aligned}
\end{equation}
to see that the untilted susceptibility obeys
\begin{align} \label{eq:chim_pf2}
\chip \le  \sum_{n=0}^\infty \Big(\frac p \Omega \Big)^n (n+1)^2 t_n
= \sum_{n=0}^\infty f(n)g(n)
\le \norm f_s \norm g_r .
\end{align}
By \eqref{eq:chim_pf}, the norm of $f$ satisfies
\begin{align}
\norm f_s ^s
= \sum_{n=0}^\infty \Big(\frac p \Omega \Big)^n \cosh(cmn^{1/d}) (n+1) t_n
\lesssim \chi \supm (p) .
\end{align}
Since $t_n \le (\Omega/ p_c)^{n}$ (as noted near \eqref{eq:tn_subadd}),
and since $m>0$ and $r > 1$,
the norm of $g$ obeys
\begin{align}
\norm g _r ^ r
= \sum_{n=0}^\infty \Big(\frac p \Omega \Big)^n [\cosh(cmn^{1/d})]^{1-r} (n+1)^{1+r} t_n
\le \sum_{n=0}^\infty \frac{ (n+1)^{1+r} }{ [ \cosh(cmn^{1/d}) ]^{r-1} }
< \infty.
\end{align}
With \eqref{eq:chim_pf2}, this gives
$\chip \lesssim \chim (p) ^{1/s}$,
and by raising to the $s$th power we obtain the desired result.
\end{proof}

\subsection{A tilted differential inequality}

We next present a tilted version of a standard differential inequality for $\chi$.
It involves tilted square diagrams, defined by
\begin{align}
\label{eq:squaremax}
\square_p \supm
= \max \Bigl \{ ( G_p \supm * G_p \supm * G_p  * G_p) (0) - \gp^4 ,\,
	( G_p \supm * G_p * G_p  * G_p) (0) - \gp^4 	\Bigr \} .
\end{align}

\begin{lemma} \label{lem:chim_diff}
Let $d\ge 1$ and $m\ge 0$.
If $\chim (p) < \infty$, then
\begin{align} \label{eq:chim_diff}
\bigg ( \frac 1 {\gp^2} - 3 \square_p \supm \bigg ) \chi ( \chi \supm )^2 - \chim
\le  p\frac{\mathrm d  \chi \supm }{\mathrm d p}
\le \frac \Omega 2 \chi ( \chi \supm )^2 .
\end{align}
\end{lemma}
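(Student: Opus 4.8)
The plan is to derive the two-sided bound \eqref{eq:chim_diff} by differentiating $\chim(p)$ term by term in its representation as a sum over polymers, and controlling the resulting expression using the lace-expansion-style differential inequalities from \cite{HS90b} adapted to the exponential tilt. Writing $\chim(p)=\sum_{x}G_p(x)e^{mx_1}$ and using that $p\frac{\mathrm d}{\mathrm d p}(p/\Omega)^{|\TA|}=|\TA|(p/\Omega)^{|\TA|}$, I would first observe that
\begin{equation}
p\frac{\mathrm d\chim}{\mathrm d p}
= \sum_{x\in\Zd}e^{mx_1}\sum_{\TA\ni 0,x}|\TA|\Big(\frac p\Omega\Big)^{|\TA|},
\end{equation}
so the derivative counts a distinguished bond in the polymer. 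The standard move (as in the skeleton inequalities of \cite{HS90b}) is to classify that bond: cutting the polymer at the distinguished bond $\{u,v\}$ produces roughly three sub-polymers meeting at $0$, $x$, and the endpoints of the bond, giving an upper bound of the form $\frac\Omega2\sum_{x,u,v}G_p(u)pD(v-u)G_p(x-v)\cdots e^{mx_1}$. Summing $x$ after writing $e^{mx_1}=e^{mu_1}e^{m(x-v)_1}e^{m(v-u)_1}$ and crudely bounding $e^{m(v-u)_1}pD(v-u)$ by its maximum over the finite range (this is where the $\Omega/2$ combinatorial factor lands), one gets $\le \frac\Omega2 \chi(\chim)^2$ for the upper bound; the two tilted factors $\chim$ come from the two ``legs'' attached to the distinguished bond and the one untilted $\chi$ comes from the ``core.'' Actually, the cleanest route to the upper bound is simply that differentiation inserts one extra bond, and the resulting skeleton of one bond plus three polymers is bounded by the convolution $\frac\Omega2 (\chim)^2\chi$ after symmetrizing where the tilt is distributed.

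For the lower bound, the strategy mirrors the lower skeleton inequality of \cite{HS90b}: the derivative is at least the contribution where the distinguished bond genuinely disconnects the polymer into three pieces with \emph{no} sharing of vertices, which after inclusion--exclusion gives a main term of order $\frac1{\gp^2}\chi(\chim)^2$ minus error terms. The $1/\gp^2$ replaces the $\Omega/2$ because in the lower bound one must normalize by one-point functions rather than use the raw branching factor. The subtracted terms account for (i) configurations where the three sub-polymers overlap --- these are bounded by tilted square diagrams $\square_p\supm$ times $\chi(\chim)^2$, and the factor $3$ counts the three pairs of sub-polymers that might intersect --- and (ii) the degenerate case where the polymer has few bonds so the skeleton argument is vacuous, which is bounded by $\chim$ itself (the ``$-\chim$'' term, arising exactly as in \eqref{eq:chim_diff}). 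The definition \eqref{eq:squaremax} as a \emph{max} over two placements of the tilt is precisely what is needed so that a single quantity $\square_p\supm$ dominates all the overlap diagrams regardless of how the exponential weight gets distributed among the loops.

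The main obstacle I anticipate is making the tilted BK/inclusion--exclusion bookkeeping rigorous while keeping the weight $e^{mx_1}$ correctly apportioned: when the polymer is cut into sub-polymers, the coordinate $x_1$ is a sum of displacements across the pieces, so $e^{mx_1}$ factorizes, but each factor must be matched to the correct sub-polymer, and in the overlap corrections the tilt can sit on either of two loops --- hence the $\max$ in \eqref{eq:squaremax}. One must also check that every tilted diagram that appears ($\chim$, $\square_p\supm$, and the convolutions) is finite under the hypothesis $\chim(p)<\infty$; finiteness of $\chim$ forces finiteness of $G_p\supm * G_p\supm * \cdots$ type quantities by Young's inequality / the fact that $\|G_p\supm\|_1=\chim<\infty$ dominates the needed $\ell^1$--$\ell^\infty$ convolution bounds (the untilted factors are controlled by $\chi\le\chim$). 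I expect the formal manipulations to parallel \cite[proof of the differential inequalities]{HS90b} closely, with the exponential tilt threaded through; the lace-expansion diagrammatic estimates needed to later show $\square_p\supm$ is small are deferred to Appendix~\ref{app:diagram} and are not needed for \eqref{eq:chim_diff} itself, which is purely a consequence of the geometry of polymers.
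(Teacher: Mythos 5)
Your proposal follows essentially the same route as the paper, which itself only sketches this lemma as a minor modification of the untilted skeleton-inequality argument of Hara--Slade: differentiate to produce a distinguished bond, apply the upper and lower skeleton (BK-type) inequalities for the resulting three-point structure, and thread the tilt $e^{mx_1}$ along the path from $0$ to $x$, with the maximum in \eqref{eq:squaremax} absorbing the possible placements of the tilt on the correction squares. One small orientation remark: in the dominant configuration the two tilted legs are those on the $0$-to-$x$ path through the branch point, while the untilted $\chi$ comes from the leg carrying the distinguished bond (not the reverse, as your first description suggests), though your subsequent symmetrization comment covers this.
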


\begin{proof}[Proof sketch]
The proof requires only minor modification of the proof of the untilted differential inequality \cite[(1.19)]{HS90b} (see also \cite{BFG86,TH87} and \cite[(7.28)]{Slad06}) in order
to accommodate the tilt.
The untilted proof is based on skeleton inequalities for the three-point function $G_p(0,x,y)$.
When the tilted diagrams are summed, the exponential weight $e^{mx_1}$
can be distributed  along a path from $0$ to $x$.
In the lower bound, we encounter a square diagram with one or two tilted two-point
functions, which we bound using the maximum in \eqref{eq:squaremax}.
\end{proof}

\section{Asymptotic behaviour of the mass: proof of Theorem~\ref{thm:mass}}
\label{sec:mass}

In this section, we prove Theorem~\ref{thm:mass}.
We begin in Section~\ref{sec:lace} by recalling some previous results
concerning the lace expansion.
In Sections~\ref{sec:pf-mass-sub}--\ref{sec:first_boot}, we prove Theorem~\ref{thm:mass}.
The proof of Lemma~\ref{lem:Pi_moments}, which involves tilted diagrammatic
estimates, is deferred to Appendix~\ref{app:diagram}.

Let $\T^d=(\R/2\pi\Z)^d$ denote the continuum torus,
which we identify with $(-\pi,\pi]^d \subset \R^d$.
We use the Fourier transform, defined for
an absolutely summable function $f:\Z^d \to \C$ by
\begin{equation}
    \hat f(k) = \sum_{x\in \Z^d} f(x) e^{ik\cdot x}
    \qquad
    ( k \in \T^d ).
\end{equation}
We assume throughout this section that $d>8$ and that $L$ is large, and we
use the $\Z^d$ convolution defined in \eqref{eq:Zdconv}.

\subsection{Lace expansion}
\label{sec:lace}

By the lace expansion,
$G_p$ satisfies the convolution equation
\begin{align}
\label{eq:Gconv}
G_p = \gp \delta + \Pi_p + p D * ( \gp \delta + \Pi_p ) * G_p
\qquad (0 \le p \le p_c),
\end{align}
where $\delta(x) = \delta_{0,x}$ is the Kronecker delta,
$D$ is defined in \eqref{eq:def_D}
and $\Pi_p$ is given explicitly as an infinite series.
For a derivation of the lace expansion,
see \cite[Section~8.1]{Slad06}.
The validity of \eqref{eq:Gconv} at $p_c$ is established in \cite{HHS03},
and $g_p$ and $\Pi_p$ are left-continuous at $p_c$.
It is convenient to rewrite \eqref{eq:Gconv} as
\begin{align} \label{eq:FG}
F_p * G_p = h_p
\qquad \text{with} \qquad
h_p = \gp \delta + \Pi_p, \quad
F_p = \delta - p D * h_p .
\end{align}
After taking the Fourier transform, \eqref{eq:FG} leads to
\begin{align} \label{eq:Gk}
\hat G_p (k) = \frac{ \hat h_p (k) }{ \hat F_p (k) }
= \frac{ \gp + \hat \Pi_p (k) }{\hat F_p (k)} .
\end{align}

The $x$-space analysis of \cite{HHS03} gives decay estimates of the functions $G_p, \Pi_p$ and asymptotics of the critical $G\crit$ (also see \cite{LS24b}).
In particular,
for any choice of a (small) constant $\eta > 0$,
it follows from
\cite[Propositions~2.2 and 1.8(b)]{HHS03}
that there are constants $L_1 = L_1(\eta)$ and
\begin{equation} \label{eq:beta}
    \beta = \const\, L^{-1 + \eta}
\end{equation}
such that, for all $L \ge L_1$ and
all $p \in [0, p_c]$,
\begin{align} \label{eq:untilted_decay}
\frac{G_p(x)}{\gp}
	\le \delta_{0,x} + \frac{ O(\beta) }{ \nnnorm x^{d-2} }, 	\qquad
\abs{ \Pi_p(x) }
	\le O(\beta) \delta_{0,x} + \frac{ O(\beta^2) }{ \nnnorm x^{2d-6} } .
\end{align}
(In \cite{HHS03}, our $G_p, \Pi_p$ are denoted $\rho_p, \psi_p$ respectively.)
We also assume $\beta$ is small enough so that $\norm{ \hat \Pi_p }_\infty \le \norm{ \Pi_p }_1 \le O(\beta)\le \gpc$.

With $L_1$ taken larger if necessary,
we also have the Fourier space \emph{infrared bound}
\begin{align} \label{eq:FIR}
\hat F_p(k) - \hat F_p(0) \ge \KIR (L^2 \abs k^2 \wedge 1)
	\qquad (k \in \T^d,\ p \in [\tfrac 1 2 p_c, p_c]),
\end{align}
with an $L$-independent constant $\KIR >0$.
Versions of \eqref{eq:FIR} have appeared in previous work, but
for completeness we prove it here.
The proof uses the elementary random walk infrared bound
\begin{align} \label{eq:AIR}
\hat D(0) - \hat D(k)
&\gtrsim  L^2 \abs k^2 \wedge 1
\qquad (k \in \T^d),
\end{align}
which is proved in \cite[Appendix~A]{HS02}.  Of course, $\hat D(0)=1$.

\begin{proof}[Proof of \eqref{eq:FIR}]
By definition, $\hat F_p = 1 - p \hat D \hat h_p$ and $\hat h_p = \gp + \hat \Pi_p$, so
\begin{align}
\label{eq:IRpf0}
\hat F_p (k) - \hat F_p(0)
= p \gp [ \hat D(0) - \hat D(k) ]
+ p [ \hat D(0) \hat \Pi_p(0) - \hat D(k) \hat \Pi_p(k) ] .
\end{align}
Since $p \ge \half p_c$
is bounded away from zero,
and since $\gp \ge 1$, the first term on the right-hand side is
bounded below by a multiple of $ L^2 \abs k^2 \wedge 1$ by \eqref{eq:AIR}.
Also, by
using the second moments of $D$ and $\Pi_p$ and Taylor expansion,
the second term can be bounded by
\begin{align}
\label{eq:IRpf1}
\bigabs{ \hat D(0) \hat \Pi_p(0) - \hat D(k) \hat \Pi_p(k) }
&= \bigabs{ [ \hat D(0) - \hat D(k) ] \hat \Pi_p(0)
	+ \hat D(k) [ \hat \Pi_p(0) - \hat \Pi_p(k) ]  } \nl
&\lesssim (L^2 \abs k^2 \wedge 1) \beta
	+ 1 ( \beta \abs k^2 \wedge \beta) .
\end{align}
Since $\beta$ is small, the right-hand side of \eqref{eq:IRpf1} is small compared
to the lower bound of order $ L^2 \abs k^2 \wedge 1$ on the first term of \eqref{eq:IRpf0},
so the infrared bound \eqref{eq:FIR} holds.
\end{proof}

\subsection{Proof of Theorem~\ref{thm:mass}}
\label{sec:pf-mass-sub}

In this section, we prove Theorem~\ref{thm:mass}.
The proof uses
a new estimate on moments of
an exponentially tilted version of $\Pi_p(x)$.
We need to take the spread-out parameter $L$ to be larger.
We restrict to $L \ge L_2$ from now on,
with the constant $L_2$ produced by Lemma~\ref{lem:Pi_moments}.

For $m\ge 0$ and a function $f : \Z^d \to \R$, we define its \emph{exponential tilt} by
\begin{equation}
f\supm(x) = f(x)e^{mx_1} 	\qquad (x = (x_1, \dots, x_d)) .
\end{equation}
If $f$ is an even function then
\begin{equation}
\sum_{x\in \Zd} f \supm (x)
= \sum_{x\in \Zd} f(x) \cosh(mx_1) .
\end{equation}
Since the transition probability $D(x)$ has range $L$,
for any $a \ge 0$ and any $m \ge0$, we have
\begin{equation} \label{eq:A_moments}
\sum_{x\in \Zd} \abs x^a D^{(m)}(x)
\lesssim L^a e^{mL}.
\end{equation}

The next two propositions are proved in Section~\ref{sec:first_boot}.

\begin{proposition} \label{prop:mp}
Let $d > 8$ and $L\ge L_2$
with $L_2$ sufficiently large.
Then $\mp \searrow 0$ as $p \to p_c^-$.
\end{proposition}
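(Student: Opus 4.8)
The plan is to argue by contradiction, using the tilted differential inequality of Lemma~\ref{lem:chim_diff}, the lower bound of Lemma~\ref{lem:chim}, and the tilted diagrammatic estimates of Lemma~\ref{lem:Pi_moments}. First observe that, by $\Z^d$-symmetry, $\chi\supm(p)=\sum_x G_p(x)\cosh(mx_1)$ is nondecreasing in both $m$ and $p$; hence $\{m\ge 0:\chi\supm(p)<\infty\}$ is an interval (equal to $[0,\mp)$ by Corollary~\ref{cor:mtilt}) that shrinks as $p$ increases, so $\mp$ is nonincreasing in $p$ and $m_c:=\lim_{p\to p_c^-}\mp$ exists in $[0,\infty)$. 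It suffices to rule out $m_c>0$. So suppose $m_c>0$ and fix $m\in(0,m_c)$, taken small (in a way depending only on $d$ and $m_c$) as specified below. Since $m<m_c\le\mp$ for every $p<p_c$, we have $\chi\supm(p)<\infty$ on all of $[0,p_c)$; moreover $G_p$ then decays exponentially with rate at least $m_c$ for every $p<p_c$, so the tilted diagrams are finite, and it is this $p$-uniform rate --- rather than the rate $\mp\to0$ that we are trying to prove --- that underlies the uniformity of the estimates below.

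The crucial input is that, under the hypothesis $m_c>0$, the tilted square diagram $\square_p\supm$ of \eqref{eq:squaremax} is uniformly small on $[\tfrac12 p_c,p_c)$ once $L$ is large and $m$ is small relative to $m_c$. This is supplied by the first bootstrap: the tilted moment estimates of Lemma~\ref{lem:Pi_moments} (proved in Appendix~\ref{app:diagram}) show that the tilted lace expansion is valid with $\widehat{\Pi_p\supm}$ and the relevant tilted moments of order $\beta$, and this feeds through the convolution equation \eqref{eq:Gconv}, the infrared bound \eqref{eq:FIR}, and Parseval to control $\square_p\supm$: its bulk is the untilted (critical) square, of order $\beta$, while the tilted excess is $O(m^2)$ times a $p$-uniform constant governed by the gap $m_c$. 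Combined with $1\le g_p\le4$, this yields $g_p^{-2}-3\square_p\supm\ge\kappa$ for a positive constant $\kappa$, uniformly in $p\in[\tfrac12 p_c,p_c)$.

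Given $\kappa$, the lower half of Lemma~\ref{lem:chim_diff} gives $p\,\frac{\mathrm d\chi\supm}{\mathrm dp}\ge\kappa\,\chi(p)\,\chi\supm(p)^2-\chi\supm(p)$ on $[\tfrac12 p_c,p_c)$. Since $\chi\supm(p)\ge\chi(p)\to\infty$ as $p\to p_c$, the subtracted term is negligible, so there is $p_1<p_c$ with $p\,\frac{\mathrm d\chi\supm}{\mathrm dp}\ge\tfrac{\kappa}{2}\chi(p)\chi\supm(p)^2$ for $p\in[p_1,p_c)$; dividing by $\chi\supm(p)^2$ and integrating from $p_1$,
\begin{equation}
\frac{1}{\chi\supm(p)}\le\frac{1}{\chi\supm(p_1)}-\frac{\kappa}{2p_c}\int_{p_1}^{p}\chi(q)\,\mathrm dq
\qquad (p_1\le p<p_c).
\end{equation}
By the two-sided bound $\chi(q)\asymp(1-q/p_c)^{-1/2}$ of \eqref{eq:gamma_half}, the integral exceeds a fixed multiple of $(p_c-p_1)^{1/2}$ once $p$ is close to $p_c$, whereas Lemma~\ref{lem:chim} with $s=2$ gives $\chi\supm(p_1)\gtrsim\chi(p_1)^2\gtrsim(p_c-p_1)^{-1}$, so $1/\chi\supm(p_1)\lesssim p_c-p_1$. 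Since $p_c-p_1=o\bigl((p_c-p_1)^{1/2}\bigr)$, choosing $p_1$ close enough to $p_c$ forces the right-hand side to vanish at some $p_*<p_c$, contradicting $\chi\supm(p_*)\in(0,\infty)$. Hence $m_c=0$, i.e.\ $\mp\searrow0$.

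The main obstacle is the second step: Lemma~\ref{lem:Pi_moments} and the resulting $p$-uniform smallness of $\square_p\supm$. For fixed $m>0$ such uniformity would fail as $p\to p_c$, since the tilt then multiplies a polynomially decaying critical convolution; what rescues it is precisely the standing hypothesis $m_c>0$, which furnishes a $p$-independent exponential decay rate. Thus one takes $m$ to be a small multiple of a suitable power of $m_c$, carries the tilted bootstrap far enough in $m$, and tracks the $L$- and $m$-dependence so that the tilted excess in the diagrams stays below the threshold set by $\kappa$; this is where the assumption $L\ge L_2$ is used. (Tracking constants throughout, the same scheme yields the quantitative relation $\mp^2\asymp\hat F_p(0)\asymp\chi(p)^{-1}$, which with \eqref{eq:gamma_half} gives the $\asymp$ part of Theorem~\ref{thm:mass}; the asymptotic equivalence for lattice trees is obtained in the subsequent subsections using the sharpened results of \cite{HS92c}.)
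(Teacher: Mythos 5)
Your proposal is correct and follows essentially the same route as the paper: contradiction via the tilted differential inequality of Lemma~\ref{lem:chim_diff} at a fixed small tilt $\mu<m_c$, with the coefficient kept positive by the uniform square-diagram bound from the bootstrap and Lemma~\ref{lem:Pi_moments}, played off against the lower bound of Lemma~\ref{lem:chim}. The only (immaterial) difference is the endgame: you integrate forward from $p_1$ and force $1/\chi^{(m)}$ to vanish before $p_c$ using $s=2$, whereas the paper integrates up to $p_c$ to get $\chi^{(\mu)}(p)\lesssim (p_c-p)^{-1/2}$ and contradicts $\chi^{(\mu)}\gtrsim\chi^{s}$ for $s>1$.
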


\begin{proposition} \label{prop:Pi_moments}
Let $d > 8$,
$a \in [ 0, \frac{ d-4} 2 ),$\footnote{
The condition $ a < \half (d-4) = 2 + \half (d-8)$ is due to an $L^2$ norm used in the proof of Lemma~\ref{lem:Pi_moments} and is not expected
to be sharp. We expect moments to be finite for all $a < d-6 = 2 + (d-8)$.}
and $L\ge L_2$
with $L_2$ sufficiently large.
There exists $\delta_1 = \delta_1(L) > 0$,
and a constant $K_a$ independent of $L$, such that
\begin{align} \label{eq:Pi_moments_prop}
\sum_{x\in \Z^d} \abs x^a  \abs{\Pi_p \supm(x)}   \le K_a
\end{align}
uniformly in $p \in [p_c - \delta_1, p_c)$ and in $m \in [0, \mp]$.
\end{proposition}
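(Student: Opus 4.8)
The plan is to derive Proposition~\ref{prop:Pi_moments} from the tilted diagrammatic estimate of Lemma~\ref{lem:Pi_moments} (proved in Appendix~\ref{app:diagram}), together with a bootstrap that controls the tilted two-point function uniformly for $m$ up to the mass $\mp$. Recall $\Pi_p=\sum_{N\ge1}(-1)^N\Pi_p^{(N)}$, where $\Pi_p^{(N)}$ is an explicit sum of lace-expansion diagrams built from $G_p$ and $g_p$, each containing at least two disjoint connections from $0$ to $x$ --- the structure responsible for the $\nnnorm x^{-(2d-6)}$ decay in \eqref{eq:untilted_decay}. To introduce the tilt one writes $e^{mx_1}$ as a telescoping product along a single self-avoiding path $0=y_0,\dots,y_\ell=x$ through the diagram; this replaces the $G_p$ lines lying on the path by tilted lines $G_p\supm$ and leaves the remaining lines as $G_p$, so that $\sum_x\abs x^a\abs{\Pi_p\supm(x)}$ is dominated by a sum of tilted diagrams. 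Lemma~\ref{lem:Pi_moments} performs this bound and expresses it as a polynomial in $\beta$ times tilted bubble/triangle/square quantities (such as $\square_p\supm$) and an $L^2$-type norm built from $G_p\supm$; it is this $L^2$ norm, combined with the weight $\abs x^a$, that produces the restriction $a<\tfrac{d-4}{2}$, which is not expected to be sharp.

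It then remains to bound the tilted diagrammatic quantities uniformly for $p\in[p_c-\delta_1,p_c)$ and $m\in[0,\mp]$. I would do this by Fourier analysis together with a lace-expansion bootstrap. Since $G_p\ge0$, the Fourier transform $\hat G_p$ continues analytically in $k_1$ to the strip $\abs{\Im k_1}<\mp$, with $\widehat{G_p\supm}(k)=\hat G_p(k-ime_1)=\hat h_p(k-ime_1)/\hat F_p(k-ime_1)$ by \eqref{eq:Gk}. The infrared bound \eqref{eq:FIR} extends to these shifts: repeating the Taylor-expansion argument used to prove \eqref{eq:FIR}, but comparing the shifted $\hat F_p$ with its value at transverse momentum zero and using the second moments of $D$ and $\Pi_p$, one gets (once $\beta$ is small) a lower bound of the form $(\text{nonnegative offset})+c(L^2\abs k^2\wedge1)$, valid for all $m\le\mp$. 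At $m=\mp$ the offset vanishes --- this is the momentum at which $\hat G_p$ acquires the singularity forced by $\chi\supmp(p)=\infty$ (Corollary~\ref{cor:mtilt}) --- but the zero is only quadratic in the $d-1$ transverse directions, so a single $\hat G_p$ factor contributes at worst a $\abs{k_\perp}^{-2}$ singularity and the relevant diagram integrals still converge; already the untilted second moment requires $d>8$, since $\sum_x\abs x^2\nnnorm x^{-(2d-6)}<\infty$ only then. With the $p$-uniform polynomial bounds \eqref{eq:untilted_decay}, the smallness of $\beta$ for $L\ge L_2$, and the self-consistency relation \eqref{eq:FG}, a standard bootstrap --- the tilted bounds hold at $m=0$; they are improved by \eqref{eq:FG} when $d>8$ and $\beta$ is small; and the tilted diagrams depend continuously on $m$ on $[0,\mp]$ by dominated convergence once uniform bounds hold --- then shows that the tilted diagrams stay within a constant multiple of their untilted counterparts throughout $[0,\mp]$, uniformly in $p$ near $p_c$. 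Inserting this into Lemma~\ref{lem:Pi_moments} gives \eqref{eq:Pi_moments_prop} with an $L$-independent $K_a$. Proposition~\ref{prop:mp} is used here only to ensure $\delta_1$ can be taken small enough that the Taylor remainders above are negligible, so in practice I would prove Propositions~\ref{prop:mp} and \ref{prop:Pi_moments} together within this bootstrap.

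The step I expect to be the main obstacle is precisely the endpoint $m=\mp$: because $\chi\supmp(p)=\infty$ and $\hat F_p$ vanishes at $-i\mp e_1$, any bound that treats a tilted line as a single unsummed copy of $G_p\supmp$ --- anything involving $\chi\supm(p)$ or $\norm{G_p\supm}_2$ directly --- blows up as $m\to\mp$ and is useless. The argument closes only because in each diagram the tilt is absorbed by one of the (at least two) connections from $0$ to $x$, while the complementary connection keeps supplying genuine polynomial decay; after telescoping, the surviving tilted object is always paired against enough untilted $G_p$ factors that the net Fourier-space singularity at $k_\perp=0$, $k_1=-i\mp$, is integrable precisely when $d>8$. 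Making this pairing rigorous at the level of $x$-space diagrams --- in particular with the non-integer weight $\abs x^a$, which cannot be obtained just by differentiating $\hat\Pi_p$ --- is the technical core of Lemma~\ref{lem:Pi_moments}, and is deferred to Appendix~\ref{app:diagram}.
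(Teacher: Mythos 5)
Your overall architecture matches the paper's: the proposition is assembled from the tilted diagrammatic estimate (Lemma~\ref{lem:Pi_moments}), a bootstrap in $m$ on the tilted two-point function (the paper's bootstrap quantity is the Fourier-space function $b(m)=\sup_k\{\gpc\inv\KIR(L^2\abs k^2\wedge1)\abs{\hat G_p\supm(k)}\}$, which is the precise form of your ``tilted diagrams stay within a constant multiple of their untilted counterparts''), and a limiting argument at the endpoint $m=\mp$ (the paper uses Fatou's lemma applied to the uniform bounds for $m<\mp$, which is cleaner than your analytic-continuation picture but serves the same purpose).

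There is, however, a genuine gap in how you propose to cover the full range $m\in[0,\mp]$. The bootstrap implication $b(m)\le4\Rightarrow b(m)\le3$ closes only when $mL$ is small: the tilted infrared bound (Corollary~\ref{cor:tilted_infrared}) carries an error term $\kappa m^\eps L^\eps$ that must be beaten by $\KIR$, so Proposition~\ref{prop:boot_done} yields $b(m)\le3$ only for $mL\le\eta$. To conclude that this interval contains all of $[0,\mp)$ one must first know that $\mp\searrow0$ as $p\to p_c^-$, and this is \emph{not} something the bootstrap can deliver on its own --- contrary to your remark that Proposition~\ref{prop:mp} ``is used here only to ensure $\delta_1$ can be taken small enough that the Taylor remainders above are negligible'' and could simply be folded into the bootstrap. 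The paper proves $\mp\searrow0$ by contradiction using inputs entirely outside the bootstrap: if $\mp\ge m_0>0$, then for the fixed tilt $\mu=\frac12\min\{m_0,\eta/L\}$ the bootstrap does control the tilted square diagram, and feeding this into the tilted differential inequality (Lemma~\ref{lem:chim_diff}) forces $\chi^{(\mu)}(p)\lesssim(p_c-p)^{-1/2}$; this contradicts the combinatorial lower bound $\chi^{(\mu)}(p)\gtrsim\chi(p)^s\gtrsim(p_c-p)^{-s/2}$ for every $s>1$ (Lemma~\ref{lem:chim}, a H\"older-inequality argument exploiting that an $n$-bond polymer must reach distance $\gtrsim n^{1/d}$), together with the divergence $\chi\supmp(p)=\infty$ from the Aizenman--Simon-type inequality (Corollary~\ref{cor:mtilt}). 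Without this separate mechanism your argument establishes \eqref{eq:Pi_moments_prop} only for $m\in[0,\eta/L\wedge\mp]$, which need not equal $[0,\mp]$.
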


Recall that $h_p = \gp \delta + \Pi_p$ and $F_p = \delta - p D * h_p$.
It follows from Proposition~\ref{prop:Pi_moments} with $a=0$, together with the dominated convergence theorem, that
the function $\hat \Pi_p \supm (0) = \sum_{x\in \Zd} \Pi_p \supm(x)$
is continuous in $m \in [0, \mp]$, and hence so is $\hat F_p \supm (0)$.
Tilted moments of $h_p$ and $F_p$ can also be estimated.
For example, using $\abs x^a \le 2^a ( \abs {x-y}^a + \abs y^a )$
and \eqref{eq:A_moments},
\begin{align} \label{eq:F_moments}
\sum_{x\in \Zd} \abs x^a \abs{ F_p\supm (x) }
&\le \1_{a=0} + p \sum_{x\in \Zd} \abs x^a (D \supm * h_p \supm)(x)  \nl
&\le \1_{a=0} + 2^a p_c  \(
	\bignorm{ \abs x^a D \supm(x) }_1 \norm{ h_p \supm }_1
	+ \norm{ D \supm }_1  \bignorm{ \abs x^a h_p \supm (x) }_1	\) 	\nl
&\lesssim \1_{a=0} + L^a e^{mL} ( \gp + K_0 ) + e^{mL} ( \gp \1_{a=0} + K_a ) 	\nl
&\lesssim  L^a e^{mL} ,
\end{align}
with the same restrictions on $a, L, p, m$ as in Proposition~\ref{prop:Pi_moments}.

\begin{proof}[Proof of Theorem~\ref{thm:mass}
assuming Propositions~\ref{prop:mp}--\ref{prop:Pi_moments}]
For $m < \mp$, by exponentially tilting equation \eqref{eq:Gconv}
and then taking the Fourier transform,
\begin{equation} \label{eq:Gm0}
\chi \supm(p)
= \hat G_p \supm (0)
= \frac{ \hat h_p \supm (0) }{\hat F_p\supm (0)}
= \frac{ \gp + \hat \Pi_p \supm (0) }{\hat F_p\supm (0)} .
\end{equation}
We take the limit $m \to \mp^-$ in the above equation and apply
Corollary~\ref{cor:mtilt}
to see that $\hat F_p \supmp (0) = 0$.
Then, we can write
\begin{equation} \label{eq:chi_rewrite}
\frac 1 { \chi(p) }
= \frac{ \hat F_p(0) } { \hat h_p (0) }
= \frac 1 { \hat h_p (0) } \big( \hat F_p(0) - \hat F_p \supmp (0) \big).
\end{equation}
We
use symmetry, the elementary fact that $\cosh t = 1 + \frac 12 t^2 +O(|t|^{2+\eps}\cosh t)$ (for any $\eps \in [0,2]$), and \eqref{eq:F_moments} with $a = 2 + \eps$
for some $\eps \in (0,1\wedge \frac 12 (d-8))$, to see that
\begin{align} \label{eq:m_asymp_pf}
    \hat F_p (0) - \hat F_p\supmp(0)
    &= - \sum_{x\in \Z^d} F_p(x) \bigl(\cosh (\mp x_1) - 1\bigr)
    \nnb & =
    \half \mp^2 \del_1^2  \hat F_p(0)
	+ O(\mp^{2+\eps})  \sum_{x\in \Z^d} \abs {x_1}^{2+\eps} \abs{ F_p \supmp(x) }
\nl
&= \mp^2 L^2 \bigg( \frac{ \del_1^2 \hat F_p(0) }{2 L^2}
	+ O(\mp^\eps L^\eps e^{\mp L})  \bigg) .
\end{align}
The derivative on the right-hand side obeys
 $\del_1^2 \hat F_p(0) \asymp L^2$ uniformly in $p$:
the lower bound is due to the infrared bound \eqref{eq:FIR}
and the upper bound is due to \eqref{eq:F_moments} with $a = 2$ and $m=0$.
Recall that $g_p$ and $\Pi_p$ are left-continuous at $p_c$.
By
dominated convergence, $\del_1^2 \hat F_p(0)$ is also left-continuous at $p_c$,
and therefore also $\del_1^2 \hat F_{p_c}(0) \asymp L^2$.
In particular, $\del_1^2 \hat F_{p_c}(0) >0$.
Hence, since $\mp \to 0$ as $p \to p_c^-$ by Proposition~\ref{prop:mp},
it follows from \eqref{eq:chi_rewrite} and \eqref{eq:m_asymp_pf} that
\begin{align} \label{eq:mp_asymp}
\frac 1 \chip  \sim  \frac { \del_1^2 \hat F_p(0) } { 2 \hat h_p (0) } \mp^2
\sim  \frac { \del_1^2 \hat F\crit(0) } { 2 \hat h\crit (0) } \mp^2
	\qquad \text{as $p \to p_c^-$.}
\end{align}
This gives the desired result, since we know from \eqref{eq:gamma_half}
that $\chi(p) \asymp (p_c-p)^{-1/2}$.
The asymptotic formula for lattice trees follows from the improvement of \eqref{eq:gamma_half} to an asymptotic relation in \cite{HS92c}.
\end{proof}

\subsection{Proof of Propositions~\ref{prop:mp} and \ref{prop:Pi_moments}}
\label{sec:first_boot}

The proof of Propositions~\ref{prop:mp}--\ref{prop:Pi_moments}
uses a bootstrap argument varying $m$, as was
first done for percolation in \cite{Hara90}.
From \eqref{eq:Gk},
the infrared bound \eqref{eq:FIR},
and the
fact that $\norm{ \hat \Pi_p }_\infty \le O(\beta)$,
we have
\begin{align} \label{eq:Gk_bound}
\abs{ \hat G_p (k) } = \biggabs{ \frac{ \gp + \hat \Pi_p(k) } { \hat F_p(k) } }
\le \frac { 2\gpc } \KIR \frac 1 { L^2 \abs k^2 \wedge 1 }
	\qquad (p \in [\tfrac12 p_c, p_c)) .
\end{align}
For $p \in [\tfrac12 p_c, p_c)$,
we define the \emph{bootstrap function} $b: [0, \mp) \to \R$ by
\begin{equation}
b(m) = \sup_{k \in \T^d} \bigl\{ \gpc\inv \KIR  (L^2 \abs k^2 \wedge 1)  \abs{ \hat G_p \supm (k) } \bigr\} ,
\end{equation}
so $b(0) \le 2$ by \eqref{eq:Gk_bound}.
It follows from
the finiteness of $\chim$ below $\mp$ that
the family $\{ \hat G_p \supm(k) \}_{k\in \Td}$ is equicontinuous
in $m \in [0, \mp)$, which implies
that $b(\cdot)$ is continuous \cite[Lemma~5.13]{Slad06}.
We will prove the implication
\begin{equation}
b(m ) \le 4
\; \implies \;
b(m)  \le 3
\end{equation}
when $L$ is sufficiently large and $m L$ is sufficiently small.
Since $b(\cdot)$ is continuous, this implies that $b(m) \le 3$ for all small $m$.
The precise statement is given in Proposition~\ref{prop:boot_done}.

\subsubsection{The bootstrap argument}

We first state an estimate
on moments of $\Pi_p\supm$ under the bootstrap hypothesis that $b(m) \le 4$.
In the process, we also obtain an estimate on
the tilted square diagram $\square_p \supm$ defined in \eqref{eq:squaremax}.

\begin{lemma} \label{lem:Pi_moments}
Let $d > 8$, $p \in [\tfrac12 p_c, p_c)$, and $m<m(p)$.
There is an $L_2$ such that for all $L \ge L_2$,
if $b(m) \le 4$ and $mL \le 1$ then
\begin{equation}
\label{eq:squaresmall}
\square \supm_p \lesssim L^{-d+1} \le \frac 1 {100},
\end{equation}
and, for any $a \in [ 0, \frac{ d-4} 2 )$ there is a constant $K_a$ independent of $L$ such that
\begin{align} \label{eq:Pi_moments}
\sum_{x\in \Z^d} \abs x^a  \abs{\Pi_p \supm(x)}   \le K_a .
\end{align}
Moreover, if $b(m)\le 4$
uniformly in
$m<m(p)$
and if $\mp L \le 1$, then the bounds
\eqref{eq:squaresmall}--\eqref{eq:Pi_moments} hold also at $m=m(p)$.
\end{lemma}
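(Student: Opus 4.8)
The plan is to run a standard lace-expansion diagrammatic estimate, but tracking the exponential tilt $e^{mx_1}$ throughout, using the bootstrap hypothesis $b(m) \le 4$ to control the tilted propagator $\hat G_p^{(m)}$. The starting point is that $b(m)\le 4$ gives the pointwise Fourier bound $|\hat G_p^{(m)}(k)| \le 4\gpc \KIR^{-1}(L^2|k|^2\wedge 1)^{-1}$, which is exactly the infrared bound that drives all the $x$-space decay estimates of \cite{HHS03}; so by the same Fourier-theoretic arguments (deconvolution / Parseval as in \cite{HHS03,LS24b}) one gets a tilted analogue of the first bound in \eqref{eq:untilted_decay}, namely $G_p^{(m)}(x)/\gp \le \delta_{0,x} + O(\beta)\nnnorm{x}^{-(d-2)}$ with $\beta = \const L^{-1+\eta}$. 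The key point making this legitimate is that the tilt $e^{mx_1}$ with $mL\le 1$ changes constants only by bounded factors on the relevant scales: on any single step of $D$ it costs at most $e^{mL}\le e$, and on a product of $k$ propagators summed over $\Z^d$ it is absorbed into the geometric decay because the tilted propagator still has the same power-law envelope. Once the tilted two-point function satisfies the same envelope as the untilted one, the tilted square diagram $\square_p^{(m)}$ in \eqref{eq:squaremax} is bounded by a convolution of four such envelopes evaluated at $0$; since $d>8$, the convolution estimate \eqref{eq:prop1.7}-type reasoning gives $(G_p^{(m)}*G_p^{(m)}*G_p*G_p)(0) - \gp^4 \lesssim \beta^2 \asymp L^{-2+2\eta} \lesssim L^{-d+1}$ (using $d>8$, so $d-1 < 2-2\eta$ fails — rather one should simply note the off-diagonal part of each factor contributes $O(\beta)$ and there are at least two off-diagonal factors, giving $O(\beta^2)$, and $\beta^2 \le L^{-d+1}$ holds for $L$ large since $2-2\eta > d-1$ is false; instead the genuine bound is that the square is $O(L^{-d+1})$ from the explicit spread-out random-walk computation as in \cite[Section~2]{HS90b}), which yields \eqref{eq:squaresmall} for $L\ge L_2$.

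For the moment bound \eqref{eq:Pi_moments}, I would use the diagrammatic representation of $\Pi_p$ as an alternating sum of Feynman-type diagrams built from $G_p$ and $pD$ (as in \cite[Section~8]{Slad06} and the estimates of \cite{HS90b,HHS03}). Applying the exponential tilt distributes $e^{mx_1}$ along a self-avoiding path through each diagram from $0$ to $x$; splitting $|x|^a$ across the lines of the diagram via $|x|^a \lesssim \sum_i |y_i - y_{i-1}|^a$ reduces the problem to bounding a single diagram with one propagator replaced by its $a$-th moment $\sum_z |z|^a G_p^{(m)}(z)$ and all other propagators tilted. The moment $\sum_z |z|^a G_p^{(m)}(z)$ is finite for $a < \frac{d-4}{2}$: this is where the $L^2$-norm argument referenced in the footnote enters — one writes $G_p^{(m)} = \gp\delta + (\text{remainder})$ and bounds the remainder's moment using Parseval and the infrared bound, with the condition $a < \frac{d-4}{2}$ ensuring $\nnnorm{x}^a \nnnorm{x}^{-(d-2)}$-type factors remain square-summable after the relevant number of convolutions. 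The remaining (tilted) propagators are summed using the envelope bound plus \eqref{eq:prop1.7}, and the step factors $pD^{(m)}$ contribute only $O(L^a e^{mL})$ each by \eqref{eq:A_moments}; since $\Pi_p$ is an absolutely convergent sum of diagrams with a small parameter $\beta$ per extra loop (again thanks to $b(m)\le 4$), the total is bounded by a constant $K_a$ independent of $L$ — the $L$-dependence from step factors is beaten by the $\beta$-smallness once $L$ is large. The restriction to $p\ge \frac12 p_c$ is only needed so the infrared bound \eqref{eq:FIR} applies.

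Finally, the extension to $m = m(p)$ under the hypothesis that $b(m)\le 4$ holds uniformly in $m < m(p)$ with $\mp L \le 1$: one takes $m \nearrow \mp$ in \eqref{eq:squaresmall}-\eqref{eq:Pi_moments}. Since all the per-diagram bounds are uniform in $m\in[0,\mp)$ (the only $m$-dependence is through factors $e^{mL}\le e$ and through $b(m)\le 4$), the bounds survive the limit; more carefully, $\Pi_p^{(m)}(x) = \Pi_p(x)e^{mx_1}$ increases to $\Pi_p^{(\mp)}(x)$ in absolute value as $m\nearrow\mp$ for each fixed $x$, so monotone convergence gives $\sum_x |x|^a |\Pi_p^{(\mp)}(x)| \le K_a$, and similarly for the square diagram. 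The main obstacle I expect is getting the tilted decay estimate $G_p^{(m)}(x)/\gp \le \delta_{0,x} + O(\beta)\nnnorm{x}^{-(d-2)}$ with the constant in $O(\beta)$ genuinely uniform in $m\in[0,\mp)$ and independent of $L$: this requires redoing the deconvolution argument of \cite{HHS03,LS24b} with the tilt in place, checking that the Fourier multiplier $\hat F_p^{(m)}$ still satisfies a suitable infrared bound (which in turn needs the tilted analogue of \eqref{eq:FIR}, obtainable from $b(m)\le 4$ together with tilted second-moment bounds on $D$ and $\Pi_p$ as in the proof of \eqref{eq:FIR} above). The bookkeeping of where the tilt lands in the diagrammatic sums — ensuring it is never applied to more than $O(1)$ steps in a way that would produce an $e^{m \cdot (\text{diameter})}$ blow-up — is the delicate part; the resolution is that the tilt acts on a path, and each propagator absorbs its share of the tilt into its own decaying envelope without losing summability.
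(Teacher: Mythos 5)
There is a genuine gap, and it sits at the very first step of your plan. You propose to convert the bootstrap hypothesis $b(m)\le 4$ — which is a \emph{pointwise Fourier-space} bound $|\hat G_p\supm(k)|\lesssim (L^2|k|^2\wedge 1)^{-1}$ — into the \emph{$x$-space} envelope $G_p\supm(x)/\gp\le\delta_{0,x}+O(\beta)\nnnorm{x}^{-(d-2)}$, and then run the diagrammatic estimates in $x$-space. A pointwise bound on $\hat G_p\supm$ does not imply pointwise decay of $G_p\supm$; extracting $x$-space decay requires control of derivatives of the Fourier transform, i.e.\ the full deconvolution machinery. You acknowledge this at the end ("this requires redoing the deconvolution argument\dots with the tilt in place"), but that is precisely the content of Theorem~\ref{thm:near_critical}, whose proof (Section~\ref{sec:near_critical}) relies on Theorem~\ref{thm:mass}, which relies on Propositions~\ref{prop:mp}--\ref{prop:Pi_moments}, which rely on this very lemma. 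So your route is circular. The paper's proof avoids the tilted $x$-space envelope entirely: the hypothesis $b(m)\le 4$ is used \emph{only} in Fourier space, via Parseval, H\"older and Young, to bound norms such as $\|G_p\supm * G_p\supm\|_2=\|(\hat G_p\supm)^2\|_2\lesssim\|(L^2|k|^2\wedge 1)^{-2}\|_2\lesssim 1$; the only $x$-space inputs are the \emph{untilted} bounds \eqref{eq:eta_zero}/\eqref{eq:untilted_decay} on the lines carrying the weight $|x|^a$ (which is where the restriction $a<\frac{d-4}{2}$ from $\||x|^aG_p\|_2<\infty$ enters). The tilt is distributed along the bottom path of each diagram and the moment along the top, so that tilted lines are handled in Fourier space and weighted lines in $x$-space, and the two never need to be combined on a single line beyond what Parseval permits.

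A second, smaller but real problem is the bound \eqref{eq:squaresmall}. Your $O(\beta^2)=O(L^{-2+2\eta})$ estimate is far weaker than $L^{-d+1}$ for $d>8$, as you notice mid-sentence, and the fallback reference to an "explicit spread-out random-walk computation" is not an argument. The mechanism that actually produces $L^{-d+1}$ is the presence of a factor $\hat D(k)$ in the Fourier integral: since $\gp^4$ is subtracted in \eqref{eq:squaremax}, at least one line must take a step, so one may insert $G_p(x)\le (pD*G_p)(x)$ for $x\ne 0$ and reduce to squares of the form $\|p\hat D\,\hat G_p\supm(\hat G_p)^3\|_1\lesssim\|\hat D(k)(L^8|k|^8\wedge 1)^{-1}\|_1$; splitting at $|k|=1/L$ and using $\|\hat D\|_{d/(d-1)}\lesssim L^{-(d-1)}$ (Lemma~\ref{lem:D_L}) gives $L^{-d}+L^{-d+1}$. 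Without extracting that $\hat D$ factor the claimed power is not reachable. Your closing limit argument for $m=\mp$ is essentially right (the paper uses Fatou on the nonnegative subdiagrams; monotonicity also holds after symmetrising the tilt into $\cosh(mx_1)$), but it only helps once the uniform-in-$m$ bounds are actually in hand.
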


The proof of Lemma~\ref{lem:Pi_moments}
is built upon diagrammatic estimates for $\Pi_p$ obtained in \cite{HHS03}
and is deferred to Appendix~\ref{app:diagram}.
Using Lemma~\ref{lem:Pi_moments}, conditional on the bootstrap hypothesis
we have the following tilted infrared bound.

\begin{corollary} \label{cor:tilted_infrared}
Let $d > 8$, $p \in [\tfrac12 p_c, p_c)$, and $m<m(p)$.
There exists $\eps \in (0,1)$ and $\kappa >0$ such that for all $L \ge L_2$,
if $b(m) \le 4$ and $mL \le 1$ then
\begin{align} \label{eq:tilted_infrared}
\Re [ \hat F_p \supm (k) - \hat F_p \supm (0) ]
\ge ( \KIR - \kappa m^\eps L^\eps   ) (L^2 \abs k^2 \wedge 1) .
\end{align}
\end{corollary}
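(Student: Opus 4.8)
The plan is to follow closely the proof of the untilted infrared bound \eqref{eq:FIR}, isolating the untilted quantity $\hat F_p(k) - \hat F_p(0)$ (which is already controlled by \eqref{eq:FIR}) and absorbing everything else into a tilt correction that is estimated by the tilted moment bounds of Lemma~\ref{lem:Pi_moments}. Since exponential tilting commutes with convolution, $(D*h_p)\supm = D\supm * h_p\supm$ and $h_p\supm = g_p \delta + \Pi_p\supm$, so $\hat F_p\supm(k) = 1 - p g_p \hat D\supm(k) - p \hat Q_p\supm(k)$ where $Q_p := D * \Pi_p$ is an even function (as $D$ and $\Pi_p$ are). Taking real parts and pairing $x$ with $-x$ using evenness gives $\Re \hat D\supm(k) = \sum_x D(x)\cosh(mx_1)\cos(k\cdot x)$, and likewise for $Q_p$, so that
\begin{equation}
\Re\big[\hat F_p\supm(k) - \hat F_p\supm(0)\big]
= p g_p \sum_x D(x) \cosh(mx_1)(1-\cos k\cdot x)
 + p \sum_x Q_p(x) \cosh(mx_1)(1-\cos k\cdot x) .
\end{equation}
Writing $\cosh(mx_1) = 1 + (\cosh(mx_1)-1)$, the contributions of the ``$1$'' reassemble into $\hat F_p(k) - \hat F_p(0)$, so the right-hand side equals $[\hat F_p(k) - \hat F_p(0)] + p g_p R_D(k) + p R_\Pi(k)$ with $R_D(k) = \sum_x D(x)(\cosh(mx_1)-1)(1-\cos k\cdot x) \ge 0$ and $R_\Pi(k) = \sum_x Q_p(x)(\cosh(mx_1)-1)(1-\cos k\cdot x)$. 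Since $R_D \ge 0$ and $\hat F_p(k)-\hat F_p(0) \ge \KIR(L^2|k|^2 \wedge 1)$ by \eqref{eq:FIR}, it suffices to show $|p R_\Pi(k)| \le \kappa\, m^\eps L^\eps (L^2|k|^2 \wedge 1)$ with $\kappa$ independent of $L$.

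To bound $R_\Pi$ I would use the elementary estimate $\cosh t - 1 \le |t|^\eps \cosh t$ for $\eps \in (0,2]$ (which follows from $1 - 1/\cosh t \le \min\{t^2/2,1\} \le |t|^\eps$), so $\cosh(mx_1)-1 \le m^\eps |x|^\eps \cosh(mx_1)$, together with $|Q_p(x)|\cosh(mx_1) \le \tfrac12(|Q_p\supm(x)| + |Q_p^{(-m)}(x)|)$. For the factor $1-\cos k\cdot x$ I would split into the regimes $|k|\le 1/L$ (using $1-\cos k\cdot x \le \tfrac12|k|^2|x|^2$ and $L^2|k|^2\wedge 1 = L^2|k|^2$, which brings in the moment $\sum_x|x|^{2+\eps}|Q_p^{(\pm m)}(x)|$) and $|k| > 1/L$ (using $1-\cos k\cdot x \le 2$ and $L^2|k|^2\wedge 1 = 1$, which brings in $\sum_x|x|^\eps|Q_p^{(\pm m)}(x)|$). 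In both regimes the needed moments obey $\sum_x|x|^a|Q_p^{(\pm m)}(x)| \lesssim L^a e^{mL} \lesssim L^a$ (using $mL\le 1$): indeed $Q_p^{(\pm m)} = D^{(\pm m)} * \Pi_p^{(\pm m)}$, so one applies \eqref{eq:A_moments} and the tilted bound $\sum_x|x|^a|\Pi_p\supm(x)| \le K_a$ of Lemma~\ref{lem:Pi_moments} (valid since $b(m)\le 4$ and $mL\le 1$, and with $K_a$ independent of $L$), provided $a < \tfrac12(d-4)$. The constraint $2+\eps < \tfrac12(d-4)$ forces $\eps < \tfrac12(d-8)$, so any $\eps \in (0, 1 \wedge \tfrac12(d-8))$ works, matching the exponent already used in the proof of Theorem~\ref{thm:mass}. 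Combining the pieces yields $|R_\Pi(k)| \lesssim m^\eps L^\eps(L^2|k|^2\wedge 1)$ with an $L$-independent constant, and since $p \le p_c$ is bounded, the same holds for $|p R_\Pi(k)|$; adding the three contributions then gives \eqref{eq:tilted_infrared}.

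The genuinely hard input is Lemma~\ref{lem:Pi_moments} itself, whose proof is deferred to Appendix~\ref{app:diagram}; granted it, the only delicate point here is the interplay between the $\cosh-1$ estimate, the $1-\cos$ estimate, and the moment constraint $a < \tfrac12(d-4)$, which together must be arranged so that a fractional power $m^\eps$ can be extracted while keeping a bound proportional to $L^2|k|^2\wedge 1$ --- and with a constant $\kappa$ that does not depend on $L$, the latter relying on the $L$-independence of $K_a$ and on the scaling $\sum_x|x|^a D\supm(x) \lesssim L^a e^{mL}$ of \eqref{eq:A_moments}.
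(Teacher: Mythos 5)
Your proposal is correct and follows essentially the same route as the paper: both add and subtract the untilted difference $\hat F_p(k)-\hat F_p(0)$ (controlled by \eqref{eq:FIR}) and bound the tilt correction $\sum_x(\cos(k\cdot x)-1)F_p(x)(\cosh(mx_1)-1)$ via $|\cosh t-1|\lesssim|t|^\eps\cosh t$, the two-regime split at $|k|=L^{-1}$, and the $L$-independent tilted moments supplied by Lemma~\ref{lem:Pi_moments} through \eqref{eq:A_moments} and \eqref{eq:F_moments}. Your further splitting of $F_p$ into the $pg_pD$ and $pD*\Pi_p$ pieces, with the observation that the $D$-part correction $R_D$ is nonnegative and may simply be dropped, is a harmless minor refinement; the paper bounds the whole correction at once using the combined moment estimate \eqref{eq:F_moments2}.
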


\begin{proof}
Recall that $F_p = \delta - p D * h_p$ and $h_p = \gp\delta + \Pi_p$.
As shown in \eqref{eq:F_moments},
from the estimates on $\Pi_p\supm$ supplied by Lemma~\ref{lem:Pi_moments} and using $e^{mL} \le e$,
we have
\begin{align} \label{eq:F_moments2}
\sum_{x\in \Zd} \abs x^a \abs{ F_p\supm (x) }  \lesssim  L^a
	\qquad ( a \in [0, \tfrac{d-4}2) ) .
\end{align}
Since $d > 8$, the above holds with $a = 2 + \eps$ for some $\eps \in (0,1)$.

To prove \eqref{eq:tilted_infrared}, we first
use the fact that $F_p$ is real and symmetric  to write
\begin{equation}
\Re [ \hat F_p\supm(k) - \hat F_p\supm (0) ]
=  \sum_{x\in \Z^d} ( \cos (k\cdot x) - 1) F_p(x) \cosh mx_1 .
\end{equation}
We then add and subtract
$\hat F_p(k) - \hat F_p(0) =  \sum_{x\in \Z^d} ( \cos (k\cdot x) - 1 ) F_p(x)$
to get
\begin{equation}
\label{eq:ReFm}
\Re [ \hat F_p\supm(k) - \hat F_p\supm (0) ]
= \hat F_p(k) - \hat F_p(0) + \sum_{x\in \Z^d} ( \cos (k\cdot x) - 1 ) F_p(x) (\cosh (mx_1) - 1).
\end{equation}
Due to the untilted infrared bound in \eqref{eq:FIR},
it suffices to prove that
\begin{equation} \label{eq:tilted_infrared_pf1}
\Bigabs{ \sum_{x\in \Z^d} ( \cos (k\cdot x) - 1 ) F_p(x) (\cosh (mx_1) - 1) }
\le O(m^\eps L^\eps) ( L^2 \abs k^2 \wedge 1 ) .
\end{equation}
For $\abs k \ge L\inv$,
we use
$\abs{ \cosh (mx_1) - 1 } \lesssim \abs{ mx_1}^\eps \cosh mx_1$
(for $\eps \le 2$) and \eqref{eq:F_moments2}
to bound the left-hand side of \eqref{eq:tilted_infrared_pf1} by
\begin{equation}
\sum_{x\in \Z^d} 2 \abs {F_p(x) } O( \abs{ m x_1 } ^\eps) \cosh mx_1
\lesssim m^\eps \sum_{x\in \Z^d} \abs x^{\eps} \abs {F_p\supm(x) }
\lesssim m^\eps L^\eps ,
\end{equation}
as claimed.
For $\abs k < L\inv$, we additionally use $\abs{  \cos (k\cdot x) - 1 } \lesssim \abs k^2 \abs x^2$ to bound the left-hand side of \eqref{eq:tilted_infrared_pf1} by
\begin{equation}
\sum_{x\in \Z^d} O(\abs k^2 \abs x^2) \abs {F_p(x) } O( \abs{ m x_1 } ^\eps) \cosh mx_1
\lesssim m^\eps \abs k^2 \sum_{x\in \Z^d} \abs x^{2+\eps} \abs {F_p \supm (x) }
\lesssim m^\eps \abs k^2 L^{2+\eps}  .
\end{equation}
This gives \eqref{eq:tilted_infrared_pf1} and completes the proof.
\end{proof}

The next proposition completes the bootstrap argument.

\begin{proposition} \label{prop:boot_done}
Let $d > 8$.
There exists $\eta \in (0,1]$ such that
for all $L \ge L_2$, and for all $m < \mp$ with $mL \le \eta$,
we have $b(m)  \le 3$.
\end{proposition}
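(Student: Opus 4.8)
The plan is a standard bootstrap-closure argument, with Lemma~\ref{lem:Pi_moments} and Corollary~\ref{cor:tilted_infrared} as the two inputs. Recall that $b$ is continuous on $[0,\mp)$ with $b(0)\le 2$ by \eqref{eq:Gk_bound}, and that $\hat G_p\supm(k)=\hat h_p\supm(k)/\hat F_p\supm(k)$ with $\hat h_p\supm=\gp+\hat\Pi_p\supm$. The key step is the implication
\[
b(m)\le 4 \quad\Longrightarrow\quad b(m)\le 3 ,
\]
to be established for all $L\ge L_2$ and all $m<\mp$ with $mL\le\eta$, where $\eta\in(0,1]$ is a constant depending only on the $L$-independent constants $\KIR,\kappa,\eps$ of the infrared bounds. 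Given this, the proposition follows from a connectedness argument in $m$.

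To prove the implication I would fix $m<\mp$ with $b(m)\le 4$ and $mL\le 1$ and bound $\abs{\hat G_p\supm(k)}$ pointwise in $k$. For the denominator, $b(m)\le 4$ feeds into Corollary~\ref{cor:tilted_infrared}, giving
\[
\Re\hat F_p\supm(k)=\hat F_p\supm(0)+\Re[\hat F_p\supm(k)-\hat F_p\supm(0)]\ge\hat F_p\supm(0)+(\KIR-\kappa m^\eps L^\eps)(L^2\abs k^2\wedge 1),
\]
where $\hat F_p\supm(0)=\sum_x F_p(x)\cosh(mx_1)$ is real by symmetry of $F_p$ and nonnegative, since $\hat F_p\supm(0)=\hat h_p\supm(0)/\chim(p)$ with $\chim(p)\in(0,\infty)$ for $m<\mp$ and $\hat h_p\supm(0)\ge\gp-\norm{\Pi_p\supm}_1>0$ for $L$ large. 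Hence $\abs{\hat F_p\supm(k)}\ge\Re\hat F_p\supm(k)\ge(\KIR-\kappa m^\eps L^\eps)(L^2\abs k^2\wedge 1)$. For the numerator, $b(m)\le 4$ and $mL\le 1$ feed into Lemma~\ref{lem:Pi_moments} with $a=0$, whose proof in fact yields the smallness $\norm{\Pi_p\supm}_1=O(\beta)$ — the $L^1$ norm being controlled by a single power of $\beta$, as in the untilted bound \eqref{eq:untilted_decay} — so that $\abs{\hat h_p\supm(k)}\le\gp+\norm{\Pi_p\supm}_1\le\gpc+O(\beta)$. Combining these bounds and using $\gpc\ge 1$,
\[
\gpc\inv\KIR(L^2\abs k^2\wedge 1)\bigl|\hat G_p\supm(k)\bigr|\le\frac{\KIR}{\KIR-\kappa m^\eps L^\eps}\bigl(1+O(\beta)\bigr).
\]

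Taking the supremum over $k$ and choosing first $L_2$ large (so $O(\beta)\le\tfrac12$ for $L\ge L_2$) and then $\eta\in(0,1]$ small (so $\kappa\eta^\eps\le\tfrac15\KIR$), I obtain $b(m)\le\tfrac54\cdot\tfrac32<3$ whenever $L\ge L_2$ and $mL\le\eta$. Crucially, this final bound no longer refers to the hypothesis $b(m)\le 4$, so it is a genuine improvement. For the connectedness step, let $I=\{m\in[0,\mp):mL\le\eta\}$, an interval containing $0$, and $J=\{m\in I:b(m)\le 3\}$: then $0\in J$ by $b(0)\le 2$, $J$ is relatively closed in $I$ by continuity of $b$, and $J$ is relatively open in $I$ — if $m_0\in J$ then $b(m_0)\le 3<4$, so $b<4$ on an $I$-neighbourhood of $m_0$ (on which $mL\le\eta\le 1$), and the implication gives $b\le 3$ there. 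Connectedness of $I$ forces $J=I$, which is the claim.

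I do not expect a serious obstacle in the proposition itself: the analytic content lies entirely in the two inputs (especially Lemma~\ref{lem:Pi_moments}, deferred to Appendix~\ref{app:diagram}), and the topology is standard (the continuity of $b$ used above is \cite[Lemma~5.13]{Slad06}). The one point requiring care is that the improved constant be \emph{strictly} below $4$: this forces the use of the smallness $\norm{\Pi_p\supm}_1=O(\beta)$ — and hence of large $L$ — rather than the mere boundedness asserted in the statement of Lemma~\ref{lem:Pi_moments}, together with a choice of $\eta$ small enough that the tilted infrared constant $\KIR-\kappa m^\eps L^\eps$ stays comparable to $\KIR$ throughout the range $mL\le\eta$.
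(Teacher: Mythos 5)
Your overall architecture (bootstrap closure $b(m)\le 4\Rightarrow b(m)\le 3$ via the tilted infrared bound for the denominator, positivity of $\hat F_p\supm(0)$, and a continuity/connectedness argument from $b(0)\le 2$) matches the paper's proof. The gap is in your treatment of the numerator: you need $\|\Pi_p\supm\|_1=O(\beta)$ and assert that the proof of Lemma~\ref{lem:Pi_moments} ``in fact yields'' this smallness. It does not. The appendix proof bounds the tilted triangle $\triangle_{a,p}\supm$ only by an $L$-independent constant, not by $O(\beta)$ (the factor $\|\,|x|^aG_p\|_2\ge g_p\ge 1$ and $\|G_p\supm * G_p\supm\|_2\gtrsim 1$ carry the $\delta$-contributions), and the authors explicitly footnote that extracting a small parameter ``would require more effort'' which they do not make. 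Nor can you import the smallness from the untilted bound \eqref{eq:untilted_decay}: the tilt $e^{mx_1}$ destroys summability of the pointwise $x$-space estimate, so $\sum_x|\Pi_p(x)|e^{mx_1}$ cannot be controlled that way. With only $\|\Pi_p\supm\|_1\le K_0$ for an uncontrolled constant $K_0$, your final coefficient $\frac{\KIR}{\KIR-\kappa\eta^\eps}\cdot\frac{\gpc+K_0}{\gpc}$ need not be below $3$, so the improvement step fails as written.

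The repair is exactly the device the paper uses: split $\hat\Pi_p\supm=\hat\Pi_p+(\hat\Pi_p\supm-\hat\Pi_p)$. The untilted piece satisfies $\|\hat\Pi_p\|_\infty\le O(\beta)$ by \eqref{eq:untilted_decay} (smallness in $\beta$ is available there because it comes from the established $m=0$ theory of \cite{HHS03}), while the correction obeys $\|\hat\Pi_p\supm-\hat\Pi_p\|_\infty\lesssim m^\eps K_\eps$ as in \eqref{eq:Pim_diff}, which uses only the \emph{boundedness} of the tilted $\eps$-moment from Lemma~\ref{lem:Pi_moments} together with the smallness of $m$. Substituting $\|\hat h_p\supm\|_\infty\le \gp+O(\beta)+O(m^\eps)$ into your chain of inequalities, and absorbing the $O(m^\eps)$ term into the choice of $\eta$, recovers your numerics and closes the bootstrap. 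Everything else in your write-up, including the open-closed argument on $I=\{m\in[0,\mp):mL\le\eta\}$, is sound.
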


\begin{proof}
We first derive an estimate on $\hat G_p \supm$
assuming $L \ge L_2$, $m < \mp$, $b(m)\le 4$, and $mL \le 1$.
To begin, we show that $\hat F_p \supm(0) > 0$ when $m$ is small.
As in \eqref{eq:Gm0},
\begin{align}
\hat F_p \supm(0)
= \frac { \hat h_p \supm (0) } {\chi\supm(p)}
= \frac { \hat h_p (0) 	- [\hat \Pi_p (0) - \hat \Pi_p \supm (0)] } {\chi\supm(p)}
	\qquad (m < \mp) .
\end{align}
The denominator is positive, and $\hat h_p(0) = \gp + \hat \Pi_p(0)$ is bounded away from zero since $\gp\ge 1$ and $|\hat \Pi_p(0)| \le O(\beta)$.
Also, the inequality $\abs{ e^{mx_1} -1 } \lesssim \abs{ mx_1 }^\eps e^{mx_1}$ for $\eps \in (0,1]$ gives
\begin{align} \label{eq:Pim_diff}
\norm{ \hat \Pi_p \supm - \hat \Pi_p }_\infty
= \sup_{k\in \Td} \Bigabs{ \sum_{x\in \Zd} \Pi_p(x) ( e^{mx_1} - 1 ) e^{ik\cdot x} }
\lesssim  \sum_{x\in \Zd} \abs{\Pi_p(x)} |mx_1|^\eps e^{mx_1}
\lesssim m^\eps K_\eps ,
\end{align}
with $K_\eps$ from Lemma~\ref{lem:Pi_moments}.
Since $\eps > 0$, this proves that $\hat F_p \supm(0) > 0$ for small $m$.

Therefore, we have
\begin{align}
\abs{ \hat F_p\supm (k) }
\ge \Re \hat F_p\supm (k)
\ge \Re [ \hat F_p\supm(k) - \hat F_p\supm (0) ]
\ge \KIR\supm (L^2 \abs k^2 \wedge 1 ),
\end{align}
where $\KIR\supm = \KIR - \kappa m^\eps L^\eps$ is from Corollary~\ref{cor:tilted_infrared}.
By taking $mL$ small, we may assume that $\KIR\supm \ge \frac 12 \KIR$.
It follows that
\begin{align}
\abs{ \hat G_p \supm (k)}
= \biggabs{ \frac{ \gp + \hat \Pi_p \supm (k) }{ \hat F_p \supm (k) } }
\le  \frac{ \norm { \gp + \hat \Pi_p }_\infty }{ \Re [ \hat F_p\supm(k) - \hat F_p\supm (0) ]  }
	+ \frac{ \norm { \hat \Pi_p \supm - \hat \Pi_p }_\infty }{ \KIR \supm (L^2 \abs k^2 \wedge 1)  } .
\end{align}
The second term on the right-hand side
is $O(m^\eps) (L^2 \abs k^2 \wedge 1) \inv$ by \eqref{eq:Pim_diff}.
For the first term,
we use $\norm{ \hat \Pi_p }_\infty \le O(\beta) \le \gpc$ to bound the numerator,
and we compare the denominator to its $m=0$ version. This gives
\begin{align}
\label{eq:big-ratio}
\frac{ \norm { \gp + \hat \Pi_p }_\infty }{ \Re [ \hat F_p\supm(k) - \hat F_p\supm (0) ]  }
&\le 2\gpc \bigg( \frac 1{ \hat F_p(k) - \hat F_p(0) }
	+ \frac{ [\hat F_p(k) - \hat F_p(0) ] - \Re [ \hat F_p\supm(k) - \hat F_p\supm (0) ] }
		{  [\hat F_p(k) - \hat F_p(0) ] \Re [ \hat F_p\supm(k) - \hat F_p\supm (0) ] } \bigg)	
.
\end{align}
The combination of \eqref{eq:ReFm} and \eqref{eq:tilted_infrared_pf1} shows
that the numerator of the last term on the right-hand side is at most
$O(m^\eps L^\eps) (L^2 \abs k^2 \wedge 1)$.  With the tilted infrared bound
of \eqref{eq:tilted_infrared}, we see that the right-hand side of
\eqref{eq:big-ratio} is bounded above by
\begin{align}
\frac {2\gpc} \KIR \frac 1 { L^2 \abs k^2 \wedge 1 }
	+ \frac{ O(m^\eps L^\eps) (L^2 \abs k^2 \wedge 1) }{ \KIR \KIR \supm (L^2 \abs k^2 \wedge1)^2 }
= \bigg( \frac {2\gpc} \KIR  + O(m^\eps L^\eps)  \bigg) \frac 1 { L^2 \abs k^2 \wedge 1 } .
\end{align}
Altogether, we obtain
\begin{equation}
\label{eq:Gm_after}
\abs{ \hat G_p \supm (k) }
\le \bigg( \frac {2\gpc} \KIR  + O(m^\eps L^\eps) + O(m^\eps)
\biggr) \frac 1 { L^2 \abs k^2 \wedge 1 },
\end{equation}
assuming $b(m)\le 4$ and $mL$ is small.

To conclude the proof,
we pick $\eta$ small so that
the coefficient on the right-hand side of \eqref{eq:Gm_after}
is less than $3\gpc / \KIR$ for all $mL \le \eta$.
This gives the implication
\begin{align}
b(m ) \le 4
\; \implies \;
b(m)  \le 3
\end{align}
for $m \in [0, \mp)$ satisfying $mL \le \eta$.
But since $b(0) \le 2$ from \eqref{eq:Gk_bound},
and since the function $b(\cdot)$ is continuous,
it must be the case that $b(m) \le 3$ for all $m \in [0, \mp)$ with $mL \le \eta$, as desired.
\end{proof}

\subsubsection{Proof of Propositions~\ref{prop:mp} and \ref{prop:Pi_moments}}

\begin{proof}[Proof of Proposition~\ref{prop:mp}]
Let $L \ge L_2$.
We prove by contradiction that $\mp \searrow 0$ as $p \to p_c^-$.
If it is not the case that $\mp \searrow 0$, then $ \mp \searrow m_0 > 0$.
Let $\mu = \half \min \{ m_0, \eta / L \} > 0$,
which satisfies $\mu < \mp$ for all $p < p_c$,
so $\chi^{(\mu)}(p) < \infty$.
Also, by Proposition~\ref{prop:boot_done}, $b(\mu) \le 3$,
so from the bound on the tilted square diagram
of Lemma~\ref{lem:Pi_moments}
(which is uniform in $p$), we obtain
\begin{align}
\label{eq:sqbdL}
\sup_{p \in [ \half p_c, p_c) } \square_p^{(\mu)}
\le \frac 1{100}.
\end{align}

We now show that the above
contradicts the
conclusions of Section~\ref{sec:chim}.
Using Lemma~\ref{lem:chim_diff} with $m = \mu$,
and inserting $\gp \le 4$ and \eqref{eq:sqbdL},
we find that
the coefficient of $\chi (\chi^{(\mu)})^2$ on the
left-hand side of the differential inequality \eqref{eq:chim_diff} remains positive as $p \to p_c^-$, so by dividing by $(\chi^{(\mu)})^2$ we get
\begin{align}
\label{eq:de-mu}
\frac { \mathrm d }{ \mathrm dp } \biggl( \frac { -1 }{ \chi^{(\mu)} } \bigg)
\gtrsim \chi - \frac{1}{\chi^{(\mu)}} .
\end{align}
Since $\chi^{(\mu)} (p) \ge 1$ and since $\chi (p) \gtrsim (p_c - p)^{-1/2}$ by \eqref{eq:chilb-general-d}, for $p$ near $p_c$ the right-hand side of
\eqref{eq:de-mu} is at least a multiple of $(p_c-p)^{-1/2}$.
Integration
from $p$ to $p_c$ therefore yields $\chi^{(\mu)}(p) \lesssim (p_c - p)^{-1/2}$ as $p \to p_c^-$.
On the other hand,
Lemma~\ref{lem:chim} with $m=\mu$ implies that, for any $s>1$,
\begin{align}
\chi^{(\mu)}(p) \gtrsim \chip^{s} \gtrsim (p_c - p)^{-s/2}
	\qquad (p<p_c).
\end{align}
Any choice of $s>1$ yields a contradiction.
Therefore, it must be the case that $\mp \searrow 0$.
\end{proof}

\begin{proof}[Proof of Proposition~\ref{prop:Pi_moments}]
Let $L \ge L_2$. Since $\mp \searrow 0$ as $p \to p_c^-$ by Proposition~\ref{prop:mp},
we can pick $\delta_1$ small so that $\mp L \le \eta$ for all $p \in [p_c - \delta_1, p_c)$.
We then obtain $b(m) \le 3$ for all $m \in [0,\mp)$ by Proposition~\ref{prop:boot_done},
and the desired moment estimate \eqref{eq:Pi_moments_prop} follows
for these values of $m$ by Lemma~\ref{lem:Pi_moments}.
The uniform estimate also implies
\eqref{eq:Pi_moments_prop} at $m = \mp$,
by the final assertion of Lemma~\ref{lem:Pi_moments}.
\end{proof}

\section{Near-critical bound: proof of Theorem~\ref{thm:near_critical}}
\label{sec:near_critical}

In this section, we prove Theorem~\ref{thm:near_critical}.
The proof uses a much simpler analogue of Theorem~\ref{thm:near_critical}
for the spread-out random walk, which is stated in Proposition~\ref{prop:so-nn}
and proved in Appendix~\ref{app:S}.
We employ a second bootstrap argument on the tilted two-point function---this
time in $x$-space---via a combination of ideas from \cite{LS24b,Liu24,Slad23_wsaw}.
The bootstrap is encapsulated in Proposition~\ref{prop:boot}.
In Section~\ref{sec:boot}, we reduce the proof of Theorem~\ref{thm:near_critical}
to Propositions~\ref{prop:so-nn}--\ref{prop:boot}.
The proof of Proposition~\ref{prop:boot} is
given in Section~\ref{sec:boot-pf} subject to Proposition~\ref{prop:f},
which itself is proved in Section~\ref{sec:f}.

\subsection{Proof of Theorem~\ref{thm:near_critical}}
\label{sec:boot}

For $d>2$ and $\mu \in [0,1]$,
let $S_\mu$ be the solution of the convolution equation
\begin{equation} \label{eq:S}
    (\delta - \mu D)*S_\mu = \delta
\end{equation}
given by
\begin{equation}
S_\mu(x)= \int_{\T^d}\frac{e^{-ik\cdot x}}{1-\mu\hat D(k)} \frac{dk}{(2\pi)^d}
\qquad (x\in\Z^d).
\end{equation}
As in the definition of the mass $m(p)$ in \eqref{eq:mtilt}, we define
\begin{align}
\label{eq:mSdef}
    m_S(\mu)
    =
    \sup \Bigl \{ m \ge 0 :
	 \sum_{x\in\Z^d} S_\mu(x) e^{mx_1} < \infty 	\Big \} .
\end{align}
The mass $m_S(\mu)$ coincides with the rate $\tilde m_S(\mu)$
of exponential decay of $S_\mu$,
defined as in \eqref{eq:mdef}.\footnote
{The random walk two-point function satisfies
$S_\mu(y) \ge \frac 1 {S_\mu(0)} S_\mu(x) S_\mu(y-x)$ for all $x,y$,
which implies the off-axis control
$S_\mu(x) \le S_\mu(0) \exp\{ - \tilde m_S(\mu) \norm x_\infty \}$.
This is exactly as in the proof of \cite[Theorem~A.2(a)]{MS93},
which applies equally to the spread-out case.
It follows that $\sum_{x\in\Z^d} S_\mu(x) e^{mx_1} < \infty$
for all $m < \tilde m_S(\mu)$,
so $m_S(\mu) \ge \tilde m_S(\mu)$.
The reverse inequality holds in general.}
In particular, if $\mu < 1$ then $m_S(\mu)>0$.
A minor modification
of the proof of Corollary~\ref{cor:mtilt} shows that
\begin{equation}
\label{eq:chiSinf}
    \sum_{x\in\Z^d} S_\mu^{(m_S(\mu))}(x)   =\infty.
\end{equation}
Let $\sigma_L^2=\sum_{x\in\Z^d}|x|^2D(x)$ denote the variance of $D$.

\begin{proposition} \label{prop:so-nn}
Let $d>2$,
$\eta >0$,
$L \ge L_2$ (with $L_2$ sufficiently large),
and $\mu\in [\half,1)$.
Then
\begin{align}
m_S(\mu)^2 \sim \frac{2d}{\sigma_L^2} (1-\mu)
	\qquad \text{as $\mu\to 1$,}
\end{align}
and there are constants $K_S > 0$, $a_S \in (0,1)$ (independent of $L$)
and $\delta_2 = \delta_2(L) > 0$, such that
\begin{equation}
S_\mu(x) \le \delta_{0,x} +
	\frac {K_S}{ L^{1-\eta} \nnnorm x^{d-2} } e^{- a_S m_S(\mu) \norm x_\infty }
	\qquad (x\in \Zd)
\end{equation}
uniformly in $\mu \in [1 - \delta_2, 1)$.
\end{proposition}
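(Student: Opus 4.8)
The plan is to exploit that exponential tilting commutes with convolution: writing $f^{(m)}(x)=f(x)e^{mx_1}$, one has $(f*g)^{(m)}=f^{(m)}*g^{(m)}$, so tilting the defining equation $(\delta-\mu D)*S_\mu=\delta$ gives $(\delta-\mu D^{(m)})*S_\mu^{(m)}=\delta$, that is, $S_\mu^{(m)}=\sum_{n\ge0}(\mu D^{(m)})^{*n}$. Thus $S_\mu^{(m)}$ is the Green's function of a killed, \emph{drifted} walk whose one-step weight $\mu D^{(m)}$ is non-negative, supported in $\{0<\norm x_\infty\le L\}$, and of total mass $\mu\bar D(m)$ with $\bar D(m)=\sum_xD(x)e^{mx_1}=\sum_xD(x)\cosh(mx_1)$. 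Summing over $x$, $\sum_xS_\mu^{(m)}(x)=(1-\mu\bar D(m))^{-1}$ is finite exactly when $\mu\bar D(m)<1$; since $\bar D$ is continuous and strictly increasing on $[0,\infty)$ from $\bar D(0)=1$ to $\bar D(\infty)=\infty$, the mass $m_S(\mu)$ is the unique positive root of $\mu\bar D(m)=1$, which also gives \eqref{eq:chiSinf}. For the asymptotics, since $|x_1|\le L$ on $\supp D$ and $\sum_xD(x)x_1^2=\sigma_L^2/d$ by $\Zd$-symmetry, a Taylor expansion of $\cosh$ gives $\bar D(m)=1+\frac{\sigma_L^2}{2d}m^2+O(L^4m^4)$ for $mL\le1$; solving $\bar D(m_S(\mu))=\mu^{-1}=1+(1-\mu)+O((1-\mu)^2)$, and noting $m_S(\mu)\to0$ as $\mu\to1$ because $\bar D$ is an increasing homeomorphism, yields $\frac{\sigma_L^2}{2d}m_S(\mu)^2\sim1-\mu$, i.e.\ $m_S(\mu)^2\sim\frac{2d}{\sigma_L^2}(1-\mu)$; in particular $m_S(\mu)\asymp L^{-1}(1-\mu)^{1/2}$ since $\sigma_L^2\asymp L^2$.

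For the near-critical $x$-space bound, I would first reduce it by $\Zd$-symmetry to the on-axis statement that there are $a_S\in(0,1)$, $K_S>0$, and $\delta_2=\delta_2(L)>0$ with
\begin{equation*}
S_\mu^{(m)}(x)\le\delta_{0,x}+\frac{K_S}{L^{1-\eta}\nnnorm x^{d-2}}\qquad(x\in\Zd,\ m\in[0,a_S m_S(\mu)]),
\end{equation*}
uniformly in $\mu\in[1-\delta_2,1)$: granting this, for each $x$ one picks a coordinate permutation and sign change $\pi$ with $(\pi x)_1=\norm x_\infty$; since $D$, hence $S_\mu$, is $\pi$-invariant, evaluating the display at $\pi x$ with $m=a_S m_S(\mu)$ gives $S_\mu(x)e^{a_S m_S(\mu)\norm x_\infty}=S_\mu^{(a_S m_S(\mu))}(\pi x)\le\delta_{0,x}+K_SL^{-(1-\eta)}\nnnorm x^{-(d-2)}$, which rearranges to the stated bound. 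The inputs for the displayed estimate are: (i) a \emph{tilted infrared bound} --- with $\rho_m=1-\mu\bar D(m)$, symmetry of $D$ gives $\Re[1-\mu\widehat{D^{(m)}}(k)]=\rho_m+\mu\sum_xD(x)\cosh(mx_1)(1-\cos(k\cdot x))\ge\rho_m+\mu(1-\hat D(k))$, so by $\cosh\ge1$, $\mu\ge\frac12$, and \eqref{eq:AIR} we get $\Re[1-\mu\widehat{D^{(m)}}(k)]\gtrsim\rho_m+(L^2|k|^2\wedge1)$ (no moment estimates are needed here, unlike Corollary~\ref{cor:tilted_infrared}, because $D^{(m)}\ge0$), whence the massive Fourier bound $|\widehat{S_\mu^{(m)}}(k)|\lesssim(\rho_m+L^2|k|^2\wedge1)^{-1}$; (ii) the fact that $\mu\bar D(a_S m_S(\mu))=1-(1-a_S^2+o(1))(1-\mu)$, so $\rho_m\asymp1-\mu$ uniformly once $a_S<1$ is fixed; (iii) the free massive propagator $C_\rho(x)=\sum_{n\ge0}(1+\rho)^{-(n+1)}D^{*n}(x)$ obeys $0\le C_\rho(x)\le\sum_{n\ge0}D^{*n}(x)\le\delta_{0,x}+O(\sigma_L^{-2})\nnnorm x^{-(d-2)}$ for \emph{all} $\rho\ge0$ (the extra mass only speeds up the decay) --- this is the random-walk content of \eqref{eq:untilted_decay}, and $\sigma_L^{-2}\asymp L^{-2}\le L^{-(1-\eta)}$.

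With these in hand I would follow the deconvolution strategy of \cite{LS24b}: factor $1-\mu\widehat{D^{(m)}}(k)=\rho_m+c_m(1-\hat D(k))+\hat E_m(k)$, choosing $c_m>0$ to match the second-order behaviour at $k=0$ so that $\hat E_m(0)=0$, and rewrite $\widehat{S_\mu^{(m)}}(k)=c_m^{-1}\hat C_{\rho_m/c_m}(k)\bigl(1+c_m^{-1}\hat C_{\rho_m/c_m}(k)\hat E_m(k)\bigr)^{-1}$; expanding the geometric series expresses $S_\mu^{(m)}$ as an absolutely convergent sum of convolutions of copies of $C_{\rho_m/c_m}$ with the remainder kernel $E_m$, each $E_m$-factor being controlled by the tilted moment bounds \eqref{eq:A_moments}. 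Combined with the uniform-in-$\rho$ pointwise bound on $C_\rho$ from (iii), this yields $S_\mu^{(m)}(x)\le\delta_{0,x}+O(L^{-(1-\eta)})\nnnorm x^{-(d-2)}$ uniformly in $\mu\in[1-\delta_2,1)$ for $\delta_2(L)$ small enough, which is the reduced claim.

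The main obstacle is that $D^{(m)}$ is \emph{not} symmetric: it carries a drift $\sum_xx_1D^{(m)}(x)\asymp m\sigma_L^2$, so $\widehat{D^{(m)}}$ has an odd part that is linear in $k$ and has no analogue in the untilted/symmetric theory; the remainder $\hat E_m$ therefore acquires a genuinely new (drift) term, and the whole deconvolution, together with all constants, must be kept uniform as $\mu\uparrow1$, where the natural scale $1/m_S(\mu)$ diverges. Keeping $a_S$ strictly below $1$ is what makes $\rho_m\asymp1-\mu$ and thereby renders the estimates uniform; controlling the drift term in the deconvolution, and establishing the uniform-in-mass bound on $C_\rho$, are the two points requiring care. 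The Taylor expansion for the mass and the classical estimate on $\sum_nD^{*n}$ are routine.
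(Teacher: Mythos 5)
Your treatment of the mass asymptotics is correct and is essentially the paper's argument in different packaging: the identity $\mu\bar D(m_S(\mu))=1$ is the same as the paper's $\hat A_\mu^{(m_S(\mu))}(0)=0$, and your Tonelli argument $\sum_x S_\mu\supm(x)=(1-\mu\bar D(m))^{-1}$ gives \eqref{eq:chiSinf} for the random walk more directly than the paper's appeal to Corollary~\ref{cor:mtilt}. Your tilted infrared bound via positivity of $D\supm$ and \eqref{eq:AIR} is also correct and cleaner than a moment-based argument. The symmetry reduction and the monotone bound $C_\rho(x)\le\sum_n D^{*n}(x)$ are fine.

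The gap is in the deconvolution of the remainder, and it is not merely a "point requiring care" --- your specific decomposition breaks at exactly the place you flag. You tilt \emph{first} and then compare $1-\mu\widehat{D\supm}(k)$ to the real, symmetric reference $\rho_m+c_m(1-\hat D(k))$. The resulting kernel $E_m(x)=(1-\rho_m-c_m)\delta_{0,x}-\mu D\supm(x)+c_mD(x)$ has a non-vanishing first moment $\sum_x x_1E_m(x)=-\mu\sum_x x_1D(x)\sinh(mx_1)\asymp -m\sigma_L^2/d$, so $\hat E_m(k)$ contains an odd term of size $\asymp mL^2|k_1|$. Matching the second-order behaviour via $c_m$ does not remove a first-order term. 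Near $|k|\asymp m$ this term is $\asymp m^2L^2$, whereas the deconvolution machinery (the analogue of Lemma~\ref{lem:AFEbds_S}, feeding into Lemma~\ref{lemma:phi}) needs $|\hat E_m(k)|\lesssim L^{2+\eps}(|k|+m)^{2+\eps}$ so that $\grad^\gamma\bigl[\hat E_m/(\hat A_1\hat A_2)\bigr]$ lies in $L^{q}$ for all $q\inv>(1+|\gamma|)/d$; the drift contributes a piece behaving like $m L^2|k|/(\rho_m+L^2|k|^2)^2\asymp \sqrt{\rho_m}\,|u|/(\rho_m+|u|^2)^2$ (with $u=Lk$), whose $L^q$ norm blows up like a power of $\rho_m^{-1}$ for $q\inv\le 2/d$, i.e.\ precisely in the range of exponents needed to close the H\"older bookkeeping at $|\alpha|=d-2$. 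So the geometric series converges as a Fourier multiplier (your AM--GM observation), but the $L^1$ bounds on $d-2$ derivatives --- which are what produce the pointwise $\nnnorm{x}^{-(d-2)}$ decay --- do not go through as sketched, uniformly as $\mu\uparrow 1$.

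The paper avoids this by reversing the order of operations: it decomposes the \emph{untilted} $S_z$ as $\delta+z\lambda_zC_{\mu_z}+\varphi_z$ against the \emph{nearest-neighbour} massive propagator, with $\lambda_z,\mu_z$ in \eqref{eq:lambdaz}--\eqref{eq:muz} chosen so that the symmetric kernel $E_z=\Ann_{\mu_z}-\lambda_zA_z$ has vanishing zeroth \emph{and} second moments, and only then tilts. For a symmetric $E_z$ the mixed second-order term in the joint Taylor expansion of $\hat E_z\supm(k)$ in $(k,m)$ is $imk_1\sum_xE_z(x)x_1^2=imk_1\frac1d\sum_x|x|^2E_z(x)=0$, so the drift cancels identically and the $(|k|+m)^{2+\eps}$ gain survives. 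The price is that the leading term $C_{\mu_z}\supm$ is now a tilted propagator, which is why the paper imports the massive pointwise bound \eqref{eq:Cmu_bd} from \cite{Slad23_wsaw} and takes $a_Sm_S(z)\le a_1m_0(\mu_z)$ so the tilt is absorbed. If you want to keep your (otherwise attractive) idea of drawing the exponential decay entirely from the tilt and bounding the leading propagator by crude monotonicity, you must either restructure the decomposition so the remainder is a tilted \emph{symmetric} kernel with vanishing second moment, or supply a genuinely new estimate for the odd part of $\hat E_m$; as written, neither is done.
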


The proof of Proposition~\ref{prop:so-nn} is presented in Appendix~\ref{app:S}.
We apply Proposition~\ref{prop:so-nn} with the same $\eta$
as in $\beta = \const\, L^{-1+\eta}$
from \eqref{eq:beta}.
In view of \eqref{eq:untilted_decay}, by taking a larger constant $K_S$ we can also assume
\begin{equation} \label{eq:untilted_G}
\frac{G_p(x)}{\gp}
\le \delta_{0,x} + \frac{ K_S }{ L^{1-\eta} \nnnorm x^{d-2} }
	\qquad ( p \in [0, p_c]).
\end{equation}
We also assume
that $L \ge L_2$ and $\delta_2 \le \delta_1$ from Proposition~\ref{prop:Pi_moments},
so we can apply all results from Section~\ref{sec:mass}.
For $p \in [p_c - \delta_2, p_c)$,
we define another \emph{bootstrap function} $\btil :[0, \mp) \to \R$ by
\begin{align}
\btil(m) = \sup_{x\ne 0} \Bigl\{
	\frac{ G_p(x) e^{mx_1} }{ \gp K_S L^{-1+\eta} \abs x^{-(d-2)}}  \Big\} .
\end{align}
The function $\btil$ is continuous in $m$ because
$G_p(x) e^{mx_1}$ decays exponentially when
$m < \mp$ by \eqref{eq:G_exp_decay}.
By \eqref{eq:untilted_G}, we have $\btil(0) \le 1$.

\begin{proposition} \label{prop:boot}
Let $d > 8$.
There are constants $L_3 \ge L_2$, $\sigma \in (0,1)$,
and $\delta_3 = \delta_3(L) > 0$ such that,
when $L \ge L_3$, $p \in [p_c - \delta_3, p_c)$, and $m \in [0, \sigma \mp]$,
we have
\begin{align}
\btil(m) \le 3
\; \implies \;
\btil(m) \le 2 .
\end{align}
\end{proposition}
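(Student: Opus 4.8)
The plan is to run the $x$-space bootstrap along the lines of \cite{Slad23_wsaw,Liu24}, feeding in the tilted lace expansion and the tilted random-walk estimate of Proposition~\ref{prop:so-nn}. Assume $\btil(m) \le 3$, so that for all $x \ne 0$
\begin{equation}
\label{eq:boothyp-plan}
G_p^{(m)}(x) \le 3 \gp K_S L^{-1+\eta} \nnnorm{x}^{-(d-2)},
\end{equation}
and of course $G_p^{(m)}(0) = \gp$. First I would tilt the lace-expansion convolution equation \eqref{eq:Gconv}: multiplying \eqref{eq:FG} by $e^{mx_1}$ and using that the tilt is multiplicative on convolutions (since $(f*g)^{(m)} = f^{(m)} * g^{(m)}$), we get $F_p^{(m)} * G_p^{(m)} = h_p^{(m)}$, i.e.
\begin{equation}
G_p^{(m)} = \gp \delta + \Pi_p^{(m)} + p\, D^{(m)} * (\gp \delta + \Pi_p^{(m)}) * G_p^{(m)}.
\end{equation}
The strategy is to solve this for $G_p^{(m)}$ against the tilted random-walk Green's function $S_{\mup}^{(m)}$ with a suitably chosen $\mup < 1$, write $G_p^{(m)} = \gp\, S_{\mup}^{(m)} + (\text{error convolved with } S_{\mup}^{(m)})$, and bound the error terms using \eqref{eq:boothyp-plan}, the tilted $\Pi_p$ moment bounds from Proposition~\ref{prop:Pi_moments} (valid for $m \le \mp$, in particular for $m \le \sigma\mp$), and the convolution bound \eqref{eq:prop1.7}. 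The key point is that if $m \le \sigma\mp$ with $\sigma$ close to $1$, then $m$ is comfortably below $m_S(\mup)$ for the right choice of $\mup$, so Proposition~\ref{prop:so-nn} applies and gives $S_{\mup}^{(m)}(x) \lesssim \delta_{0,x} + L^{-1+\eta}\nnnorm{x}^{-(d-2)} e^{(m - a_S m_S(\mup))\norm{x}_\infty}$, where the exponential is still decaying if $m < a_S m_S(\mup)$; choosing $\mup$ so that $1-\mup \asymp \mp^2$ and $m_S(\mup) \asymp \mp$, this forces the constraint $\sigma < a_S$, giving $\sigma \in (0,1)$.

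The crux is the choice of $\mup$ and the bookkeeping that makes the constant improve from $3$ to $2$. The natural choice is to match the effective diffusion constant: set $\mup$ so that $\gp\, p\, \hat D(0) \cdot(\text{stuff})$ lines up, concretely $\mup = p\gp(1 + \hat\Pi_p(0)/\gp)$ up to corrections, equivalently choosing $\mup$ so that the critical random walk $S_{\mup}$ has the same large-$x$ amplitude as $G_p$; this is exactly the deconvolution setup of \cite{LS24b}. With this choice, the "main" term $\gp S_{\mup}^{(m)}(x)$ has amplitude $\gp \cdot B_S L^{-1+\eta}\nnnorm{x}^{-(d-2)}$ with $B_S$ an $L$-independent constant coming from $K_S$, and one checks $B_S \le 1$ (or can be arranged $\le 1$ by the normalization built into $K_S$ via \eqref{eq:untilted_G}), while every error term carries an extra factor of $\beta = \const\, L^{-1+\eta}$ or of $(1-\mup)$ or of the tilted $\Pi$-moment $K_a$ times $L^{-(1-\eta)}$-type gains from \eqref{eq:prop1.7}; these are all $O(\beta)$ smaller. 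So for $L \ge L_3$ large and $\delta_3$ small (to ensure $\mp L$ small, so Proposition~\ref{prop:so-nn} and the $m \le \mp$ moment bounds apply, and $p \ge \frac12 p_c$), the total is at most $(1 + O(\beta))\gp K_S L^{-1+\eta}\nnnorm{x}^{-(d-2)} \le 2\gp K_S L^{-1+\eta}\nnnorm{x}^{-(d-2)}$.

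The step I expect to be the main obstacle is the error analysis that produces the improvement, in two respects. First, one must handle the term $p\, D^{(m)} * \Pi_p^{(m)} * G_p^{(m)}$ (and the $\delta$-piece $p\, D^{(m)} * \gp\delta * G_p^{(m)}$ with its mismatch from $\mup D$): this requires splitting into the $x$-near-origin regime where $G_p^{(m)}(0) = \gp$ contributes a bounded constant, and the large-$x$ regime where the bootstrap hypothesis \eqref{eq:boothyp-plan} and the convolution bound \eqref{eq:prop1.7} with $k=2$ or $k=3$ give the $\nnnorm{x}^{-(d-2)}$ decay back (using $d > 8$ so $2(d-2) > d$, i.e. enough room). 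Second, tracking the exponential weights through the convolutions is delicate: one needs $a_S m_S(\mup) > m$ with room to spare, and one needs the "leftover" decay rate $(a_S m_S(\mup) - m)$ to still be positive and ideally comparable to $\mp$, which is why $\sigma$ must be strictly less than $a_S$ and why $\mup$ must be chosen so that $m_S(\mup) \asymp \mp$ rather than much smaller. The bound $\norm{\Pi_p^{(m)}}$ and its moments being uniform in $m \le \mp$ (Proposition~\ref{prop:Pi_moments}) and $S_{\mup}^{(m)}$ being controlled uniformly in $\mup \in [1-\delta_2,1)$ (Proposition~\ref{prop:so-nn}) are exactly what make these uniform estimates possible. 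Once this is in place, the continuity of $\btil$ (already noted, from \eqref{eq:G_exp_decay}) together with $\btil(0) \le 1$ upgrades the implication to $\btil(m) \le 2$ for all $m \in [0,\sigma\mp]$, which is the statement of Theorem~\ref{thm:near_critical} after unwinding $\btil$.
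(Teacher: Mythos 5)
Your high-level plan---deconvolve $G_p\supm$ against a tuned spread-out random-walk Green's function and feed in the tilted $\Pi_p$ bounds and Proposition~\ref{prop:so-nn}---is the right one, and your remarks about $\sigma<a_S$ and about needing $m_S(\mup)\asymp\mp$ match what is actually required. But the error analysis as you describe it has a genuine gap, in two linked places. First, you write $G_p\supm=\gp S_\mup\supm+(\text{error})$ with the coefficient fixed at $\gp$ and only $\mup$ free, chosen to ``match the amplitude.'' The deconvolution needs \emph{two} free parameters: the paper writes $\hat G_p=\lambda/\hat A_\mu+\hat E_{p,\lambda,\mu}/(\hat A_\mu\hat F_p)$ with $A_\mu=\delta-\mu D$, and chooses $(\lambda_p,\mup)$ so that $E_p$ has vanishing zeroth \emph{and} second moments (\eqref{eq:E_condition}--\eqref{eq:lambda-mu}); it then turns out that $\lambda_p=\gp+O(\beta)$, but fixing $\lambda=\gp$ a priori leaves a nonvanishing moment and destroys the cancellation on which the whole argument rests.

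Second, and more seriously, you propose to bound the error term by the pointwise convolution estimate \eqref{eq:prop1.7}. The error is $f_p=S_\mup*E_p*H_p$ with $H_p$ the inverse transform of $1/\hat F_p$; both $S_\mup$ and $H_p$ decay like $\nnnorm x^{-(d-2)}$, so \eqref{eq:prop1.7} gives only $\nnnorm x^{-(d-4)}$ for their convolution, which cannot close the bootstrap at the rate $\nnnorm x^{-(d-2)}$ that $\btil$ measures. (Your parenthetical ``$2(d-2)>d$, enough room'' addresses summability, not the pointwise decay rate.) Recovering the two missing powers of $|x|$ is exactly what the vanishing moments of $E_p$ buy, and the paper extracts this in Fourier space: Lemma~\ref{lem:AFEbds} together with the massive infrared bounds of Lemma~\ref{lem:massive_infrared} gives $L^1$ control of all derivatives of $\hat f_p\supm$ up to order $d-2$ (Proposition~\ref{prop:f}), and Lemma~\ref{lem:fourier} converts this into $|f_p\supm(x)|\lesssim\beta(L^{-c}+\beta)|x|^{-(d-2)}$. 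Some such mechanism---Fourier differentiability of order $d-2$, or an $x$-space summation by parts exploiting the moment cancellation---is indispensable and is missing from your outline. The remaining ingredients you list (tilting the lace expansion, reusing the diagrammatic bounds under $\btil(m)\le 3$ to get \eqref{eq:Pim_decay}, and the final bookkeeping $1+O(\beta)+O(L^{-c})\le 2$ for $L\ge L_3$) do match the paper's proof.
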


\begin{proof}[Proof of Theorem~\ref{thm:near_critical}]
Let $\delta_L = \delta_3(L)$.
For $p \in [p_c - \delta_L, p_c)$,
since the function $\btil$ is continuous,
it follows from Proposition~\ref{prop:boot} and $\btil(0) \le 1$
that $\btil(\sigma \mp) \le 2$, which is equivalent to
\begin{align}
G_p(x)
\le  \frac{ 2 \gp K_S }{ L^{1-\eta} \abs x^{d-2} } e^{-\sigma \mp x_1}
	\qquad (x\ne 0) .
\end{align}
Since $G_p$ is $\Zd$-symmetric,
we can assume $x_1 = \norm x_\infty$.
The desired bound then follows from $\norm x_\infty \ge d^{-1/2} \abs x$,
by taking the constants in \eqref{eq:near_critical} to be $c = d^{-1/2} \sigma$ and $C_L = 2 \gpc K_S L^{-1 + \eta}$.
\end{proof}

\subsection{Proof of Proposition~\ref{prop:boot}}
\label{sec:boot-pf}

To prove Proposition~\ref{prop:boot},
we adapt the deconvolution argument of \cite{LS24b}
using ideas from \cite{Liu24, Slad23_wsaw}.

Recall $\beta = \const\, L^{-1+\eta}$.
Under the bootstrap hypothesis $\btil(m) \le 3$,
we have $G_p \supm (x) \lesssim \beta \gp \abs x^{-(d-2)}$ for all $x \ne 0$.
Unlike in Section~\ref{sec:mass},
this bootstrap hypothesis allows the use of convolution estimates
to bound $\Pi_p \supm(x)$, exactly as in the $m=0$ case in \cite{HHS03}.
For example,
we bound the tilt of $p D * G_p$ by
\begin{align}
p (D\supm * G_p \supm) (x)
\le e^{mL} p (D *  G_p \supm) (x)
\le e^{mL} \frac{ C \beta }{ \nnnorm x^{d-2} } ,
\end{align}
as in \cite[(4.45)]{HHS03}.
Assuming $mL \le 1$ so that we can use $e^{mL}\le e$,
the bootstrap assumption $\btil(m) \le 3$ implies, as in the proof of
\cite[(4.43)]{HHS03} (with $q=d-2$ there), that
\begin{align} \label{eq:Pim_decay}
\abs{ \Pi_p \supm (x) } \le O(\beta) \delta_{0,x} + \frac{ O(\beta^2) } { \nnnorm x^{2d-6} }
\end{align}
for all $L \ge L_3$ with $L_3$ sufficiently large.
The decay of $\Pi_p \supm$ in \eqref{eq:Pim_decay} gives estimates on derivatives of $\hat \Pi_p \supm$ through the Fourier transform, via \cite[Lemma~2.6]{LS24a}, namely
\begin{align} \label{eq:Pi_gamma}
\bignorm{ \grad^\gamma \hat \Pi_p \supm }_q
\lesssim  \beta \1_{\gamma = 0} + \beta^2
	\qquad \Big(\frac{ \abs \gamma - (d - 6) } d < q \inv \le 1 \Big)
\end{align}
for all multi-indices $\gamma$ with $\abs \gamma \le d-2$.
Here and in what follows, the Fourier norms are $L^q(\T^d)$ norms on the continuum
torus $\T^d$ of period $2\pi$, with
respect to the normalised Lebesgue measure $(2\pi)^{-d} d k$.

To prove that $\btil(m) \le 2$, we analyse $G_p(x)$ in detail.
Recall from \eqref{eq:Gk} the identity
\begin{align} \label{eq:Gk2}
\hat G_p (k) = \frac{ \hat h_p (k) }{\hat F_p (k)} .
\end{align}
We decompose the right-hand side of \eqref{eq:Gk2} into a random walk term and a remainder.
The parameter $\mu$ of the random walk is chosen carefully to make the remainder small.
This isolation of leading term idea originated in \cite{HHS03} and has subsequently
been used repeatedly \cite{Slad23_wsaw, LS24a, LS24b, Liu24}, but our decomposition is not exactly the same.

For $\mu \in [0,1]$,
we define
\begin{equation}
\label{eq:Adef}
    A_\mu = \delta - \mu D,
\end{equation}
so that $\hat A_\mu = 1 - \mu \hat D$.
For any $\lambda\in \R$, we can write
\begin{align} \label{eq:G_decomp}
\hat G_p
= \frac{ \hat h_p }{\hat F_p}
= \lambda \frac 1 {\hat A_\mu} + \frac{ \hat h_p \hat A_\mu - \lambda \hat F_p }{ \hat A_\mu \hat F_p} .
\end{align}
Let
\begin{equation}
\label{eq:Edef}
    E_{p,\lambda,\mu} = h_p * A_\mu - \lambda F_p.
\end{equation}
We choose the parameters $\lambda, \mu$ to ensure that
\begin{align} \label{eq:E_condition}
\sum_{x\in \Z^d} E_{p,\lambda,\mu}(x) = \sum_{x\in \Z^d} \abs x^2 E_{p,\lambda,\mu}(x) = 0.	
\end{align}
This is a system of two linear equations in $\lambda,\mu$, with solution
\begin{align} \label{eq:lambda-mu}
\lambda_p
= \frac {  \hat h_p(0) } { \hat F_p(0) - \sigma_L^{-2}\sum_{x\in\Z^d} \abs x^2 \big[ F_p(x) - \frac{\hat F_p(0)}{ \hat h_p(0) } \Pi_p(x) \big] } ,	
\qquad
\mu_p
= 1 - \frac{ \lambda_p } \chip .
\end{align}
Some properties of $\lambda_p, \mup$ are stated in the following lemma.
We write $E_p = E_{p,\lambda_p, \mu_p}$ henceforth.

\begin{lemma} \label{lem:m_S}
Let $L \ge L_3$.
Then $\lambda_p = \gp + O(\beta)$, and as $p\to p_c^-$ we have $\mup \to 1$ and
\begin{align}
\mp \sim m_S(\mup) .
\end{align}
\end{lemma}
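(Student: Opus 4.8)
The plan is to establish the three assertions in Lemma~\ref{lem:m_S} in order: first the size of $\lambda_p$, then $\mu_p \to 1$, and finally the asymptotic equivalence $m(p) \sim m_S(\mu_p)$. The first two are essentially bookkeeping from the explicit formulas \eqref{eq:lambda-mu}, while the third is the substantive point and will require matching the exponential decay rate of $G_p$ with that of the random walk two-point function $S_{\mu_p}$.

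\textbf{Step 1: $\lambda_p = g_p + O(\beta)$ and $\mu_p \to 1$.} From \eqref{eq:lambda-mu}, I would note that $\hat h_p(0) = g_p + \hat\Pi_p(0) = g_p + O(\beta)$ since $\norm{\Pi_p}_1 = O(\beta)$. The denominator of $\lambda_p$ is $\hat F_p(0) - \sigma_L^{-2}\sum_x |x|^2[F_p(x) - \frac{\hat F_p(0)}{\hat h_p(0)}\Pi_p(x)]$. Here $\hat F_p(0) = 1 - p\hat D(0)\hat h_p(0) = 1 - p\hat h_p(0)$, and the correction sum is $O(\beta)$ times $L^2$-type factors divided by the $\sigma_L^2 \asymp L^2$; a careful tracking shows the denominator equals $1 + O(\beta)$ after using $\del_1^2 \hat F_p(0) \asymp L^2$ and the moment bounds \eqref{eq:F_moments} with $a=2$, together with $\norm{|x|^2\Pi_p(x)}_1 = O(\beta^2 L^2)$ (this last from the $a=2$ case of Proposition~\ref{prop:Pi_moments} at $m=0$, or rather its $m=0$ analogue from \cite{HHS03}). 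Hence $\lambda_p = g_p + O(\beta)$. For $\mu_p$: by definition $\mu_p = 1 - \lambda_p/\chi(p)$, and since $\lambda_p$ is bounded (it is $g_p + O(\beta) \le 4 + O(\beta)$) while $\chi(p) \to \infty$ as $p \to p_c^-$ by \eqref{eq:gamma_half}, we get $\mu_p \to 1$.

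\textbf{Step 2: $m(p) \sim m_S(\mu_p)$.} This is where the work lies. The decomposition \eqref{eq:G_decomp} writes $\hat G_p = \lambda_p \hat A_{\mu_p}^{-1} + \hat E_p/(\hat A_{\mu_p}\hat F_p)$, i.e. in $x$-space $G_p = \lambda_p S_{\mu_p} + (\text{remainder})$. The conditions \eqref{eq:E_condition} force $\hat E_p(0) = 0$ and $\del^2 \hat E_p(0) = 0$, so $E_p$ has a double zero at $k=0$, which makes the remainder term better-behaved than the leading random-walk term near criticality. I would argue as follows. For the inequality $m(p) \ge m_S(\mu_p)$: suppose $m < m_S(\mu_p)$; then $S_{\mu_p}^{(m)}$ is summable, and the tilted versions of $F_p$ and $h_p$ have finite moments (via Proposition~\ref{prop:Pi_moments}, valid because $m < m_S(\mu_p)$ will be $\le m(p)$ once we know the two masses are comparable --- this requires care to avoid circularity, so I would instead bound the remainder's tilt directly by analyticity of $\hat F_p$ and $\hat h_p$ in a strip $|\Im k_1| < m$, using that $\hat F_p(k) \ne 0$ there, which follows from the tilted infrared bound of Corollary~\ref{cor:tilted_infrared} extended to complex $k$). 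This would give $\chi^{(m)}(p) < \infty$, hence $m \le m(p)$. For the reverse, $m_S(\mu_p) \ge m(p)$: I would use that at $m = m(p)$, $\hat F_p^{(m(p))}(0) = 0$ (established in the proof of Theorem~\ref{thm:mass}), combined with the factorisation --- the only way $\chi^{(m)}(p)$ can diverge as $m \uparrow m(p)$ is through the $S_{\mu_p}$ term, since the remainder has a double zero killing the leading singularity; thus $m(p)$ cannot exceed $m_S(\mu_p)$. Finally, to upgrade $m(p) \asymp m_S(\mu_p)$ to $m(p) \sim m_S(\mu_p)$, I would combine $\lambda_p = g_p + O(\beta) = g_p(1+o(1))$ (actually $\beta$ is fixed here, so this is $g_p + O(\beta)$ with $\beta$ small but not vanishing --- the $\sim$ must come from a finer argument) with the relation $\mu_p = 1 - \lambda_p/\chi(p)$ and Proposition~\ref{prop:so-nn}'s asymptotic $m_S(\mu_p)^2 \sim \frac{2d}{\sigma_L^2}(1-\mu_p) = \frac{2d}{\sigma_L^2}\frac{\lambda_p}{\chi(p)}$, together with the asymptotic $\frac{1}{\chi(p)} \sim \frac{\del_1^2 \hat F_{p_c}(0)}{2\hat h_{p_c}(0)} m(p)^2$ from \eqref{eq:mp_asymp}. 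Substituting gives $m_S(\mu_p)^2 \sim \frac{2d}{\sigma_L^2}\cdot g_{p_c} \cdot \frac{\del_1^2\hat F_{p_c}(0)}{2\hat h_{p_c}(0)} m(p)^2$, and I would then check that the constant on the right equals $1$: since $\sigma_L^2 = \del_1^2\hat D(0)\cdot d$ (by symmetry $\sigma_L^2 = d\,\del_1^2\hat D(0)$, using $\sum|x|^2 D = d\sum x_1^2 D$), and $\del_1^2\hat F_p(0) = p g_p \del_1^2\hat D(0) + (\text{tilted-}\Pi\text{ correction which is }O(\beta))$... the constant matching is delicate and may only give $\asymp$ unless the $O(\beta)$ terms are tracked to cancel; for lattice trees where \eqref{eq:gamma_half} is an asymptotic equality the $\sim$ should hold, consistent with the statement of Theorem~\ref{thm:mass}.

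\textbf{The main obstacle} I anticipate is Step 2, specifically controlling the remainder term $E_p/(A_{\mu_p} F_p)$ under the exponential tilt without circular reasoning: one wants to say the remainder decays faster than $S_{\mu_p}$, but establishing its summable tilt requires knowing $\hat F_p$ is non-vanishing in a complex strip of width close to $m(p)$, and the natural tool (Corollary~\ref{cor:tilted_infrared}) was proved for real tilts. I would handle this by extending the infrared-type bound to complex momenta $k = k' + i\theta e_1$ with $|\theta| < m$ --- since $\hat F_p^{(\cdot)}$ is built from $\hat D$ (an entire function, as $D$ has finite support) and $\hat\Pi_p$ (analytic in the strip by the moment bound of Proposition~\ref{prop:Pi_moments}) --- and then invoke the double zero of $\hat E_p$ at the origin to absorb the $k^{-2}$ singularity of $\hat A_{\mu_p}^{-1}$, showing the remainder's contribution to $\chi^{(m)}(p)$ stays bounded as $m \uparrow m(p)$. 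The constant-matching needed for the sharp $\sim$ (rather than $\asymp$) is the second delicate point, resolved by comparing the explicit prefactors in \eqref{eq:mp_asymp} and Proposition~\ref{prop:so-nn} and using the lace-expansion identity $\del_1^2\hat F_{p_c}(0) = p_c g_{p_c}\sigma_L^2/d + O(\beta)$, noting the $O(\beta)$ discrepancy is consistent with the asymptotic formula holding for trees and the weaker $\asymp$ for animals in Theorem~\ref{thm:mass}.
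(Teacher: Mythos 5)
Your Step 1 matches the paper's argument exactly. Step 2, however, both takes an unnecessary detour and contains a genuine gap that prevents you from reaching the claimed conclusion. The paper does \emph{not} compare the exponential decay rates of $G_p$ and $S_{\mu_p}$ at all, and needs no two-sided inequality $m(p)\gtrless m_S(\mu_p)$, no complex-analytic extension of the infrared bound, and no control of the remainder $f_p$ under a tilt. The relation $m(p)\sim m_S(\mu_p)$ is pure algebra: Proposition~\ref{prop:so-nn} gives $m_S(\mu_p)^2\sim \frac{2d}{\sigma_L^2}(1-\mu_p)=\frac{2d}{\sigma_L^2}\frac{\lambda_p}{\chi(p)}$; left-continuity gives $\lambda_p\to\lambda_{p_c}$; and --- this is the step you miss --- since $\hat F_{p_c}(0)=0$, the defining formula \eqref{eq:lambda-mu} collapses to the \emph{exact} identity
\begin{equation*}
\lambda_{p_c}=\frac{\hat h_{p_c}(0)}{-\sigma_L^{-2}\sum_{x}|x|^2F_{p_c}(x)}
=\frac{\sigma_L^2}{d}\,\frac{\hat h_{p_c}(0)}{\partial_1^2\hat F_{p_c}(0)},
\end{equation*}
using $\Z^d$-symmetry. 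Substituting this together with \eqref{eq:mp_asymp}, i.e.\ $\chi(p)^{-1}\sim\frac{\partial_1^2\hat F_{p_c}(0)}{2\hat h_{p_c}(0)}\,m(p)^2$, the constants cancel identically and $m_S(\mu_p)^2\sim m(p)^2$ with no error term.

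Your version of the constant-matching instead uses the approximation $\lambda_{p_c}=g_{p_c}+O(\beta)$ and the identity $\partial_1^2\hat F_{p_c}(0)=p_cg_{p_c}\sigma_L^2/d+O(\beta)$, which leaves a multiplicative factor $1+O(\beta)$ that you cannot remove; you acknowledge this ("may only give $\asymp$") and attribute the slack to the trees-versus-animals distinction in Theorem~\ref{thm:mass}. That diagnosis is wrong: the lemma asserts the sharp $\sim$ for \emph{both} models, and \eqref{eq:mp_asymp} is an exact asymptotic relation valid for both (the trees/animals distinction in Theorem~\ref{thm:mass} enters only when converting $m(p)^2\sim c/\chi(p)$ into an asymptotic in $p_c-p$ via \eqref{eq:gamma_half}). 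The point of the carefully chosen $\lambda_p$ in \eqref{eq:lambda-mu} is precisely that its value at $p_c$ is the right constant on the nose, so no $O(\beta)$ bookkeeping is needed. The rest of your Step 2 (summability of the tilted remainder, nonvanishing of $\hat F_p$ in a complex strip, the "double zero kills the singularity" claim) is not required for this lemma and is in any case only sketched, not proved.
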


\begin{proof}
Since $F_p = \delta - pD * h_p $
with $h_p = \gp\delta + \Pi_p$, and since $\Pi_p$ is of order $\beta$
by \eqref{eq:untilted_decay}, we have
\begin{align}
\hat F_p(0) &= 1 - p\gp - p \hat \Pi_p(0) = 1 - p\gp + O(\beta), \\
\sum_{x\in \Zd} \abs x^2 F_p(x)
	&= -p \gp \sigma_L^2 - p\sum_{x\in \Zd} \abs x^2 (D*\Pi_p)(x)
	= - p \gp \sigma_L^2 + O(\sigma_L^2 \beta) .
\end{align}
We insert this into the definition of $\lambda_p$ and obtain
\begin{align} \label{eq:lambda_pf}
\lambda_p
&= \frac{ \gp + O(\beta) } { 1 - p\gp + O(\beta) + p\gp + O(\beta) }
= \gp + O(\beta) .
\end{align}
Since $\chip$ diverges as $p\to p_c^-$, whereas $\lambda_p$ remains bounded,
we also have  $\mup   \to 1$.

In conjunction with $\mup\to 1$,  Proposition~\ref{prop:so-nn} gives
\begin{align}
m_S(\mup)^2
\sim \frac{2d}{\sigma_L^2} (1 - \mup)
\sim \frac{2d}{\sigma_L^2} \frac {\lambda\crit} \chip .
\end{align}
Since $\hat F\crit (0) = 0$, \eqref{eq:lambda-mu} and the $\Zd$-symmetry give
\begin{align}
\lambda\crit
= \frac { \hat h\crit(0) } { - \sigma_L^{-2}  \sum_{x\in \Zd} \abs x^2 F\crit(x)  }
= \frac { \sigma_L^2} d \frac{ \hat h\crit(0) }{ \del_1^2 \hat F\crit(0) } .
\end{align}
It then follows from the asymptotic formula \eqref{eq:mp_asymp}  for $\mp$ that
\begin{align}
m_S(\mup)^2
\sim \frac{ 2 \hat h\crit(0) }{ \del_1^2 \hat F\crit(0) } \frac 1 \chip
\sim \mp^2 ,
\end{align}
and the proof is complete.
\end{proof}

With $\lambda = \lambda_p$ and $\mu = \mup$,
the inverse Fourier transform of \eqref{eq:G_decomp} yields
\begin{align} \label{eq:G_isolate}
G_p = \lambda_p S_\mup + f_p ,
\end{align}
where $f_p$ is the inverse Fourier transform of $\hat f_p = \hat E_p / (\hat A_\mup \hat F_p)$, namely
\begin{align}
f_p = S_\mup * E_p * H_p,
\qquad H_p(x) = \int_{\T^d}\frac{e^{-ik\cdot x}}{\hat F_p(k)} \frac{dk}{(2\pi)^d} .
\end{align}
Multiplication of \eqref{eq:G_isolate} by $e^{mx_1}$ gives the tilted equation
\begin{align} \label{eq:Gm_decomp}
G_p \supm = \lambda_p S_\mup \supm + f_p \supm,
\qquad
f_p \supm =  S_\mup \supm * E_p \supm * H_p \supm.
\end{align}
The next proposition provides the key estimate on the remainder $f_p \supm$, under the bootstrap hypothesis $\btil(m) \le 3$.
The hypotheses that $\mp L$ and $m_S(\mup) L$ are small will be established
by restricting to $p \in [p_c - \delta_3, p_c)$ with $\delta_3$ small,
since both $\mp, m_S(\mup) \to 0$ as $p \to p_c^-$ by Theorem~\ref{thm:mass} and Lemma~\ref{lem:m_S}.
The proof of Proposition~\ref{prop:f} is given in Section~\ref{sec:f}.

\begin{proposition} \label{prop:f}
Let $d>8$ and $L \ge L_3$.
Suppose that $m \in [0, \half \mp \wedge \half m_S(\mup)]$,
that $\btil(m) \le 3$,
and that $\mp L$ and $m_S(\mup)L$ are sufficiently small.
Then
$\hat f_p \supm$
is $d-2$ times weakly differentiable,
and there exists $c > 0$ such that
\begin{align}
\bignorm { \grad^\alpha \hat f_p \supm }_{1}
\lesssim \beta (L^{-c} + \beta)
	\qquad (\abs \alpha = d-2) ,
\end{align}
with the constants independent of $p,m,L$.
\end{proposition}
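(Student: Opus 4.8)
The proof will rest on the Fourier identity $\hat f_p\supm = \hat E_p\supm/(\hat A_{\mup}\supm \hat F_p\supm)$, which comes from \eqref{eq:Gm_decomp} together with $\hat E_p\supm = \hat h_p\supm\hat A_{\mup}\supm - \lambda_p\hat F_p\supm$, and I would first assemble four ingredients. (i) Under the bootstrap hypothesis $\btil(m)\le3$ with $\mp L$ (hence $mL$) small, the decay \eqref{eq:Pim_decay} of $\Pi_p\supm$ yields the Fourier-derivative bounds \eqref{eq:Pi_gamma}. (ii) The finite range of $D$ and \eqref{eq:A_moments} make $\hat D\supm$ smooth, with the $L^q$-norms of $\nabla^\gamma\hat D\supm$ ($\gamma\neq0$) bounded by a negative power of $L$ for $q$ slightly above $1$, by the Dirichlet-kernel structure of $\hat D$. (iii) Exactly as in the proof of Corollary~\ref{cor:tilted_infrared}, now run under the $\btil$-bootstrap via \eqref{eq:Pim_decay}, one gets the tilted infrared bounds $|\hat A_{\mup}\supm(k)|\gtrsim(1-\mup)+(L^2|k|^2\wedge1)$ and $\Re\hat F_p\supm(k)\gtrsim\hat F_p(0)+(L^2|k|^2\wedge1)$; these hold because the hypotheses $m\le\tfrac12\mp$ and $m\le\tfrac12 m_S(\mup)$ keep the tilt corrections (of order $L^2m^2$) below the mass terms, which by Lemma~\ref{lem:m_S} and Theorem~\ref{thm:mass} satisfy $1-\mup\asymp L^2 m_S(\mup)^2\asymp L^2\mp^2\asymp\hat F_p(0)$. (iv) The crucial input: the moment conditions \eqref{eq:E_condition} give $\hat E_p(0)=0$ and $\nabla^2\hat E_p(0)=0$, and since $\Z^d$-symmetry kills $\nabla\hat E_p(0)$ and since under the bootstrap $E_p$ has finite moments of every order below $d-6>2$ (its $\delta$- and $D$-parts carry coefficients of order $\beta$ while its $\Pi_p$-parts inherit \eqref{eq:Pim_decay}, and $E_p\equiv0$ when $\Pi_p=0$, where $\lambda_p=g_p$, $\mup=pg_p$, $h_p=g_p\delta$), one obtains, for a fixed $a\in(0,1\wedge(d-8))$, the estimate $|\hat E_p\supm(k)|\lesssim\beta(|k|+m)^{2+a}$ near $k=0$, with Hölder-type bounds on the low-order derivatives of $\hat E_p\supm$.

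Next I would split $\T^d$ into an infrared region $\{|k|\le L^{-1}\}$ and a bulk region $\{L^{-1}<|k|\le\pi\sqrt d\}$. On the infrared region the ingredients combine to give $|\hat f_p\supm(k)|\lesssim\beta(|k|+\mp)^{2+a}/[L^4(\mp^2+|k|^2)^2]$. Differentiating $d-2$ times (Leibniz, with Faà di Bruno for the reciprocal factors, using the infrared bounds on $\hat A_{\mup}\supm,\hat F_p\supm$ and on their derivatives from (ii)), each derivative landing on the numerator removes a power of $(|k|+\mp)$ and is harmless, while each derivative landing on a denominator factor costs at most $|k|^{-1}$ on the window $\mp\lesssim|k|\le L^{-1}$ (in the deep infrared $|k|\lesssim\mp$ the denominators stabilise at order $L^4\mp^4$ and that sub-region contributes only lower order). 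The worst scaling is therefore $\beta|k|^{2+a}\cdot|k|^{-4}\cdot|k|^{-(d-2)}=\beta|k|^{a-d}$, so since $a>0$ we get $\int_{|k|\le L^{-1}}|\nabla^\alpha\hat f_p\supm(k)|\,dk\lesssim\beta\int_0^{L^{-1}}r^{a-1}\,dr\lesssim\beta L^{-a}$. This is exactly where $d>8$ enters: the condition $a>0$ makes the singularity at $k=0$ integrable and produces the gain $L^{-a}$ from the shrinking radius of integration.

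On the bulk region, $L^2|k|^2\wedge1\asymp1$, so $\hat A_{\mup}\supm$, $\Re\hat F_p\supm$, and their product are bounded above and below by positive constants uniformly in $p,m$. Here I would bound the differentiated quotient in $L^q(\{|k|>L^{-1}\})$ for a fixed $q>1$ close to $1$, rather than in $L^\infty$: since $|\alpha|=d-2\ge1$, every term of the Leibniz expansion carries at least one derivative, and Hölder together with the $L^q$ bounds on $\nabla^\gamma\hat\Pi_p\supm$ from \eqref{eq:Pi_gamma} ($O(\beta^2)$ for $\gamma\neq0$) and the small $L^q$-norms of $\nabla^\gamma\hat D\supm$ ($\gamma\neq0$) from (ii) bound each term by $\beta(L^{-c}+\beta)$ — the overall $\beta$ coming from $\hat E_p\supm$ being an $O(\beta)$ ``$\Pi_p$-effect'' and the extra $L^{-c}+\beta$ from the derivative that must fall somewhere. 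Since $\|\cdot\|_{L^1(\T^d)}\le\|\cdot\|_{L^q(\T^d)}$ on the normalised torus, this controls the bulk contribution to $\|\nabla^\alpha\hat f_p\supm\|_1$.

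Combining the two regions gives $\|\nabla^\alpha\hat f_p\supm\|_1\lesssim\beta L^{-a}+\beta(L^{-c}+\beta)\lesssim\beta(L^{-c}+\beta)$ after relabelling $a\wedge c$ as $c$, with constants independent of $p,m,L$; the weak differentiability of $\hat f_p\supm$ up to order $d-2$ then follows from the absolute convergence of the differentiated integrals just established. The hard part will be the infrared analysis of the second paragraph: one must check carefully that the two conditions \eqref{eq:E_condition}, fed only by the limited ($>2$) moments of $\Pi_p\supm$ available under the bootstrap, really do upgrade the vanishing of $\hat E_p$ at the origin from order $2$ to order $2+a$ with $a>0$, and one must track that every tilt correction — of order $m^{2+a}$ in the numerator and $L^2m^2$ in the denominators — stays subordinate throughout $m\in[0,\tfrac12\mp\wedge\tfrac12 m_S(\mup)]$ and uniformly in $p$, so that the infrared contribution is genuinely $\lesssim\beta L^{-a}$.
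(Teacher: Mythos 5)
Your proposal is correct and follows essentially the same route as the paper: the identity $\hat f_p\supm=\hat E_p\supm/(\hat A_\mup\supm\hat F_p\supm)$, the massive infrared lower bounds on the two denominator factors, the gain of $(\abs k+m)^{2+\eps}$ for $\hat E_p\supm$ from the vanishing zeroth and second moments \eqref{eq:E_condition}, and H\"older bookkeeping split between $\{\abs k<1/L\}$ and its complement (the paper packages exactly these steps as $L^q$ bounds on the ratios $\grad^\gamma\hat A_\mup\supm/\hat A_\mup\supm$, $\grad^\gamma\hat F_p\supm/\hat F_p\supm$, $\grad^\gamma\hat E_p\supm/(\hat A_\mup\supm\hat F_p\supm)$ in Lemma~\ref{lem:AFEbds}). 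The only slips are cosmetic: the infrared numerator bound should carry the factor $L^{2+a}$ coming from the range-$L$ moments of $E_p$ (which only improves the resulting power of $L$), and the pointwise ``one power of $\abs k^{-1}$ per derivative'' heuristic must be replaced by the $L^q$/H\"older treatment for second and higher derivatives of the denominator factors, as you already indicate for the bulk region.
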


See \cite[Appendix~A]{LS24a} for an introduction to the weak derivative,
which extends the classical notion of derivative to (locally) integrable functions, using the usual integration by parts formula.
We combine Proposition~\ref{prop:f} with the classical fact that smoothness in the Fourier space corresponds to decay in the physical space.
A precise version of this principle is stated next.

\begin{lemma} \label{lem:fourier}
Let $n,d>0$ be positive integers.
Suppose that $\hat \psi : \T^d \to \C$ is  $n$ times weakly differentiable,
and let $\psi : \Z^d \to \C$
be the inverse Fourier transform of $\hat \psi$.
There is a constant $c_{d,n}$ depending only on the dimension $d$ and the
order $n$ of differentiation, such that
\begin{equation}
\label{eq:Graf}
\abs{ \psi(x) }  \le  c_{d,n} \frac{1}{ \abs x^n}
	\max_{ \abs \alpha = n } \bignorm{ \grad^\alpha \hat \psi }_1
	\qquad (x\ne 0) .
\end{equation}
\end{lemma}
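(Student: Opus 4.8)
The plan is to transfer all $n$ derivatives from the exponential onto $\hat\psi$ by a one-dimensional integration by parts in the Fourier variable conjugate to the largest coordinate of $x$, and then estimate trivially. Since the right-hand side of \eqref{eq:Graf} is invariant under permuting the coordinate axes, I would first fix $x \ne 0$ and choose $j \in \{1,\dots,d\}$ with $|x_j| = \norm{x}_\infty$, so that $|x_j| \ge d^{-1/2}|x|$. Writing the inverse Fourier transform
\[
    \psi(x) = \int_{\T^d} e^{-ik\cdot x}\, \hat\psi(k)\, \frac{dk}{(2\pi)^d},
\]
the key elementary identity is $(\del_{k_j})^n e^{-ik\cdot x} = (-ix_j)^n e^{-ik\cdot x}$.

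Next I would integrate by parts $n$ times in the variable $k_j$. Because $\T^d$ is a torus (no boundary) and the test function $k \mapsto e^{-ik\cdot x}$ is smooth and periodic, the definition of the weak derivative on $\T^d$ gives, for any $u$ that is weakly differentiable with integrable derivative, $\int_{\T^d} (\del_{k_j}u)\, e^{-ik\cdot x}\, dk = -\int_{\T^d} u\, (\del_{k_j} e^{-ik\cdot x})\, dk$ with no boundary term; applying this successively (using that the weak derivative of $\hat\psi$ of order $n-1$ is again weakly differentiable, which is exactly the hypothesis that $\hat\psi$ is $n$ times weakly differentiable) yields
\[
    (ix_j)^n\, \psi(x) = \int_{\T^d} e^{-ik\cdot x}\, (\del_{k_j})^n \hat\psi(k)\, \frac{dk}{(2\pi)^d}.
\]
Taking absolute values and using $|e^{-ik\cdot x}| = 1$ gives $|x_j|^n\, |\psi(x)| \le \norm{(\del_{k_j})^n \hat\psi}_1 \le \max_{|\alpha| = n} \bignorm{\grad^\alpha \hat\psi}_1$, and dividing by $|x_j|^n$ and inserting $|x_j|^{-n} \le d^{n/2}|x|^{-n}$ yields \eqref{eq:Graf} with $c_{d,n} = d^{n/2}$.

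The only delicate point is justifying the iterated integration by parts with merely $L^1$ control on the derivatives, which I expect to be the main (but entirely routine) obstacle. It can be handled either by mollifying $\hat\psi$ on $\T^d$ via periodic convolution, applying the classical periodic integration-by-parts formula to the smooth approximants, and passing to the limit using $L^1$ convergence of the derivatives, or directly from the definition of weak derivatives on the torus together with induction on $n$ as indicated above. Either route is standard, and the lemma follows.
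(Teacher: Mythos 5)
Your proposal is correct and is essentially the argument the paper invokes: the paper's proof simply cites \cite[Corollary~3.3.10]{Graf14}, whose proof is exactly the $n$-fold integration by parts in the coordinate $k_j$ with $|x_j|=\norm{x}_\infty$ that you carry out, together with the observation that the boundary-free periodic integration by parts is the defining property of the weak derivative. Your explicit constant $c_{d,n}=d^{n/2}$ and the handling of the weak-derivative technicality are both fine.
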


\begin{proof}
This is a version of \cite[Corollary~3.3.10]{Graf14} with its estimate
$\abs{ \psi(0) } \le \norm{ \hat \psi }_1$ omitted.
The proof uses only integration by parts and applies to weak derivatives too.
\end{proof}

\begin{proof}[Proof of Proposition~\ref{prop:boot} assuming
Propositions~\ref{prop:so-nn} and \ref{prop:f}]

Let $\sigma \in (0, a_S \wedge \half)$,
where $a_S$ is from Proposition~\ref{prop:so-nn}.
Using Lemma~\ref{lem:m_S},
we can choose $\delta_3 > 0$ small enough that
\begin{align} \label{eq:sigma_choice}
\sigma \mp \le (a_S \wedge \tfrac 1 2) m_S(\mup),
\end{align}
and also that $\mp L$ and $m_S(\mup) L$ are small enough to
apply Proposition~\ref{prop:f}
for all $p \in [p_c - \delta_3, p_c)$
and $m\in [0, \sigma \mp]$.
Then, under the assumption that $\btil(m) \le 3$,
we can use
Proposition~\ref{prop:f}, Lemma~\ref{lem:fourier}
(with $\psi = f_p \supm$ and $n = d-2$), and the decomposition \eqref{eq:Gm_decomp} of $G_p \supm$ to get
\begin{align}
G_p \supm(x) = \lambda_p S_\mup \supm(x)
	+ O\( \frac{ \beta (L^{-c} + \beta) }{ \abs x^{d-2} } \)
\qquad (x\ne 0) .
\end{align}
Since $m \le \sigma \mp \le a_S m_S(\mup)$ by \eqref{eq:sigma_choice}, Proposition~\ref{prop:so-nn} gives
\begin{align}
S_\mup \supm(x)
= S_\mup(x) e^{mx_1}
\le \frac{ K_S }{ L^{1-\eta} \abs x^{d-2} } e^{- (a_S m_S(\mup) - m )\norm x_\infty }
\le \frac{ K_S }{ L^{1-\eta} \abs x^{d-2} } .
\end{align}
With $\lambda_p = \gp + O(\beta)$ from Lemma~\ref{lem:m_S},
and with $\beta = \const\, L^{-1+\eta}$
from \eqref{eq:beta},
we obtain
\begin{align}
G_p \supm(x) &\le (\gp + O(\beta)) \frac{ K_S }{ L^{1-\eta} \abs x^{d-2} }
	+ O\( \frac{ \beta (L^{-c} + \beta) }{ \abs x^{d-2} } \) 	\nl
&=  \big(1 + O(\beta) + O(L^{-c}) \big)  \frac{ \gp K_S }{ L^{1-\eta} \abs x^{d-2} }
\qquad\qquad\qquad (x\ne 0) .
\end{align}
By choosing the constant $L_3$ larger to ensure that
$1 + O(\beta) + O(L^{-c}) \le 2$ for all $L \ge L_3$,
we obtain $\btil(m) \le 2$, and the proof is complete.
\end{proof}

\subsection{Proof of Proposition~\ref{prop:f}}
\label{sec:f}

\subsubsection{A key lemma}

To prove Proposition~\ref{prop:f}, we must show that the function
\begin{equation}
\label{eq:fhatm}
    \hat f_p \supm = \frac{\hat E_p \supm}{\hat A_\mup \supm \hat F_p \supm }
\end{equation}
is $d-2$ times weakly differentiable, with estimates on its derivatives.
We first reduce the proof of Proposition~\ref{prop:f}
to Lemma~\ref{lem:AFEbds}, whose proof is given Section~\ref{sec:AFE-pf}.

\begin{lemma} \label{lem:AFEbds}
Let $d>8$ and $L \ge L_3$.
Suppose that $m \in [0, \half \mp \wedge \half m_S(\mup)]$,
that $\btil(m) \le 3$,
and that $\mp L$ and $m_S(\mup)L$ are sufficiently small.
Then the functions $\hat F_p \supm, \hat A_\mup \supm, \hat E_p \supm$ are $d-2$ times weakly differentiable.
In addition, there exists $\eps \in (0,1)$ such that,
for any multi-index $\gamma$ with $\abs \gamma \le d-2$
and any choice of $q_1, q_2$ satisfying
\begin{equation} \label{eq:q1q2}
    \frac{ \abs \gamma } d < q_1\inv < 1,
    \qquad
    \frac{ 2 - \eps + \abs \gamma } d < q_2\inv < 1 ,
\end{equation}
the following hold with constants independent of $p,m,L$:
\begin{align} \label{eq:AFEbds_SO}
\biggnorm{ \frac{ \grad^\gamma \hat A_\mup \supm }{ \hat A_\mup \supm } }_{q_1}, \;
\biggnorm{ \frac{ \grad^\gamma \hat F_p \supm }{ \hat F_p \supm } }_{q_1}
	\lesssim 1 , \qquad
\biggnorm{ \frac{ \grad^\gamma \hat E_p \supm }{ \hat A_\mup \supm \hat F_p \supm} }_{q_2}
\lesssim \beta ,
\end{align}
If $\abs \gamma \ne 0$, the estimates are improved to
\begin{align} \label{eq:AFEbds_SO2}
\biggnorm{ \frac{ \grad^\gamma \hat A_\mup \supm }{ \hat A_\mup \supm } }_{q_1}, \;
\biggnorm{ \frac{ \grad^\gamma \hat F_p \supm }{ \hat F_p \supm } }_{q_1}
	\lesssim L^{ \abs \gamma - d/q_1 } 	, \quad
\biggnorm{ \frac{ \grad^\gamma \hat E_p \supm }{ \hat A_\mup \supm \hat F_p \supm} }_{q_2}
	\lesssim \beta ( L^{\abs \gamma - d/q_2} + \beta ),
\end{align}
with constants independent of $p,m,L$.
\end{lemma}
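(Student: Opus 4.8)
The plan is to prove Lemma~\ref{lem:AFEbds} by transplanting the untilted deconvolution estimates of \cite{HHS03,LS24a,LS24b} to the tilted setting, carrying the weight $e^{mx_1}$ through every step. The decisive point is that under the bootstrap hypothesis $\btil(m)\le 3$ the $m=0$ argument of \cite{HHS03} for $\Pi_p$ applies verbatim to $\Pi_p\supm$, yielding the convolution decay bound \eqref{eq:Pim_decay} and hence, via \cite[Lemma~2.6]{LS24a}, the Fourier derivative estimates \eqref{eq:Pi_gamma} for $\grad^\gamma\hat\Pi_p\supm$ with $|\gamma|\le d-2$. Since $\hat h_p\supm=g_p+\hat\Pi_p\supm$, $\hat A_{\mup}\supm=1-\mup\hat D\supm$, $\hat F_p\supm=1-p\hat D\supm\hat h_p\supm$, and $\hat E_p\supm=\hat h_p\supm\hat A_{\mup}\supm-\lambda_p\hat F_p\supm$, all three target functions are built from $\hat\Pi_p\supm$ and the entire, range-$L$ function $\hat D\supm$; weak differentiability up to order $d-2$ then follows from the product and sum rules, using that $\hat D\supm$ is smooth with $|\grad^\gamma\hat D\supm(k)|\le\sum_x|x|^{|\gamma|}D(x)e^{mx_1}\lesssim L^{|\gamma|}$ by \eqref{eq:A_moments} and $mL$ small.

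Next I would establish lower bounds on the denominators: $|\hat A_{\mup}\supm(k)|$ and $|\hat F_p\supm(k)|$ are bounded below by a multiple of $L^2|k|^2\wedge 1$, with the near-origin floors $\hat A_{\mup}\supm(0),\hat F_p\supm(0)\asymp L^2\mp^2$ whenever $m\le\tfrac12\mp$. For $\hat A_{\mup}\supm$ this uses $\Re\hat D\supm(0)-\Re\hat D\supm(k)=\sum_xD(x)\cosh(mx_1)(1-\cos(k\cdot x))\ge\hat D(0)-\hat D(k)\gtrsim L^2|k|^2\wedge1$ from \eqref{eq:AIR}, together with $\hat A_{\mup}\supm(0)=(1-\mup)-\tfrac{\mup\sigma_L^2}{2d}m^2+O(m^4L^4)$ and the relations $1-\mup=\lambda_p/\chip$ and $m_S(\mup)\sim\mp$ from Lemma~\ref{lem:m_S}; for $\hat F_p\supm$ it is the argument of Corollary~\ref{cor:tilted_infrared}, re-run with $\btil(m)\le3$ (which only improves the error terms) in place of $b(m)\le4$, combined with $\hat F_p\supm(0)=\hat h_p\supm(0)/\chi\supm(p)>0$ and the asymptotics \eqref{eq:mp_asymp}.

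For the numerators I would use $\grad^\gamma\hat A_{\mup}\supm=-\mup\grad^\gamma\hat D\supm$ and the standard $L^q(\T^d)$ bounds on $\hat D$ and its derivatives for spread-out $D$ --- where $\grad^\gamma\hat D$ concentrates near $k=0$ on a set of measure $\lesssim L^{-d}$, cf.\ \cite{HHS03,LS24a} --- which produce the exponent $L^{|\gamma|-d/q}$ appearing in \eqref{eq:AFEbds_SO2}; dividing by the denominator bounds and splitting $\{|k|\le L^{-1}\}$ from its complement gives the $\hat A$- and $\hat F$-estimates. For $\hat E_p\supm$ I would Leibniz-expand, estimating the range-$L$ factors by the $\hat D\supm$ bounds and the $\hat\Pi_p\supm$ factors by \eqref{eq:Pi_gamma}, and crucially exploit that \eqref{eq:E_condition} together with $\Zd$-symmetry forces $\hat E_p(0)=0$ and $\grad^2\hat E_p(0)=0$; consequently the tilted $\hat E_p\supm(k)$ vanishes quadratically at $k=0$ up to an error $\hat E_p\supm(0)=O(\beta m^2L^2)$ (from $\hat D\supm(0)-1=O((mL)^2)$ and $\lambda_p p-\mup=O(\beta)$) and a remainder $O(\beta L^{2+\eps}|k|^{2+\eps})$ governed by the tilted moment $\sum_x|x|^{2+\eps}|E_p\supm(x)|\lesssim\beta L^{2+\eps}$, which in turn follows from \eqref{eq:Pim_decay} and \eqref{eq:A_moments} after expanding $E_p$ into its $\delta$-, $D$-, $\Pi_p$- and $D*\Pi_p$-parts; the exponent $2-\eps$ in \eqref{eq:q1q2} is precisely the one keeping $\grad^\gamma(\hat E_p\supm/(\hat A_{\mup}\supm\hat F_p\supm))$ in $L^{q_2}$. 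Wherever a derivative of a product or quotient appears it is distributed by the Leibniz/quotient rule and the factor estimates are combined by H\"older with reciprocal exponents summing to $1$.

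The step I expect to be the main obstacle is controlling $\hat E_p\supm$ near the origin. Because the tilt breaks the exact double zero of $\hat E_p$, the quotient $\hat E_p\supm/(\hat A_{\mup}\supm\hat F_p\supm)$ is not pointwise small near $k=0$ --- it is of size $\asymp\beta m^2L^2/(L^4\mp^4)$ there --- so one must instead combine the facts that this region has measure $\lesssim\mp^d$, that the denominator on it is $\asymp L^4\mp^4$ by the second step, and that $m\le\tfrac12\mp$, in order to recover the $L^{q_2}$ bound $\lesssim\beta$. Obtaining the sharper \eqref{eq:AFEbds_SO2} then forces one to track every power of $L$ and of $\mp$ through these region splittings and, where necessary, to shrink $\delta_3=\delta_3(L)$ so that all $\mp$-dependent prefactors are $\lesssim1$ uniformly on $[p_c-\delta_3,p_c)$.
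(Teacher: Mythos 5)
Your overall architecture matches the paper's: massive infrared lower bounds on $\hat A_\mup\supm$ and $\hat F_p\supm$, moment/derivative bounds on the numerators via $\hat D\supm$ and $\hat\Pi_p\supm$, vanishing moments of $E_p$ to gain powers of $(|k|+m)$, and Leibniz/H\"older assembly. There is, however, a genuine gap in your treatment of $\hat E_p\supm$ near $k=0$, which you correctly identify as the crux but do not resolve. First, the bound $\hat E_p\supm(0)=O(\beta m^2L^2)$ cannot be obtained the way you indicate: writing $\hat E_p\supm(0)=[\hat h_p\supm(0)-\lambda_p]+(\lambda_p p-\mup)\hat D\supm(0)\hat h_p\supm(0)$, each bracket is only $O(\beta)$ with no smallness in $m$, so the algebraic identities $\hat D\supm(0)-1=O((mL)^2)$ and $\lambda_p p-\mup=O(\beta)$ by themselves give nothing better than $O(\beta)$. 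The smallness comes entirely from the moment conditions \eqref{eq:E_condition} applied to the \emph{tilted} sum: $\hat E_p\supm(0)=\sum_x E_p(x)\cosh(mx_1)$, and expanding $\cosh t=1+\tfrac12 t^2+O(|t|^{2+\eps}\cosh t)$ kills both the zeroth- and second-order terms, leaving $O(\beta (mL)^{2+\eps})$ via the tilted $(2+\eps)$-moment of $E_p$. This extra factor $(mL)^{\eps}$ is not cosmetic: with only $O(\beta m^2L^2)$ at the origin, your measure-counting gives a contribution to the $L^{q_2}$ norm of order $\beta L^{-2}\mp^{\,d/q_2-2}$ from the region $|k|\lesssim \mp$, and since the lemma allows $d/q_2$ anywhere above $2-\eps$ (e.g.\ $d/q_2\in(2-\eps,2)$ when $|\gamma|=0$), this diverges as $p\to p_c^-$. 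So the claimed uniform bound $\lesssim\beta$ is not recovered on the full stated range of $q_2$.

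The paper's Lemma~\ref{lem:E} avoids the region-splitting altogether by performing the Taylor expansion jointly in $(k,m)$, yielding $|\grad^\gamma\hat E_p\supm(k)|\lesssim\beta L^{2+\eps}(|k|+m)^{2+\eps-|\gamma|}$ for $|\gamma|\le 2$; dividing by the massive infrared bound $|\hat A_\mup\supm\hat F_p\supm|\gtrsim L^4(|k|+m)^4$ on $\BL$ gives a quotient $\lesssim\beta L^{-2+\eps}(|k|+m)^{-(2-\eps+|\gamma|)}$, whose $L^{q_2}$ norm is finite precisely when $q_2\inv>(2-\eps+|\gamma|)/d$ --- this is where the exponent in \eqref{eq:q1q2} comes from, and it requires the full $(|k|+m)^{2+\eps}$ vanishing, not just a pointwise origin estimate plus a $k$-remainder. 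If you replace your origin estimate by this joint expansion (using both vanishing moments and the tilted moment $\sum_x|x|^{2+\eps}|E_p\supm(x)|\lesssim\beta L^{2+\eps}$, which you do state correctly), your argument closes; you would also need to say explicitly how derivatives of order $3\le|\gamma|\le d-2$ of $\hat E_p\supm$ are handled on $\BL$ (the paper uses H\"older with $\|\grad^\gamma\hat E_p\supm\|_r$ at $r\inv=(|\gamma|-2-\eps)/d$ against $\||k|^{-4}\1_\BL\|_s$), and track the provenance of the $\beta^2$ term in \eqref{eq:AFEbds_SO2} (it comes from the $\hat\Pi_p\supm$ derivatives, which carry $\beta^2$ but no negative power of $L$).
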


\begin{proof}[Proof of Proposition~\ref{prop:f} assuming Lemma~\ref{lem:AFEbds}]
For differentiability of
$\hat f_p \supm = \hat E_p \supm / ( \hat A_\mup \supm \hat F_p \supm )$,
we apply the usual product and quotient rules,
and we show that the result is a (locally) integrable function
\cite[Lemmas~A.2--A.3]{LS24a}.
Let $\alpha$ be any multi-index with $\abs \alpha \le d-2$.
By the product and quotient rules, $\grad^\alpha \hat f_p \supm$
is a linear combination of terms of the form
(omitting subscripts $p, \mup$ and superscript $(m)$)
\begin{equation} \label{eq:f_decomp}
\( \prod_{n=1}^i \frac{ \grad^{\delta_n} \hat A }{\hat A } \)
\( \frac{ \grad^{\alpha_2} \hat E }{ \hat A \hat F } \)
\( \prod_{l=1}^j \frac{ \grad^{\gamma_l} \hat F  }{ \hat F } \),
\end{equation}
where $\alpha$ is decomposed as
$\alpha = \alpha_1 + \alpha_2 + \alpha_3$,
$ 0 \le i \le \abs {\alpha_1}$, $0 \le j \le \abs {\alpha_3 }$,
$\sum_{n=1}^i \delta_n = \alpha_1$, and $\sum_{l=1}^j \gamma_l = \alpha_3$.
By Lemma~\ref{lem:AFEbds} and
H\"older's inequality, the $L^r$ norm of \eqref{eq:f_decomp} is bounded by
\begin{align}
\( \prod_{n=1}^i \biggnorm{ \frac{ \grad^{\delta_n} \hat A }{\hat A } }_{r_n} \)
\biggnorm{ \frac{ \grad^{\alpha_2} \hat E }{ \hat A \hat F } }_q
\( \prod_{l=1}^j \biggnorm{ \frac{ \grad^{\gamma_l} \hat F  }{ \hat F } }_{q_l} \),
\end{align}
where $r$ can be any positive number that satisfies
\begin{align}
\frac 1 r &= \( \sum_{n=1}^i \frac 1 {r_n} \) + \frac 1 q + \( \sum_{l=1}^j \frac 1 {q_l} \)
> \frac{ \sum_{n=1}^i \abs{\delta_n}  } d  + \frac{ 2 - \eps + \abs{\alpha_2}   } d + \frac{ \sum_{l=1}^j \abs{\gamma_l}  } d
= \frac{  \abs \alpha  + 2 - \eps } d.
\end{align}
In particular, $r=1$ is allowed, because $\abs \alpha \le d-2$ and $\eps > 0$.
This shows that the product \eqref{eq:f_decomp} is in $L^1$ and
that $\hat f_p \supm$ is $d-2$ times weakly differentiable.
In addition,
since derivatives of $\hat f_p \supm$ inherit estimates of \eqref{eq:f_decomp},
from \eqref{eq:AFEbds_SO} we get
\begin{align} \label{eq:fbeta}
\bignorm { \grad^\alpha \hat f_p \supm }_{r} \lesssim \beta
	\qquad (r\inv > \frac{ \abs \alpha + 2 - \eps }d ) ,
\end{align}
with the constant independent of $p,m,L$.
When $\abs \alpha \ne 0$, since there is at least one derivative taken,
we can use the stronger \eqref{eq:AFEbds_SO2} in one of the factors.
This gives
\begin{align} \label{eq:fbeta2}
\bignorm { \grad^\alpha \hat f_p \supm }_{r} \lesssim \beta (L^{-c} + \beta)
	\qquad (r\inv > \frac{ \abs \alpha + 2 - \eps}d )
\end{align}
for some $c>0$ produced by the strict inequalities in \eqref{eq:q1q2}.
The proof is concluded by taking $r=1$.
\end{proof}

\subsubsection{Massive infrared bound}

The next lemma extends the tilted infrared bound
of Corollary~\ref{cor:tilted_infrared} to include a mass term when $k=0$.
It also treats the random walk case.

\begin{lemma}
\label{lem:massive_infrared}
~
\begin{enumerate}
\item[(i)]
Let $d > 8$ and $L \ge L_2$.
If $\mp L$ is sufficiently small, then
\begin{align}
\abs{ \hat F_p \supm (k) }
\gtrsim L^2 m^2 + L^2 \abs k^2 \wedge 1
	\qquad (m \in [0, \tfrac 1 2 \mp]),
\end{align}
with the constant independent of $p, m, L$.

\item[(ii)]
Let $d > 2$ and $L \ge L_2$.
If $m_S(\mu) L$ is sufficiently small, then
\begin{align} \label{eq:Am_lb}
\abs{ \hat A_\mu \supm (k) }
\gtrsim L^2 m^2 + L^2 \abs k^2 \wedge 1
	\qquad (m \in [0, \tfrac 1 2 m_S(\mu)]),
\end{align}
with the constant independent of $\mu,m,L$.
\end{enumerate}
\end{lemma}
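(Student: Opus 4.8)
The plan is to handle both parts by the same two-step scheme: split $\hat F_p\supm(k)$ (resp.\ $\hat A_\mu\supm(k)$) into its value at $k=0$ plus the increment $\hat F_p\supm(k)-\hat F_p\supm(0)$, bound the $k=0$ value below by a constant times $L^2m^2$ via a second-order Taylor expansion in $m$, bound the real part of the increment below by a constant times $L^2\abs k^2\wedge1$ via the (tilted) infrared bound, and then add the two lower bounds and use $\abs z\ge\Re z$. The delicate feature, present in both parts, is that the quadratic-in-$m$ coefficient of the $k=0$ term carries a \emph{minus} sign (tilting decreases $\hat F_p\supm(0)$ and $\hat A_\mu\supm(0)$, which is why they vanish at the respective masses), so the positivity and the size $L^2m^2$ at $k=0$ must come from the constant term, and the restriction $m\le\frac12\mp$ (resp.\ $m\le\frac12 m_S(\mu)$) is exactly what forces the constant term to dominate.

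\emph{Part (i).} Since $\mp L$ is small and $m\le\frac12\mp<\mp$, we have $mL\le\eta$, so $b(m)\le 3$ by Proposition~\ref{prop:boot_done}; hence Lemma~\ref{lem:Pi_moments}, Corollary~\ref{cor:tilted_infrared}, and the moment bound \eqref{eq:F_moments2} are all available. Expanding $\cosh$ as in \eqref{eq:m_asymp_pf} and using $\sum_x x_1^2F_p(x)=-\del_1^2\hat F_p(0)$ gives $\hat F_p\supm(0)=\hat F_p(0)-\frac12 m^2\del_1^2\hat F_p(0)+O(m^{2+\eps}L^{2+\eps})$. Here $\del_1^2\hat F_p(0)\asymp L^2$ (lower bound from \eqref{eq:FIR}, upper bound from \eqref{eq:F_moments} with $a=2$, $m=0$), and \eqref{eq:mp_asymp} together with $\hat F_p(0)=\hat h_p(0)/\chip$ gives $\hat F_p(0)\sim\frac12\del_1^2\hat F_p(0)\,\mp^2$ as $p\to p_c^-$. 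Since $m\le\frac12\mp$ the subtracted term is at most $\frac18\del_1^2\hat F_p(0)\,\mp^2$, i.e.\ asymptotically at most $\frac14\hat F_p(0)$, and the error is $O((\mp L)^\eps)\hat F_p(0)$; thus $\hat F_p\supm(0)\ge\frac12\hat F_p(0)\gtrsim L^2\mp^2\ge L^2m^2$ once $\mp L$ is small. For the increment, Corollary~\ref{cor:tilted_infrared} gives $\Re[\hat F_p\supm(k)-\hat F_p\supm(0)]\ge(\KIR-\kappa m^\eps L^\eps)(L^2\abs k^2\wedge1)\ge\frac12\KIR(L^2\abs k^2\wedge1)$ once $mL$ is small. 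Adding, $\abs{\hat F_p\supm(k)}\ge\Re\hat F_p\supm(k)\gtrsim L^2m^2+L^2\abs k^2\wedge1$.

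\emph{Part (ii).} Here $A_\mu=\delta-\mu D$ is finitely supported, so $\hat A_\mu\supm(k)=1-\mu\sum_xD(x)e^{mx_1}e^{ik\cdot x}$ has no convergence issues and all its moments are controlled by \eqref{eq:A_moments}. Expanding $\cosh$ in $\hat A_\mu\supm(0)=1-\mu\sum_xD(x)\cosh(mx_1)$ and using $\sum_xx_1^2D(x)=\sigma_L^2/d$ gives $\hat A_\mu\supm(0)=(1-\mu)-\frac{\mu\sigma_L^2}{2d}m^2\bigl(1+O((mL)^\eps e^{mL})\bigr)$. Since $m_S(\mu)L$ small forces $\mu$ close to $1$ (recall $m_S(\mu)$ is nonincreasing in $\mu$ and vanishes only as $\mu\to1$), Proposition~\ref{prop:so-nn} gives $1-\mu\sim\frac{\sigma_L^2}{2d}m_S(\mu)^2$; combined with $m\le\frac12 m_S(\mu)$ this makes the subtracted term at most roughly $\frac14(1-\mu)$, so $\hat A_\mu\supm(0)\ge\frac12(1-\mu)\gtrsim L^2m^2$ (using $\sigma_L^2\asymp L^2$ with $L$-independent constants). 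For the increment, since $D\ge0$ and $\cosh(mx_1)\ge1$,
\begin{equation*}
\Re[\hat A_\mu\supm(k)-\hat A_\mu\supm(0)]
=\mu\sum_xD(x)\cosh(mx_1)\bigl(1-\cos(k\cdot x)\bigr)
\ge\mu\bigl[\hat D(0)-\hat D(k)\bigr]
\gtrsim L^2\abs k^2\wedge1,
\end{equation*}
by the random-walk infrared bound \eqref{eq:AIR} and $\mu\ge\frac12$. Adding gives $\abs{\hat A_\mu\supm(k)}\ge\Re\hat A_\mu\supm(k)\gtrsim L^2m^2+L^2\abs k^2\wedge1$.

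\emph{Main obstacle.} The crux is the $k=0$ estimate in part (i): because the quadratic Taylor term is \emph{subtracted}, mere positivity of $\hat F_p(0)$ and $\del_1^2\hat F_p(0)$ does not suffice, and one must know that $\hat F_p(0)$ is of the \emph{same order} $L^2\mp^2$ as that quadratic term — precisely the near-critical asymptotic \eqref{eq:mp_asymp} proved in Section~\ref{sec:mass}, which is why this lemma comes after Theorem~\ref{thm:mass}. Everything else is comparatively soft: in part (ii) the increment is automatically nonnegative thanks to $D\ge0$, and the relation $1-\mu\asymp m_S(\mu)^2$ is supplied directly by Proposition~\ref{prop:so-nn}; the only additional care needed there is to keep the implicit constants independent of $L$, which follows from $\sigma_L^2\asymp L^2$ uniformly in $L$.
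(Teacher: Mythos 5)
Your proof is correct and follows essentially the same route as the paper: both split $\Re \hat F_p \supm(k)$ into its value at $k=0$ plus the increment, control the increment with the (tilted) infrared bound, and extract the $L^2m^2$ lower bound at $k=0$ from the vanishing of the tilted $\hat F_p$ at the mass together with a second-order Taylor expansion and the restriction $m \le \tfrac12 \mp$. The only cosmetic differences are that the paper Taylor-expands $\cosh(\mp x_1)-\cosh(m x_1)$ in one step rather than invoking the asymptotic \eqref{eq:mp_asymp} for $\hat F_p(0)$ (which sidesteps having to observe that $\mp L$ small forces $p$ near $p_c$, via $\mp \ge L^{-1}\log(p_c/p)$), and that in part (ii) your positivity argument ($D\ge 0$, $\cosh(mx_1)\ge 1$) for the increment is a slight simplification of the paper's ``repeat the same proof'' instruction.
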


\begin{proof}
(i)
We begin with
\begin{align} \label{eq:Fm_decomp}
\abs{ \hat F_p\supm (k) }
\ge \Re \hat F_p\supm (k)
= \hat F_p \supm(0) + \Re [ \hat F_p \supm(k) - \hat F_p \supm (0) ] .
\end{align}
By Corollary~\ref{cor:tilted_infrared}, when $\mp L$ is sufficiently small,
the second term on the right-hand side is bounded from below by $\half \KIR  (L^2 \abs k^2 \wedge 1)$.
Therefore, it suffices to prove that $\hat F_p \supm(0) \gtrsim L^2 m^2$.

For this, we use a strategy similar to that used for $\mp$ in the proof of Theorem~\ref{thm:mass} (cf. \eqref{eq:m_asymp_pf}).
Let $M = \mp$.
Since $\hat F_p \supM(0) = 0$,
\begin{align} \label{eq:Fm0_lb}
\hat F_p \supm(0)
= \hat F_p\supm(0) - \hat F_p \supM (0)
= - \sum_{x\in \Z^d} F_p(x) (\cosh Mx_1 - \cosh mx_1).
\end{align}
Let $\eps \in (0,1)$.
By the inequality $\abs{ \cosh t - (1 + \half t^2) } \lesssim \abs t^{2+\eps} \cosh t $ (if $\eps \le 2$), and since $m \le M$,
\begin{align}
\cosh Mx_1 - \cosh mx_1
= \half ( M^2 - m^2 ) x_1^2 + O(M^{2+\eps}) \abs{x_1}^{2+\eps} \cosh Mx_1 .
\end{align}
We insert this into \eqref{eq:Fm0_lb},
and use the $a = 2+\eps$ moment of $F_p\supM(x)$ bounded in \eqref{eq:F_moments},
to obtain
\begin{align}
\hat F_p \supm(0)
&= - \half (M^2 - m^2) \sum_{x\in \Z^d} x_1^2 F_p(x)
	- O(M^{2+\eps})  \sum_{x\in \Z^d} \abs {x_1}^{2+\eps} \abs{ F_p(x) } \cosh Mx_1 \nl
&= \half (M^2 - m^2) \del_1^2 \hat F_p(0)
	- O(M^{2+\eps})  \sum_{x\in \Z^d} \abs {x_1}^{2+\eps} \abs{ F_p\supM(x) } \nl
&\ge \frac 3 4 \mp^2 L^2 \bigg( \frac{ \del_1^2 \hat F_p(0) }{2 L^2}
	- O(\mp^\eps L^\eps e^{\mp L})  \bigg) ,
\end{align}
where the last inequality uses $m \le \half M$.
Since $\del_1^2 \hat F_p(0) \gtrsim L^2$ by the infrared bound \eqref{eq:FIR} and Taylor's Theorem,
when $\mp L$ is sufficiently small we obtain
\begin{align}
\hat F_p \supm(0) \gtrsim \mp^2 L^2 \ge 4 m^2 L^2,
\end{align}
as desired. This concludes the proof.

\smallskip \noindent
(ii)
The proof of part~(i) relies only on the untilted infrared bound \eqref{eq:FIR},
on the moment estimate $\sum_{x\in \Zd} \abs x^{2+\eps} \abs{ F_p \supm(x) } \lesssim L^{2+\eps} e^{mL}$ from \eqref{eq:F_moments},
and on the fact that $\hat F_p \supmp(0) = 0$ by \eqref{eq:Gm0}.
The same proof works when $F_p$ is replaced by $A_\mu = \delta - \mu D$,
using the infrared bound \eqref{eq:AIR}, the moment estimate
\eqref{eq:A_moments}, and \eqref{eq:chiSinf}.
\end{proof}

\subsubsection{Proof of Lemma~\ref{lem:AFEbds}}
\label{sec:AFE-pf}

We now complete the proof of Theorem~\ref{thm:near_critical} by proving
Lemma~\ref{lem:AFEbds}.

Recall from \eqref{eq:Edef} that $E_p = h_p * A_\mup - \lambda_p F_p$.
We first note that
\begin{align} \label{eq:EPi}
E_p(x)  = O(\beta) \delta_{0,x}  + \Pi_p(x) + O(\beta) (D * h_p)(x) .
\end{align}
Too see this, we recall from \eqref{eq:FG} that $\hat F_p(0)=1-p\hat h_p(0)$,
so $\mup$ can be rewritten as
\begin{align}
\label{eq:mu-identity}
\mu_p
= 1 - \frac{ \lambda_p } \chip
= 1 - \lambda_p \frac{ \hat F_p(0) }{ \hat h_p(0) }
= 1 - \frac{ \lambda_p }{ \hat h_p(0) } + p \lambda_p .
\end{align}
Inserting into the definition of $E_p$, we have
\begin{align}
E_p
&= h_p * (\delta - \mu_p D) - \lambda_p (\delta - pD*h_p) \nl
&= h_p * \bigg(\delta - \Big(1 - \frac{ \lambda_p }{ \hat h_p(0) }\Big) D\bigg)   - \lambda_p \delta	\nl
&= ( \gp - \lambda_p ) \delta + \Pi_p - \Big(1 - \frac{ \lambda_p }{ \hat h_p(0) }\Big) (h_p * D ) ,
\end{align}
and \eqref{eq:EPi} follows from $\lambda_p = \gp + O(\beta)$
and $\hat h_p(0) = \gp + \hat \Pi_p(0) = \gp + O(\beta)$.

In Lemma~\ref{lem:AFEbds},
we assume that $m \in [0, \half \mp \wedge \half m_S(\mup)]$,
and that $\mp L$ and $m_S(\mup)L$ are sufficiently small,
so the massive infrared bounds of Lemma~\ref{lem:massive_infrared} apply.
Since $\abs k^2 + m^2 \ge \half (\abs k + m)^2$,
the latter gives
\begin{equation} \label{eq:massive_infrared}
\abs{ \hat F_p \supm (k) }, \;\; \abs{ \hat A_\mup \supm (k) }
\gtrsim \begin{cases}
L^2 (\abs k + m)^2		& (\abs k < L\inv) \\
1					&(\abs k \ge L\inv) ,
\end{cases}
\end{equation}
with constant independent of $p,m,L$.
For a small number of derivatives  of the ratio \eqref{eq:fhatm} for $\hat f_p \supm $, we seek cancellations of powers of $\abs k + m$. This is
achieved using the next lemma.

\begin{lemma} \label{lem:E}
Let $d > 8$, $L \ge L_3$, and $\mp L$ be sufficiently small.
For any $j \in \{1, \dots, d\}$, we have
\begin{align}
\label{eq:A_linear}
\abs{ \grad_j \hat A_\mu \supm(k) } &\lesssim L^2 (\abs k + m ) e^{mL}
	\qquad (m \ge 0) , \\
\label{eq:F_linear}
\abs{ \grad_j \hat F_p \supm(k) } &\lesssim L^2 (\abs k + m )  e^{mL}
	\qquad (m \in [0, \mp]) .
\end{align}
Also, given any $\eps \in (0,1]$ such that $\eps <d-8$,
if $m \in [0, \mp]$ and $\btil(m) \le 3$ then
\begin{align} \label{eq:E_gamma_ub}
\abs{ \grad^\gamma \hat E_p \supm (k) }
\lesssim  \beta L^{2+\eps} (\abs k  + m)^{2 + \eps - \abs \gamma }  e^{mL}
	\qquad (\abs \gamma \le 2).
\end{align}
All constants are independent of $\mu,p,m,L$.
\end{lemma}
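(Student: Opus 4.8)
The plan is to prove all three bounds with a single device: differentiate the tilted Fourier transform under the sum, subtract the appropriate low-order Taylor polynomial of the exponential (which is annihilated by the sum for reasons of $\Zd$-symmetry and, for $E_p$, the moment conditions \eqref{eq:E_condition}), and bound the remaining Taylor remainder by hand. Writing $f\in\{A_\mu,F_p,E_p\}$ and $z=mx_1+i(k\cdot x)$, one has
\begin{equation*}
    \grad^\gamma \hat f\supm(k) = i^{\abs\gamma}\sum_{x\in\Zd} x^\gamma f(x)\, e^{mx_1+ik\cdot x},
    \qquad x^\gamma=\prod_j x_j^{\gamma_j},
\end{equation*}
and differentiation under the sum is legitimate because the tilted sums converge absolutely for $m\in[0,\mp]$ by \eqref{eq:A_moments}, \eqref{eq:F_moments}, and \eqref{eq:Pim_decay} (the hypothesis that $\mp L$ be small ensures, via Proposition~\ref{prop:Pi_moments}, that these moment bounds hold throughout $[0,\mp]$). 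I will use repeatedly the elementary inequality, valid with a constant depending only on the truncation degree $n\le 2$ and for any $\eps\in[0,1]$,
\begin{equation*}
    \Bigabs{ e^z - \sum_{j=0}^{n}\frac{z^j}{j!} }
    \lesssim \min\bigl(\abs z^{\,n},\abs z^{\,n+1}\bigr)\, e^{m\abs{x_1}}
    \le \abs z^{\,n+\eps}\, e^{m\abs{x_1}},
\end{equation*}
(using $\mathrm{Re}\, z=mx_1\le m\abs{x_1}$), together with $\abs z\le(\abs k+m)\abs x$, $\abs{x^\gamma}\le\abs x^{\abs\gamma}$, and the reflection part of $\Zd$-symmetry in the form $\sum_x\abs x^a\abs{f(x)}e^{m\abs{x_1}}\le 2\sum_x\abs x^a\abs{f\supm(x)}$.

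For \eqref{eq:A_linear} and \eqref{eq:F_linear} we have $\abs\gamma=1$ and I would subtract only the constant term: $\Zd$-symmetry gives $\sum_x x_j A_\mu(x)=-\mu\sum_x x_j D(x)=0$ and $\sum_x x_j F_p(x)=0$, so $\grad_j\hat A_\mu\supm(k)=-\mu\sum_x(ix_j)D(x)(e^z-1)$ and likewise for $F_p$. Bounding $\abs{e^z-1}\le(\abs k+m)\abs x\,e^{m\abs{x_1}}$ and using that $D$ has range $L$ (so $\abs x\le L\sqrt d$, $e^{m\abs{x_1}}\le e^{mL}$, $\sum_x D(x)=1$, $\mu\le 1$) yields \eqref{eq:A_linear}; for $F_p$ one uses instead the reflection trick together with the moment bound $\sum_x\abs x^2\abs{F_p\supm(x)}\lesssim L^2 e^{mL}$ from \eqref{eq:F_moments}, applicable because $2<\tfrac12(d-4)$ when $d>8$, giving \eqref{eq:F_linear}.

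For \eqref{eq:E_gamma_ub} I would first establish the moment bound $\sum_x\abs x^a\abs{E_p\supm(x)}\lesssim\beta L^a e^{mL}$ for $a\in[0,d-6)$. This follows from \eqref{eq:EPi}, $E_p=O(\beta)\delta+\Pi_p+O(\beta)(D*h_p)$: the $\Pi_p$ contribution is $\lesssim\beta^2$ for $a<(2d-6)-d=d-6$ by the tilted decay \eqref{eq:Pim_decay} (which is the only place the hypothesis $\btil(m)\le 3$ is used), and the $D*h_p=\gp D+D*\Pi_p$ terms are controlled by the range-$L$ moment bounds \eqref{eq:A_moments}, using $\gp\le 4$ and $\sum_x\abs{\Pi_p\supm(x)}\lesssim\beta$. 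Next I would record the cancellations $\sum_x x^\alpha E_p(x)=0$ for every multi-index with $\abs\alpha\le 2$: the odd moments ($\abs\alpha=1$, and the off-diagonal quadratic terms) vanish by $\Zd$-symmetry, while \eqref{eq:E_condition} gives $\sum_x E_p(x)=0$ and, with permutation symmetry, $\sum_x x_l^2 E_p(x)=\tfrac1d\sum_x\abs x^2 E_p(x)=0$. Then, for $\gamma$ with $g=\abs\gamma\le 2$, I would write $\grad^\gamma\hat E_p\supm(k)=i^g\sum_x x^\gamma E_p(x)\bigl(e^z-\sum_{j=0}^{2-g}z^j/j!\bigr)$; the subtraction changes nothing because every monomial in $x^\gamma\sum_{j=0}^{2-g}z^j/j!$ has $x$-degree at most $g+(2-g)=2$ and is thus killed by $\sum_x(\cdot)E_p(x)$. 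Applying the remainder estimate with $n=2-g$ and $\eps<d-8$ (so $2+\eps<d-6$ and the $E_p\supm$-moment of order $2+\eps$ is available), and using $\abs{x^\gamma}\abs z^{2-g+\eps}\le(\abs k+m)^{2-g+\eps}\abs x^{2+\eps}$ and the reflection trick, gives
\begin{equation*}
    \abs{\grad^\gamma\hat E_p\supm(k)}
    \lesssim (\abs k+m)^{2+\eps-\abs\gamma}\sum_x\abs x^{2+\eps}\abs{E_p(x)}e^{m\abs{x_1}}
    \lesssim \beta L^{2+\eps}(\abs k+m)^{2+\eps-\abs\gamma}e^{mL},
\end{equation*}
which is \eqref{eq:E_gamma_ub}.

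None of this is conceptually hard; the work is bookkeeping of the exponents of $\beta$, $L$, $(\abs k+m)$ and $e^{mL}$. The one genuine constraint is that the $E_p\supm$-moment needs order $2+\eps<d-6$, i.e.\ $\eps<d-8$, which is exactly the stated hypothesis, and the $F_p\supm$-moment needs order $2<\tfrac12(d-4)$, i.e.\ $d>8$; both are granted. The only structural inputs are the symmetry-plus-\eqref{eq:E_condition} cancellations up to second order and the matching choice of Taylor truncation degree $2-\abs\gamma$, so that the $x$-powers pulled down by $\grad^\gamma$ are exactly compensated while still leaving a remainder of order $(\abs k+m)^{2+\eps-\abs\gamma}$.
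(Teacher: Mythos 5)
Your proof is correct and follows essentially the same route as the paper: first-order symmetry cancellation plus the tilted second-moment bounds \eqref{eq:A_moments} and \eqref{eq:F_moments} for \eqref{eq:A_linear}--\eqref{eq:F_linear}, and for \eqref{eq:E_gamma_ub} a Taylor expansion exploiting the vanishing moments \eqref{eq:E_condition} together with the $(2+\eps)$-moment of $E_p\supm$ obtained from \eqref{eq:EPi} and \eqref{eq:Pim_decay}. The only difference is presentational: you work with the complex exponential $e^{mx_1+ik\cdot x}$ and carry out explicitly the Taylor-remainder step that the paper delegates to \cite[Lemma~2.9]{Liu24}, which makes your write-up slightly more self-contained but not different in substance.
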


\begin{proof}
The proof proceeds by expansion near $(k,m) = (0,0)$. Powers of $L$ arise from moments of $A_\mu, F_p$, or $E_p$.

We prove \eqref{eq:F_linear} first. By symmetry,
\begin{align}
\hat F_p \supm(k) =
	\sum_{x\in \Z^d} \cos (k\cdot x) \cosh (mx_1) F_p(x)
	+ i \sum_{x\in \Z^d} \sin (k\cdot x) \sinh (mx_1) F_p(x) .
\end{align}
To estimate its derivative, we use the inequalities
\begin{align}
\abs{ \grad_j \cos (k\cdot x) } \le \abs k \abs x^2,  \quad
\abs{ \grad_j \sin (k\cdot x) }  \le \abs x, \quad
\abs{ \sinh (mx_1) } \le m \abs{ x_1} \cosh (mx_1).
\end{align}
By the moment estimate \eqref{eq:F_moments} for $F_p \supm$, these give
\begin{align}
\abs{ \grad_j \hat F_p \supm(k) }
\le  (\abs k + m) \sum_{x\in \Z^d} \abs x^2 \abs{ F_p \supm(x) }
\lesssim (\abs k + m) L^2 e^{mL},
\end{align}
which is \eqref{eq:F_linear}.
The proof of \eqref{eq:A_linear} is the same,
using \eqref{eq:A_moments} with $a=2$.

For \eqref{eq:E_gamma_ub}, with $\eps$ as specified in the
statement, by symmetry we have
\begin{equation}
    \hat E_p\supm(k) = \sum_{x\in\Z^d}E_p(x)
    \bigl( \cosh(mx_1)\cos(k \cdot x) + i \sinh(mx_1)\sin(k\cdot x) \big).
\end{equation}
It is then an exercise in Taylor expansion in $k$ and $m$
(details can be found in \cite[Lemma~2.9]{Liu24}), using the
vanishing moments \eqref{eq:E_condition} of $E_p(x)$, to see that
for any $\abs \gamma \le 2$ and any $\eps \in (0,1]$,
\begin{align}
\label{eq:Egambd}
\abs{ \grad^\gamma \hat E_p \supm (k) }
\lesssim (\abs k  + m)^{2 + \eps - \abs \gamma } \sum_{x\in \Zd} \abs x^{2+\eps} \abs{ E_p \supm (x) } .
\end{align}
It therefore suffices to show that $\sum_{x\in \Zd} \abs x^{2+\eps} \abs{ E_p \supm (x) } \lesssim \beta L^{2+\eps} e^{mL}$.
Since $\eps < d-8$ and $\btil(m) \le 3$, the decay of $\Pi_p \supm$ from \eqref{eq:Pim_decay} implies that $\sum_{x\in \Zd} \abs x^{2+\eps} \abs{ \Pi_p \supm (x) } \le O(\beta)$.
By tilting equation \eqref{eq:EPi}, this leads to
\begin{align}
\sum_{x\in \Zd} \abs x^{2+\eps} \abs{ E_p \supm (x) }
&\le \sum_{x\in \Zd} \abs x^{2+\eps} \Big( \abs{ \Pi_p \supm (x) }
	+ O(\beta) ( D \supm * h_p \supm )(x)   \Big) 	\nl
&\lesssim  \beta  +
\beta \(
	\bignorm{ \abs x^{2+\eps} D \supm(x) }_1 \norm{ h_p \supm }_1
	+ \norm{ D \supm }_1  \bignorm{ \abs x^{2+\eps} h_p \supm }_1	\) 	\nl
&\lesssim \beta  +  \beta \big( L^{2+\eps} e^{mL} ( \gp + \beta ) + e^{mL} \beta \big)	\nl
&\lesssim  \beta L^{2+\eps} e^{mL} .
\end{align}
This completes the proof.
\end{proof}

\begin{proof}[Proof of Lemma~\ref{lem:AFEbds}]
We first show that $\hat F_p \supm, \hat A_\mup \supm, \hat E_p \supm$ are $d-2$ times weakly differentiable.
By \cite[Lemma~A.4]{LS24a},
the Fourier transform $\hat g$ of a function $g: \Zd \to \R$ is $d-2$ times weakly differentiable if $\abs x^{d-2} g(x) \in \ell^2(\Zd)$.
For $g = A_\mup \supm$, this condition follows from the fact that $A_\mu$
has finite support.
For $g = F_p \supm$ or $E_p \supm$,
we first calculate the moment of $h_p \supm = \gp \delta + \Pi_p \supm$.
Using $\btil(m) \le 3$, \eqref{eq:Pim_decay}, and $d>8$, we have
\begin{equation} \label{eq:Pi_d-2}
\sum_{x\in \Z^d} \Bigl( \abs x^{d-2}  \abs{\Pi_p \supm(x)} \Bigr)^2
\le O(\beta^4).
\end{equation}
This implies that
\begin{equation}
\bignorm{ \abs x^{d-2} h_p \supm(x) }_2
= \bignorm{ \abs x^{d-2} \Pi_p \supm(x) }_2
\lesssim \beta^2 .
\end{equation}
Therefore,
by the triangle inequality and Young's convolution inequality,
the moment of $F_p\supm = \delta - p D\supm * h_p \supm$ obeys
\begin{align}
\bignorm{ \abs x^{d-2} F_p \supm(x) }_2
&= p \bignorm{ \abs x^{d-2} ( D \supm * h_p \supm )(x) }_2			\nl
&\le 2^{d-2} p_c  \(
	\bignorm{ \abs x^{d-2} D \supm(x) }_1 \norm{ h_p \supm }_2
	+ \norm{ D \supm }_1  \bignorm{ \abs x^{d-2} h_p \supm (x) }_2 \)  	\nl
&\lesssim L^{d-2} e^{mL} .
\end{align}
Similarly, by \eqref{eq:EPi},
\begin{align}
\bignorm{ \abs x^{d-2} E_p \supm(x) }_2
\le  \bignorm{ \abs x^{d-2} \Pi_p \supm(x) }_2
	+ O(\beta) \bignorm{ \abs x^{d-2} ( D \supm * h_p \supm )(x) }_2
\lesssim \beta L^{d-2} e^{mL} .
\end{align}
This shows that each of $\hat F_p \supm, \hat A_\mup \supm, \hat E_p \supm$
is $d-2$ times weakly differentiable.

For the remaining claims \eqref{eq:AFEbds_SO}--\eqref{eq:AFEbds_SO2},
we omit the subscripts $p,\mup$ in
$\hat F_p \supm, \hat A_\mup \supm, \hat E_p \supm$,
and we instead use subscripts to denote derivatives.
For example, given a multi-index $\gamma$, we write $\hat A \supm _ \gamma = \grad^\gamma \hat A_\mup \supm$.
We bound factors of $(1+mL)$ or $e^{mL}$ by constants,
using the hypotheses that
$m \in [0, \half \mp \wedge \half m_S(\mup)]$
and that $\mp L$ and $m_S(\mup)L$ are small.

\medskip\noindent
\emph{Bound on $\grad^\gamma \hat A_\mup \supm / \hat A_\mup \supm$.}
Let $\gamma$ be any multi-index.
There is nothing to prove for $\abs \gamma = 0$ since the ratio is then 1.
We will prove that, for $\abs \gamma \ge 1$,
\begin{align} \label{eq:AA_L}
\biggnorm{ \frac{ \hat A \supm _\gamma }{ \hat A \supm } }_{q}
\lesssim L^{\abs \gamma - d/q }
\qquad \Big(  \frac{ \abs \gamma \wedge 2 } d < q\inv < 1 \Big).
\end{align}
This is stronger than the desired \eqref{eq:AFEbds_SO2} by allowing more values of $q = q_1$.
It also implies $\norm{  \hat A \supm _\gamma  /  \hat A \supm  }_{q} \lesssim 1$ when we restrict to $ \abs \gamma / d < q\inv < 1$, as claimed in \eqref{eq:AFEbds_SO}.

Let $\BL = \{ \abs k < 1/L \}$.
By the massive infrared bound \eqref{eq:massive_infrared},
\begin{equation} \label{eq:AAIR}
\biggabs{ \frac{ \hat A \supm _\gamma }{ \hat A \supm }(k) }
\lesssim \frac{ \abs{ \hat A \supm _\gamma (k) } }{ L^2 (\abs k + m)^2 } \1_\BL
	+ \abs{ \hat A \supm _\gamma (k) } .
\end{equation}
Since $ \hat A \supm _\gamma = - \mup  \hat D_\gamma \supm$ when $\abs \gamma \ge 1$, by Lemma~\ref{lem:D_L}
the $L^q$ norm of the second term on the right-hand side is bounded by $L^{\abs \gamma - d/q}$ (dropping a factor of $(1+mL)e^{mL}$), as required.
For the first term,
if $\abs \gamma = 1$ we use Lemma~\ref{lem:E} to bound the numerator.
Since factors of $L^2$ cancel, the $L^q$ norm of the quotient is bounded by a multiple of
\begin{align} \label{eq:linear_pf}
e^{mL} \biggnorm{ \frac 1 { \abs k + m } \1_\BL }_q
\le e^{mL} \biggnorm{ \frac 1 { \abs k } \1_\BL }_q
\lesssim L^{1 - d/q}
	\qquad \Big(q\inv > \frac 1 d \Big) ,
\end{align}
as desired.
If $\abs \gamma \ge 2$, we use H\"older's inequality and Lemma~\ref{lem:D_L} to
see that
\begin{align}
\biggnorm{ \frac{ \abs{ \hat A \supm _\gamma (k) } }{ L^2 (\abs k + m)^2 } \1_\BL }_q
\le \frac 1 {L^2} \norm{ \hat A \supm _\gamma }_r
	\biggnorm{ \frac 1 {\abs k^2} \1_\BL }_s
\lesssim \frac 1 {L^2} (L^{\abs \gamma - d/r} ) L^{2 - d/s}
= L^{\abs \gamma - d/q}
\end{align}
for $q\inv = r \inv + s \inv$ with $ 0\le r\inv < 1$ and $s \inv > 2/d$.
In particular, this holds for any $q\inv > 2/d$, and this completes the proof of \eqref{eq:AA_L}.

\medskip\noindent
\emph{Bound on $\grad^\gamma \hat F_p \supm / \hat F_p \supm$.}
The $\abs \gamma = 0$ case is again trivial.
We will prove for $1 \le \abs \gamma \le d-2$ that
\begin{align} \label{eq:FF_L}
\biggnorm{ \frac{ \hat F \supm _\gamma }{ \hat F \supm } }_{q}
\lesssim L^{\abs \gamma - d/q }
\qquad \Big(  \frac{ \abs \gamma } d < q\inv < 1 \Big) .
\end{align}
We again use the massive infrared bound \eqref{eq:massive_infrared} to obtain
\begin{equation} \label{eq:FF_decomp}
\biggabs{ \frac{ \hat F \supm _\gamma }{ \hat F \supm }(k) }
\lesssim \frac{ \abs{ \hat F \supm _\gamma (k) } }{ L^2 (\abs k + m)^2 } \1_\BL
	+ \abs{ \hat F \supm _\gamma (k) } .
\end{equation}
We will show that each term obeys the required bound \eqref{eq:FF_L}.

We begin with the second term, for which
we in fact improve the range of $q$
in \eqref{eq:FF_L} values to the larger range
\begin{equation}
\label{eq:more-q}
    \frac{|\gamma|-2-\theta}{d} \vee 0  \le q\inv < 1,
\end{equation}
for any fixed $\theta \in [0,d-8)$ (we will use this improvement for \eqref{eq:6.1pf2}).
Let $q$ satisfy \eqref{eq:more-q}.
Using $\hat F \supm = 1 - p \hat D \supm  \hat h_p \supm $,
$\abs \gamma \ge 1$, the product rule, H\"older's inequality,
and Lemma~\ref{lem:D_L},
\begin{align} \label{eq:F_product_rule}
\norm{ \hat F \supm _\gamma }_q
&\lesssim   \sum_{\alpha_1 + \alpha_2 =  \gamma}
	\bignorm{  \hat D_{\alpha_1} \supm }_{r_{\alpha_1}}
	\bignorm{ \hat h_{\alpha_2}\supm }_{ r_{\alpha_2} }
    \lesssim   \sum_{\alpha_1 + \alpha_2 =  \gamma}
	L^{|\alpha_1|-d/r_{\alpha_1}}
	\bignorm{   \hat h_{\alpha_2} \supm }_{ r_{\alpha_2} } ,
\end{align}
where $r_{\alpha_1}\inv \in [0, 1)$ and $r_{\alpha_1}^{-1}+r_{\alpha_2}^{-1}=q^{-1}$.
Since $\hat h_p \supm = \gp + \Pi_p \supm$, by \eqref{eq:Pi_gamma} we have
\begin{align}
\label{eq:6.1pf}
    \bignorm{   \hat h_{\alpha_2} \supm }_{ r_{\alpha_2} }
    & \lesssim
    \1_{\alpha_2=0} + \beta^2 \lesssim 1
    \qquad
    \Big( \frac{|\alpha_2|-d+6}{d}  < r_{\alpha_2}^{-1} \le 1,\; r_{\alpha_2}^{-1} \ge 0 \Big) .
\end{align}
We make the choice
\begin{equation}
    r_{\alpha_2}^{-1} = \frac{|\alpha_2|-2-\theta}{d} \vee 0,
\end{equation}
which satisfies the restriction in \eqref{eq:6.1pf} because
$\theta < d-8$.
Since $q\inv \ge r_{\alpha_2}\inv$, this enforces \eqref{eq:more-q}.
Inserting \eqref{eq:6.1pf} into \eqref{eq:F_product_rule},
and using $r_{\alpha_2} \inv \le \abs {\alpha_2} / d$,
we obtain
\begin{align}
\label{eq:Fratio-bd}
\norm{ \hat F \supm _\gamma }_q
&\lesssim   \sum_{\alpha_1 + \alpha_2 =  \gamma}
	L^{|\alpha_1|-d/r_{\alpha_1}}
= \sum_{\alpha_1 + \alpha_2 =  \gamma}
	L^{|\alpha_1|-d/q + d/ r_{\alpha_2}}
\le \sum_{\alpha_1 + \alpha_2 =  \gamma}
	L^{|\gamma|-d/q}  ,
\end{align}
which is the required bound \eqref{eq:FF_L}.

For the first term of \eqref{eq:FF_decomp},
let $\abs \gamma / d<q\inv < 1$.
If $\abs \gamma = 1$ we use Lemma~\ref{lem:E} and get the result as in \eqref{eq:linear_pf}.
If $\abs \gamma \ge 2$, we write $q\inv = r \inv + s \inv$ with
$r\inv = (\abs \gamma - 2 )/d$ and $s \inv > 2/d$.
Then we use H\"older's inequality and \eqref{eq:Fratio-bd} to obtain
\begin{align}
\label{eq:6.1pf2}
\biggnorm{ \frac{ \abs{ \hat F \supm _\gamma (k) } }{ L^2 (\abs k + m)^2 } \1_\BL }_q
\le \frac 1 {L^2} \norm{ \hat F \supm _\gamma }_r
	\biggnorm{ \frac 1 {\abs k^2} \1_\BL }_s
\lesssim \frac 1 {L^2} (L^{\abs \gamma - d/r} ) L^{2 - d/s}
= L^{ \abs \gamma - d/q}.
\end{align}
This concludes the proof of \eqref{eq:FF_L}.

\medskip\noindent
\emph{Bound on $\grad^\gamma \hat E_p \supm / ( \hat A_\mup \supm \hat F_p \supm)$.}
Let $\abs \gamma \le d-2$,
and let $\eps \in (0,1]$ with $\eps <d-8$
so that we can apply
Lemma~\ref{lem:E}.
We will prove that
\begin{align} \label{eq:EAF_L}
\biggnorm{ \frac{ \hat E \supm _\gamma }{ \hat A \supm \hat F \supm }}_q
\lesssim \beta \1_{|\gamma|=0} + \beta^2 + \beta L^{\abs \gamma - d/q}
	\qquad \Big( \frac{ 2 - \eps + \abs \gamma } d < q\inv < 1 \Big) .
\end{align}
This establishes \eqref{eq:AFEbds_SO2} immediately,
and also establishes \eqref{eq:AFEbds_SO} once we observe that
$\beta^2 \le \beta$ and $L^{\abs \gamma - d/q} \le L^{\eps - 2} \le 1$ for these values of $q\inv$.
To start the proof,
we use both of the massive infrared bounds in \eqref{eq:massive_infrared}, to get
\begin{equation} \label{eq:EAF_decomp}
\biggabs{ \frac{ \hat E \supm _\gamma }{ \hat A \supm \hat F \supm } (k) }
\lesssim \frac{ \abs{ \hat E \supm _\gamma (k) } }{ L^4 (\abs k + m)^4 } \1_\BL
	+ \abs{ \hat E \supm _\gamma (k) } .
\end{equation}

We again begin with the second term.
By tilting \eqref{eq:EPi} and taking the Fourier transform,
\begin{align}
\hat E \supm = O(\beta) 1 + \hat \Pi_p \supm + O(\beta) \hat D \supm \hat h_p \supm .
\end{align}
We calculate derivatives of $\hat E \supm$ using the product rule.
Derivatives of $\hat \Pi_p \supm$ are estimated in \eqref{eq:Pi_gamma},
and derivatives of $\hat D \supm \hat h_p \supm$ have already been estimated below \eqref{eq:F_product_rule}.
The result is
\begin{align} \label{eq:E_gamma_L}
\norm{ \hat E_\gamma \supm }_q
\lesssim \beta \1_{\gamma = 0} + \beta^2
	+ \beta L^{\abs \gamma - d/q}
\qquad \Big( \frac { \abs \gamma - 2 -\theta} d  \vee 0  \le q\inv < 1 \Big) ,
\end{align}
which holds, in particular, for
$( 2 - \eps + \abs \gamma) / d < q\inv < 1$,
since  $2 - \eps \ge 1 \ge -2 -\theta$.

For the first term of \eqref{eq:EAF_decomp},
if $\abs \gamma \le 2$ we use Lemma~\ref{lem:E} to bound the numerator.
After a cancellation of powers of $L$ and of $(\abs k + m)$, the $L^q$ norm of the quotient is bounded by a multiple of
\begin{align}
\frac \beta { L^{2-\eps} } \biggnorm{ \frac 1 { (\abs k + m)^{2-\eps + \abs \gamma} } \1_\BL }_q
\lesssim \frac \beta { L^{2-\eps} }  L^{2-\eps + \abs \gamma - d/q}
= \beta L^{\abs \gamma - d/q}
	\qquad \Big(q\inv > \frac {2-\eps + \abs \gamma} d\Big) ,
\end{align}
as desired.
If $\abs \gamma \ge 3$,
we let  $r\inv = (\abs \gamma - 2 - \eps)/d$ and use \eqref{eq:E_gamma_L}.  Here we need $\eps \le \theta$,
and we can choose $\theta$
to satisfy this inequality because $\eps$ is a number in $(0,1]$ with $\eps < d-8$,
and we may choose any $\theta \in [0,d-8)$.
Then, by H\"older's inequality,
\begin{align}
\label{eq:6.2pf2}
\biggnorm{ \frac{ \abs{ \hat E \supm _\gamma (k) } }{ L^4 (\abs k + m)^4 } \1_\BL }_q
&\le \frac 1 {L^4} \norm{ \hat E \supm _\gamma }_r
	\biggnorm{ \frac 1 {\abs k^4} \1_\BL }_s
\nl &
\lesssim \frac 1 {L^4} \beta(\beta+L^{\abs \gamma - d/r}) L^{4 - d/s}
\le \beta^2 + \beta L^{ \abs \gamma - d/q}
\end{align}
for $q\inv = r \inv + s \inv$ with $s \inv > 4/d$.
Since $r\inv = (\abs \gamma - 2 - \eps) / d$,
the bound holds for any $q\inv > ( 2 - \eps + \abs \gamma ) / d$.
This concludes the proof of \eqref{eq:EAF_L} and completes the proof of Lemma~\ref{lem:AFEbds}.
\end{proof}

\appendix

\section{Tilted diagrammatic estimates: proof of Lemma~\ref{lem:Pi_moments}}
\label{app:diagram}

In this appendix, we prove Lemma~\ref{lem:Pi_moments}.
The proof uses the following elementary estimate for the transition probability.

\begin{lemma} \label{lem:D_L}
Let $L\ge L_2$ with $L_2$ sufficiently large, let $m \ge 0$, and let
$\alpha$ be any multi-index.
Then
\begin{align}
\label{eq:D_alpha}
\bignorm { \grad^\alpha \hat D \supm }_{q } &\lesssim L^{\abs\alpha - d/q} (1+mL)e^{mL}	
\qquad ( 0 \le q\inv < 1 ).
\end{align}
\end{lemma}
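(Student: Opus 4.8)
The plan is to exploit the product structure of $D$. Since $D$ is uniform over the $\ell^\infty$-ball $[-L,L]^d\cap\Zd$ with the origin removed, and the exponential tilt $e^{mx_1}$ affects only the first coordinate, we have, with $N=2L+1$ and $\Omega=N^d-1$,
\begin{equation}
\hat D\supm(k)=\frac{N^d}{\Omega}\,\Psi_m(k_1)\prod_{j=2}^d\Phi(k_j)-\frac{1}{\Omega},
\end{equation}
where $\Phi(t)=\frac 1N\sum_{x=-L}^{L}e^{itx}=\frac{\sin(Nt/2)}{N\sin(t/2)}$ is the normalised Dirichlet kernel and $\Psi_m(t)=\frac 1N\sum_{x=-L}^{L}e^{(m+it)x}=\frac{\sinh((m+it)N/2)}{N\sinh((m+it)/2)}$ is its exponentially tilted version. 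For $\abs\alpha\ge1$ the constant $-1/\Omega$ differentiates away, so $\grad^\alpha\hat D\supm$ is a pure tensor product of one-variable functions; since $\norm{\cdot}_q$ is taken with respect to the normalised product measure on $\Td$, the $L^q$ norm factorises,
\begin{equation}
\bignorm{\grad^\alpha\hat D\supm}_q\le\frac{N^d}{\Omega}\,\bignorm{\Psi_m^{(\alpha_1)}}_{L^q(\T)}\prod_{j=2}^d\bignorm{\Phi^{(\alpha_j)}}_{L^q(\T)},
\end{equation}
and $N^d/\Omega\le2$. For $\alpha=0$ the extra term contributes only $\Omega\inv\asymp L^{-d}\le L^{-d/q}$. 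So it suffices to prove the one-dimensional bounds
\begin{equation}
\bignorm{\Phi^{(a)}}_{L^q(\T)}\lesssim L^{a-1/q},\qquad\bignorm{\Psi_m^{(a)}}_{L^q(\T)}\lesssim L^{a-1/q}(1+mL)e^{mL}
\end{equation}
for every integer $a\ge0$ and every $q$ with $q\inv<1$: the exponents of $L$ then add up to $\abs\alpha-d/q$, and the factor $(1+mL)e^{mL}$ is incurred only once.

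The estimate for $\Phi$ is the standard Dirichlet-kernel bound. From $\Phi^{(a)}(t)=\frac1N\sum_{x}(ix)^ae^{itx}$ we get $\abs{\Phi^{(a)}(t)}\lesssim L^a$ for all $t$, while differentiating the closed form gives $\abs{\Phi^{(a)}(t)}\lesssim L^{a-1}/\abs t$ for $L\inv\le\abs t\le\pi$ (each derivative of $\sin(Nt/2)$ costs a factor $N$, each derivative of $1/\sin(t/2)$ costs $\abs t\inv$, and $N^{a-j}\abs t^{-1-j}\le N^a\abs t\inv$ once $N\abs t\ge1$). Splitting the integral at $\abs t=1/L$ and using $\int_{1/L}^\pi t^{-q}\,dt\asymp L^{q-1}$ for $q>1$ yields $\norm{\Phi^{(a)}}_q^q\lesssim L^{aq-1}$.

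For $\Psi_m$ I will split on the size of $m$. If $m\ge1$, then $\frac{e^m}{e^m-1}$ is bounded, so the crude estimate $\abs{\Psi_m^{(a)}(t)}\le\frac1N\sum_{\abs x\le L}\abs x^ae^{mx}\le\frac1N L^a\,\frac{e^{m(L+1)}}{e^m-1}\lesssim L^{a-1}e^{mL}$ holds uniformly in $t$, and $L^{a-1}e^{mL}\le L^{a-1/q}(1+mL)e^{mL}$ because $q>1$. If $m\le1$, so that $e^{Nm/2}\asymp e^{mL}$, I expand $\Psi_m(t)=\frac1N\sinh(N(m+it)/2)\cdot\frac1{\sinh((m+it)/2)}$ near $t=0$: each $k$-th complex derivative of the first factor is $\lesssim N^ke^{mL}$, while $1/\sinh(z/2)$ has a simple pole at $z=0$, so each $l$-th complex derivative is $\lesssim\abs z^{-1-l}$ on a bounded neighbourhood of the relevant contour, where $\abs{m+it}\ge\abs t$; Leibniz then gives $\abs{\Psi_m^{(a)}(t)}\lesssim L^{a-1}e^{mL}/\abs t$ for $L\inv\le\abs t\le\pi$, together with the crude bound $\abs{\Psi_m^{(a)}(t)}\lesssim L^ae^{mL}$ for $\abs t\le L\inv$. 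Integrating as for $\Phi$ gives $\norm{\Psi_m^{(a)}}_q^q\lesssim L^{aq-1}e^{qmL}$, which is even stronger than claimed.

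I expect the only genuinely delicate point to be the factor $L^{-d/q}$ when $q$ is close to $1$: there the sup-norm bound $\norm{\grad^\alpha\hat D\supm}_\infty\lesssim L^{\abs\alpha}e^{mL}$ is far from sufficient, and one must exploit the decay of $\hat D\supm$ away from $k=0$, uniformly in $m\ge0$. This is precisely why the two regimes are treated separately: for $m\le1$ the decay comes from the pole of $1/\sinh(z/2)$ at the origin, whereas for $m\ge1$ the factor $e^{m/2}$ hidden inside $e^{Nm/2}=e^{mL}e^{m/2}$ is no longer harmless, and one instead relies on the cancellation between numerator and denominator of $\Psi_m$ — conveniently visible in the geometric-series form $\frac{e^{(m+it)(L+1)}-e^{-(m+it)L}}{e^{m+it}-1}$ together with $\abs{e^{m+it}-1}\ge e^m-1$.
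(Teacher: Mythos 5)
Your proof is correct, and it takes a genuinely different route from the paper. The paper writes $D^{(m)}(x)$ as a rescaled continuum indicator $v(x/L)e^{mL(x_1/L)}$ and invokes a summation-by-parts identity (from \cite[(5.34)]{HS90a}) to produce a pointwise bound of the form $L^{|\alpha|}\,\|\del^I[y^\alpha v(y)e^{mLy_1}]\|_1\prod_{\nu\in I}|2L\sin(k_\nu/2)|^{-1}$, whose $L^q$ norm is then computed as in \cite[(B.3)]{LS24b}; the factor $(1+mL)e^{mL}$ arises from differentiating the tilt in the total-variation norm. You instead exploit the exact tensor-product structure of the uniform spread-out kernel, factorise the $L^q(\T^d)$ norm (legitimate, since the measure is a normalised product measure) into one-dimensional Dirichlet-kernel norms, and prove the required one-dimensional bounds directly, splitting $m\le 1$ (pole of $1/\sinh(z/2)$ at $z=0$, with $|\sinh(z/2)|\gtrsim|z|\ge|t|$ on the relevant strip) from $m\ge 1$ (geometric-series cancellation in the denominator). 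Your calculations of the exponents all check out, including the separate treatment of the $-1/\Omega$ term when $\alpha=0$ and the $q=\infty$ endpoint. What the paper's approach buys is generality: it works verbatim for any kernel of the form $v(x/L)$ with $v$ of bounded variation, not just the indicator of an $\ell^\infty$-ball, and it delegates the $k$-space decay to a reusable identity. What your approach buys is self-containedness and elementarity --- no external references are needed --- plus a marginally sharper conclusion: in both of your regimes the factor $(1+mL)$ is not actually needed, so you obtain $\lesssim L^{|\alpha|-d/q}e^{mL}$, which of course implies the stated bound.
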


\begin{proof}
We define $v:\R^d\to \R$ by $v(y) = 2^{-d} \1\{ 0 < \norm y_\infty \le 1 \}$.
Then $\int_{[-1, 1]^{d}}v(y)  d y = 1$, and we can write $D\supm$ as
\begin{equation}
D \supm(x) = D(x) e^{mx_1}
= \frac{v(x/L) e^{mL (x_1/L)}}{\sum_{x\in \Z^d}v(x/L)}.
\end{equation}
With this rewriting, we see from a minor extension of \cite[(5.34)]{HS90a} that,
for all $\alpha$ and for any choice of the order $I \subset \{1, \dots, d\}$
for the spatial derivative $\del^I$,
\begin{align}
\abs{ \grad^\alpha \hat D \supm (k) }
\le 2 L^{\abs \alpha} \bignorm{ \del^I [ y^\alpha v(y) e^{mL y_1}  ] }_1
	\prod_{ \nu \in I} \abs{ 2L \sin(k_\nu / 2) }\inv .
\end{align}
By the product rule and $e^{mL y_1} \le e^{mL}$,
we have $\bignorm{ \del^I [ y^\alpha v(y) e^{mL y_1}  ] }_1 \lesssim (1 + mL)e^{mL}$.
The inequality \eqref{eq:D_alpha} then follows exactly as in
the proof of its $m=0$ case in \cite[(B.3)]{LS24b}.
\end{proof}

\begin{proof}[Proof of Lemma~\ref{lem:Pi_moments}]
Let $d>8$, $p \in [\tfrac12 p_c, p_c)$, and $m<m(p)$.
We assume that $b(m) \le 4$ and $mL \le 1$, and we will show that if $L \ge L_2$ with
$L_2$ sufficiently large then
\begin{align}
\label{eq:squaresmall-pf}
\square \supm_p \lesssim L^{-d+1} \le \frac 1 {100},
\end{align}
and also that for any $a \in [ 0, \frac{ d-4} 2 )$ there is a constant $K_a$ such that
\begin{align}
\label{eq:Pi_moments-pf}
\sum_{x\in \Z^d} \abs x^a  \abs{\Pi_p \supm(x)}   \le K_a .
\end{align}
In addition, we will prove
around \eqref{eq:square_pc}--\eqref{eq:Pi_moments_crit}
that if $b(m)\le 4$ for all $m<m(p)$
 and if $\mp L \le 1$, then the bounds
\eqref{eq:squaresmall-pf}--\eqref{eq:Pi_moments-pf} also hold when $m=m(p)$.

The proof uses diagrammatic estimates for $\Pi_p$ obtained in \cite{HHS03}
(which are simpler than the original diagrams of \cite{HS90b}),
where our $\Pi_p$ is denoted by $\psi_p$.
The function $\Pi_p$ is given by an absolutely convergent series $\Pi_p(x) = \sum_{N=0}^\infty (-1)^N \Pi_p\supN(x)$ (the superscript here is \emph{not} an exponential tilt), where each $\Pi_p\supN$ is non-negative and $\Z^d$-symmetric.
The $N=0$ term is absent for lattice trees.
We write $\Pi_p\supNm$ for the exponential tilt of $\Pi_p\supN$,
and we will prove the stronger statement that
\begin{align} \label{eq:Pi_moments_strong}
\sum_{N=0}^\infty  \sum_{x\in \Z^d} \abs x^a  \Pi_p\supNm (x)  \le K_a
	\qquad ( 0 \le a < \frac{d-4} 2 ) .
\end{align}

The upper bounds for $\Pi_p \supN(x)$
are given in \cite[(4.42)]{HHS03} (for trees) and \cite[(4.51) and (4.56)]{HHS03} (for animals), as explicitly defined diagrams.
The diagrams for the $N=4$ case are depicted in Figure~\ref{figure:MN}.
For general $N$, we use the triangle inequality to distribute the factor $|x|^a$
along the top of the diagram, so that in each term only one top line carries
the moment factor.  We distribute the tilt $e^{mx_1}$ along the bottom of the diagram,
so that each line in the bottom path from $0$ to $x$ contains a tilt.
We then bound the resultant diagrams by decomposing into subdiagrams, as is
usual with the lace expansion.

\begin{figure}[h]
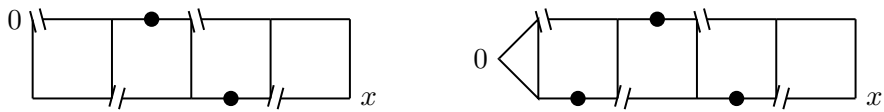

\center{
\LTPiFour   \qquad   \LAPiFour
\caption{
One example of diagrams for $\Pi_p \supk 4(x)$ for
lattice trees (left) and for lattice animals (right).
There are $2^{N-1}$ similar diagrams for $\Pi_p \supN(x)$ for $N\ge 1$.
}
\label{figure:MN}
}
\end{figure}

\smallskip \noindent \emph{Diagrammatic estimate for lattice trees.}
For lattice trees, the relevant subdiagrams are the tilted squares
\begin{equation} \label{eq:tilted_squares}
\begin{aligned}
\square_{1,p} \supm &=
	\norm{ p D \supm * G_p \supm * G_p * G_p * G_p }_\infty 	, \\
\square_{2,p} \supm &=
	\norm{ p D * G_p * G_p * G_p \supm * G_p \supm }_\infty 	,
\end{aligned}
\end{equation}
and the tilted and weighted triangles
\begin{equation}
\label{eq:tilted_triangles}
\begin{aligned}
\triangle_{1,a,p} \supm &=  \norm{ p( \abs x^a D ) * G_p * G_p \supm * G_p \supm }_\infty , \\
\triangle_{2,a,p} \supm &=  \norm{ pD * ( \abs x^a G_p ) * G_p \supm * G_p \supm }_\infty , \\
\triangle_{3,a,p} \supm &=  \norm{ pD \supm * G_p \supm * G_p * ( \abs x^a G_p ) }_\infty .
\end{aligned}
\end{equation}
We write
$\tilde \square_p \supm = \max \{ \square_{1,p} \supm , \square_{2,p} \supm \}$
and $\triangle_{a,p} \supm = \max\{ \triangle_{1,a,p} \supm, \triangle_{2,a,p} \supm, \triangle_{3,a,p} \supm \}$. Then,
as in \cite[Lemma~2.3]{HS90b},
\begin{equation} \label{eq:PiN_diagram}
\sum_{x\in \Z^d} \abs x^a  \Pi_p\supNm (x)
\le 2^{N-1} (2N)^{a+1} \triangle_{a,p} \supm (\tilde \square_p \supm )^{N-1}
	\qquad (N\ge1).
\end{equation}
We will show that, under the hypotheses $b(m) \le 4$ and $mL \le 1$, we have
\begin{align} \label{eq:Pi_moments_claim}
\tilde \square_p \supm \lesssim L^{-d+1},	\qquad
\triangle_{a,p} \supm \lesssim 1 .
\end{align}
By requiring that $L_2$ be large enough that $2 \tilde \square_p \supm < 1$,
we can then sum \eqref{eq:PiN_diagram} over $N$ to get the desired \eqref{eq:Pi_moments_strong},
since $\Pi_p \supzerom(x) = 0$ for lattice
trees.\footnote{With more effort, it could be shown that the triangle is small---not merely
bounded---leading to a small parameter on the right-hand side of
\eqref{eq:Pi_moments-pf}.  We do not need the small parameter so do not make the
extra effort.}
Also, for the tilted square $\square_p \supm$ defined in \eqref{eq:squaremax},
since $\gp^4$ is subtracted from the convolutions in \eqref{eq:squaremax},
for a nonzero contribution
at least one of the four functions in the convolutions must take at least one step.
Thus, by combining the observation
$G_p(x) \le (pD * G_p)(x)$ for $x\ne 0$
with the fact that $ e^{-mL} D(x) \le D \supm(x) \le e^{mL} D(x)$,
we have
\begin{align}
\square_p \supm \le \max \Bigl\{
	(2e^{mL} + 2) \square_{2,p} \supm,\,
	(1 + 3 e^{mL}) \square_{1,p} \supm
	\Bigr\}
\le 4e^{mL} \tilde \square_p .
\end{align}
The claim \eqref{eq:squaresmall-pf} then follows from
\eqref{eq:Pi_moments_claim}, by taking  $L_2$ larger.

We now prove \eqref{eq:Pi_moments_claim}.
The hypothesis $b(m) \le 4$ is equivalent to
\begin{align} \label{eq:Gmk}
\abs{ \hat G_p \supm (k) } \le \frac {4\gpc} \KIR \frac 1 { L^2 \abs k^2 \wedge 1 } .
\end{align}
For the squares,
using $D\supm(x) \le e^{mL} D(x)$, $mL \le 1$, and \eqref{eq:Gk_bound}, we have
\begin{align}
\square_{1,p}\supm
&\le p e^{mL} \norm{ \hat D \hat G_p \supm ( \hat G_p )^3 }_1
\le p_c e \frac {32 \gpc^4 } {\KIR^4}  \Bignorm{ \frac{ \hat D(k) }{ L^8 \abs k^8 \wedge 1 } }_1 ,
\\
\square_{2,p}\supm
&\le p \norm{ \hat D ( \hat G_p )^2 ( \hat G_p \supm )^2 }_1
\le p_c \frac {64 \gpc^4 } {\KIR^4}  \Bignorm{ \frac{ \hat D(k) }{ L^8 \abs k^8 \wedge 1 } }_1 .
\end{align}
Let $\BL = \{k: \abs k < 1/L \}$, then
\begin{align}
\Bigabs{ \frac { \hat D(k) } { L^8 \abs k^8 \wedge 1 } }
\le \frac{ \norm{ \hat D } _ \infty }{ L^8 \abs k^8 } \1_\BL    +  \abs{ \hat D(k) } .
\end{align}
We take the $L^1$ norm of the first term.  For the second term,
we set $q=\frac{d}{d-1}$, use $\|\hat D\|_1 \le \|\hat D\|_q$, and apply
Lemma~\ref{lem:D_L} with $\alpha = 0$ and $m=0$.  This gives
\begin{align}
\Bignorm{  \frac{  \hat D(k)  } {  L^8 \abs k^8 \wedge 1  }   }_1
\lesssim  \frac 1 {L^8} L^{8-d}   +  L^{-d + 1},
\end{align}
and therefore $\tilde \square_p \supm \lesssim L^{-d+1}$.

For the triangles,
we decompose using Young's convolution inequality.
Since $G_p(x) \le G\crit (x) \le O(\nnnorm x^{-(d-2)})$ by
\eqref{eq:eta_zero},
\begin{alignat}2 \label{eq:G_norm}
\sup_{p \le p_c}  \norm{ G_p }_q &< \infty
	\qquad && \Big( \frac 1 q < 1 - \frac 2 d \Big) , \\
\sup_{p \le p_c} \bignorm{ \abs x^a G_p(x) }_2 &< \infty
	\qquad && \Big( 0 \le a < \frac{d-4} 2 \Big) .
\end{alignat}
Therefore,
by the Parseval relation and \eqref{eq:Gmk}, 
the second triangle in \eqref{eq:tilted_triangles} obeys
\begin{equation}
\triangle_{2,a,p} \supm
\le p \norm D_1 \bignorm{ \abs x^a G_p(x) }_2 \bignorm{ G_p \supm * G_p \supm }_2
\lesssim  \bignorm{ ( \hat G_p \supm )^2 }_2
\lesssim \Bignorm{ \frac{ 1 }{ L^4 \abs k^4 \wedge 1 } }_2
\lesssim 1  ,
\end{equation}
and the third obeys
\begin{align}
\triangle_{3,a,p} \supm
&\le  \norm{ pD \supm * G_p \supm * G_p }_2 \bignorm{ \abs x^a G_p(x) }_2 	\nl
&\lesssim  \norm{  (pD \supm * G_p \supm * G_p) *  (pD \supm * G_p \supm * G_p)  }_\infty^{1/2} 	\nl
&\le e^{mL}  \norm{ pD }_1^{1/2} \norm{ pD * G_p * G_p * G_p \supm * G_p \supm }_\infty ^{1/2}	\nl
&= e^{mL} (p \square_{2,p} \supm)^{1/2}
\lesssim L^{ (-d+1)/2} \le 1 .
\end{align}
Finally, for the first triangle,
we apply Young's inequality twice to get
\begin{align}
    \triangle_{1,a,p} \supm
    &\le
    \norm{ p( \abs x^a D ) * G_p}_2 \norm{ G_p \supm * G_p \supm }_2
    \nnb & \le
     p \bignorm{  \abs x^a D(x) }_s \norm{G_p}_q \norm{ G_p \supm * G_p \supm }_2,
\end{align}
which holds for $q,s\ge 1$ such that $1+\frac 12 = \frac 1s + \frac 1q$.
We choose $q$ via $\frac 1q = \frac a d + \half$,
which satisfies $\frac 1q < 1 - \frac 2 d$ since $a < \frac{d-4}2$ by hypothesis.
Then \eqref{eq:G_norm} applies.  Also, our choice of $q$ implies that
$\frac ad =1-\frac 1s$, so
\begin{align}
    \bignorm { \abs x^a D(x) }_s
\lesssim L^{-d} \bignorm {\abs x^a \1_{\norm x_\infty \le L} }_s
\asymp L^{-d} L^{ a + d/s  }
= L^{a - (1 - \frac 1 s) d} =1.
\end{align}
Hence, by the massive infrared bound and the Parseval relation,
\begin{align}
\triangle_{1,a,p} \supm
&
\lesssim
\Bignorm{ \frac{ 1 }{ L^4 \abs k^4 \wedge 1 } }_2
\lesssim 1
.
\end{align}
This concludes the proof of \eqref{eq:Pi_moments-pf} for lattice trees.

\smallskip \noindent \emph{Diagrammatic estimate for lattice animals.}
For lattice animals,
as in \cite[Section~4.4]{HHS03},
the diagrammatic estimates differ from those of lattice trees only in that they contain an extra triangle at the origin.
We need additionally the tilted triangle
\begin{align}
T_p \supm = \norm{ G_p \supm * G_p * G_p }_\infty
\lesssim
\Bignorm{ \frac{ 1 }{ L^6 \abs k^6 \wedge 1 } }_1
\lesssim 1
\end{align}
and the tilted and weighted bubble
\begin{align}
B_{a,p} \supm = \bignorm{ ( \abs x^a G_p) * G_p \supm }_\infty
&\le \bignorm{ \abs x^a G_p(x) }_2 \bignorm{ G_p \supm }_2 	
\lesssim
\Bignorm{ \frac{ 1 }{ L^2 \abs k^2 \wedge 1 } }_2
\lesssim 1
.
\end{align}
For $N =0$ these lead to
$\sum_{x\in \Zd} \abs x^a \Pi_p \supzerom (x) \le B_{a,p} \supm $,
and for $N\ge1$ they give
\begin{equation}
\sum_{x\in \Z^d} \abs x^a  \Pi_p\supNm (x)
\le 2^{N-1} (2N+1)^{a} \Big( (2N) T_p \supm \triangle_{a,p} \supm (\tilde \square_p \supm )^{N-1}
	+ B_{a,p} \supm (\tilde \square_p \supm )^{N}   \Big).
\end{equation}
Summation of the above over $N$ again converges since $2 \tilde \square_p \supm$
is small.

\smallskip \noindent \emph{Estimates at $m = \mp$.}
If $b(m) \le 4$ for all $m < \mp$ and if $\mp L \le 1$,
then the estimates in \eqref{eq:Pi_moments_claim} hold uniformly in $m < \mp$.
Therefore, by Fatou's lemma, the tilted square diagram at $m = \mp$ also satisfies
\begin{equation}
\label{eq:square_pc}
\tilde \square_p \supmp
\le \liminf_{m\to \mp^-} \tilde \square_p \supm
\lesssim L^{-d+1}.
\end{equation}
Using the same argument to bound the triangle and bubble diagrams at $m=\mp$,
we obtain
\begin{equation} \label{eq:Pi_moments_crit}
\sum_{x\in \Z^d} \abs x^a  \sum_{N=0}^\infty \Pi_p^{(N,\mp)} (x)  \le K_a ,
\end{equation}
as desired.  This completes the proof.
\end{proof}

\section{Near-critical spread-out random walk}
\label{app:S}

In this appendix, we prove Proposition~\ref{prop:so-nn}, restated
here as Proposition~\ref{prop:S}
(with $z$ in place of $\mu$).
The constant $L_2$ is the same constant as in Lemma~\ref{lem:D_L}.

\begin{proposition} \label{prop:S}
Let $d>2$,
$\eta >0$,
$L \ge L_2$,
and $z\in [\half,1)$.
Then
\begin{align} \label{eq:mass_S}
m_S(z)^2 \sim \frac{2d}{\sigma_L^2} (1-z)
	\qquad \text{as $z\to 1$,}
\end{align}
and there are constants $K_S > 0$, $a_S \in (0,1)$ (independent of $L$)
and $\delta_2 = \delta_2(L) > 0$ such that
\begin{equation} \label{eq:S1bd}
S_z(x) \le \delta_{0,x} +
	\frac {K_S}{ L^{1-\eta} \nnnorm x^{d-2} } e^{- a_S m_S(z) \norm x_\infty }
	\qquad (x\in \Zd)
\end{equation}
uniformly in $z \in [1 - \delta_2, 1)$.
\end{proposition}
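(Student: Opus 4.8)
The plan is to treat the two assertions of Proposition~\ref{prop:S} separately, establishing the mass asymptotics \eqref{eq:mass_S} first, since its consequence $m_S(z)L\to0$ is needed throughout. For the mass, I would use the representation $S_z=\sum_{n\ge0}z^nD^{*n}$, so that $\sum_{x\in\Zd}S_z(x)e^{mx_1}=\sum_{n\ge0}z^n\hat D\supm(0)^n$ where $\hat D\supm(0)=\sum_{x\in\Zd}D(x)\cosh(mx_1)$ by symmetry; this converges precisely when $z\hat D\supm(0)<1$. Since $m\mapsto\hat D\supm(0)$ is continuous and strictly increasing from its value $1$ at $m=0$, the mass $m_S(z)$ of \eqref{eq:mSdef} is the unique root of $z\hat D\supm(0)=1$, and $m_S(z)\to0$ as $z\to1^-$. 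Taylor expanding, using $\sum_{x\in\Zd}x_1^2D(x)=\sigma_L^2/d$ by symmetry and the moment bound \eqref{eq:A_moments}, gives $\hat D\supm(0)=1+\tfrac{\sigma_L^2}{2d}m^2+O(L^4m^4)$, hence $z^{-1}-1=\tfrac{\sigma_L^2}{2d}m_S(z)^2(1+O(L^2m_S(z)^2))$; letting $z\to1$ with $m_S(z)\to0$ yields \eqref{eq:mass_S}, and $\sigma_L^2\asymp L^2$ then gives $m_S(z)L\to0$. I would fix $\delta_2=\delta_2(L)$ small enough that $m_S(z)L$ is as small as required below for all $z\in[1-\delta_2,1)$.

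For the pointwise bound I would fix $a_S=\tfrac12$, set $m=a_Sm_S(z)$, and write $\hat A_z\supm(k)=1-z\hat D\supm(k)$, so $\widehat{S_z\supm}=1/\hat A_z\supm$. Because $m\le\tfrac12m_S(z)$ and $m_S(z)L$ is small, Lemma~\ref{lem:massive_infrared}(ii) supplies the massive infrared bound $|\hat A_z\supm(k)|\gtrsim L^2m^2+L^2|k|^2\wedge1\gtrsim L^2(|k|+m)^2$ for $|k|<1/L$, and $\gtrsim1$ otherwise, with $L$-independent constant. Since $S_z\supm(x)=S_z(x)e^{mx_1}$ decays exponentially at rate $(1-a_S)m_S(z)>0$ by the off-axis control recalled in the footnote to \eqref{eq:mSdef}, we have $|x|^{d-2}S_z\supm(x)\in\ell^2(\Zd)$, so $\widehat{S_z\supm}$ is $d-2$ times weakly differentiable by \cite[Lemma~A.4]{LS24a}. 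It then suffices to prove $S_z\supm(x)\lesssim L^{-2}\nnnorm{x}^{-(d-2)}$ for $x\ne0$ uniformly in $z$, together with $S_z(0)\le S_1(0)=1+O(\beta)$ uniformly (which follows from $S_z(0)\le S_1(0)$ and the random walk infrared bound \eqref{eq:AIR}): the estimate \eqref{eq:S1bd} follows by writing $S_z(x)=S_z\supm(x)e^{-mx_1}$, using $\Zd$-symmetry to select the representative with $x_1=\norm{x}_\infty$, and using $L^{-2}\le L^{-(1-\eta)}$.

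To bound $S_z\supm$ I would deconvolve as in \cite{LS24b}. Let $\hat C$ be the degree-two Taylor polynomial of $\hat A_z\supm$ at $k=0$, namely $\hat A_z\supm(0)-ib\,k_1+\tfrac12\sum_ja_jk_j^2$ with $b=z\sum_xx_1D(x)\sinh(mx_1)>0$ and $a_j=z\sum_xx_j^2D(x)e^{mx_1}>0$ (the Hessian is diagonal by symmetry), cut off smoothly away from the origin to a periodic function with $|\hat C(k)|\asymp L^2(m^2+|k|^2)$ for $|k|<\pi/4$ and $|\hat C(k)|\asymp L^2$ elsewhere --- possible because $\hat A_z\supm(0)\asymp L^2m^2$, $b\asymp L^2m$, $a_j\asymp L^2$ by the mass asymptotics. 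Write $\widehat{S_z\supm}=\hat C^{-1}-(\hat A_z\supm-\hat C)(\hat A_z\supm\hat C)^{-1}$. The inverse Fourier transform of $\hat C^{-1}$ is essentially the explicit Green's function of a massive operator with a drift term, decaying like $L^{-2}\nnnorm{x}^{-(d-2)}e^{-cm\norm{x}_\infty}$. For the remainder, $\hat A_z\supm-\hat C$ vanishes to third order at $k=0$, so that $\grad^\gamma(\hat A_z\supm-\hat C)$ is controlled near $0$ by Taylor's theorem together with $\sum_x|x|^3D(x)e^{mx_1}\lesssim L^3$ from \eqref{eq:A_moments}; combining this with the derivative bounds for $\hat D\supm$ of Lemma~\ref{lem:D_L}, the lower bounds on $|\hat A_z\supm|$ and $|\hat C|$, and the product and quotient rules, each $(d-2)$-fold derivative of $(\hat A_z\supm-\hat C)(\hat A_z\supm\hat C)^{-1}$ lies in $L^1(\Td)$ with norm $\lesssim L^{-2}$, uniformly in $z,m,L$: the point is that the third-order vanishing of $\hat A_z\supm-\hat C$ buys one extra power of $|k|+m$ in the numerator, which turns the logarithmically divergent integral $\int_{|k|<1/L}(|k|+m)^{-d}\,dk$ into uniformly convergent integrals of the type $\int_{|k|<1/L}|k|^3(|k|+m)^{-(d+2)}\,dk$. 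Lemma~\ref{lem:fourier} then yields $L^{-2}\nnnorm{x}^{-(d-2)}$ decay for the remainder, and adding the two pieces completes the proof.

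The main obstacle is this deconvolution step, and in particular the uniformity as $m\to0$: the infrared singularity of $1/\hat A_z\supm$ is regularised only at scale $m$, so directly differentiating it $d-2$ times loses a logarithm in $m$, and the role of the degree-two reference kernel $\hat C$ is precisely to restore uniform $L^1$-integrability of the top-order derivatives of the remainder. Arranging $\hat C$ to be a genuinely periodic, bounded-below kernel whose own Green's function has the advertised decay, and ensuring that all constants --- especially $K_S$ and $a_S$ --- remain independent of $L$, are the other points needing care.
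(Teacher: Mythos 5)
Your treatment of the mass asymptotics \eqref{eq:mass_S} is correct and is essentially the paper's argument: the characterisation $z\,\hat D\supmS(0)=1$ (which the paper extracts from $\hat A_z\supmS(0)=0$ via \eqref{eq:chiSinf}, and which you obtain more directly from the geometric series $\sum_n z^n\hat D\supm(0)^n$), followed by Taylor expansion of $\cosh$ and the moment bound \eqref{eq:A_moments}.

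The pointwise bound \eqref{eq:S1bd} is where your route diverges from the paper's, and it has a gap at its crux. You deconvolve against the reciprocal of the cut-off second-order Taylor polynomial $\hat C$ of $\hat A_z\supm$ at $k=0$, and your power counting for the remainder $(\hat A_z\supm-\hat C)/(\hat A_z\supm\hat C)$ is sound: the third-order vanishing at $k=0$ does restore $m$-uniform $L^1$ bounds on the $(d-2)$-fold derivatives, and Lemma~\ref{lem:fourier} then gives the $|x|^{-(d-2)}$ decay of the remainder. The problem is the leading term. You need the inverse Fourier transform of $\hat C\inv$ to be bounded by $C L^{-2}\nnnorm{x}^{-(d-2)}$ with $C$ independent of $m$ and $L$, and this \emph{cannot} come from Lemma~\ref{lem:fourier}: the worst $(d-2)$-fold derivatives of $\hat C\inv$ are of size $L^{-2}(|k|+m)^{-d}$ near $k=0$, whose $L^1$ norm is of order $L^{-2}\log(1/(mL))$ and diverges as $m\to 0$ --- exactly the logarithmic loss you identify for $\hat A_z\supm$ itself, now transferred undiminished to the reference kernel. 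So a genuinely different, direct argument (heat-kernel representation, contour shift in $k_1$, or comparison with a known Green's function) is required for $\hat C\inv$, and this is asserted (``essentially the explicit Green's function of a massive operator with a drift term'') rather than supplied; the drift $-ibk_1$ produced by the tilt and the periodic cutoff make it nonstandard, even though the claimed decay is in fact true. The paper's proof is organised precisely to avoid ever having to prove such a bound: it takes the reference kernel to be the \emph{untilted nearest-neighbour} Green's function $C_{\mu_z}$, for which the uniform massive bound is already available as Lemma~\ref{lem:C} (imported from \cite{Slad23_wsaw}), and transfers the exponential tilt to it through the trivial pointwise identity $C_{\mu_z}\supm(x)=C_{\mu_z}(x)e^{mx_1}$ combined with the choice $m\le a_1 m_0(\mu_z)$, which in turn rests on the mass comparison $m_0(\mu_z)\sim m_S(z)$. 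If you wish to keep your Taylor-polynomial reference kernel you must prove the analogue of Lemma~\ref{lem:C} for it; otherwise the cleanest repair is to adopt the paper's reference kernel and the accompanying mass comparison.
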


\subsection{Asymptotic formula for the mass}

\begin{proof}[Proof of \eqref{eq:mass_S}]
As in \eqref{eq:Adef}, we set
$A_z = \delta - zD$ so that $A_z * S_z = \delta$.
We adapt the proof of Theorem~\ref{thm:mass}, with $z$ now playing
the role of $p$.
By \eqref{eq:chiSinf},
we have $\hat A_z \supmS (0)= 0$ and thus
\begin{align} \label{eq:mass_S_pf}
1 - z
= \hat A_z (0)
= \hat A_z (0) - \hat A_z \supmS (0) .
\end{align}
As in \eqref{eq:m_asymp_pf},
we use the moment estimate \eqref{eq:A_moments} for $D\supm$ to rewrite
the right-hand side of \eqref{eq:mass_S_pf} as
\begin{align}
1 - z
= m_S(z)^2 L^2 \bigg( \frac{ \del_1^2 \hat A_z(0) }{2 L^2}
	- O(m_S(z)^\eps L^\eps e^{m_S(z) L})  \bigg)
\end{align}
for any $\eps \in (0,2]$.
The mass $m_S(z)$ goes to zero as $z\to 1$, since otherwise $\sum_{x\in \Zd} S_z(x)$
would be uniformly bounded in $z<1$, and it is not because it equals $(1-z)^{-1}$.
Since $\del_1^2 \hat A_z (0)
= z \sigma_L^2 / d$,
we get
\begin{align}
1 - z
\sim  m_S(z)^2  \frac{ z \sigma_L^2 }{ 2d }
	\qquad\text{as $z \to 1$.}
\end{align}
The claim \eqref{eq:mass_S} then follows.
\end{proof}

\subsection{Massive upper bound}

The proof of the massive upper bound \eqref{eq:S1bd}
follows the strategy of the proof of Theorem~\ref{thm:near_critical},
but now we compare $S_z$ to the nearest-neighbour random walk.

Let $d>2$ and $P = \frac 1 {2d} \1_{\abs x = 1}$.
For $\mu \in [0,1]$, the nearest-neighbour random walk two-point function
$C_\mu$ is the solution to the convolution equation
\begin{align}
(\delta - \mu P)* C_\mu = \delta
\end{align}
given by
\begin{equation}
C_\mu(x)= \int_{\T^d}\frac{e^{-ik\cdot x}}{1-\mu\hat P(k)} \frac{dk}{(2\pi)^d}
.
\end{equation}
The mass $m_0(\mu)$ of $C_\mu$ is the unique non-negative solution
to
\begin{align} \label{eq:m0_def}
\cosh m_0(\mu) = 1 + d \Big( \frac{ 1 - \mu }{ \mu } \Big)
\end{align}
(see \cite{MS22}).
By definition, $m_0(\mu) \to 0$ as $\mu \to 1$,
and, by
expanding $\cosh^{-1}$,
\begin{align} \label{eq:m0}
m_0(\mu)^2 = 2d \Big( \frac{ 1 - \mu }{ \mu } \Big)
	+ O \Big( \frac{ 1 - \mu }{ \mu } \Big)^2
	\qquad \text{as $\mu \to 1$.}
\end{align}
The following near-critical estimate of $C_\mu$ is obtained in \cite[Proposition~2.1]{Slad23_wsaw}.

\begin{lemma} \label{lem:C}
For $d>2$, there are contants $a_0 > 0$ and $a_1 \in (0,1)$ such that for all $\mu \in (0, 1]$,
\begin{align}
\label{eq:Cmu_bd}
C_\mu(x) \le \frac{ a_0 }{ \nnnorm x^{d-2} } e^{-a_1 m_0(\mu) \norm x _ \infty}
	\qquad(x\in \Zd).
\end{align}
\end{lemma}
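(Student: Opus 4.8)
The plan is to split into two regimes. When $\mu$ is bounded away from $1$, say $\mu\le\tfrac12$, the bound is soft. The identity $\sum_x C_\mu(x)e^{mx_1}=\sum_{n\ge0}\bigl[\tfrac{\mu}{d}(\cosh m+d-1)\bigr]^n$ shows the tilted sum is finite precisely for $m<m_0(\mu)$; combined with the inequality $C_\mu(0)C_\mu(x+y)\ge C_\mu(x)C_\mu(y)$ (which makes $n\mapsto-\log C_\mu(ne_1)$ subadditive up to a constant, exactly as in the footnote's argument for $S_\mu$) this gives the off-axis bound $C_\mu(x)\le C_\mu(0)e^{-m_0(\mu)\norm x_\infty}$. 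Since $C_\mu(0)\le C_1(0)<\infty$ for $d>2$ and $m_0(\mu)\ge m_0(\tfrac12)>0$ on this range, the surplus factor $e^{-(1-a_1)m_0(\mu)\norm x_\infty}$ beats any polynomial, giving \eqref{eq:Cmu_bd} with $a_0$ depending only on $d$ and on a fixed choice $a_1\in(0,1)$. It remains to treat $\mu\in[\tfrac12,1]$, where $m_0(\mu)\to0$ as $\mu\to1$ and the estimate must be made uniform.

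For that range I would fix $a_1\in(0,1)$, set $m=a_1 m_0(\mu)$, and work with $C_\mu^{(m)}(x)=C_\mu(x)e^{mx_1}\ge0$; by the signed-permutation symmetry of $C_\mu$ it suffices to bound it for $x$ with $x_1=\norm x_\infty\ge0$. Since $m<m_0(\mu)$, analytic continuation of $\hat C_\mu$ gives
\[
    C_\mu^{(m)}(x)=\int_{[-\pi,\pi]^d}\frac{e^{-ik\cdot x}}{1-\mu\hat P(k_1-im,k_2,\dots,k_d)}\,\frac{dk}{(2\pi)^d}.
\]
The real part of the denominator equals $g(k)=\epsilon_0+\tfrac{\mu}{d}\bigl[\cosh m\,(1-\cos k_1)+\sum_{j\ge2}(1-\cos k_j)\bigr]$ with $\epsilon_0=1-\tfrac{\mu}{d}(\cosh m+d-1)$. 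Using $\cosh(at)-1\le a^2(\cosh t-1)$ for $a\in[0,1]$ and $\cosh m_0(\mu)-1=\tfrac d\mu(1-\mu)$ one gets $\epsilon_0\ge(1-a_1^2)(1-\mu)$, and since $1-\mu\asymp m_0(\mu)^2$ as $\mu\to1$ this yields $\epsilon_0\asymp m_0(\mu)^2>0$ — this positivity of $\epsilon_0$ is exactly where $a_1<1$ is needed. With $1-\cos\theta\gtrsim\theta^2$ on $[-\pi,\pi]$ we get $\abs{1-\mu\hat P(k_1-im,k')}\ge g(k)\gtrsim\epsilon_0+\abs k^2$, uniformly in $\mu$ and $k$, and standard bounds on the derivatives of $(1-\mu\hat P(\,\cdot\,-ime_1))^{-1}$ — each derivative of $\hat P$ costing at most a factor $(\epsilon_0+\abs k^2)^{1/2}$ since $m\lesssim\epsilon_0^{1/2}$, with the remaining slack absorbed by $\epsilon_0+\abs k^2\lesssim1$ — give $\abs{\grad^\gamma\hat C_\mu^{(m)}(k)}\lesssim(\epsilon_0+\abs k^2)^{-1-\abs\gamma/2}$ for every $\gamma$, with constants independent of $\mu$.

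The final step is to extract $\nnnorm x^{-(d-2)}$ decay from this oscillatory integral, uniformly in $\mu$, and I expect this to be the bulk of the work. For $\nnnorm x$ of order $1$ one simply has $C_\mu^{(m)}(x)\le\int(\epsilon_0+\abs k^2)^{-1}\,dk\lesssim1$. For large $\nnnorm x$, split according to whether $\epsilon_0\nnnorm x^2\ge1$ or $<1$. In the first case, integrate by parts $d-1$ times in $k_1$ over the periodic cube (no boundary terms, since $\hat C_\mu^{(m)}$ is smooth there), so $\abs{x_1}^{d-1}C_\mu^{(m)}(x)\lesssim\int(\epsilon_0+\abs k^2)^{-(d+1)/2}\,dk\asymp\epsilon_0^{-1/2}$, whence $C_\mu^{(m)}(x)\lesssim\nnnorm x^{-(d-1)}\epsilon_0^{-1/2}\le\nnnorm x^{-(d-2)}$. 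In the second case, insert a smooth cutoff at scale $1/\nnnorm x$: the inner ball contributes at most $\int_{\abs k<2/\nnnorm x}(\epsilon_0+\abs k^2)^{-1}\,dk\lesssim\nnnorm x^{-(d-2)}$, while on the outer region $\epsilon_0+\abs k^2\asymp\abs k^2$, so $\abs{\grad^\gamma\hat C_\mu^{(m)}(k)}\lesssim\abs k^{-\abs\gamma-2}$ and integrating by parts $d-1$ times, using $\int_{\abs k\gtrsim1/\nnnorm x}\abs k^{-(d+1)}\,dk\lesssim\nnnorm x$, again gives $\lesssim\nnnorm x^{-(d-2)}$. Thus $C_\mu(x)e^{a_1 m_0(\mu)x_1}\lesssim\nnnorm x^{-(d-2)}$ uniformly, which is \eqref{eq:Cmu_bd}. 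The genuine difficulty is keeping all constants uniform in $\mu$ down to $\mu=1$ (where $\epsilon_0\to0$): this forces the case split on $\epsilon_0\nnnorm x^2$ and is precisely why one cannot take $a_1=1$.
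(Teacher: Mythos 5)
The paper does not actually prove Lemma~\ref{lem:C}: it is imported verbatim from \cite[Proposition~2.1]{Slad23_wsaw}, so there is no internal proof to compare against. Your argument is a correct, self-contained alternative, and it is closer in spirit to the machinery this paper develops elsewhere than to an external citation. The soft regime $\mu\le\frac12$ is handled exactly as in the paper's footnote for $S_\mu$ (the Simon--Lieb-type inequality $C_\mu(x)C_\mu(y)\le C_\mu(0)C_\mu(x+y)$ plus the explicit tilted sum $\sum_n[\frac{\mu}{d}(\cosh m+d-1)]^n$, which identifies the decay rate with $m_0(\mu)$ of \eqref{eq:m0_def}); there the surplus exponential absorbs the polynomial. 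In the near-critical regime your route --- shift the $k_1$-contour by $ia_1m_0(\mu)$, check that the real part of the shifted denominator is $\ge\epsilon_0+c|k|^2$ with $\epsilon_0\ge(1-a_1^2)(1-\mu)\asymp m_0(\mu)^2$ via $\cosh(a_1t)-1\le a_1^2(\cosh t-1)$, deduce $|\grad^\gamma\hat C_\mu^{(m)}|\lesssim(\epsilon_0+|k|^2)^{-1-|\gamma|/2}$, and convert Fourier smoothness into $\nnnorm{x}^{-(d-2)}$ decay by integrating by parts $d-1$ times with a dichotomy on $\epsilon_0\nnnorm{x}^2$ --- is precisely the massive-infrared-bound strategy used for $S_z$ and $G_p$ in Lemmas~\ref{lem:massive_infrared} and \ref{lem:fourier} and Appendix~\ref{app:S}, applied directly to the nearest-neighbour Green function. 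The computations check out (e.g.\ $\int(\epsilon_0+|k|^2)^{-(d+1)/2}dk\asymp\epsilon_0^{-1/2}$, and the cutoff bookkeeping in the regime $\epsilon_0\nnnorm{x}^2<1$), all constants are uniform in $\mu$, and you correctly isolate the two places where $a_1<1$ is genuinely needed. What the citation buys the paper is brevity; what your proof buys is that Appendix~\ref{app:S} would need no external input, since the base case $C_\mu$ is then controlled by the same Fourier apparatus already present. The only loosely stated step is the chain of derivative bounds for $1/(1-\mu\hat P(k_1-im,k'))$, but the worst term (all derivatives falling on distinct factors of $\grad D$, each costing $(\epsilon_0+|k|^2)^{1/2}$ because $m\lesssim\epsilon_0^{1/2}$) indeed gives the claimed exponent, so this is routine.
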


We now write $\Ann_\mu = \delta - \mu P$ and again write $A_z = \delta - zD$.
Similar to \eqref{eq:G_isolate},
we isolate the leading decay of $S_z$ as
\begin{equation}
S_z = \lambda_z C_\muz + f_z ,
\qquad f_z = C_\muz  * E_z  * S_z,
\qquad E_z = \Ann_\muz - \lambda_z A_z,
\end{equation}
with the choices
\begin{align}
\label{eq:lambdaz}
\lambda_z &= \frac{1} { \hat A_z(0) - \sum_{x\in \Z^d} \abs x^2 A_z(x) }
	= \frac 1 { (1-z) + z\sigma_L^2 }, \\
\label{eq:muz}
\mu_z &= 1 - \lambda_z \hat A_z(0)
	= \frac {z\sigma_L^2} { (1-z) + z\sigma_L^2 } .
\end{align}
These choices cause $E_z$ to have vanishing zeroth and second moments.
We further isolate the zero-step contribution to $S_z$
using $S_z = \delta + z D * S_z $
(as done in \cite[Appendix~A]{LS24b}), to obtain
\begin{equation}
S_z
 = \delta + z D * ( \lambda_z C_\muz + f_z ) 	 = \delta + z\lambda_z C_{\mu_z} + \varphi_z,
\end{equation}
with
\begin{equation}
\varphi_z
 = - z \lambda_z A_1 * C_\muz + zD * f_z
\end{equation}
(note that we used $A_1=\delta -D$, \emph{not} $A_z$).
Multiplication by $e^{mx_1}$ gives
\begin{align} \label{eq:S_decomp}
S_z \supm = \delta + z\lambda_z C_\muz \supm + \varphi_z \supm .
\end{align}
We estimate $C_\muz \supm$ with Lemma~\ref{lem:C},
and estimate $\varphi_z \supm$ with the next lemma.

\begin{lemma} \label{lemma:phi}
Let $d>2$, $\eta > 0$, and $L \ge L_2$.
There is a $\delta_2 = \delta_2(L) > 0$ such that
for all $z \in [1 - \delta_2, 1)$
and $m \in [0, \half m_S(z) \wedge \half m_0(\muz) ]$,
$\hat \varphi_z \supm$ is $d-2$ times weakly differentiable, and
\begin{align} \label{eq:phibd}
\bignorm{ \grad^\alpha \hat \varphi_z \supm }_1 \lesssim L^{-(1-\eta)}
	\qquad (\abs \alpha \le d-2) ,
\end{align}
with the constant independent of $z,m,L$ but dependent on $\eta$.
\end{lemma}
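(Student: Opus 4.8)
The plan is to prove Lemma~\ref{lemma:phi} as the spread-out random walk analogue of Lemma~\ref{lem:AFEbds} and Proposition~\ref{prop:f}, under the dictionary $A_z = \delta - zD \leftrightarrow F_p$, $\Ann_\muz = \delta - \muz P \leftrightarrow A_{\mup}$, $E_z = \Ann_\muz - \lambda_z A_z \leftrightarrow E_p$, together with the extra outer structure $\varphi_z = -z\lambda_z A_1 * C_\muz + zD * f_z$ with $f_z = C_\muz * E_z * S_z$ and $A_1 = \delta - D$ that is already recorded above. Since the exponential tilt is multiplicative over convolutions we have $\hat S_z\supm = 1/\hat A_z\supm$, $\hat C_\muz\supm = 1/\hat\Ann_\muz\supm$, and $\hat f_z\supm = \hat E_z\supm/(\hat\Ann_\muz\supm\hat A_z\supm)$, so that
\begin{equation*}
    \hat\varphi_z\supm = -z\lambda_z\frac{\hat A_1\supm}{\hat\Ann_\muz\supm} + z\hat D\supm\frac{\hat E_z\supm}{\hat\Ann_\muz\supm\hat A_z\supm} ,
\end{equation*}
and the whole task is to bound, uniformly in $z,m,L$, the $L^1(\Td)$ norms of all derivatives of order $\le d-2$ of the two terms on the right. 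The proof will be substantially simpler than that of Lemma~\ref{lem:AFEbds} because there is no lace expansion function $\Pi_p$ to control: every function that occurs---$D$, $A_1$, $A_z$, $P$, $\Ann_\muz$, $E_z$---is supported in a box of radius $L$, and the only non-finitely-supported objects, $S_z$ and $C_\muz$, enter solely through $1/\hat A_z\supm$ and $1/\hat\Ann_\muz\supm$.

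First I would assemble the elementary inputs, which replace the diagrammatic estimates of Appendix~\ref{app:diagram}. From \eqref{eq:lambdaz}--\eqref{eq:muz} one reads off $\lambda_z \asymp L^{-2}$ and $1-\muz = \lambda_z(1-z) \asymp L^{-2}(1-z)$, and combining \eqref{eq:mass_S} with \eqref{eq:m0} gives $m_S(z) \asymp m_0(\muz) \asymp L\inv\sqrt{1-z}$; choosing $\delta_2$ small makes $m_S(z)L$ and $m_0(\muz)L$ as small as needed for $z \in [1-\delta_2,1)$, and the hypothesis $m \le \half m_S(z) \wedge \half m_0(\muz)$ then makes all the massive infrared bounds available. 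Concretely, $\abs{\hat A_z\supm(k)} \gtrsim L^2 m^2 + L^2\abs k^2 \wedge 1$ is Lemma~\ref{lem:massive_infrared}(ii) with $A_z$ in the role of $A_\mu$, while the nearest-neighbour counterpart $\abs{\hat\Ann_\muz\supm(k)} \gtrsim m^2 + \abs k^2 \wedge 1$ follows from the identical argument: a direct computation gives $\hat\Ann_\muz^{(m_0(\muz))}(0) = 0$ from the definition \eqref{eq:m0_def} of $m_0$, $\del_1^2\hat\Ann_\muz(0) = \muz/d \asymp 1$, and $\hat P(0) - \hat P(k) \gtrsim \abs k^2 \wedge 1$ plays the role of \eqref{eq:AIR}. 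For the finite-support functions, the proof of Lemma~\ref{lem:D_L} applies verbatim to give $\bignorm{\grad^\gamma\hat g\supm}_q \lesssim L^{\abs\gamma - d/q}(1+mL)e^{mL}$ for $g \in \{D, A_1, A_z\}$ and $\bignorm{\grad^\gamma\hat g\supm}_q \lesssim 1$ for $g \in \{P, \Ann_\muz\}$ (ranges $L$ and $1$), together with the linear bounds $\abs{\grad_j\hat A_1\supm(k)}, \abs{\grad_j\hat A_z\supm(k)} \lesssim L^2(\abs k + m)e^{mL}$ and $\abs{\grad_j\hat\Ann_\muz\supm(k)} \lesssim (\abs k + m)e^{mL}$ exactly as in \eqref{eq:A_linear}--\eqref{eq:F_linear}. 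Finally, the choices \eqref{eq:lambdaz}--\eqref{eq:muz} are precisely what make $\sum_x E_z(x) = \sum_x \abs x^2 E_z(x) = 0$, so Taylor expansion as in Lemma~\ref{lem:E} gives $\abs{\grad^\gamma\hat E_z\supm(k)} \lesssim (\abs k + m)^{2+\eps-\abs\gamma}\sum_x \abs x^{2+\eps}\abs{E_z\supm(x)}$ for $\abs\gamma \le 2$ and any $\eps \in (0,1]$, and here $\sum_x\abs x^{2+\eps}\abs{E_z\supm(x)} \lesssim L^\eps$ because the $\delta - zD$ part of $E_z$ carries the factor $\lambda_z \asymp L^{-2}$, which offsets the $L^{2+\eps}$ moment of $D$. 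This offsetting $L^{-2}$---absent from the analogous lace-expansion estimate \eqref{eq:E_gamma_ub}---is what leaves room for the $L^{-(1-\eta)}$ on the right-hand side of \eqref{eq:phibd}.

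With these inputs I would run the product and quotient rules exactly as in the proof of Proposition~\ref{prop:f}: $\grad^\alpha$ of either term of $\hat\varphi_z\supm$ is a sum of products containing at most one factor among $\grad^\gamma\hat A_1\supm$, $\grad^\gamma\hat D\supm$, $\grad^\gamma\hat E_z\supm$, together with factors $\grad^{\delta_n}\hat\Ann_\muz\supm/\hat\Ann_\muz\supm$ and $\grad^{\gamma_l}\hat A_z\supm/\hat A_z\supm$ and one overall $1/\hat\Ann_\muz\supm$ or $1/(\hat\Ann_\muz\supm\hat A_z\supm)$. Each quotient factor is estimated in $L^q$ by splitting at $\abs k = L\inv$: on $\BL^c$ the denominators are $\gtrsim 1$, and on $\BL$ the massive infrared bounds convert them to powers of $\abs k + m$, against which $\bignorm{\abs k^{-j}\1_\BL}_q \asymp L^{j-d/q}$ whenever $q\inv > j/d$; taking $q\inv$ just below $1$ costs a factor $L^{\eta}$, which is the sole source of the $\eta$. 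Hölder's inequality with exponents summing to $1$ assembles the pieces, and accounting the powers of $L$---the prefactor $\lambda_z \asymp L^{-2}$ and the small moment $\sum\abs x^{2+\eps}\abs{E_z\supm} \lesssim L^\eps$ on the one hand, the finite-support gains $L^{\abs\gamma - d/q}$ and infrared losses $L^{j-d/q}$ on the other, and the cancellations with $m_S(z) \asymp m_0(\muz) \asymp L\inv\sqrt{1-z}$ in the denominators---yields $\bignorm{\grad^\alpha\hat\varphi_z\supm}_1 \lesssim L^{-(1-\eta)}$ for $\abs\alpha \le d-2$, uniformly in $z \in [1-\delta_2,1)$ and admissible $m$. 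The stated weak differentiability to order $d-2$ is automatic: each building block is a finite trigonometric sum, hence smooth in $k$, divided by factors that the massive infrared bounds keep bounded away from $0$, so the products are bounded and hence integrable on the compact torus $\Td$ (cf.\ \cite[Appendix~A]{LS24a}).

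The main difficulty is not conceptual but the power-of-$L$ bookkeeping: organising the H\"older splittings so that the total exponent lands at or below $-(1-\eta)$ while the estimate stays uniform as $z \to 1$. The delicate point is that $1/\hat\Ann_\muz(k)$ and $1/\hat A_z(k)$ blow up pointwise as $z \to 1$---their values at $k=0$ are $\asymp 1-\muz$ and $\asymp L^2(1-z)$---so sup bounds are unavailable and one must work everywhere with the massive infrared bounds, tracking that the mass $m_S(z) \asymp m_0(\muz)$ in the denominators is exactly the right size to tame the near-origin singularities. A secondary, routine point is to check that the nearest-neighbour massive infrared bound for $\hat\Ann_\muz\supm$ holds with constants independent of $\muz, m, L$, where the explicit form \eqref{eq:m0_def} of $m_0$ is used.
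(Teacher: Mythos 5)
Your proposal is correct and follows essentially the same route as the paper: the same decomposition $\hat\varphi_z\supm = -z\lambda_z \hat A_1\supm/\hat\Ann_\muz\supm + z\hat D\supm\,\hat E_z\supm/(\hat\Ann_\muz\supm\hat A_z\supm)$, the same key inputs (massive infrared bounds for $\hat A_z\supm$ and $\hat\Ann_\muz\supm$, $\lambda_z\asymp L^{-2}$, the vanishing zeroth and second moments of $E_z$ with the $L^{-2}$ offset in its moments, and Lemma~\ref{lem:D_L}-type bounds), and the same H\"older bookkeeping; the paper merely packages the ratio estimates as Lemma~\ref{lem:AFEbds_S} and quotes the pointwise bound on $\grad^{\alpha_1}\hat C_\muz\supm$ from \cite{Slad23_wsaw} rather than re-deriving it via the quotient rule, which is an equivalent computation.
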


\begin{proof}[Proof of \eqref{eq:S1bd} assuming Lemma~\ref{lemma:phi}]
By \eqref{eq:m0} and \eqref{eq:muz},
\begin{align} \label{eq:m0_asymp}
m_0(\muz)^2
\sim 2d \Big( \frac{ 1 - \mu_z }{ \mu_z } \Big)
= \frac{ 2d }{ \sigma_L^2 } \Big( \frac{ 1 - z }{ z } \Big)
	\qquad \text{as $z\to 1$.}
\end{align}
By comparing with the asymptotic formula for $m_S$ in \eqref{eq:mass_S},
this yields $m_0(\muz) \sim m_S(z)$ as $z\to 1$.
We can therefore find a constant $a_S \in (0,\half)$ and a possibly smaller $\delta_2$
such that
\begin{align}
a_S m_S(z) \le (a_1 \wedge \tfrac 1 2 )m_0(\mu_z)
\end{align}
for all $z \in [1 - \delta_2, 1)$,
where $a_1$ is the constant of Lemma~\ref{lem:C}.
In particular,
for $z \in [1 - \delta_2 , 1)$
and $m = a_S m_S(z)$, we can use Lemma~\ref{lem:C} to see that
\begin{align}
C_\muz \supm (x)
\le \frac{ a_0 }{ \nnnorm x^{d-2} } e^{-(a_1 m_0(\muz) - m ) \norm x _ \infty}
\le \frac{ a_0 }{ \nnnorm x^{d-2} } .
\end{align}
The bound on $\hat \varphi_z\supm$ from
Lemma~\ref{lemma:phi}, together with
Lemma~\ref{lem:fourier} (with $\psi = \varphi_z\supm$ and $n=d-2$)
and with $ \abs{ \varphi_z\supm (0) } \le \norm{ \hat \varphi_z \supm }_1$,
imply that
\begin{equation}
\abs{ \varphi_z \supm (x) }
\lesssim \frac { L^{-(1-\eta) } } { \nnnorm x^{d-2} } .
\end{equation}
We insert this into the decomposition \eqref{eq:S_decomp} of $S_z \supm$,
and use $z \lambda_z \lesssim L^{-2}$,
to obtain
\begin{align}
S_z \supm (x) - \delta_{0,x}
\lesssim  \frac{ a_0 } { L^2 \nnnorm x^{d-2} }
	+ \frac{ 1 } { L^{1-\eta} \nnnorm x^{d-2} } .
\end{align}
We can assume by $\Zd$-symmetry that $x_1 = \norm{ x }_\infty$,
and since $m = a_S m_S(z)$,
this implies that
\begin{align}
S_z(x) - \delta_{0,x}  \lesssim \frac{ 1 } { L^{1-\eta} \nnnorm x^{d-2} }
	e^{ - a_S m_S(z) \norm x_\infty },
\end{align}
as desired.
\end{proof}

\subsection{Proof of Lemma~\ref{lemma:phi}}

It remains to prove Lemma~\ref{lemma:phi}.
For this, we use the following analogue of
Lemma~\ref{lem:AFEbds}.
Its proof involves only a minor
adaptation of the proof of Lemma~\ref{lem:AFEbds}.

\begin{lemma} \label{lem:AFEbds_S}
Let $d>2$ and $L \ge L_2$.
There is a $\delta_2 = \delta_2(L) > 0$ such that
for all $z \in [1 - \delta_2, 1)$,
for all $m \in [0, \half m_S(z) \wedge \half m_0(\muz) ]$,
for all multi-indices $\gamma$ with $\abs \gamma \le d-2$,
and for all $q_1, q_2$ with
\begin{equation}
    \frac{ \abs \gamma } d < q_1\inv < 1,
    \qquad
    \frac{ 1 + \abs \gamma } d < q_2\inv < 1,
\end{equation}
the norm inequalities
\begin{equation}  \label{eq:EAF_S}
\biggnorm{ \frac{ \grad^\gamma \hat \Ann_\muz \supm }{ \hat \Ann_\muz \supm } }_{q_1}, \;
\biggnorm{ \frac{ \grad^\gamma \hat A_z \supm }{ \hat A_z \supm } }_{q_1}, \;
\biggnorm{ \frac{ \grad^\gamma \hat E_z \supm }{ \hat \Ann_\muz \supm \hat A_z \supm } }_{q_2}
\lesssim 1
\end{equation}
hold, with constants independent of $z,m,L$.
\end{lemma}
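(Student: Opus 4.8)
The plan is to transcribe the proof of Lemma~\ref{lem:AFEbds}, with $A_z=\delta-zD$ now playing the role of $F_p$, with the nearest-neighbour operator $\Ann_\muz=\delta-\muz P$ taking over the role of the (spread-out) random walk operator $A_{\mu_p}$, and with $E_z=\Ann_\muz-\lambda_z A_z$ replacing $E_p$; there is no analogue of $\Pi_p$, so the argument in fact simplifies. Weak differentiability to all orders of $\hat A_z\supm$, $\hat\Ann_\muz\supm$, and $\hat E_z\supm$ is immediate, since $A_z$, $\Ann_\muz$, and hence $E_z$ all have finite support, so their Fourier transforms are trigonometric polynomials and \cite[Lemma~A.4]{LS24a} (or direct inspection) applies with no moment computation needed. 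Throughout, $m\in[0,\half m_S(z)\wedge\half m_0(\muz)]$, and I take $\delta_2$ small enough that $m_S(z)L$ and $m_0(\muz)$ are as small as needed; this is possible because $m_S(z)^2\asymp(1-z)/L^2$ by \eqref{eq:mass_S} together with $\sigma_L^2\asymp L^2$, while $\muz\to1$ and hence $m_0(\muz)\to0$ as $z\to1$ by \eqref{eq:m0}--\eqref{eq:muz}.

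The two inputs are massive infrared bounds and linear bounds on first derivatives. For $\hat A_z\supm$ I apply Lemma~\ref{lem:massive_infrared}(ii) with $\mu=z$ to get $|\hat A_z\supm(k)|\gtrsim L^2m^2+(L^2|k|^2\wedge1)$ for $m\le\half m_S(z)$. For $\hat\Ann_\muz\supm$ I prove the nearest-neighbour analogue by the same argument: using $\cosh m_0(\muz)=1+d(1-\muz)/\muz$ from \eqref{eq:m0_def} one has $\hat\Ann_\muz^{(m_0(\muz))}(0)=0$, so a Taylor expansion of $\cosh$ gives $\hat\Ann_\muz\supm(0)\gtrsim m_0(\muz)^2-m^2\gtrsim m^2$ for $m\le\half m_0(\muz)$; combining this with the elementary nearest-neighbour infrared bound $\hat P(0)-\hat P(k)\gtrsim|k|^2\wedge1$ and a small-$m$ perturbation of it (as in the proof of Corollary~\ref{cor:tilted_infrared}, but with $P$ in place of $D$ and no $L$, hence with $L$-uniform constants) yields $|\hat\Ann_\muz\supm(k)|\gtrsim m^2+(|k|^2\wedge1)$ for $m\le\half m_0(\muz)$ and $m_0(\muz)$ small. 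Hence $|\hat\Ann_\muz\supm(k)\,\hat A_z\supm(k)|\gtrsim L^2(|k|+m)^4$ on $\BL=\{|k|<1/L\}$ and $\gtrsim1$ off $\BL$. For the linear bounds, the argument of Lemma~\ref{lem:E} gives $|\grad_j\hat A_z\supm(k)|\lesssim L^2(|k|+m)e^{mL}$ and $|\grad_j\hat\Ann_\muz\supm(k)|\lesssim(|k|+m)e^{m}$, while \cite[Lemma~2.9]{Liu24} applied to $E_z$---whose zeroth and second moments vanish by the choice \eqref{eq:lambdaz}--\eqref{eq:muz} and whose odd moments vanish by $\Zd$-symmetry---gives $|\grad^\gamma\hat E_z\supm(k)|\lesssim(|k|+m)^{3-|\gamma|}\sum_x|x|^3|E_z\supm(x)|$ for $|\gamma|\le2$. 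The one computation that matters is that $\lambda_z=((1-z)+z\sigma_L^2)\inv\asymp\sigma_L^{-2}\asymp L^{-2}$, so from $E_z=\Ann_\muz-\lambda_z A_z$ and the finite supports of $P$ and $D$ one gets $\sum_x|x|^a|E_z\supm(x)|\lesssim1+L^{a-2}$; in particular the third moment of $E_z\supm$ is $\lesssim L$, exactly the factor that cancels against the $L^2$ in the denominator once one power of $|k|+m$ has been spent.

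Given these inputs, each of the three bounds follows exactly as the corresponding step of the proof of Lemma~\ref{lem:AFEbds}. For $\grad^\gamma\hat\Ann_\muz\supm/\hat\Ann_\muz\supm$ and $\grad^\gamma\hat A_z\supm/\hat A_z\supm$ one splits via the massive infrared bound into $\BL$ and its complement; off $\BL$ the ratio is $\lesssim|\grad^\gamma\hat P\supm|$ (resp. $\lesssim|\grad^\gamma\hat D\supm|$), of $L^{q_1}$ norm $\lesssim1$, $L$-free (resp. $\lesssim L^{|\gamma|-d/q_1}$ by Lemma~\ref{lem:D_L}), while on $\BL$ one cancels a factor $|k|+m$ using the linear bound when $|\gamma|=1$ and for $|\gamma|\ge2$ uses H\"older with $\|\,|k|^{-2}\1_\BL\|_s$ ($s\inv>2/d$) together with Lemma~\ref{lem:D_L}; in all cases this gives $\lesssim L^{|\gamma|-d/q_1}\le1$ when $q_1\inv>|\gamma|/d$ (and $\lesssim1$ for the nearest-neighbour ratio). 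For $\grad^\gamma\hat E_z\supm/(\hat\Ann_\muz\supm\hat A_z\supm)$ one again splits; off $\BL$ the ratio is $\lesssim|\grad^\gamma\hat E_z\supm|$, whose $L^{q_2}$ norm is $\lesssim1$ for $q_2\inv>(1+|\gamma|)/d$ by $\sum_x|x|^a|E_z\supm(x)|\lesssim1+L^{a-2}$ and Lemma~\ref{lem:D_L}; on $\BL$, for $|\gamma|\le2$ one inserts the vanishing-moment bound $|\grad^\gamma\hat E_z\supm(k)|\lesssim L(|k|+m)^{3-|\gamma|}$ to see the ratio is $\lesssim L\inv(|k|+m)^{-1-|\gamma|}$, of $L^{q_2}$ norm $\lesssim L^{|\gamma|-d/q_2}\le1$ precisely when $q_2\inv>(1+|\gamma|)/d$, while for $3\le|\gamma|\le d-2$ one uses H\"older with $\|\,|k|^{-4}\1_\BL\|_s$ ($s\inv>4/d$, a constraint dominated by $q_2\inv>(1+|\gamma|)/d$) paired with $\|\grad^\gamma\hat E_z\supm\|_r$, again bounded by $1$ in the stated range once the $L$-powers combine. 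The main obstacle is exactly this last $L$-bookkeeping: one must verify that the $L^{-2}$ from $\lambda_z$ and the positive powers of $L$ from the moments of the spread-out step distribution conspire with the $L^2$ in $\hat\Ann_\muz\supm\hat A_z\supm$ so that every constant is genuinely independent of $L$---this uniformity is what makes the bound on $\hat\varphi_z\supm$ in Lemma~\ref{lemma:phi}, and hence the constant $K_S$ in Proposition~\ref{prop:S}, independent of $L$. Everything else is a routine transcription of the proof of Lemma~\ref{lem:AFEbds} with $\Pi_p$ set to zero.
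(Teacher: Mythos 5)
Your overall architecture is the paper's: weak differentiability is free because $A_z$, $\Ann_\muz$, and $E_z$ have finite support; the spread-out factor $\hat A_z\supm$ is controlled by the massive infrared bound of Lemma~\ref{lem:massive_infrared}(ii); the nearest-neighbour factor by the analogue $|\hat\Ann_\muz\supm(k)|\gtrsim(|k|+m)^2$ (the paper cites \cite[(2.19)]{Slad23_wsaw} rather than re-deriving it from \eqref{eq:m0_def}, but your derivation is fine); the numerator by the vanishing zeroth and second moments of $E_z$ together with $\lambda_z\asymp L^{-2}$, so that $\sum_x|x|^a|E_z\supm(x)|\lesssim 1+L^{a-2}$ (this is the paper's choice $\eps=1$ in the role of $\tau$ from \cite[Lemma~A.2]{LS24b}); and the norms by splitting at $\BL$ and applying H\"older. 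Up to the citations it defers to, this is exactly the paper's proof.

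There is, however, one concrete false step. You claim $|\hat\Ann_\muz\supm(k)\,\hat A_z\supm(k)|\gtrsim 1$ off $\BL$, and correspondingly bound the ratio $\grad^\gamma\hat\Ann_\muz\supm/\hat\Ann_\muz\supm$ there by $|\grad^\gamma\hat P\supm|$ and the ratio $\grad^\gamma\hat E_z\supm/(\hat\Ann_\muz\supm\hat A_z\supm)$ by $|\grad^\gamma\hat E_z\supm|$. Only the spread-out factor satisfies $|\hat A_z\supm(k)|\gtrsim 1$ for $|k|\ge 1/L$; the nearest-neighbour factor satisfies only $|\hat\Ann_\muz\supm(k)|\gtrsim(|k|+m)^2$, and at $|k|\asymp 1/L$ (with $\muz\to1$ and $m$ small) this is genuinely of order $L^{-2}$: for instance $\Re\hat\Ann_{\muz}(e_1/L)\approx\tfrac{1}{2d}L^{-2}$. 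So your off-$\BL$ estimates lose a factor of up to $L^2$ and do not yield $L$-independent constants as written. The fix is what the paper does: for the $\Ann_\muz$-ratio, do not split at all but use $(|k|+m)^{-2}$ globally, cancelling one power via the linear bound when $|\gamma|=1$ and using H\"older against $\bignorm{|k|^{-2}}_s$ with $s\inv>2/d$ when $|\gamma|\ge2$ (consistent with $q_1\inv>|\gamma|/d$); for the $E$-ratio, the off-$\BL$ term must carry the extra $(|k|+m)^{-2}$ (this is the second term of the paper's \eqref{eq:EAF_decomp_S}), absorbed by the same H\"older step, for which the hypothesis $q_2\inv>(1+|\gamma|)/d$ leaves exactly enough room. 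With this correction the rest of your computation --- the $L^{-1}(|k|+m)^{-1-|\gamma|}$ bound on $\BL$ for $|\gamma|\le2$, the $\bignorm{|k|^{-4}\1_\BL}_s$ step for $|\gamma|\ge3$, and the moment bookkeeping --- goes through and matches the paper.
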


\begin{proof}
We begin with $A_z = \delta - z D$.
The claim on $\grad^\gamma \hat A_z \supm / \hat A_z \supm$
is essentially proved in Lemma~\ref{lem:AFEbds},
which handles $A_\mup = \delta - \mup D$.
That proof uses only the massive infrared bound of \eqref{eq:Am_lb} and
the Taylor expansion for \eqref{eq:A_linear}.
It can be repeated provided $L$ is large
and $m_S(z) L$ is small.
Since $m_S(z) \to 0$ as $z\to 1$,
we obtain the result for $z \in [1- \delta_2,1)$ with $\delta_2$ small.

The proof for $\hat\Ann_\muz$ follows similar steps to the proof for $\hat A_z$ in
Lemma~\ref{lem:AFEbds}, but it is simpler because there is no need to track
$L$-dependence.  The details are given in
\cite[(2.58)--(2.61)]{Liu24},
for $m \in [0, \half m_0(\muz)]$,
using only smallness of $m_0(\muz)$.
Since $m_0(\muz) \to 0$ as $z \to 1$ by \eqref{eq:m0_asymp},
we obtain the result by taking a possibly smaller $\delta_2$.

For the remaining claim on $\grad^\gamma \hat E_z \supm / ( \hat \Ann_\muz \supm \hat A_z \supm )$,
the proof is a straightforward adaptation of the proof of Lemma~\ref{lem:AFEbds},
as in \cite[Lemma~A.2]{LS24b}.
We therefore only highlight the difference
from the proof of \cite[Lemma~A.2]{LS24b} here.
We let $\eps = 1$ play the role of $\tau$ in that proof.
The massive infrared bounds
\begin{align}
\hat \Ann_\muz \supm (k) \gtrsim ( \abs k + m )^2,
\qquad
\hat A_z \supm (k) \gtrsim \begin{cases}
L^2 (\abs k + m)^2		& (\abs k < L\inv) \\
1					&(\abs k \ge L\inv) ,
\end{cases}
\end{align}
hold for $m \in [0, \half m_S(z) \wedge \half m_0(\muz) ]$;
the first is elementary and is proved, e.g., in \cite[(2.19)]{Slad23_wsaw}, and the
second is \eqref{eq:massive_infrared}.
We also use $\lambda_z \lesssim L^{-2}$
to see that
\begin{align} \label{eq:E_S}
\abs{ E_z \supm } = \abs{ (\delta - \muz P \supm) - \lambda_z (\delta - z D \supm) }
\lesssim  (1 + L^{-2}) \delta + P \supm + L^{-2} D \supm .
\end{align}
With this to estimate moments of $E_z \supm$,
a Taylor expansion as in the proof of Lemma~\ref{lem:E} gives
\begin{align}
\abs{ \grad^\gamma \hat E_z \supm (k) }
\lesssim  \begin{cases}
(\abs k + m)^\eps e^{mL}		&(\abs \gamma = 0)  	\\
L^{\eps} (\abs k  + m)^{2 + \eps - \abs \gamma }  e^{mL}
	&(\abs \gamma \le 2) .
\end{cases}
\end{align}
Also, by taking the Fourier transform in \eqref{eq:E_S} and using Lemma~\ref{lem:D_L} ,
we have
\begin{align} \label{eq:E_gamma_S}
\bignorm{ \grad^\gamma \hat E_z \supm }_r
\lesssim  1 + L^{-2 + \abs \gamma - d/r }
	\qquad (0 \le r\inv < 1,\ mL\le 1) .
\end{align}
These estimates allow us to bound the two terms of
\begin{align} \label{eq:EAF_decomp_S}
\biggabs{ \frac{ \grad^\gamma \hat E_z \supm }{ \hat \Ann_\muz \supm \hat A_z \supm } (k) }
\lesssim
\frac{ \abs{ \grad^\gamma \hat E_z \supm (k) } } { L^2 ( \abs k + m )^4 } \1_{B_L}
+ \frac { \abs{ \grad^\gamma \hat E_z \supm (k) } } { ( \abs k + m )^{2} }
\end{align}
using H\"older's inequality, as in the proof of \cite[Lemma~A.2]{LS24b}.
The only difference from that proof is that we now have cancellation of powers of $(\abs k + m)$, instead of powers of $\abs k$.
\end{proof}

\begin{proof}[Proof of Lemma~\ref{lemma:phi}]
It suffices to consider small $\eta>0$.
By definition, $\hat \varphi_z \supm = z  \hat f_z \supm \hat D \supm
- z \lambda_z \hat C_\muz \supm \hat A_1 \supm$.
Since $\lambda_z \lesssim L^{-2}$ uniformly in $z\in [\half,1]$,
by the product rule it suffices to prove that, for $|\alpha_1|+|\alpha_2|=|\alpha| \le d-2$,
\begin{align}
\label{eq:Df}
\bignorm{ \grad^{\alpha_1} \hat f_z \supm
	\grad^{\alpha_2} \hat D \supm  }_1  	&\lesssim L^{-(1-\eta)} , \\
\label{eq:CF}
\bignorm{ \grad^{\alpha_1} \hat C_\muz \supm
	\grad^{\alpha_2} \hat A_1 \supm  }_1  	&\lesssim L^{\eta} \le L^{1+\eta} .
\end{align}

By Lemma~\ref{lem:AFEbds_S}, it follows exactly as in the proof of Proposition~\ref{prop:f} that
the function $\hat f_z \supm = \hat E_z \supm / (\hat \Ann_\muz \supm \hat A_z \supm )$
is $d-2$ times weakly differentiable, and,
for any $\abs \alpha \le d-2$,
\begin{align} \label{eq:f_alpha_S}
\bignorm {   \grad^\alpha \hat f_z \supm  }_{r} \lesssim 1
	\qquad (r\inv > \frac{ \abs \alpha + 1 }d),
\end{align}
with the constant independent of $z,m,L$.
The bound \eqref{eq:Df} then follows from H\"older's inequality, \eqref{eq:f_alpha_S}, and \eqref{eq:D_alpha}:
\begin{align}
\bignorm{ \grad^{\alpha_1} \hat f_z \supm
	\grad^{\alpha_2} \hat D \supm  }_1
&\le
\bignorm{ \grad^{\alpha_1} \hat f_z \supm }_{\frac d {\abs {\alpha_1}+1+\eta }}
\bignorm{ \grad^{\alpha_2} \hat D \supm   }_{\frac d { \abs{\alpha_2}+1-\eta }}
\lesssim  L^{ \abs{\alpha_2} - (\abs{\alpha_2} + 1-\eta) } = L^{-(1-\eta)}.
\end{align}

For \eqref{eq:CF}, we first recall from
\cite[Lemma~2.2]{Slad23_wsaw}
the elementary fact that
$\abs{ \grad^{\alpha_1} \hat C_\muz \supm (k) }
\lesssim ( \abs k + m )^{-(2 + \abs {\alpha_1} )}$
for all $m \in [0, \half m_0(\muz)]$ and all $\alpha_1$.
Consider first the case $\abs{\alpha_2} < \eta$, \ie, $\alpha_2 = 0$.
Since $\hat A_1 (0) = \sum_{x\in\Z^d} A_1(x) = 0$ and since $1+\eta \le 2$,
a Taylor expansion around $(k,m)=(0,0)$, similar to the proof of Lemma~\ref{lem:E}, gives
\begin{align}
\abs{ \hat A_1 \supm (k) }
= \abs{  \hat A_1 \supm (k) - \hat A_1(0) }
\lesssim L^{\eta} (\abs k + m)^{ \eta } e^{mL} .
\end{align}
We can again bound $e^{mL}$ by a constant because $mL$ is small.
It follows that
\begin{align}
\bignorm{ \grad^{\alpha} \hat C_\muz \supm \cdot \hat A_1 \supm }_1
\lesssim  L^{\eta} \biggnorm{ \frac 1 { (\abs k+m)^{ 2 + \abs{\alpha} - \eta} } }_1 \lesssim L^{\eta} ,
\end{align}
since $2 + \abs{ \alpha } - \eta \le d-\eta < d$.
If instead
$\abs{ \alpha_2} \ge \eta$, then
we use H\"older's inequality and the norm bound \eqref{eq:D_alpha} on $\hat F_1 \supm = 1 - \hat D \supm$ to
complete the proof of \eqref{eq:CF} with
\begin{align}
\bignorm{ \grad^{\alpha_1} \hat C_\muz \supm
	\grad^{\alpha_2} \hat A_1 \supm  }_1
\le  \bignorm{ \grad^{\alpha_1} \hat C_\muz \supm }_{\frac{d}{2 + \abs{ \alpha_1 } + \eta} }
	\bignorm{ \grad^{\alpha_2} \hat A_1 \supm }_{\frac{d}{ \abs{ \alpha_2 } -\eta} }
\lesssim  L^{  \abs{ \alpha_2 }    -( \abs{ \alpha_2 } -\eta )  }
= L^{\eta}.
\end{align}
This concludes the proof.
\end{proof}

\section*{Acknowledgements}
The work of both authors was supported in part by NSERC of Canada.
The authors thank the Isaac Newton Institute for Mathematical Sciences, Cambridge, for support and hospitality during the programme \emph{Stochastic systems for anomalous diffusion}, where work on this paper was undertaken; this work was supported by EPSRC grant EP/R014604/1.  The work of GS was supported in part by the Clay
Mathematics Institute.

\subsection*{Data availability}
No data is generated or analysed in our work.

\subsection*{Conflict of interest}
The authors have no relevant financial or non-financial interests to disclose.

\end{document}